\newtheorem{thm}{Theorem}
\newtheorem{lem}[thm]{Lemma}
\newtheorem{cor}[thm]{Corollary}
\theoremstyle{definition}
\newtheorem{defn}[thm]{Definition}
\newtheorem{defns}[thm]{Definitions}
\newtheorem{example}[thm]{Example}
\theoremstyle{remark}
\newtheorem{rmk}[thm]{Remark}
\newtheorem{rmks}[thm]{Remarks}
\newtheorem{notation}[thm]{Notation}
\newcommand{\hP}{{\hat{P}}}
\newcommand{\hrho}{{\hat{\rho}}}
\newcommand{\hcA}{{\hat{\mathcal{A}}}}
\newcommand{\hcC}{{\hat{\mathcal{C}}}}
\newcommand{\hcP}{{\hat{\mathcal{P}}}}
\newcommand{\talpha}{{\tilde{\alpha}}}
\newcommand{\tgamma}{{\tilde{\gamma}}}
\newcommand{\teta}{{\tilde{\eta}}}
\newcommand{\tLambda}{{\tilde{\Lambda}}}
\newcommand{\tDelta}{{\tilde{\Delta}}}
\newcommand{\tpsi}{{\tilde{\psi}}}
\newcommand{\ttheta}{{\tilde{\theta}}}
\newcommand{\tP}{{\tilde{P}}}
\newcommand{\tF}{{\tilde{F}}}
\newcommand{\tL}{{\tilde{L}}}
\newcommand{\tQ}{{\widetilde{Q}}}
\newcommand{\tT}{{\tilde{T}}}
\newcommand{\te}{{\tilde{e}}}
\newcommand{\tp}{{\tilde{p}}}
\newcommand{\tq}{{\tilde{q}}}
\newcommand{\tv}{{\tilde{v}}}
\newcommand{\tx}{{\tilde{x}}}
\newcommand{\ty}{{\tilde{y}}}
\newcommand{\tz}{{\tilde{z}}}
\newcommand{\br}{{\bar r}}
\newcommand{\bh}{{\bar h}}
\newcommand{\bB}{{\overline B}}
\newcommand{\bDD}{{\overline\D}}
\newcommand{\bcV}{{\overline\cV}}
\newcommand{\bU}{{\overline U}}
\newcommand{\brho}{{\bar{\rho}}}
\newcommand{\sone}{S^1}
\newcommand{\csph}{{\widehat{\C}}}
\newcommand{\plane}{{\mathbb{R}}^2}
\newcommand{\lseg}{[\![}
\newcommand{\rseg}{]\!]}
\newcommand{\I}{^{-1}}
\newcommand{\co}{\colon}
\newcommand{\llangle}{\left\langle}
\newcommand{\lang}{\left\langle}
\newcommand{\rrangle}{\right\rangle}
\newcommand{\rang}{\right\rangle}
\newcommand{\sbs}{\subset}
\newcommand{\ol}[1]{\overline{#1}}
\newcommand{\interior}[1]{{\stackrel{\circ}{#1}}}
\newcommand{\opna}[1]{\operatorname{#1}}
\newcommand{\segpair}[2]{\left\langle #1,#2\right\rangle}
\newcommand{\ssegpair}[1]{\segpair{#1}{#1'}}
\newcommand{\rmd}{\,\mathrm{d}}
\newcommand{\rmm}{\mathrm{m}}
\newcommand{\rmL}{{\mathrm{L}}}
\newcommand{\rmR}{{\mathrm{R}}}
\newcommand{\Int}{\operatorname{Int}}
\newcommand{\Area}{\operatorname{Area}}
\newcommand{\Ann}{\opna{Ann}}
\newcommand{\im}{\opna{im}}
\renewcommand{\mod}{\opna{mod}}
\newcommand{\Cone}{\operatorname{Cone}}
\newcommand{\diam}{\operatorname{diam}}
\newcommand{\Planar}{{\opna{Planar}}}
\newcommand{\thor}{{\widetilde{\opna{h}}}{\opna{or}}}
\newcommand{\tHor}{{\widetilde{\opna{H}}}{\opna{or}}}
\newcommand{\tver}{{\widetilde{\opna{v}}}{\opna{er}}}
\newcommand{\tVer}{{\widetilde{\opna{V}}}{\opna{er}}}
\newcommand{\hor}{\opna{hor}}
\newcommand{\Hor}{\opna{Hor}}
\newcommand{\ver}{\opna{ver}}
\newcommand{\Ver}{\opna{Ver}}
\newcommand{\Gr}{\opna{Gr}}
\newcommand{\veps}{\varepsilon}
\newcommand{\vphi}{\varphi}
\newcommand{\cA}{{\mathcal{A}}}
\newcommand{\cF}{{\mathcal{F}}}
\newcommand{\cR}{{\mathcal{R}}}
\newcommand{\cV}{{\mathcal{V}}}
\newcommand{\cG}{{\mathcal{G}}}
\newcommand{\cC}{{\mathcal{C}}}
\newcommand{\cP}{{\mathcal{P}}}
\newcommand{\cI}{{\mathcal{I}}}
\newcommand{\cL}{{\mathcal{L}}}
\newcommand{\cQ}{{\mathcal{Q}}}
\newcommand{\R}{\mathbb{R}}
\newcommand{\Q}{\mathbb{Q}}
\newcommand{\N}{\mathbb{N}}
\newcommand{\C}{\mathbb{C}}
\newcommand{\D}{\mathbb{D}}
\newcommand{\pichere}[2]
{\begin{center}\includegraphics[width=#1\textwidth]{#2}\end{center}}
\newcommand{\lab}[3]{\psfrag{#1}[#3]{$\scriptstyle{#2}$}}
\author{Andr\'e de Carvalho} \address{Departamento de Matem\'atica
  Aplicada, IME - USP \\ Rua do Mat\~ao 1010, Cidade Universit\'aria
  \\ 05508-090 S\~ao Paulo, SP, Brazil}\email {andre@ime.usp.br}
\author{Toby Hall} \address{Department of Mathematical
  Sciences\\University of Liverpool\\Liverpool L69 7ZL, UK} \email
       {T.Hall@liv.ac.uk}
\title{Paper folding, Riemann surfaces, and convergence of
  pseudo-Anosov sequences}
\begin{document}

\begin{abstract}
A method is presented for constructing closed surfaces out of
Euclidean polygons with infinitely many segment identifications along
the boundary. The metric on the quotient is identified.  A sufficient
condition is presented which guarantees that the Euclidean structure on
the polygons induces a unique conformal structure on the quotient
surface, making it into a closed Riemann surface. In this case, a
modulus of continuity for uniformizing coordinates is found which
depends only on the geometry of the polygons and on the
identifications. An application is presented in which a uniform
modulus of continuity is obtained for a family of pseudo-Anosov
homeomorphisms, making it possible to prove that they converge to a
Teichm\"uller mapping on the Riemann sphere.
\end{abstract}

\maketitle

\bibliographystyle{amsplain}

\section{Introduction}

This article addresses the classical problem of constructing surfaces
out of subsets of the plane by making identifications along the
boundary: in contrast to the usual discussion arising in the
classification of surfaces, however, infinitely many identifications
are allowed. The topological structure of the identification space $S$
is studied and conditions are given which guarantee that $S$ is a
closed surface. The quotient metric on $S$ induced by the Euclidean
metric on the plane is identified, and the question of whether or not
this metric induces a unique complex structure on~$S$ is discussed. A
sufficient condition for uniqueness of the complex structure is given
and, when it holds, a modulus of continuity for uniformizing
coordinates is obtained. The interplay between the metric and
conformal structures is central to the paper, promoting the
topological stucture to a Riemann surface structure and providing
quantitative control over the quotient map. This analytic control is
then used to prove convergence of a certain sequence of pseudo-Anosov
homeomorphisms to a {\em generalized} pseudo-Anosov homeomorphism.

\medskip
Let $P$ be a finite collection of disjoint polygons in the (complex)
plane. A {\em paper-folding scheme} is an equivalence relation which
glues together segments --- possibly infinitely many --- along the
boundary of $P$. The image of $\partial P$ in the quotient space $S$
is called the {\em scar}: it contains {\em cone points}, where the
total angle is not equal to~$2\pi$, and {\em singular points}, such as
accumulations of cone points. The following statements summarize the
main theorems of this article.

\bigskip
\noindent\textbf{Topological Structure Theorems
  (Theorems~\ref{thm:plain-top-structure}
  and~\ref{thm:top-structure})} \,\, \emph{Necessary and sufficient
  conditions are given for the quotient space $S$ of a paper-folding
  scheme to be a closed surface. In particular, if the paper-folding
  scheme is plain ($P$ is a single polygon and the identifications
  along its boundary are unlinked) then $S$ is a topological sphere
  and the scar is a dendrite.}
\bigskip

\noindent\textbf{Metric Structure Theorem
  (Theorem~\ref{thm:metric-structure})} \,\, \emph{The quotient metric
  on~$S$ is intrinsic, and~$S$ is a conic-flat surface (that is, it is
  locally isometric to cones on circles) away from singular points.}

\bigskip

\noindent\textbf{Conformal Structure Theorem (Theorem~\ref{thm:main})}
\,\, \emph{The natural conformal structure on the conic-flat part of
  $S$ extends uniquely across an isolated singularity provided that a
  certain integral diverges. In particular, if there are only finitely
  many singular points at each of which the relevant integral
  diverges, then the conformal structure extends uniquely across the
  singular set, making $S$ into a closed Riemann surface.}

\bigskip

It is possible to extend this theorem to the case of arbitrary
singular sets, and this will be the subject of a forthcoming paper.

\smallskip
If the conditions of both Theorem~\ref{thm:top-structure} and
Theorem~\ref{thm:main} are satisfied, then~$S$ has a natural closed
Riemann surface structure. If $S$ is a topological sphere, as is the
case for plain paper foldings, then $S$ is isomorphic to the Riemann
sphere. A modulus of continuity for a
suitably normalized uniformizing map from the polygon to the Riemann
sphere is found and the following theorem is proved:

\bigskip

\noindent\textbf{Modulus of continuity of uniformizing map
  (Theorem~\ref{thm:modcont})} \,\, \emph{The uniformizing map has a
  modulus of continuity which depends only on the geometry of the
  polygon and on the metric on the scar.}

\bigskip

The theorems above belong
to the fields of surface topology, Riemann surface theory, and
geometric function theory. The question that motivated this work,
however, comes from dynamical systems theory and was described to the
first author many years ago by Dennis Sullivan. Consider a
self-homeomorphism of a surface, such as the much-studied H\'enon
diffeomorphism of the plane
\[ f(x,y):=(a-x^2-by,x),\] 
where $a,b$ are real parameters. There is a large set of parameters
for which $f$ is {\em chaotic} which means, amongst other things, that
it has infinitely many periodic orbits. If $Q$ is a finite
$f$-invariant set (a union of periodic orbits), consider the isotopy
class of~$f$ in the punctured plane $\R^2\setminus Q$.  Thurston's
classification of surface homeomomorphisms provides a canonical
representative $\vphi_Q$ in this isotopy class, which is, typically, a
{\em pseudo-Anosov} homeomorphism. If $Q_n$ is an increasing sequence
which exhausts the collection of finite $f$-invariant sets, one can
imagine passing to the limit in the sequence $\vphi_{Q_n}$ of
associated pseudo-Anosov maps. The resulting limit would be a {\em
  tight} representative of $f$ which contains `all' of its dynamics.
The paper ends with an application of the theorems above to establish
a case of this scenario:

\bigskip

\noindent\textbf{Convergence to the tight horseshoe
  (Theorem~\ref{thm:convergence})}
\,\, \emph{There exists a cofinal sequence in the 
  set of horseshoe braid types partially ordered by forcing whose
  associated pseudo-Anosov homeomorphisms converge to the tight
  horseshoe.}

\bigskip 

To understand the existence of limits of sequences of pseudo-Anosov
maps on a fixed surface $S$ relative to varying finite subsets $Q$ has
several consequences. From a purely intrinsic point of
view, it is interesting to be able to describe the closure of the set of
relative pseudo-Anosov maps on $S$ --- an important class of
homeomorphisms --- in the space of all self-homeomorphisms of $S$.
Also, Thurston's classification of surface homeomorphims up to isotopy
produces geometrically and conformally rigid structures out of the
topology. Understanding limits of pseudo-Anosovs would do the same in
a more general context and, in this case, the dynamics and not the
topology would be the driving force.  It is hoped that this will
eventually lead to the correct 2-dimensional analogue of the
Milnor-Thurston theorem stating that every multimodal interval map has
a piecewise-linear quotient with same entropy. There are also
connections to be explored with infinite dimensional Teichm\"uller
theory, since the tight limits are Teichm\"uller mappings with respect
to quadratic differentials in the $L^1$-closure of the set of
meromorphic quadratic differentials.

\medskip
Section~\ref{sec:background} provides a brief summary of results from
surface topology, metric geometry, and geometric function theory which
will be used in the remainder of the paper. In
Section~\ref{sec:surfaceorigami} the main objects of study,
paper-folding schemes, are defined. These are identification schemes
around the boundaries of finite disjoint unions of polygons, whose
(metric) quotients are called {\em paper spaces}.

Sections~\ref{sec:topol-struct-plain} and~\ref{sec:cxstructure}
contain discussions of the topological, metric, and conformal
structures of paper spaces. To simplify the exposition, some of the
arguments are first developed in the simplest and most common case of
plain paper foldings, before being extended to the general case.

In Section~\ref{sec:modcont}, moduli of continuity are discussed,
first locally for uniformizing coordinates charts on the neighborhood
of points on $S$ at which the conformal structure extends uniquely and
then globally in the case where $S$ is a complex sphere and is thus
isomorphic to the Riemann sphere.

In Section~\ref{sec:dynam-appl}, the dynamical application is given. A
sequence of pseudo-Anosov maps of the punctured sphere is considered:
each can be regarded as a quasi-conformal homeomorphism of an
appropriate (finite) paper surface. Using the results of
Section~\ref{sec:modcont}, the natural limit of this sequence is
constructed. In fact this is equivalent to taking the limit of
pseudo-Anosov maps over an exhaustion of the collection of finite
invariant sets of Smale's horseshoe map.

\smallskip
A brief account of the results described in this article, their
application, and an outline of the methods of proof can be found
in~\cite{PNAS}.

\bigskip\noindent {\bf Acknowledgements:} The authors would like to
thank Lasse Rempe for his help with the proof of
Lemma~\ref{lem:convergence-phi-n}. They gratefully acknowledge the
support of FAPESP
grant 2006/03829-2. The first author would also like to
acknowledge the support of CNPq 
grant 151449/2008-2 and the hospitality
of IMPA,
where part of this work was developed.

\section{Background}
\label{sec:background}

This section contains a summary of background theory which will be
used in the article, together with references to more detailed
accounts. Section~\ref{subsec:top-background} contains topological
results which will be used in Section~\ref{sec:topol-struct-plain} to
give conditions under which the quotient space associated with a
paper-folding scheme is a topological sphere or, more generally, a
surface. The main tool here is Moore's theorem~\cite{MoorePaper,Moo},
which provides conditions under which the quotient of a sphere by an
equivalence relation is again a
sphere. Section~\ref{subsec:metric-background} contains a brief
summary of the theory of quotient metric spaces and their
relationship to topological quotients. Finally,
Section~\ref{subsec:gft-background} describes some results from
geometric function theory which will be used in
Section~\ref{sec:cxstructure} to give conditions under which the
conformal structure on the quotient space extends uniquely across an
isolated singularity; and in Section~\ref{sec:modcont} to determine a
modulus of continuity for a uniformizing map from the polygon to the
Riemann sphere in the case of a plain paper folding.

\subsection{Moore's theorem and the dendrite quotient theorem}
\label{subsec:top-background}

\begin{defns}[Separation, continuum]
\label{defns:continuum}
A {\em separation} of a topological space~$X$ is a decomposition
of~$X$ as a disjoint union~$X=A\cup B$ of non-empty closed subsets
of~$X$. The space~$X$ is {\em connected} if no separation of~$X$
exists.

A set $C\sbs X$ {\em separates} two points $x,y\in X$
(respectively subsets $D,E\sbs X$) if there is a separation
$X\setminus C=A\cup B$ with $x\in A$, $y\in B$ (respectively
$D\sbs A$, $E\sbs B$). If $X$ is connected and $X\setminus
C$ is not, $C$ {\em separates} $X$.

A {\em continuum} is a compact connected (subset of a) Hausdorff
topological space.
\end{defns}

\begin{defns}[Monotone upper semi-continuous decomposition]
\label{defn:musc}
An equivalence relation $\sim$ on a topological space $X$ is {\em
  closed} if it is closed as a subset of $X\times X$. (On compact
Hausdorff spaces, being closed is the same as saying that $x_n\sim
y_n$, $x_n\to x$ and $y_n\to y$ imply $x\sim y$; and an equivalence
relation is closed if and only if the quotient space is Hausdorff.)

A {\em decomposition} of $X$ is synonymous with a partition of $X$,
i.e., a collection of disjoint subsets whose union is~$X$.  A
decomposition $\cG$ of a topological space into compact subsets is
{\em upper semi-continuous~ (usc)} if the associated equivalence
relation is closed. A decomposition is {\em monotone} if its elements
are connected.

The expression ``monotone upper semi-continuous'' will be abbreviated
{\em musc}.
\end{defns}

\begin{defn}[Realizing an equivalence relation]
\label{defn:realize}
Let $P$ be a subset of a topological space~$X$, and $\sim$ be an
equivalence relation on $P$. A decomposition $\cG$ of $X$ {\em
  realizes}~$\sim$ if all elements of $\cG$ intersect $P$, and each
intersection of an element of $\cG$ with $P$ is a $\sim$-equivalence
class. (This means that $P/{\sim}$ is naturally identified with
$X/\cG$.)
\end{defn}

\begin{defn}[Saturation]
\label{defn:saturation}
Let~$\rmR$ be a reflexive and symmetric relation on a set~$X$. A subset~$U$
of~$X$ is {\em $\rmR$-saturated} if it contains $\{y\in X\,:\,y\rmR x\}$ for
all~$x\in U$.

Any collection~$\cP$ of subsets of~$X$ (not necessarily a partition)
naturally defines a symmetric and reflexive relation~$\rmR$ (two
distinct points are related if they belong to the same element
of~$\cP$), which is transitive if the elements of~$\cP$ are
disjoint. A subset~$U$ of~$X$ is {\em $\cP$-saturated} if it is
$\rmR$-saturated.
\end{defn}

\medskip
The following theorem, due to Moore~\cite{MoorePaper},
is the main tool used for deciding when the quotient of an equivalence
relation on the 2-sphere is again the sphere. A generalization for
surfaces was given by Roberts and Steenrod~\cite{RoSt}. 

\begin{thm}[Moore on the 2-sphere] 
\label{thm:Moore}
A topological quotient of the 2-sphere by a musc decomposition whose
elements do not separate it is again homeomorphic to the 2-sphere.
\end{thm}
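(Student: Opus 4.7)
The plan is to study the quotient map $q: S^2 \to S := S^2/\cG$ in three stages: establish that $S$ is well-behaved point-set-topologically, promote $S$ to a 2-manifold, and then identify it with $S^2$.

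First I would verify that $S$ is a compact, Hausdorff, second-countable Peano continuum. Upper semi-continuity of $\cG$ is equivalent to the equivalence relation being closed in $S^2 \times S^2$, hence to $S$ being Hausdorff (Definition~\ref{defn:musc}); it also guarantees that $q$ is a closed map, so $S$ inherits compactness and second countability (and therefore metrisability, by Urysohn) from $S^2$. Connectedness is immediate from surjectivity of $q$, and local connectedness passes to $S$ by the standard theorem on monotone closed images of Peano continua.

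The central step is to equip $S$ with local disk charts. Fix $g\in\cG$ and let $p=q(g)\in S$. Because $g$ is a compact connected subset of $S^2$ that does not separate $S^2$, its complement is a connected open subset of $S^2$ with $H_1=0$ by Alexander duality, hence simply connected, hence homeomorphic to an open disk. From this I would extract a neighborhood basis of $g$ in $S^2$ consisting of open topological disks $V_n$ bounded by Jordan curves. Using upper semi-continuity of $\cG$ (which makes the saturation of any closed set closed), I would then shrink each $V_n$ to a $\cG$-saturated open set $U_n$ still containing $g$, chosen so that $\partial U_n$ is a Jordan curve along which no element of $\cG$ is cut: each element of $\cG$ that meets $\partial U_n$ either lies entirely on $\partial U_n$ or entirely inside $U_n$. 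By an inductive shrinking argument, or by iteratively invoking Moore's theorem for the closed disk $\ol{V_n}$, $q(U_n)$ is then an open topological disk in $S$ whose frontier is a simple closed curve. This produces a disk neighborhood basis at $p$, so $S$ is a closed 2-manifold.

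Finally, to identify the closed 2-manifold $S$ with $S^2$, I would verify Zippin's characterization of the sphere: $S$ contains a simple closed curve, every simple closed curve in $S$ separates $S$, and no arc in $S$ separates $S$. Each of these properties transfers from $S^2$ through $q$ using the local disk charts and the observation that the preimage of any $\cG$-saturated subset fails to separate $S^2$ exactly when its image fails to separate $S$, together with the non-separation hypothesis on the elements of $\cG$. Alternatively, one could apply the Vietoris-Begle theorem (the fibres of $q$ are non-separating continua, hence acyclic in \v{C}ech cohomology) to conclude that $q$ induces an isomorphism on singular homology, so that the closed surface $S$ has the homology of $S^2$ and is therefore $S^2$.

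I expect the main obstacle to lie in the middle step, specifically the construction of the $\cG$-saturated Jordan neighborhoods $U_n$ when non-trivial elements of $\cG$ accumulate on $g$ or on $\partial V_n$. Arranging the boundary $\partial U_n$ to avoid cutting any decomposition element, without destroying the Jordan curve structure, is delicate, and this is precisely where the non-separation hypothesis enters essentially: without it, the complement $S^2 \setminus g$ would not be a single disk and the quotient of a saturated neighborhood of $g$ could pinch to a figure-eight or a more complicated branched set rather than a disk in $S$.
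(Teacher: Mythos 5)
The paper does not prove this statement at all: Theorem~\ref{thm:Moore} is quoted as a classical result, with the proof deferred to Moore's original paper (and Roberts--Steenrod for the surface generalization). So the only question is whether your sketch would stand on its own, and as written it has a genuine gap exactly where you suspect it. The first stage (compact, Hausdorff, metrizable Peano quotient) and the last stage (Zippin's characterization, or Vietoris--Begle plus acyclicity of non-separating continua via Alexander duality, once $S$ is known to be a closed surface) are fine. But the middle stage is not an argument: the existence, for each $g\in\cG$, of arbitrarily small $\cG$-saturated neighborhoods bounded by Jordan curves that do not cut any decomposition element, together with the claim that their images in $S$ are open disks bounded by simple closed curves, \emph{is} the substance of Moore's theorem. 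Upper semi-continuity gives you saturated open neighborhoods of $g$ inside any $V_n$, but it gives you no control on their frontiers; when nondegenerate elements of $\cG$ accumulate densely (which the hypotheses allow), there is no soft reason why a Jordan curve avoiding or absorbing every element it meets should exist, and your proposed fallback of ``iteratively invoking Moore's theorem for the closed disk $\ol{V_n}$'' is circular, since Moore's theorem for the disk is equivalent to the statement being proved. Even granting such a saturated Jordan domain $U_n$, the assertion that $q(U_n)$ is a disk is again the disk case of the theorem, not a consequence of the Jordan curve theorem alone.

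If you want a complete proof rather than a citation, the known non-circular routes are of a different nature: Moore's own axiomatic characterization of the plane/sphere; the modern cell-like/shrinking approach (showing the decomposition map is a near-homeomorphism, e.g.\ via Bing shrinking); or a complex-analytic argument in the spirit of what this paper does elsewhere, building a conformal structure on the quotient and uniformizing. Any of these supplies the missing local (and in fact global) structure that your second stage currently only postulates.
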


The topological structure of the scar of a paper-folding scheme will
also be important, and the main tool which will be used in this
regard is Theorem~\ref{thm:dendritequot} below.

\begin{defns}[Dendrite, dendritic collection]\label{defn:dendrite}
A {\em dendrite} is a locally connected continuum which does not
contain any simple closed curve. A {\em local dendrite} is a continuum
which is locally a dendrite, i.e., for which every point has a
(closed) neighborhood which is a dendrite.

A collection $\cG$ of disjoint subsets of a topological space $X$ is
{\em non-separated} if no element of $\cG$ separates two points of any
other single element of $\cG$. A non-separated collection $\cG$ is
{\em dendritic} if given $g\in\cG$ and $y\not\in g$, there exists
$g'\in\cG$ which separates $y$ and $g$.
\end{defns}

The following theorem can be found in~\cite{Whyburn}. There the word
{\em saturated} is used to signify what was defined as {\em dendritic}
above (the former word has already been used here).

\begin{thm}[Dendrite quotient]\label{thm:dendritequot}
Let $\cG$ be a dendritic decomposition of a continuum $X$. Then $\cG$
is usc and the quotient space $X/\cG$ is a dendrite.
\end{thm}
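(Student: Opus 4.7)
The plan is to verify upper semicontinuity of $\cG$ and then establish a point-separation characterization of dendrites for $X/\cG$. The key mechanism in both steps is that a separating element $g' \in \cG$ automatically yields a separation $X \setminus g' = A \cup B$ in which $A$ and $B$ are $\cG$-saturated, thanks to the non-separation hypothesis.

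First I would prove $\cG$ is usc by showing the equivalence relation is closed. Suppose $x_n, y_n \in g_n \in \cG$ with $x_n \to x$, $y_n \to y$, but $x \in g_x$, $y \in g_y$ with $g_x \neq g_y$. The dendritic hypothesis furnishes $g' \in \cG$ separating $y$ from $g_x$, giving a separation $X \setminus g' = A \cup B$ with $g_x \sbs A$ and $y \in B$. Since each element of $\cG$ is assumed compact, $X \setminus g'$ is open in $X$, so $A$ and $B$ are open in $X$. The non-separation property forces $A$ and $B$ to be $\cG$-saturated: any $h \in \cG \setminus \{g'\}$ meeting both $A$ and $B$ would have two of its points separated by $g'$, contrary to non-separation. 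As $x \in A$ (an open set) and $y \in B$, we get $x_n \in A$ and $y_n \in B$ eventually, so $g_n$ meets both $A$ and $B$; saturation then forces $g_n = g'$ for arbitrarily large $n$, whence $x, y \in g'$, contradicting $x \in g_x$ disjoint from $g'$.

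Next, as a continuous image of the continuum $X$, the quotient $X/\cG$ is compact and connected, and usc implies it is Hausdorff, so $X/\cG$ is a continuum. I then show every pair of distinct points of $X/\cG$ is separated by a third. Given $p \neq q$, write $p = \pi(g_p)$, $q = \pi(g_q)$ for the quotient projection $\pi\co X \to X/\cG$, pick any $y \in g_q$, and apply the dendritic property to produce $g' \in \cG$ separating $y$ from $g_p$. Saturation of the associated $A$ and $B$ gives $\pi\I(\pi(A)) = A$ open and similarly for $B$, so $\pi(A)$ and $\pi(B)$ are disjoint open subsets of $X/\cG$ whose union is $X/\cG \setminus \{\pi(g')\}$, with $p \in \pi(A)$ and $q \in \pi(B)$. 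Thus the single point $\pi(g')$ separates $p$ from $q$.

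To conclude, I would invoke the classical theorem (Whyburn) that a continuum in which every pair of distinct points is separated by a third point is a dendrite. The main obstacle is the first step: it has to coordinate the compactness of elements of $\cG$ (to make $X \setminus g'$ open), the non-separation condition (to make $A,B$ saturated), and the dendritic condition (to produce $g'$) in order to rule out both the case $g_n \neq g'$ (which contradicts non-separation) and the case $g_n = g'$ (which contradicts disjointness of partition elements). All subsequent steps are essentially variations on the same saturation idea.
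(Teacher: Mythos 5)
The paper does not actually prove this theorem: it is quoted from Whyburn (the paper's own contribution is only the remark about the word ``saturated'' versus ``dendritic''), so there is no in-paper argument to compare against. Your proposal is a correct self-contained proof along the classical lines, and it is consistent with the paper's toolkit: the closedness of the relation is exactly the paper's definition of usc, and the final step invokes the third-point-separation characterization of dendrites that the paper records as Theorem~\ref{thm:pprtsdendrites}a). The one hypothesis you smuggle in --- that the elements of $\cG$ are compact, which you need so that $X\setminus g'$ is open --- is not stated in the paper's definition of a dendritic collection, but it is harmless: it is implicit in the usc conclusion (the paper's usc notion only applies to decompositions into compact sets), and in fact it follows from the dendritic hypothesis itself, since if $x\notin g$ and $g'$ separates $x$ from $g$ via $X\setminus g'=A\cup B$ with $g\sbs A$, $x\in B$, then $\ol{A}\sbs A\cup g'$ (as $A$ is closed in $X\setminus g'$), so $x\notin\ol{g}$; hence each $g$ is closed, and closed subsets of the continuum $X$ are compact. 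Two cosmetic points: in the usc step, once $g_n=g'$ is forced you already have the contradiction $x_n\in A\cap g'=\emptyset$, so the passage to the limit ``$x,y\in g'$'' is unnecessary (though fine, using closedness of $g'$); and since the paper's continua are only assumed compact Hausdorff, the sequential formulation of closedness should strictly be run with nets, but your argument transfers verbatim and matches the paper's own parenthetical convention.
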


Here is a list of properties of dendrites which will be used
later. The main references for them
are~\cite{KuraII,Whyburn}.
\begin{thm}[Properties of dendrites] \mbox{}
\label{thm:pprtsdendrites}
\begin{enumerate}[a)] 
\item Any two distinct points in a dendrite $G$ are separated by a
  third point in $G$. Conversely, any continuum with this property is a
  dendrite. 
\item If $G$ is a dendrite and $x,y\in G$, there exists a unique
 arc $\gamma\sbs G$ whose endpoints are $x,y$. The notation
  $[x,y]_G$ will be used to indicate this arc and expressions such as
  $(x,y]_G$ will have the usual meaning (i.e.,
    $(x,y]_G=[x,y]_G\setminus\{x\}$).
\item A point $p$ in a continuum $G$ is an {\em endpoint} if it has
  arbitrarily small neighborhoods (in~$G$) whose boundary is a single
  point; and it is a {\em cut point} if $G\setminus\{p\}$ is
  disconnected. A continuum is a dendrite if and only if all of its
  points are either endpoints or cut points.
\item Dendrites are contractible~\cite{Fort}. 
\item Every subcontinuum of a dendrite is also a dendrite. (A {\em
  subcontinuum} of a continuum $G$ is a subset which is also a
  continuum.) 
\item Every connected subset of a dendrite is arcwise connected, and
  the intersection of any two connected subsets is connected.
\end{enumerate}
\end{thm}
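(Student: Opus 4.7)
All six items are classical results from continuum theory, and the strategy I would follow is to derive them in a coordinated way from the basic definition of a dendrite (locally connected continuum containing no simple closed curve), drawing on the standard treatments in \cite{KuraII, Whyburn}. I would prove (a) and (b) first, then deduce (f), (e), (c), and finally (d).

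For (a), given distinct $x,y \in G$, I would invoke the Hahn--Mazurkiewicz theorem --- a locally connected metric continuum is arcwise connected --- to extract an arc $[x,y]_G \sbs G$. Any interior point $z$ of this arc separates $x$ from $y$: otherwise a connected set $A \sbs G \setminus \{z\}$ containing both endpoints, combined with $[x,y]_G$, would yield a simple closed curve through $z$, contradicting the no-circle hypothesis. For the converse, on any simple closed curve a single point never separates the other two, so the separation property rules out simple closed curves, and local connectedness of a continuum with this property follows from a classical argument of Whyburn. Uniqueness in (b) is essentially the same observation: two distinct arcs between $x$ and $y$ contain sub-arcs whose union forms a simple closed curve.

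Property (f) follows directly from (a): if $Y \sbs G$ is connected and $x,y \in Y$ are distinct, then any point $z \in [x,y]_G \setminus Y$ would separate $G$ into $A \ni x$ and $B \ni y$, giving a separation $Y = (Y \cap A) \cup (Y \cap B)$ and contradicting connectedness of $Y$; hence $[x,y]_G \sbs Y$, $Y$ is arcwise connected, and stability under intersection follows from uniqueness of arcs. For (e), the same argument shows that any subcontinuum $K \sbs G$ contains every arc $[x,y]_G$ with $x,y \in K$, and an interior point of such an arc then separates $x$ from $y$ within $K$; the converse of (a) promotes $K$ to a dendrite. Property (c) reduces to showing that every non-cut point of a dendrite is an endpoint, which I would do by using (b) to build around such a point small neighborhoods whose boundary consists of the single first-exit point of a maximal arc.

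For (d), I would follow Fort's construction: fix a basepoint $p \in G$ and define a homotopy $H \co G \times [0,1] \to G$ by letting $H(x,t)$ travel along $[x,p]_G$ at fractional arc-length $t$, so that $H(\cdot,0)$ is the identity and $H(\cdot,1) \equiv p$. The main obstacle is verifying continuity of $H$: while the map is easy to define pointwise, showing that $[x,p]_G$ varies continuously (in the Hausdorff metric) as a function of $x$ requires a careful combination of local connectedness with the uniqueness statement (b), and this is the technical heart of Fort's theorem. A secondary subtlety is the local-connectedness part of the converse in (a), which requires producing small connected open neighborhoods purely from the separation characterization; for this I would again follow the classical arguments in Whyburn.
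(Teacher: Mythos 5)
The paper does not actually prove Theorem~\ref{thm:pprtsdendrites}: it is stated as a list of classical facts with references to Kuratowski, Whyburn and Fort, so the benchmark here is the classical literature rather than an argument in the text. Measured against that, your sketches of the existence half of (a), of (b), and of (e) and (f) are correct and standard: Hahn--Mazurkiewicz for the existence of arcs, the ``two distinct arcs with common endpoints contain a simple closed curve'' argument for uniqueness and for the separation property, and the observation that a connected subset must contain the unique arc between any two of its points (since an interior point of that arc separates them in $G$), from which both (e) and (f) follow. Deferring the two genuinely hard analytic points --- the local connectedness needed in the converse of (a), and the continuity of the arc-length homotopy in (d) --- to Whyburn and Fort mirrors exactly what the paper does, and is acceptable.

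The genuine gap is in (c). The statement is an equivalence, and your proposal only addresses the implication ``dendrite $\Rightarrow$ every point is an endpoint or a cut point''; the converse --- that a continuum all of whose points are endpoints or cut points is already a dendrite --- is a substantive theorem of Whyburn which does not follow from the converse of (a) or from anything you have set up (one must in particular extract local connectedness and rule out simple closed curves from this purely pointwise hypothesis), and you neither prove it nor flag it for citation. Moreover, even your forward-direction sketch is too thin as stated: taking ``the single first-exit point of a maximal arc'' does not obviously yield arbitrarily small neighborhoods of a non-cut point $p$. For instance, if $q$ is a cut point close to $p$ on an arc emanating from $p$, the component of $G\setminus\{q\}$ containing $p$ need not be small, because arbitrarily long branches may attach to the subarc between $p$ and $q$; the standard proof instead starts from small connected open neighborhoods supplied by local connectedness and then controls their boundaries, using that $p$ lies in the interior of no arc. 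Either supply these arguments for (c), or do what the paper does and cite them explicitly.
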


\subsection{Quotient metric spaces and intrinsic metrics}
\label{subsec:metric-background}

The quotient spaces associated with paper-folding schemes are metric
rather than topological quotients, and this section summarizes some
relevant definitions and results. \cite{BBI} is an excellent reference
for readers seeking further details.

\begin{defns}[Metric and semi-metric]
A {\em metric} on a set $X$ is a function $d_X\co X\times X\to
\R\cup\{+\infty\}$ satisfying the usual conditions of positivity,
symmetry and the triangle inequality (it is convenient to allow two points
to be infinitely distant from one another). When no ambiguity arises,
subscripts may be dropped so that, for example, the usual Euclidean
distance on $\R^2$ or $\C$ may be denoted $d_{\R^2}(x,y)$,
$d_{\C}(x,y)$, $d(x,y)$, or even $|xy|$.

A {\em semi-metric} on a set $X$ satisfies the same axioms as a
metric, except that distinct points are allowed to be at semi-distance
zero from one another.
\end{defns}

\begin{notation}
\label{notation:ballsandcircles}
The following notation is used for metric and semi-metric
spaces~$(X,d_X)$. If $x\in X$ and $r\ge 0$, then
\begin{eqnarray}
B_{X}(x;r)&:=&\{y\in X:\,\, d_X(y,x)<r\},\nonumber \\ 
 \bB_{X}(x;r)&:=&\{y\in X:\,\, d_X(y,x)\leq r\},
 \qquad\text{and} \label{eq:ballsandcircles}\\
 C_{X}(x;r)&:=&\{y\in X:\,\, d_X(y,x)=r\}. \nonumber
\end{eqnarray}
If ambiguity lurks, subscripts may be enhanced so that $B_{X}(x;r)$,
for example, may be denoted $B_{(X,d_X)}(x;r)$ or $
B_{d_X}(x;r)$. At the other extreme, if the ambient space is clear,
$B(x;r)$ may be used.

If $x\in X$ and $A\sbs X$, the distance from $x$ to $A$ is 
\[d_X(x,A):=\inf\{d_X(x,y):\;y\in A\}.\]
and if $A,B\sbs X$, the distance between them is 
\[d_X(A,B):=\inf\{d_X(x,y):\;x\in A\,,y\in B\}.\]
Given a set $A\sbs X$, the sets $B_X(A;r)$, $\bB_{X}(A;r)$ and
$C_{X}(A;r)$ are defined by substituting $d(y,A)$ for
$d(y,x)$ in~(\ref{eq:ballsandcircles}).
\end{notation}

\begin{defns}[Quotient metric]\label{defn:quotmetric}
Let $(X,d)$ be a metric space and $\rmR$ be a reflexive and symmetric
relation on~$X$ (which in this article will usually arise from a
collection~$\cP$ of subsets of~$X$, as in
Definition~\ref{defn:saturation}). An $\rmR$-{\em chain} from $x$ to
$y$ is a sequence $((p_i,q_i))_{i=0}^k$ in~$X^2$ such that $x \rmR
p_0$, $q_i\rmR p_{i+1}$ for $i=0,\ldots,k-1$, and $q_k\rmR y$. Its
{\em length} is
\[\rmL^\rmR\left((p_i,q_i)\right):=\sum_{i=0}^k d(p_i,q_i)\] 
(thus one pays to move between unrelated elements of~$X$, but moving
between related elements is free).

Define $d^\rmR\co X\times X\to\R_{\geq 0}\cup\{\infty\}$ by
\[ d^\rmR(x,y):=\inf\left\lbrace \rmL^\rmR((p_i,q_i)):\, ((p_i,q_i)) \text{
  is an $\rmR$-chain from $x$ to $y$}\right\rbrace.\] Then $d^\rmR$ is
a semi-metric on~$X$. The equivalence relation which identifies points
at $d^\rmR$-semi-distance 0 is denoted~$\sim_\rmR$ and the quotient
space under this equivalence relation is the {\em quotient metric
  space} of~$(X,d)$ under the relation~$\rmR$, denoted either
$(X/{d^\rmR},d^\rmR)$ or $(X/{\sim_\rmR},d^\rmR)$. (So the same symbol
$d^\rmR$ is used to denote both the semi-metric and the metric on the
quotient space.)
\end{defns}

Many mathematicians, including the authors, are more familiar with
topological quotients. Assuming that~$\rmR$ is an equivalence
relation, there are essentially two reasons why the quotient metric
space $(X/{d^\rmR},d^\rmR)$ and the topological quotient $X/\rmR$ may
differ. First, they may differ as sets: for example, the metric
quotient of~$\R$ by~$\Q$ is a point (as is the metric quotient of any
metric space by a relation~$\rmR$ which identifies a dense set of
points), since there are arbitrarily short $\rmR$-chains joining any
two points. Second, even if the two quotients agree as sets, they may
have different topologies. An instructive example (Example~3.1.17
of~\cite{BBI}) is the following. Let~$X$ be a disjoint union of
countably many intervals $I_i$, of lengths~$\ell_i$, and consider the
quotients by the relation which identifies the left hand endpoints of
all of the intervals. The topology of the metric quotient depends on
the lengths~$\ell_i$: in particular, it is compact if $\ell_i\to 0$,
and non-compact otherwise. Clearly, though, the topological quotient
does not depend on these lengths. 

The case where $\ell_i\to 0$ will be of interest later:
\begin{defn}[$\infty$-od]
\label{defn:infty-od}
An {\em $\infty$-od} is the metric quotient of countably many
intervals $I_i$ of lengths~$\ell_i$, with $\ell_i\to 0$ as
$i\to\infty$, under the relation which identifies all of their left
hand endpoints.
\end{defn}

The problem of having quotients which agree as sets but not as
topological spaces does not arise if~$X$ is compact (see~\cite{BBI}):
\begin{thm}[Metric and topological quotients]
\label{thm:compact-metr-top-agree}
Let~$X$ be a compact metric space, and $\rmR$ be a reflexive and
symmetric relation on~$X$. Then the topological quotient
$X/{\sim_\rmR}$ and the metric quotient $(X/{\sim_\rmR},d^\rmR)$ are
homeomorphic.
\end{thm}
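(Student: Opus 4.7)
The plan is to use the fact that both quotients have the same underlying set $X/{\sim_\rmR}$ and then exhibit the identity map between them as a continuous bijection from a compact space to a Hausdorff space, forcing it to be a homeomorphism by the classical closed-map argument.

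First I would show that the projection $\pi\co X\to (X/{\sim_\rmR},d^\rmR)$ is $1$-Lipschitz. Given $x,y\in X$, the single-pair sequence $((p_0,q_0))=((x,y))$ with $k=0$ qualifies as an $\rmR$-chain from $x$ to $y$: the required conditions $x\,\rmR\,p_0$ and $q_k\,\rmR\,y$ reduce to $x\,\rmR\,x$ and $y\,\rmR\,y$, which hold by reflexivity. Its length is $d(x,y)$, so $d^\rmR(\pi(x),\pi(y))\leq d(x,y)$. Consequently $\pi$ is continuous when the target carries the topology $\tau_{\mathrm{met}}$ induced by $d^\rmR$, and by the universal property of the topological quotient $\tau_{\mathrm{top}}$ the identity map $(X/{\sim_\rmR},\tau_{\mathrm{top}})\to (X/{\sim_\rmR},\tau_{\mathrm{met}})$ is continuous; that is, $\tau_{\mathrm{top}}$ is at least as fine as $\tau_{\mathrm{met}}$.

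To promote this continuous bijection to a homeomorphism, I would invoke compactness: $(X/{\sim_\rmR},\tau_{\mathrm{top}})$ is the continuous image under $\pi$ of the compact space $X$, hence compact, while $(X/{\sim_\rmR},\tau_{\mathrm{met}})$ is Hausdorff since any metric space is. A continuous bijection from a compact space to a Hausdorff space is automatically closed --- closed subsets of the domain are compact, their images are compact, and compact subsets of a Hausdorff space are closed --- so its inverse is continuous and the two topologies coincide.

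The argument is essentially routine; the main delicacy is definitional, namely keeping straight that $\sim_\rmR$ is the equivalence relation induced by $d^\rmR$-semi-distance zero and that it is used to form both quotients. It is worth remarking that compactness of $X$ is indispensable: the disjoint union of countably many intervals of non-vanishing lengths discussed in the excerpt shows that without it, the metric topology can genuinely disagree with the topological quotient topology.
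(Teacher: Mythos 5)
Your argument is correct. The paper itself does not prove this statement — it is quoted as background with a reference to~\cite{BBI} — so there is no in-paper proof to compare against; your route (the projection $\pi\co X\to (X/{\sim_\rmR},d^\rmR)$ is $1$-Lipschitz via the one-pair $\rmR$-chain, hence the identity from the topological quotient to the metric quotient is continuous by the universal property, and a continuous bijection from a compact space to a Hausdorff space is a homeomorphism) is the standard one and is exactly what the cited result rests on; the Lipschitz step also matches Remark~\ref{rmk:distance-decreasing}. The one definitional point you rightly keep straight is that both quotients are formed with the same relation $\sim_\rmR$ (semi-distance zero), so they coincide as sets and the identity map is available.
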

(Notice that in this statement the two quotients are equal as sets by
definition, since the same equivalence relation $\sim_\rmR$ is used in
both cases.) 

The quotient associated to a paper-folding scheme on a
polygon~$P$ is constructed by taking the metric quotient of $P$ by a
relation~$\rmR$ determined by the folding scheme. Since~$P$ is
compact, this metric quotient is homeomorphic to the topological
quotient $P/{\sim_\rmR}$, which will be studied using Moore's theorem.

\medskip\medskip

In this article subsets of metric spaces will normally be endowed not
with the subspace metric, but with the intrinsic metric inherited from
the parent space. The remainder of this section provides a brief
summary of these ideas.

\begin{defns}[Length and intrinsic metric]
\label{defns:lengths-intrinsic-metrics}
Let $(X,d)$ be a metric space. A {\em path} in X is a continuous map
$\gamma\co [a,b]\to X$. The {\em length} of the path $\gamma$ is
\[ |\gamma|_X= \sup\left\lbrace \sum 
d\left(\gamma(t_i),\gamma(t_{i+1})\right) \right\rbrace \in \R_{\ge0}
\cup\{\infty\},\] where the supremum is taken over all finite
partitions $a=t_0<t_1<\cdots<t_k=b$ of the interval $[a,b]$. When the
metric space~$X$ is clear from the context, the length will be
denoted~$|\gamma|$ without the subscript. If $\gamma$ is injective,
then the length of the image of~$\gamma$ is defined to be
$|\im(\gamma)|_X := |\gamma|_X$.

A path is {\em rectifiable} if its length is finite.

A metric is {\em intrinsic} if the distance between any two points is
arbitrarily well approximated by lengths of curves joining the two
points, that is,
\[ d(x,y)=\inf\left\lbrace |\gamma|:\ 
\gamma\co [a,b]\to X \text{ is a path with } \gamma(a)=x,\,\,
\gamma(b)=y\right\rbrace.\] The metric is {\em strictly intrinsic} if
the infimum is attained, that is, if for every $x,y\in X$, there
exists a continuous path from $x$ to $y$ whose length equals
$d(x,y)$. (In particular, $d(x,y)=+\infty$ if $x$ and $y$ lie in
different path components of~$X$.) It can be shown (see~\cite{BBI})
that a complete locally compact intrinsic metric is strictly
intrinsic. In particular,
\begin{thm}
\label{thm:compact-intrinsic-strict}
A compact intrinsic metric is strictly intrinsic.
\end{thm}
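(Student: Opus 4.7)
The plan is to produce a length-minimizing path by a standard Arzelà--Ascoli compactness argument applied to a minimizing sequence of paths. Fix $x,y\in X$; if $d(x,y)=+\infty$ there is nothing to prove (no path exists, as any path would have finite length $\ge d(x,y)$), so assume $d(x,y)<\infty$. Since $d$ is intrinsic, choose paths $\gamma_n\co[0,1]\to X$ with $\gamma_n(0)=x$, $\gamma_n(1)=y$ and $|\gamma_n|\to d(x,y)$; we may assume $L_n:=|\gamma_n|\le d(x,y)+1$ for all $n$, so each $\gamma_n$ is rectifiable.

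Next I would reparametrize each $\gamma_n$ proportionally to arc length. By the standard fact that every rectifiable path admits an arc-length parametrization, after reparametrization we have $\gamma_n\co[0,1]\to X$ with
\[ d\bigl(\gamma_n(s),\gamma_n(t)\bigr)\le L_n\,|s-t|\le (d(x,y)+1)\,|s-t|\]
for all $s,t\in[0,1]$. (If $L_n=0$ then $x=y$ and a constant path realizes the distance, so this case can be set aside.) The family $\{\gamma_n\}$ is therefore uniformly Lipschitz, hence equicontinuous, and each $\gamma_n$ takes values in the compact space $X$. The Arzel\`a--Ascoli theorem yields a subsequence, still denoted $\gamma_n$, converging uniformly to a continuous path $\gamma\co[0,1]\to X$; clearly $\gamma(0)=x$ and $\gamma(1)=y$.

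The final step is lower semi-continuity of length under uniform convergence: for any partition $0=t_0<t_1<\cdots<t_k=1$, continuity of $d$ gives
\[\sum_{i=0}^{k-1} d\bigl(\gamma(t_i),\gamma(t_{i+1})\bigr)=\lim_{n\to\infty}\sum_{i=0}^{k-1} d\bigl(\gamma_n(t_i),\gamma_n(t_{i+1})\bigr)\le \liminf_{n\to\infty}|\gamma_n|.\]
Taking the supremum over all partitions yields $|\gamma|\le\liminf_n|\gamma_n|=d(x,y)$, while the reverse inequality $d(x,y)\le|\gamma|$ is immediate from the definition of the intrinsic metric. Hence $|\gamma|=d(x,y)$ and the infimum is attained.

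I do not expect serious obstacles here: the argument is classical and appears in this form in~\cite{BBI}. The only mildly delicate points are the arc-length reparametrization (needing only that the paths in the minimizing sequence are rectifiable, which is automatic once one truncates to $|\gamma_n|\le d(x,y)+1$) and the lower semi-continuity of length, both of which are standard. The use of compactness is essential and enters exactly at the Arzel\`a--Ascoli step; without it one would have to work harder (the corresponding statement under mere local compactness plus completeness is also true, but requires a slightly more careful argument, as indicated in the sentence preceding the theorem in the excerpt).
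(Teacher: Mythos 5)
Your proof is correct, but note that the paper does not actually prove this statement: it is quoted as a special case of the Hopf--Rinow-type result cited from~\cite{BBI} (a complete, locally compact intrinsic metric is strictly intrinsic), so there is no in-paper argument to compare against. Your route --- a minimizing sequence of paths, constant-speed reparametrization to get a uniformly Lipschitz family, Arzel\`a--Ascoli via compactness of $X$, and lower semicontinuity of length under uniform convergence --- is precisely the standard proof of the compact case found in that reference, and each step you invoke (existence of arc-length parametrizations of rectifiable paths, semicontinuity of length, length of a path bounding the distance between its endpoints) is sound. One small quibble: in the case $d(x,y)=+\infty$ your parenthetical justification (``any path would have finite length $\ge d(x,y)$'') is not quite right, since a priori a path can have infinite length; the clean way to dispose of this case is either to note that the set of points at finite distance from $x$ is clopen, so no path from $x$ to $y$ exists, or simply that if such a path did exist its length would be $\ge d(x,y)=+\infty$, hence equal to it, so attainment is automatic either way.
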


If~$d$ is not an intrinsic metric, then there is an {\em induced
  intrinsic metric} $\widehat{d}$ on~$X$, defined by
\[ \widehat{d}(x,y)=\inf\left\lbrace |\gamma|:\ 
\gamma\co [a,b]\to X \text{ is a path with } \gamma(a)=x,
\gamma(b)=y\right\rbrace\] (that~$\widehat{d}$ is an intrinsic metric
follows from the straightforward observation that the
length~$|\gamma|$ doesn't depend on whether the metric $d$ or
$\widehat{d}$ is used).  For example, if~$P\sbs\R^2$ is a
polygon, then there is an intrinsic metric $d_P$ on~$P$ induced by the
metric on~$\R^2$, which does not agree with the subspace metric on~$P$
unless~$P$ is convex. 
\end{defns}

A proof of the following result can be found on pp. 62\,--\,63 of~\cite{BBI}.
\begin{lem}
\label{lem:intrinsic-quotient-intrinsic}
Let~$(X,d)$ be an intrinsic metric space, and~$\rmR$ be a reflexive
and symmetric relation on~$X$. Then the quotient metric space
$(X/d^{\rmR}, d^{\rmR})$ is also intrinsic.
\end{lem}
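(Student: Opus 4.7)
The plan is to use the intrinsic property of $d$ to approximate each step of an $\rmR$-chain by a rectifiable path, and then to paste these pieces together in the quotient (where the links of the chain become trivial).

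First, I would record the elementary fact that the quotient projection $\pi\co X\to X/{\sim_\rmR}$ is $1$-Lipschitz: for any $x,y\in X$, the single-pair sequence $((x,y))$ is itself an $\rmR$-chain from $x$ to $y$, so $d^\rmR(x,y)\le d(x,y)$. As a consequence, for any path $\gamma\co[a,b]\to X$, the composition $\pi\gamma\co[a,b]\to X/{\sim_\rmR}$ is continuous, and
\[
  |\pi\gamma|_{X/{\sim_\rmR}} \;\le\; |\gamma|_X,
\]
since for every partition, $\sum d^\rmR(\pi\gamma(t_i),\pi\gamma(t_{i+1}))\le \sum d(\gamma(t_i),\gamma(t_{i+1}))$.

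Next, fix $[x],[y]\in X/{\sim_\rmR}$ and $\varepsilon>0$. Since there is nothing to prove if $d^\rmR([x],[y])=\infty$, assume it is finite. Choose an $\rmR$-chain $((p_i,q_i))_{i=0}^k$ from $x$ to $y$ with
\[
  \sum_{i=0}^k d(p_i,q_i) \;<\; d^\rmR([x],[y])+\tfrac{\varepsilon}{2}.
\]
Using the intrinsic hypothesis on $(X,d)$, for each $i$ choose a path $\gamma_i\co[a_i,b_i]\to X$ from $p_i$ to $q_i$ with $|\gamma_i|_X < d(p_i,q_i) + \tfrac{\varepsilon}{2(k+1)}$. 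Project to obtain paths $\pi\gamma_i$ in the quotient. Because $x\,\rmR\,p_0$, $q_i\,\rmR\,p_{i+1}$, and $q_k\,\rmR\,y$, the quotient identifies $[x]=[p_0]$, $[q_i]=[p_{i+1}]$ for $0\le i<k$, and $[q_k]=[y]$, so the paths $\pi\gamma_0,\ldots,\pi\gamma_k$ concatenate into a single continuous path $\Gamma$ from $[x]$ to $[y]$ in $X/{\sim_\rmR}$ whose length satisfies
\[
  |\Gamma|_{X/{\sim_\rmR}} \;\le\; \sum_{i=0}^k |\gamma_i|_X \;<\; \sum_{i=0}^k d(p_i,q_i) + \tfrac{\varepsilon}{2} \;<\; d^\rmR([x],[y])+\varepsilon.
\]

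Finally, for the reverse inequality one uses the standard fact that in any metric space, the distance between two points is bounded above by the length of any path joining them (apply the triangle inequality to partitions). So the infimum of path lengths equals $d^\rmR([x],[y])$, which is the intrinsic condition. The only step requiring any care is verifying that the concatenation $\Gamma$ is genuinely a continuous map into $X/{\sim_\rmR}$ — but this is immediate since each $\pi\gamma_i$ is continuous and consecutive endpoints coincide in the quotient. There is no serious obstacle: the whole argument reduces to the observation that an $\rmR$-chain in $X$, once its edges are replaced by $d$-geodesic approximants supplied by the intrinsic metric, projects to an honest path in the quotient whose length is controlled by the length of the chain.
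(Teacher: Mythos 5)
Your proof is correct, and it is essentially the argument the paper relies on (the paper only cites pp.~62--63 of~\cite{BBI} for this lemma): replace each link of a nearly optimal $\rmR$-chain by a path supplied by the intrinsic hypothesis, project by the $1$-Lipschitz quotient map, and concatenate, the free identifications making the concatenation a genuine path whose length exceeds $d^{\rmR}([x],[y])$ by at most $\varepsilon$. Your handling of the edge cases (infinite distance, the reverse inequality via length $\ge$ distance) is also fine, so there is nothing to add.
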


\begin{rmk}
\label{rmk:distance-decreasing}
The projection~$\pi$ from a metric space~$X$ to a metric
quotient~$X/d^{\rmR}$ does not increase the distances between points
or the lengths of paths. That is, if $x,y\in X$ then
$d^\rmR(\pi(x),\pi(y))\le d(x,y)$; and if $\gamma$ is a path in~$X$, then
$|\pi\circ\gamma|_{X/d^\rmR} \le |\gamma|_X$.
\end{rmk}

The following three results about Hausdorff 1-dimensional measure
$\mu^1_X$ on a metric space~$X$ are Lemma~2.6.1 of~\cite{BBI},
Theorem~2.6.2 of~\cite{BBI}, and a corollary of Theorem~29
of~\cite{Rogers} respectively.

\begin{lem}
\label{lem:hausdorff-connected}
If~$X$ is a connected metric space, then $\mu^1_X(X) \ge \diam X$.
\end{lem}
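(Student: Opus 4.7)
The plan is to project $X$ onto a subset of~$\R$ via a $1$-Lipschitz function and exploit the fact that $1$-Lipschitz maps do not increase 1-dimensional Hausdorff measure, while connectedness forces the image to be an interval of comparable length.

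More precisely, fix $\eps>0$ and choose $x,y\in X$ with $d(x,y)>\diam X - \eps$. Define $f\co X\to\R$ by $f(z):=d(x,z)$. The triangle inequality gives $|f(z)-f(z')|\le d(z,z')$, so $f$ is $1$-Lipschitz. Since $X$ is connected and $f$ is continuous, $f(X)$ is a connected subset of $\R$, hence an interval; it contains $f(x)=0$ and $f(y)=d(x,y)$, so $[0,d(x,y)]\subset f(X)$.

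Next, I would invoke the standard property of Hausdorff measure that for any $1$-Lipschitz map $f\co X\to Y$ between metric spaces, $\mu^1_Y(f(A))\le\mu^1_X(A)$ for every $A\subset X$. Applied to the map above, this yields
\[ \mu^1_X(X) \;\ge\; \mu^1_\R(f(X)) \;\ge\; \mu^1_\R\bigl([0,d(x,y)]\bigr) \;=\; d(x,y) \;>\; \diam X - \eps. \]
Letting $\eps\to 0$ gives $\mu^1_X(X)\ge\diam X$.

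The only step requiring more than elementary input is the monotonicity of Hausdorff measure under $1$-Lipschitz maps, which is immediate from the definition: any cover of $A$ by sets of diameter at most $\delta$ pushes forward to a cover of $f(A)$ by sets of diameter at most $\delta$. The computation $\mu^1_\R([0,L])=L$ is equally standard, using that $\mu^1$ on $\R$ coincides with Lebesgue measure. I do not anticipate any serious obstacle; the argument is a textbook application of the projection trick, which explains why the result appears as a preliminary lemma in \cite{BBI}.
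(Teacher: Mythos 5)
Your argument is correct and is exactly the standard proof of this fact (the paper itself does not prove it, citing Lemma~2.6.1 of~\cite{BBI}, whose proof is this same projection trick via the $1$-Lipschitz map $z\mapsto d(x,z)$ together with monotonicity of $\mu^1$ under $1$-Lipschitz maps). Note only that your $\eps$-argument implicitly treats $\diam X$ finite; if $\diam X=\infty$ the same reasoning gives $\mu^1_X(X)\ge M$ for every $M$, so nothing is lost.
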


\begin{lem}
\label{lem:hausdorff-arc}
If $\gamma\co[a,b]\to X$ is a rectifiable simple path, then $|\gamma|_X =
\mu^1_X(\gamma([a,b]))$. 
\end{lem}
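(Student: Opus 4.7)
The plan is to establish the two inequalities $\mu^1_X(\gamma([a,b])) \le |\gamma|_X$ and $|\gamma|_X \le \mu^1_X(\gamma([a,b]))$ separately. Only the second needs simplicity of $\gamma$; the first holds for arbitrary rectifiable paths.

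For the inequality $\mu^1_X(\gamma([a,b])) \le |\gamma|_X$, I would argue as follows. Fix $\delta>0$. Since $\gamma$ is continuous on the compact interval $[a,b]$, it is uniformly continuous, so I can choose a partition $a=t_0<t_1<\cdots<t_n=b$ fine enough that $\diam(\gamma([t_i,t_{i+1}]))<\delta$ for all $i$. The sets $\gamma([t_i,t_{i+1}])$ then form a cover of $\gamma([a,b])$ by sets of diameter at most $\delta$, so the $\delta$-approximation to the Hausdorff 1-measure satisfies
\[
\mu^1_{X,\delta}(\gamma([a,b])) \;\le\; \sum_{i=0}^{n-1} \diam(\gamma([t_i,t_{i+1}])) \;\le\; \sum_{i=0}^{n-1} |\gamma|_{[t_i,t_{i+1}]} \;=\; |\gamma|_X,
\]
where the middle inequality uses the elementary fact that the diameter of a path's image is at most its length (an immediate consequence of Definition~\ref{defns:lengths-intrinsic-metrics}). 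Letting $\delta\to 0$ gives the desired bound.

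For the reverse inequality, I would use the hypothesis that $\gamma$ is injective together with Lemma~\ref{lem:hausdorff-connected}. Given any partition $a=t_0<t_1<\cdots<t_n=b$, each subarc $\gamma([t_i,t_{i+1}])$ is a continuum containing $\gamma(t_i)$ and $\gamma(t_{i+1})$, so Lemma~\ref{lem:hausdorff-connected} gives
\[
\mu^1_X(\gamma([t_i,t_{i+1}])) \;\ge\; \diam(\gamma([t_i,t_{i+1}])) \;\ge\; d(\gamma(t_i),\gamma(t_{i+1})).
\]
By injectivity, the subarcs $\gamma([t_i,t_{i+1}])$ have pairwise intersections consisting only of the finitely many points $\gamma(t_i)$ with $0<i<n$, each of which has $\mu^1_X$-measure zero. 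Hence the subadditivity of Hausdorff measure gives $\sum_i \mu^1_X(\gamma([t_i,t_{i+1}])) = \mu^1_X(\gamma([a,b]))$, and therefore
\[
\sum_{i=0}^{n-1} d(\gamma(t_i),\gamma(t_{i+1})) \;\le\; \mu^1_X(\gamma([a,b])).
\]
Taking the supremum over all partitions yields $|\gamma|_X \le \mu^1_X(\gamma([a,b]))$.

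The main potential obstacle is the partitioning-by-simplicity step in the second inequality: one must verify that the subarcs genuinely overlap only at the finitely many subdivision points, which is exactly where injectivity of $\gamma$ is essential. Without it, the argument collapses, since the subarcs could coincide on sets of positive $\mu^1_X$-measure and the sum of their measures could exceed $\mu^1_X(\gamma([a,b]))$. Everything else is a routine application of uniform continuity, Lemma~\ref{lem:hausdorff-connected}, and elementary properties of Hausdorff measure.
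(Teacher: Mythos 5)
Your argument is correct. Note that the paper does not actually prove this lemma: it is quoted as Theorem~2.6.2 of~\cite{BBI}, so there is no in-paper proof to compare against, and your write-up is essentially the standard argument from that source --- lower bound on $|\gamma|_X$ via covering $\gamma([a,b])$ by the images of the subintervals of a fine partition, upper bound via Lemma~\ref{lem:hausdorff-connected} applied to each subarc, with injectivity used exactly where you say it is. One small repair of wording: the equality $\sum_i \mu^1_X\bigl(\gamma([t_i,t_{i+1}])\bigr) = \mu^1_X(\gamma([a,b]))$ does not follow from subadditivity --- subadditivity gives only the reverse, unneeded inequality $\mu^1_X(\gamma([a,b])) \le \sum_i \mu^1_X\bigl(\gamma([t_i,t_{i+1}])\bigr)$. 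What your argument needs is (finite) additivity: the subarcs are compact, hence Borel, hence $\mu^1_X$-measurable; by injectivity they overlap only in the finite set of subdivision points, which is $\mu^1_X$-null; and $\mu^1_X$ is a Borel measure, so the measure of the union equals the sum of the measures. Equivalently, it suffices to record the one-sided bound $\sum_i \mu^1_X\bigl(\gamma([t_i,t_{i+1}])\bigr) \le \mu^1_X(\gamma([a,b]))$, which is all the supremum-over-partitions step uses. With that phrasing fixed, the proof is complete.
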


\begin{lem}
\label{lem:hausdorff-nonincreasing}
Let~$(X,\rho)$ and~$(Y,\sigma)$ be metric spaces, and $f\co X\to Y$ be
a function with $\sigma(f(x),f(y)) \le \rho(x,y)$ for all~$x,y\in
X$. Then $\mu^1_Y(f(A)) \le \mu^1_X(A)$ for every $\mu^1_X$-measurable
subset~$A$ of~$X$.
\end{lem}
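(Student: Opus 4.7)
The plan is to prove the inequality directly from the definition of Hausdorff $1$-dimensional measure via its $\delta$-approximations, exploiting only the fact that $f$ is non-expanding. For $\delta>0$ and $A\subset X$, recall
\[
\mu^{1,\delta}_X(A):=\inf\left\lbrace\sum_{i} \diam(U_i):\ A\subset\bigcup_{i} U_i,\ \diam(U_i)<\delta\right\rbrace,
\]
and $\mu^1_X(A)=\lim_{\delta\to 0^+}\mu^{1,\delta}_X(A)=\sup_{\delta>0}\mu^{1,\delta}_X(A)$; the analogous definitions hold in~$Y$.

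The key observation is the following. Fix $\delta>0$, and let $\{U_i\}$ be any countable cover of $A$ by subsets of~$X$ with $\diam_X(U_i)<\delta$. Then $\{f(U_i)\}$ is a countable cover of $f(A)$ by subsets of~$Y$. The hypothesis $\sigma(f(x),f(y))\le\rho(x,y)$ gives
\[
\diam_Y(f(U_i))\le\diam_X(U_i)<\delta,
\]
so each $f(U_i)$ is admissible for the $\delta$-approximation to $\mu^1_Y(f(A))$, and
\[
\sum_i\diam_Y(f(U_i))\le \sum_i\diam_X(U_i).
\]
Taking the infimum over all such covers $\{U_i\}$ of $A$ yields $\mu^{1,\delta}_Y(f(A))\le\mu^{1,\delta}_X(A)$.

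Finally, let $\delta\to 0^+$. Since both sides are monotone in~$\delta$, we obtain $\mu^1_Y(f(A))\le\mu^1_X(A)$. Measurability of~$A$ plays no real role in the argument: the inequality holds verbatim for the Hausdorff outer measure of an arbitrary subset, and coincides with the stated inequality when $A$ is measurable. There is essentially no obstacle here; the only point requiring even minor care is verifying that $\diam_Y(f(U))\le\diam_X(U)$ for a non-expanding $f$, which is immediate from the definition of diameter as a supremum of pairwise distances.
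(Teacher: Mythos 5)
Your argument is correct and complete: pushing forward a $\delta$-cover of $A$ under a $1$-Lipschitz map gives a $\delta$-cover of $f(A)$ with non-increased diameters, which bounds the $\delta$-approximating pre-measures and hence, letting $\delta\to 0^+$, the Hausdorff $1$-measures; and you are right that measurability of $A$ is irrelevant, since the inequality already holds at the level of outer measures. The paper itself gives no proof of this lemma --- it simply records it as a corollary of Theorem~29 of the cited reference of Rogers --- so your self-contained $\delta$-cover argument is exactly the standard proof that the citation stands in for, and there is nothing to add beyond the observation (which you make) that $\diam_Y(f(U))\le\diam_X(U)$ follows from the definition of diameter, together with the harmless fact that using arbitrary covering sets rather than open or closed ones does not change the value of Hausdorff measure.
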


\subsection{Background geometric function theory}
\label{subsec:gft-background}

In this section some results from geometric function theory which will
be used in Sections~\ref{sec:cxstructure} and~\ref{sec:modcont} are
summarized. Standard references are Ahlfors-Sario~\cite{AhSa},
Ahlfors~\cite{Ah} and Lehto and Virtanen~\cite{LeVi}.

\begin{defns}[Module of an annular region, concentric, nested]
\label{defns:annulus-stuff}
An {\em annular region} in a surface is a subset homeomorphic to an
open round annulus
\[A(r_1,r_2) = \{z\in\C\,:\, r_1<|z|<r_2\},\]
where $0\le r_1<r_2\le\infty$. If~$R\sbs\C$ is an annular region,
then there is a conformal map taking~$R$ onto some round
annulus~$A(r_1,r_2)$, and this map is unique up to postcomposition
with a homothety. It follows that the ratio $r_2/r_1$ is a conformal
invariant of~$R$, and the {\em module} $\mod R$ of~$R$ is defined by
\[
\mod R = 
\left\{
\begin{array}{ll}
\infty & \text{ if }r_1=0 \text{ or }r_2=\infty,\\
\frac{1}{2\pi} \ln\frac{r_2}{r_1} \quad & \text{ otherwise.}
\end{array}
\right.
\]

Suppose that~$S$ is a closed topological disk and that~$R\sbs S$
is an annular region. The {\em bounded component} of~$S\setminus R$ is
the one which is disjoint from~$\partial S$.  Annular regions
$R_0,R_1\sbs S$ are {\em concentric} if the bounded
complementary component of one of them is contained in the bounded
complementary component of the other. They are {\em nested} if they
are concentric and disjoint, that is, if one is entirely contained in
the bounded complementary component of the other. 
\end{defns}

If $R_0,R_1$ are concentric annular regions and $R_1\sbs R_0$, then
$\mod R_1\leq \mod R_0$. The way in which this observation will be
used here is in the form of the following:

\begin{lem}[Conformal puncture]\label{lem:conformalpnct}
Consider a 1-parameter family $R(t)$ ($t\ge0$) of concentric annular
regions contained in a disk in the complex plane. Assume each $R(t)$
has finite module and that $R(t')\sbs R(t)$ if $t<t'$. If $\mod
R(t)\to\infty$ as $t\downarrow 0$ then $R(0)=\bigcup_{t>0}R(t)$ is an
open annular region whose bounded complementary component is a point.
\end{lem}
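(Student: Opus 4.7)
The plan is to combine monotonicity of module under concentric inclusion with the classical principle that a doubly connected domain in the sphere has infinite module only if at least one of its complementary continua is a single point.

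First, I would pass to $\overline{\C}$ and track the complementary components. Denote by $K(t)$ the bounded complementary component of $R(t)$ and by $U(t)$ the unbounded one. A short case analysis shows that the hypothesis $R(t')\sbs R(t)$ for $t<t'$, together with concentricity and the fact that $K(t)$ is bounded while $U(t)$ is not, forces $K(t)\sbs K(t')$ and $U(t)\sbs U(t')$ whenever $t<t'$ (the alternative $K(t')\subsetneq K(t)$ would require the bounded complement of $R(t')$ to lie in the bounded complement of $R(t)$ while the annulus $R(t')$ itself lies in $R(t)$, which is incompatible with connectedness of $U(t')$ and its containing $\infty$). Thus $\{K(t)\}$ and $\{U(t)\}$ are nested families of continua in $\overline{\C}$ which decrease as $t\downarrow 0$, and their intersections $K:=\bigcap_{t>0}K(t)$ and $U:=\bigcap_{t>0}U(t)$ are disjoint continua (since each $R(t)$ separates them). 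As every $R(t)$ lies in a fixed disk $D$, we have $U\sps\overline{\C}\setminus D$, so $U$ is not a single point.

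Next, I would verify that $R(0)=\bigcup_{t>0}R(t)$ is an annular region. Openness and connectedness follow from the nested structure. The identity $\overline{\C}\setminus R(0)=K\cup U$ follows by a direct argument: a point $p$ outside $R(0)$ lies in $K(t)\cup U(t)$ for every $t$, and monotonicity of the two nested families prevents $p$ from switching sides as $t$ varies, so $p\in K$ or $p\in U$. Hence $R(0)$ is doubly connected with complementary continua $K$ and $U$.

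Finally, I would compute the module. Since each $R(t)\sbs R(0)$ sits concentrically in $R(0)$, the monotonicity recalled just before the statement gives $\mod R(t)\leq\mod R(0)$ for all $t>0$, so the hypothesis forces $\mod R(0)=\infty$. By the classical upper bound for the module of a doubly connected domain in $\overline{\C}$ with both complementary continua nondegenerate (a consequence of Teichm\"uller's extremal module theorem, see~\cite{LeVi,Ah}), infinite module requires one of $K$, $U$ to be a single point. Since $U$ is not, $K$ must be, which is the desired conclusion. The main obstacle I anticipate is invoking the right form of the classical module upper bound, making precise that two nondegenerate complementary continua force a finite module; once that is in hand, the rest is topological bookkeeping on the nested complementary components.
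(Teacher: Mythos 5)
The paper states this lemma purely as background in Section~\ref{subsec:gft-background} and supplies no proof of its own, so there is no argument of the paper to compare against; judged on its merits, your proof is correct and complete, and it follows the standard route: establish the nesting $K(t)\subset K(t')$, $U(t)\subset U(t')$ of the complementary components, pass to the continua $K=\bigcap_{t>0}K(t)$ and $U=\bigcap_{t>0}U(t)$, identify $\overline{\C}\setminus R(0)=K\cup U$ so that $R(0)$ is a doubly connected domain, deduce $\mod R(0)=\infty$ from the monotonicity of the module, and conclude from the classical fact that an annular region with two nondegenerate complementary continua has finite module, while $U\supset\overline{\C}\setminus D$ is nondegenerate. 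Two small remarks. First, your parenthetical justification of the nesting is aimed at the wrong set: as written, the appeal to connectedness of $U(t')$ does not quite close the case, because $U(t')$ is allowed to meet $R(t)\setminus R(t')$, so no separation is violated. The clean argument applies connectedness to $K(t)$: it is connected and disjoint from $R(t')\subset R(t)$, hence lies in a single complementary component of $R(t')$; if it lay in $U(t')$, then the only alternative permitted by concentricity, $K(t')\subset K(t)$, would place the nonempty set $K(t')$ inside $U(t')$, a contradiction, so $K(t)\subset K(t')$. This is a local repair, not a change of strategy. Second, the final step does not need Teichm\"uller's extremal module theorem: since $R(0)$ lies in the fixed disk, the Gr\"otzsch Annulus Theorem already quoted in the paper (Theorem~\ref{thm:GAT}) suffices --- after an affine change of coordinates sending one point of $K$ to the origin and the disk into the unit disk, a second point $z\in K$ would give $\mod R(0)\le\frac{1}{2\pi}\ln\frac{4}{|z|}<\infty$, contradicting $\mod R(0)=\infty$, so $K$ is a single point.
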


To verify the hypotheses of this lemma, the following fundamental
inequality will be used. 

\begin{lem}\label{lem:summod}
If $\{R_n\}$
is a finite or countable family of nested annular regions
all contained in and concentric with the annular region $R$, then
\begin{equation}\label{eq:summod}
\mod R\geq \sum\mod R_n.
\end{equation}
\end{lem}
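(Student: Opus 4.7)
The plan is to prove this as a classical Gr\"otzsch-type inequality via the length-area (extremal length) characterization of the module, as found in~\cite{Ah} or~\cite{LeVi}. First I would reduce to the finite case: if $\mod R \geq \sum_{n=1}^N \mod R_n$ is established for every $N$, then~\eqref{eq:summod} follows by letting $N \to \infty$. So suppose $R_1, \ldots, R_N$ are nested annular regions contained in and concentric with $R$.

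The key tool is the fact that for any annular region~$R$,
\[
\mod R = \sup_\rho \frac{L_\rho(\Gamma)^2}{A_\rho},
\]
where $\Gamma$ is the family of rectifiable paths in $\overline{R}$ joining the two boundary components of $R$, the supremum is over Borel measurable $\rho \colon R \to [0,\infty]$, and $L_\rho(\Gamma) = \inf_{\gamma \in \Gamma} \int_\gamma \rho\,|dz|$, $A_\rho = \iint_R \rho^2\,dx\,dy$. For each $n$ I would pick an extremal metric $\rho_n$ on $R_n$ for the analogous crossing family $\Gamma_n$, normalized so that $L_{\rho_n}(\Gamma_n) \geq 1$ and $A_{\rho_n} = 1/\mod R_n$, and extend $\rho_n$ by zero outside $R_n$. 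Since the $R_n$ are pairwise disjoint,
\[
\rho := \sum_{n=1}^N (\mod R_n)\,\rho_n
\]
is a Borel function on $R$ whose area is $A_\rho = \sum_n (\mod R_n)^2 A_{\rho_n} = \sum_n \mod R_n$. The topological input is that any $\gamma \in \Gamma$ crosses each $R_n$: by nested concentricity, the two boundary components of $R$ lie in the two distinct complementary components of $R_n$, so some subarc of $\gamma \cap \overline{R_n}$ belongs to $\Gamma_n$, giving $\int_{\gamma \cap R_n} \rho_n\,|dz| \geq 1$. Summing over $n$ yields $L_\rho(\Gamma) \geq \sum_n \mod R_n$, and therefore
\[
\mod R \geq \frac{L_\rho(\Gamma)^2}{A_\rho} \geq \frac{\bigl(\sum_n \mod R_n\bigr)^2}{\sum_n \mod R_n} = \sum_{n=1}^N \mod R_n.
\]

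The main obstacle, once the standard extremal length characterization is accepted, is the topological crossing property; tracking where $\gamma$ first exits the inner complementary component of $R_n$ and last enters the outer one makes it precise. The particular weighting of the $\rho_n$ by $\mod R_n$ is the Cauchy--Schwarz-optimal choice that converts the naive $L^2/A$ bound into the sharp sum.
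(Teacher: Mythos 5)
The paper offers no proof of Lemma~\ref{lem:summod}: it is stated as classical background --- the Gr\"otzsch inequality, or ``serial rule'' for modules of nested annuli --- and is implicitly deferred to the standard references \cite{Ah,AhSa,LeVi} cited at the start of Section~\ref{subsec:gft-background}. Your extremal-length argument is precisely the standard proof of that classical fact, and it is correct; note that it rests on the same length--area characterization of the module that the paper records immediately afterwards in~(\ref{eq:modest}). The reduction to finitely many $R_n$, the disjointness of the supports (nested annular regions are disjoint by Definitions~\ref{defns:annulus-stuff}), the identity $A_\rho=\sum_n \mod R_n$, and the Cauchy--Schwarz-optimal weighting of the $\rho_n$ are all in order. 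Two points you leave implicit are routine but worth making explicit: (i) the crossing property requires the observation that, since $R_n\subset R$ is concentric with $R$, the bounded (respectively unbounded) complementary component of $R$ is contained in the bounded (respectively unbounded) complementary component of $R_n$, so any curve joining the two boundary components of $R$ contains a subarc lying in $R_n$ whose closure meets both complementary components of $R_n$; (ii) since $\partial R_n$ may be irregular, it is cleaner either to use admissible metrics within $\veps$ of extremal, or to take $\rho_n$ to be the pullback of $|\rmd w|/\bigl(2\pi|w|\bigr)$ under a conformal map $R_n\to A(1,e^{2\pi \mod R_n})$ and verify, using properness of that map, that every such crossing subarc has $\rho_n$-length at least $\mod R_n$ before rescaling. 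With these standard details filled in, your proof is complete and is the expected one for this background lemma.
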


From this it follows that if $\sum\mod R_n$ diverges then at least
one of the complementary components of $R$ is a point.

A {\em conformal metric} on $\C$ is a metric obtained defining the
length of arcs by $|\gamma|_\nu:=\int_\gamma\nu(z)|\!\rmd z|$, where
$\nu$ is a nonnegative Borel measurable function defined on $\C$. If
$R$ is an annular region in~$\C$ whose boundary components are
$C_1,C_2$ then
\[\mod R= \sup\left\lbrace\dfrac{d_\nu(C_1,C_2)^2}{\Area_\nu(R)}\,:\, 
\nu(z)|\!\rmd z| \text{ is a conformal metric}\right\rbrace,\] where
$d_\nu(C_1,C_2)$ is the $\nu$-distance between the boundary components
of $R$ (i.e., the minimum $\nu$-length of an arc with endpoints
in~$C_1$ and~$C_2$) and $\Area_\nu(R)=\iint_R\nu(z)^2\rmd x\rmd y$ is
the $\nu$-area of $R$.  It follows that
\begin{equation}\label{eq:modest}
\mod R\geq \dfrac{d_\nu(C_1,C_2)^2}{\Area_\nu(R)}
\end{equation}
for any conformal metric $\nu(z)|\!\rmd z|$.

\begin{defn}[Gr\"otzsch annular region]
Let~$t\in[0,1)$. The {\em Gr\"otzsch annular region} $\Gr(t)$ is the
  region obtained from the open unit disk by removing the closed interval
  $[0,t]$ in the real axis.
\end{defn}

The following inequality can be found in~\cite{LeVi} and~\cite{Ah}:
\begin{equation}\label{eq:Grotzschmodest}
 \mod \Gr(t)\leq \frac{1}{2\pi}\ln\frac{4}{t}.
\end{equation}

The following theorem (see~\cite{Ah}, p.72) will be of fundamental
importance in Section~\ref{sec:modcont}.

\begin{thm}[Gr\"otzsch Annulus Theorem]
\label{thm:GAT}
Let~$R$ be an annular region contained in the unit disk in the complex
plane. If both $0$ and a point~$z$ are contained in the bounded
complementary component of~$R$, then
\[\mod R \le \mod {\Gr(|z|)} \le \frac{1}{2\pi}\ln\frac{4}{|z|}.\]
\end{thm}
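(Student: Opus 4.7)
Since the second inequality is precisely~(\ref{eq:Grotzschmodest}), only the bound $\mod R\le\mod\Gr(|z|)$ requires proof. After a rotation of~$\C$, which preserves modules, it may be assumed that $z=t:=|z|>0$. Let $E_0$ be the bounded complementary component of~$R$; by hypothesis it contains $0$ and~$t$, and since $R\subset\D$ it lies in $\ol\D$. The annular region $R':=\D\setminus E_0$ contains~$R$, is concentric with it, and shares the bounded complement~$E_0$, so the module monotonicity for concentric annular regions (stated immediately before Lemma~\ref{lem:summod}) gives $\mod R\le\mod R'$. It therefore suffices to prove $\mod(\D\setminus E_0)\le\mod\Gr(t)$.

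The central step is circular (P\'olya) symmetrization of~$E_0$ with respect to the positive real axis, producing a continuum $E_0^\ast\subset\ol\D$. Since~$\D$ itself is invariant under this operation, the classical symmetrization principle for the conformal module of a doubly connected region yields
\[
\mod(\D\setminus E_0)\le\mod(\D\setminus E_0^\ast).
\]
Because $E_0$ is connected and contains $0$ and~$t$, the intermediate value theorem applied to $|\cdot|$ shows that $E_0$ meets the circle $\{|w|=r\}$ for every $r\in[0,t]$. The standard convention for circular symmetrization then places the positive real point~$r$ into $E_0^\ast$, so $[0,t]\subset E_0^\ast$. Consequently $\D\setminus E_0^\ast\subset\Gr(t)$ is a concentric inclusion of annular regions, and a final application of module monotonicity gives $\mod(\D\setminus E_0^\ast)\le\mod\Gr(t)$. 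Chaining the three inequalities completes the proof.

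The substantive ingredient---and the expected main obstacle---is the P\'olya symmetrization principle itself. A convenient route to it uses the area-distance characterization~(\ref{eq:modest}): given a conformal metric $\nu|\!\rmd z|$ on $\D\setminus E_0$, one defines $\nu^\ast|\!\rmd z|$ on $\D\setminus E_0^\ast$ by circularly rearranging~$\nu$ along each circle $\{|w|=r\}$ about the positive real axis, and then checks that $\nu^\ast$ has the same total area as~$\nu$ while the $\nu^\ast$-distance between the two boundary components of $\D\setminus E_0^\ast$ is at least the $\nu$-distance between those of $\D\setminus E_0$. A self-contained alternative, as developed in~\cite{Ah}, p.~72, bypasses symmetrization entirely by constructing the extremal metric on $\Gr(t)$ explicitly via an elliptic integral and applying~(\ref{eq:modest}) directly to the competitor~$R$.
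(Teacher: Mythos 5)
The paper itself offers no proof of Theorem~\ref{thm:GAT}: it is quoted from Ahlfors (\cite{Ah}, p.~72), just as the second inequality is the quoted estimate~(\ref{eq:Grotzschmodest}), so the only question is whether your argument stands on its own. In outline it does, and it is the standard symmetrization proof of Gr\"otzsch's module theorem: the rotation reduction; the passage from $R$ to $R'=\D\setminus E_0$ via the monotonicity statement preceding Lemma~\ref{lem:summod} (here you should say a word on why $\D\setminus E_0$ is an annular region concentric with $R$ --- $E_0$ is a continuum with connected complement lying in $\D$, so $\D\setminus E_0$ is a doubly connected domain with bounded complementary component exactly $E_0$); the observation that $|E_0|\supseteq[0,t]$ forces $[0,t]\subset E_0^\ast$; and the final monotonicity step (for which one should also note that $E_0^\ast$ is a continuum, each of its points being joined to the axis segment by a circular arc, so that $\D\setminus E_0^\ast$ is again an annular region). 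Citing the circular symmetrization principle (P\'olya--Szeg\H{o}, Hayman, Dubinin) as a classical black box is no worse than the paper's own citation of Ahlfors, so with that citation your proof is complete. You may also note that when $z=0$ the right-hand side is infinite and there is nothing to prove, so $t=|z|>0$ may be assumed.

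The one genuine weak point is the paragraph sketching how to prove the symmetrization principle itself. Circular rearrangement of $\nu$ along each circle preserves $\Area_\nu$ trivially, but the assertion that the $\nu^\ast$-distance between the boundary components of $\D\setminus E_0^\ast$ is at least the $\nu$-distance between those of $\D\setminus E_0$ is precisely the content of the symmetrization inequality and cannot simply be ``checked'': a curve joining the plates of the symmetrized condenser does not correspond, circle by circle, to any curve of comparable $\nu$-length in the original configuration, so there is no pointwise length comparison to appeal to. The known proofs proceed via iterated polarization (two-point symmetrization), Baernstein's star function, or the P\'olya--Szeg\H{o} Dirichlet-integral inequality applied to the equilibrium potential of the condenser, all of which require real work. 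So either invoke the principle as a cited theorem, or follow the reflection/extremal-metric argument for the Gr\"otzsch ring as in~\cite{Ah}; the circle-by-circle rearrangement sketch should not be presented as a proof of it.
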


The following distortion theorem will be needed. For completeness, a
proof is included. The term {\em univalent} is used in this article
as a synonym for {\em injective holomorphic}.

\begin{thm}\label{thm:Koebedistn}
Let $P\sbs\C$ be a closed topological disk, $\tp\in\Int(P)$ and
$\Phi\co \Int(P)\to\C$ be a univalent function with $\Phi(\tp)=0$ and
$\Phi'(\tp)=1$. Let $\tQ(h)$ denote the closed interior collar
neighborhood of $\partial P$ of $d_\C$-width~$h>0$ and set
$P_h:=P\setminus\tQ(h)$. Assume that $h$ is small enough that $P_h$ is
path connected and $\tp\in P_h$, and set
\[\kappa:=\exp\left(\frac{8\diam_{P_h}P_h}{h}\right),\]
where $\diam_{P_h}P_h$ denotes the diameter of $P_h$ in the intrinsic
metric $d_{P_h}$ on $P_h$ induced by~$d_\C$.
Then for any $\tq\in P_h$,
\[\frac{1}{\kappa}\leq |\Phi'(\tq)| \leq 
\kappa.
\]
In particular, $\Phi$ is $\kappa$-biLipschitz in $P_h$.
\end{thm}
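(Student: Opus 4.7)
The plan is a direct application of Koebe's distortion theory for univalent functions, localized to disks of radius $h$ inside $P$, followed by integration of the resulting derivative bound along paths in $P_h$.

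\textbf{Step 1 — pointwise bound on the logarithmic derivative.} For each $\tq\in P_h$, the hypothesis $\tq\notin\tQ(h)$ forces $d_\C(\tq,\partial P)\ge h$, so the Euclidean disk $D(\tq,h)$ lies in $\Int(P)$ and $\Phi$ is univalent there. Applying the standard Bieberbach coefficient bound $|a_2|\le 2$ to the normalized univalent map
$$w\;\longmapsto\;\frac{\Phi(\tq+hw)-\Phi(\tq)}{h\,\Phi'(\tq)}$$
on the unit disk (which has value $0$ and derivative $1$ at the origin) yields, after unscaling, an absolute pointwise bound
$$\left|\frac{\Phi''(\tq)}{\Phi'(\tq)}\right|\;\le\;\frac{C}{h}\qquad\text{for every }\tq\in P_h,$$
where $C$ is a small universal constant (the sharp Bieberbach value gives $C=4$; the looser distortion bound $(1+r)/(1-r)^3$ chained in steps of size about $h/2$ gives a constant close to $8$, which is the form the statement uses).

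\textbf{Step 2 — integrate.} Since $\Phi$ is univalent on $\Int(P)$, the derivative $\Phi'$ is nowhere zero there and a continuous branch of $\log\Phi'$ exists along any path in $\Int(P)$. Because $P_h$ is path connected and contains $\tp$ and $\tq$, choose a rectifiable path $\gamma\subset P_h$ from $\tp$ to $\tq$ with $|\gamma|$ arbitrarily close to $d_{P_h}(\tp,\tq)$. Using $\Phi'(\tp)=1$, one writes
$$\log\Phi'(\tq)\;=\;\log\Phi'(\tq)-\log\Phi'(\tp)\;=\;\int_\gamma\frac{\Phi''(z)}{\Phi'(z)}\,dz,$$
and applying the Step~1 bound yields $\bigl|\log\Phi'(\tq)\bigr|\le (C/h)|\gamma|$. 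Taking the infimum over $\gamma$ gives
$$\bigl|\log|\Phi'(\tq)|\bigr|\;\le\;\bigl|\log\Phi'(\tq)\bigr|\;\le\;\frac{C\,\diam_{P_h}P_h}{h}\;=\;\log\kappa,$$
and exponentiation produces the required two-sided bound $1/\kappa\le|\Phi'(\tq)|\le\kappa$.

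\textbf{Step 3 — biLipschitz.} The biLipschitz statement follows by integrating the derivative bound along paths: for $\tq_1,\tq_2\in P_h$ and any path $\gamma\subset P_h$ joining them,
$$|\Phi(\tq_1)-\Phi(\tq_2)|\;\le\;\int_\gamma|\Phi'(z)|\,|dz|\;\le\;\kappa\,|\gamma|,$$
so taking the infimum gives the upper Lipschitz bound with respect to $d_{P_h}$. The reverse inequality is obtained by applying the same argument to the holomorphic inverse $\Phi^{-1}$ on $\Phi(\Int P)$, whose derivative satisfies $|(\Phi^{-1})'(w)|=1/|\Phi'(\Phi^{-1}(w))|\le\kappa$ on $\Phi(P_h)$ by what has already been shown.

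The argument is essentially routine; the only mild obstacle is the precise bookkeeping needed to land on the exponent $8$ in the statement of $\kappa$, rather than a slightly smaller universal constant. Choosing the appropriate form of Koebe's inequality (and how one chains estimates in increments of at most $h$) produces the stated value without any additional ideas.
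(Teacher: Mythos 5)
Your argument is correct, and it reaches the key estimate by a genuinely different route from the paper. The paper proves the derivative bound by composing with a Riemann map $\zeta\co\D\to\Int(P)$, applying the classical two-point distortion theorem to get $|\Phi'(\zeta(z))|$ pinched between $\exp(\pm 4 d^H_\D(0,z))$, and then estimating the hyperbolic distance from $\tp$ to $\tq$ in $\Int(P)$ via the density bound $\lambda_P(\zeta)\le 2/d_\C(\zeta,\partial P)$ along a path in $P_h$, which is where the factor $2\cdot 4=8$ and the quantity $\diam_{P_h}P_h/h$ enter. You instead localize: the Bieberbach bound $|a_2|\le 2$ applied to $\Phi$ rescaled on the disk $D(\tq;h)\subset\Int(P)$ gives $|\Phi''/\Phi'|\le 4/h$ pointwise on $P_h$, and integrating $\Phi''/\Phi'$ along near-optimal rectifiable paths in $P_h$ from $\tp$ (where $\log\Phi'=0$) yields $\bigl|\log|\Phi'(\tq)|\bigr|\le 4\diam_{P_h}P_h/h$, which is in fact sharper than the stated $\kappa$ (exponent $4$ rather than $8$), so the theorem follows a fortiori. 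Both proofs ultimately rest on the same geometric input --- points of $P_h$, and hence paths realizing $d_{P_h}$, stay at distance at least $h$ from $\partial P$, so a pointwise bound of order $1/h$ can be integrated over a length $\diam_{P_h}P_h$ --- but yours avoids the Riemann map and the hyperbolic metric entirely and is more self-contained. One small caveat on your Step 3: the reverse Lipschitz inequality does not follow by integrating $(\Phi^{-1})'$ along the straight segment between image points, since that segment may leave $\Phi(P_h)$ (where alone the bound $|(\Phi^{-1})'|\le\kappa$ is available); what the derivative bound honestly gives is that $\Phi$ distorts lengths of paths in $P_h$, hence the intrinsic metrics of $P_h$ and $\Phi(P_h)$, by at most $\kappa$. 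The paper is equally terse on this point, and only the upper (Lipschitz) bound is used later, so this does not affect the substance of your argument.
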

\begin{proof}
The usual distortion theorem (see~\cite{Ah}) states that if
$f\co\D\to\C$ is univalent then, for any $z_1,z_2\in\D$, 
\begin{equation}\label{eq:usualdistn}
\frac{1}{m(z_2,z_1)}\leq\frac{|f'(z_1)|}{|f'(z_2)|}\leq m(z_1,z_2),
\end{equation}
where
\begin{equation}\label{eq:m}
m(z_1,z_2):= \frac{1+|z_1|}{1-|z_2|}\cdot
\left(\frac{1+|z_2|}{1-|z_1|}\right)^3.
\end{equation}

Let $\zeta\co\D\to \Int(P)$ be a Riemann mapping with $\zeta(0)=\tp$
and set $f:=\Phi\circ\zeta$. Then both $f$ and $\zeta$ are univalent
so that, writing $\zeta_i:=\zeta(z_i)$
for $i=1,2$, it follows from~(\ref{eq:usualdistn}) that
\[\frac{1}{m(z_1,z_2)\cdot m(z_2,z_1)}
\leq\frac{|\Phi'(\zeta_1)|}{|\Phi'(\zeta_2)|}=
\frac{|f'(z_1)|}{|f'(z_2)|}\cdot\frac{|\zeta'(z_2)|}{|\zeta'(z_1)|}
\leq m(z_1,z_2)\cdot m(z_2,z_1), \]
and~(\ref{eq:m}) gives
\begin{eqnarray*}
m(z_1,z_2)\cdot m(z_2,z_1)&=&\left(\frac{1+|z_1|}{1-|z_1|}\cdot
\frac{1+|z_2|}{1-|z_2|}\right)^4 \\
 &=& \left[\exp\left( d_\D^H(0,z_1)\right)\cdot
\exp\left( d_\D^H(0,z_2)\right)\right]^4,
\end{eqnarray*}
where $d_\D^H$ denotes the Poincar\'e distance in the unit disk
$\D$. Setting $z_2=0$ it follows that, for any $z\in\D$,
\begin{equation} \label{eq:ineq}
\left[\exp\left( d_\D^H(0,z)\right)\right]^{-4}\leq|\Phi'(\zeta(z))|
\leq\left[\exp\left( d_\D^H(0,z)\right)\right]^{4}.
\end{equation}
Now let $\lambda_P(\zeta)|\rmd\zeta|$ denote the Poincar\'e metric on
$\Int(P)$. It is well known (see for example~\cite{Ah}) that
\[\lambda_P(\zeta)\leq \frac{2}{d_{\C}(\zeta,\partial P)}.\]
Let $\tq=\zeta(z)\in P_h$. Then, if $d_P^H$ denotes the
Poincar\'e distance in $\Int(P)$,
\begin{eqnarray*}
d_\D^H(0,z)&=& d_P^H(\tp,\tq)\\
    &=& \inf\left\lbrace
\int_{\gamma}\lambda_P(\zeta)|\rmd\zeta|;\;\gamma\text{ is a path in
$\Int(P)$ from } \tp \text{ to } \tq\right\rbrace \\
    &\leq& \inf\left\lbrace
\int_{\gamma}\lambda_P(\zeta)|\rmd\zeta|;\;\gamma\text{ is a path in
$P_h$ from } \tp \text{ to } \tq\right\rbrace \\
    &\leq& \frac{2d_{P_h}(\tp,\tq)}{h}\\
    &\leq& \frac{2\diam_{P_h}P_h}{h},
\end{eqnarray*}
which, together with~(\ref{eq:ineq}), establishes the result.
\end{proof}

\medskip

Finally, the following {\em holomorphic removability} criterion will
be needed: it is a consequence of Theorem~V.3.2 on p.\ 202
of~\cite{LeVi}. 

\begin{thm}\label{thm:removability2}
Let $\Omega\sbs\C$ be a domain and $E\sbs\Omega$ be a compact subset
with finite \mbox{1-dimensional} Hausdorff measure. Suppose
$g\co\Omega\to\C$ is an orientation-preserving homeomorphism onto its
image which is conformal on $\Omega\setminus E$. Then, in fact, $g$ is
conformal on all of $\Omega$.
\end{thm}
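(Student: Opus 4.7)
The plan is to deduce the result directly from the cited Lehto--Virtanen removability theorem (Theorem~V.3.2 of~\cite{LeVi}), which asserts that a compact set of finite (indeed $\sigma$-finite) 1-dimensional Hausdorff measure is a removable singularity for $K$-quasiconformal homeomorphisms: if a homeomorphism of a domain is $K$-quasiconformal off such a set, then it is $K$-quasiconformal on the entire domain.

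The first step is to recast the hypothesis in quasiconformal language: an orientation-preserving homeomorphism is conformal if and only if it is $1$-quasiconformal, so saying that $g$ is conformal on $\Omega\setminus E$ is the same as saying that $g$ is $1$-quasiconformal there. The set $E$ is compact in $\Omega$ and has finite 1-dimensional Hausdorff measure, so the removability hypothesis of Theorem~V.3.2 of~\cite{LeVi} is satisfied.

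Applying that theorem with $K=1$ yields that $g\co\Omega\to g(\Omega)$ is $1$-quasiconformal on all of $\Omega$. The last step is to invoke the standard fact (a consequence of Weyl's lemma applied to the trivial Beltrami equation $\mu\equiv 0$) that an orientation-preserving $1$-quasiconformal homeomorphism is holomorphic; combined with injectivity this upgrades $g$ to a conformal map on $\Omega$.

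There is no substantive obstacle here: the statement is packaged in the paper as an immediate corollary of the Lehto--Virtanen result, and the only minor bookkeeping is to confirm that finite 1-dimensional Hausdorff measure implies the thinness assumption of the cited theorem (which is typically phrased under the weaker hypothesis of $\sigma$-finite linear measure) and that conformal and $1$-quasiconformal coincide for orientation-preserving homeomorphisms. Both are standard.
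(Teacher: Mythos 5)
Your proposal is correct and matches the paper's treatment: the paper gives no separate argument, stating the result as an immediate consequence of Theorem~V.3.2 of~\cite{LeVi}, which is exactly the deduction you spell out (conformal equals $1$-quasiconformal for orientation-preserving homeomorphisms, apply the removability theorem with $K=1$, then use the standard fact that $1$-quasiconformal implies conformal). The routine bookkeeping you add is accurate and uncontroversial.
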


\section{Paper-folding schemes}
\label{sec:surfaceorigami}

This section provides the definition of paper-folding schemes, which
are the main objects of study in this article. Some properties of
the metric and measure on the scar of a paper-folding scheme are
established (Lemma~\ref{lem:metricstructureofG}), and the notion of a
{\em plain} paper-folding scheme is introduced.

The polygons used in the definition of paper-folding schemes are
subsets of the Euclidean plane $\R^2$, which will be identified with
the complex plane $\C$ in Sections~\ref{sec:cxstructure}
and~\ref{sec:modcont} when the complex structure of paper surfaces is
discussed. The notation $\csph$ will be used for the Riemann sphere.

\begin{defns}[Arc, segment, polygon, multipolygon]
An {\em arc} in a metric space~$X$ is a homeomorphic image
$\gamma\sbs X$ of the interval $[0,1]$.  Its {\em endpoints} are
the images of $0$ and $1$ and its {\em interior} is the image of
$(0,1)$, denoted $\interior{\gamma}$. An {\em open arc} is the
interior of an arc. A {\em segment} is an arc in $\R^2$ which is a
subset of a straight line. The length of a segment $\alpha$ is denoted
$|\alpha|$.  A {\em simple closed curve} in $X$ is a homeomorphic
image of the unit circle.

An arc or simple closed curve in~$\R^2$ is called {\em polygonal} if
it is the concatenation of finitely many segments. Its {\em vertices}
are the intersections of consecutive maximal segments, and the maximal
segments themselves are its {\em edges}.

 A {\em polygon} is a closed topological disk in $\R^2$ whose boundary
 is a polygonal simple closed curve. Its {\em vertices} are the same
 as its boundary's vertices and its {\em sides} are the edges forming
 its boundary.

A {\em multipolygon} is a disjoint union of finitely many polygons.  A
{\em (polygonal) multicurve} is a disjoint union of finitely many
(polygonal) simple closed curves. 
\end{defns}

\begin{defns}[Segment pairing, interior pair, full collection, fold]
Let $C\sbs\R^2$ be an oriented polygonal multicurve and
$\alpha,\alpha'\sbs C$ be segments of the same length with disjoint
interiors. The {\em segment pairing}
$\left\langle\alpha,\alpha'\right\rangle$ is the relation which
identifies pairs of points of $\alpha$ and $\alpha'$ in a
length-preserving and orientation-reversing way. The segments
$\alpha,\alpha'$ and any two points which are identified under the
pairing are said to be {\em paired}. Two paired points which lie in
the interior of a segment pairing form an {\em interior pair}. Observe
that the notation for a pairing is not ordered, so that
$\left\langle\alpha,\alpha'\right\rangle$ and
$\left\langle\alpha',\alpha\right\rangle$ represent the same pairing.

A collection $\{\left\langle\alpha_i,\alpha_i'\right\rangle\}$ of
segment pairings is {\em interior disjoint} if the interiors of all of
the segments $\alpha_i$ and $\alpha_i'$ are pairwise disjoint.

The {\em length} of a segment pairing
$\llangle\alpha,\alpha'\rrangle$, denoted
$|\llangle\alpha,\alpha'\rrangle|$, is the length of one of the arcs
in the pairing, i.e.,
$|\llangle\alpha,\alpha'\rrangle|:=|\alpha|=|\alpha'|$. If
$\cP=\{\llangle\alpha_i,\alpha'_i\rrangle\}$ is a (countable) interior
disjoint collection of segment pairings on~$C$, its {\em length},
denoted $|\cP|$, is the sum of the lengths of the pairings in $\cP$,
i.e., $|\cP|=\sum_i|\llangle\alpha_i,\alpha'_i\rrangle|$.

An interior disjoint collection $\cP$ of segment pairings on~$C$ is
{\em full} if $|\cP|$ equals half the length of~$C$. This means that
the pairings in $\cP$ cover $C$ up to a set of Lebesgue 1-dimensional
measure zero.

A pairing of two segments which have an endpoint in common is called a
{\em fold} and the common endpoint is called its {\em folding
  point}. The folding point in a fold is therefore not paired with
any other point.

An interior disjoint collection~$\cP$ of segment pairings induces a
reflexive and symmetric {\em pairing relation}, also denoted~$\cP$:
\[\cP = \{(x,x'):\text{ $x,x'$ are paired or $ x=x'$}\}.\]
\end{defns}

\begin{defns}[Paper-folding scheme]
\label{defn:origami}
A {\em paper-folding scheme} is a pair $(P,\cP)$ where $P\sbs\R^2$ is
a multipolygon with the intrinsic metric $d_P$ induced from $\R^2$,
and $\cP$ is a full interior disjoint collection of segment pairings
on $\partial P$ (positively oriented). The metric quotient
$S:=P/d_P^\cP$ of $P$ under the semi-metric $d_P^\cP$ induced by the
pairing relation $\cP$ is the associated {\em paper space}. When $S$
is a closed (compact without boundary) topological surface, then
$(P,\cP)$ is called a {\em surface paper-folding scheme} and $S$ is
the associated {\em paper surface}.

The projection map is denoted $\pi\co P\to S$ and the quotient
$G=\pi(\partial P)\sbs S$ of the boundary is the {\em scar}. Notice
that the restriction $\pi:\Int(P)\to S\setminus G$ is a homeomorphism.

The (quotient) metric on~$S$ is denoted~$d_S$. The metric~$d_G$ on~$G$
is defined to be the intrinsic metric as a subset of~$S$: it will be
shown in Lemma~\ref{lem:metricstructureofG} below that this is equal
to the quotient metric on~$G$ as a quotient of~$\partial P$, where
$\partial P$ is endowed with its intrinsic metric as a subset of~$P$.

The measure $\rmm_G$ on~$G$ is defined to be the push-forward of
Lebesgue $1$-dimensional measure $\rmm_{\partial P}$ on $\partial
P$. Hausdorff $1$-dimensional measure on~$G$ is denoted~$\mu^1_G$ ---
it will be shown in Lemma~\ref{lem:metricstructureofG} that $\mu^1_G=
\frac{1}{2}\rmm_G$.
\end{defns}

\medskip

Example~\ref{ex:generalexample} below provides simple concrete
examples of paper-folding schemes. The next definitions distinguish
different types of points in a paper space.

\begin{defns}[Vertex, edge, singular point, planar point]
\label{defn:origamipts}
For $k\in\N\cup\{\infty\}$, a point $x\in G$ is a {\em vertex of
  valence~$k$}, or a {\em $k$-vertex}, if either (i) $\#\pi\I(x)=k\neq
2$; or (ii) $\#\pi\I(x)=k=2$ and $\pi\I(x)$ contains a vertex of
$P$. Let~$\cV$ denote the set of all vertices of~$G$.

The points of the paper space~$S$ are divided into three types:
\begin{description}
\item[Singular points] vertices of valence~$\infty$ and accumulations
  of vertices. Let~$\cV^s$ denote the set of singular points.
\item[Regular vertices] vertices which are not singular.
\item[Planar points] all other points of~$S$: that is, the points of
  $S\setminus \bcV$.
\end{description}

The closures of the connected components of $G\setminus\bcV$ are
called {\em edges} of the scar~$G$.
\end{defns}

\begin{rmks}\mbox{}
\begin{enumerate}[i)]
\item When discussing paper foldings, symbols with tildes will usually
  refer to objects in the source~$P$ and symbols without tildes to
  objects in the quotient~$S$. Thus, for example, $\tx$ may denote a
  point in $P$ and $x$ its projection in $S$; or
  $\lang\talpha,\talpha'\rang$ may be a pairing on $\partial P$ and
  $\alpha:=\pi(\talpha)=\pi(\talpha')\sbs G$. 
For notational simplicity, however, tildes
  will be omitted when all objects being discussed lie in~$P$.
\item The metric on a multipolygon $P$ is always the intrinsic
  metric induced by the metric on~$\R^2$, denoted $d_P$. The symbol
  $d_P^\cP$ will be used to denote both the semi-metric on $P$ induced
  by the pairing relation and the distance beteween $\sim_\cP$
  equivalence classes: if $\tx,\ty\in P$ and $[\tx],[\ty]$ denote
  their $\sim_\cP$-classes, then
  $d_P^\cP([\tx],[\ty]):=d_P^\cP(\tx,\ty)$. Thus, if
  $x=\pi(\tx),y=\pi(\ty)\in S$, then $d_S(x,y):=d_P^\cP([\tx],[\ty])$.
\item The endpoints of paired segments need not necessarily project to
  vertices or singular points. For example, a segment pairing
  $\left\langle\alpha,\alpha'\right\rangle$ can be split into two
  segment pairings $\ssegpair{\beta}$, $\ssegpair{\gamma}$ by
  subdividing the segments~$\alpha$ and~$\alpha'$, and the common
  endpoint of $\beta$ and $\gamma$ then projects to a planar point
  (see also Definition~\ref{defn:subpairing}).
\end{enumerate}
\end{rmks}

The next lemma summarises the properties of the metric and measure on
the scar~$G$ which will be important later. One of the issues which it
addresses is the equivalence of the two natural ways to define a
metric on~$G$: as the intrinsic metric induced by the inclusion of~$G$
as a subset of $(S,d_S)$; and as the quotient metric coming from the
intrinsic metric on~$\partial P$.

\begin{lem}
\label{lem:metricstructureofG}
Let~$G$ be the scar of the paper-folding scheme~$(P,\cP)$. Then
\begin{enumerate}[a)]
\item The set of planar points is open and dense in the scar~$G$, while the
set~$\bcV$ of vertices and singular points is a closed nowhere dense subset
of~$G$ with zero $\rmm_G$-measure.
\item The intrinsic metric on~$G$ induced by the inclusion $G\subset
  S$ agrees with the quotient metric on $G=\partial P / d_{\partial
    P}^{\cP}$ induced by the intrinsic metric~$d_{\partial P}$
  on~$\partial P$.
\item $G$ has Hausdorff dimension 1, and Hausdorff $1$-dimensional
  measure $\mu_G^1$ on~$G$ is equal to~$\frac{1}{2}\rmm_G$.
\item Every arc~$\gamma$ in~$G$ is rectifiable, and $|\gamma|_G =
  \frac{1}{2}\rmm_G(\gamma)$. 

\end{enumerate}
\end{lem}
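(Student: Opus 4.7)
The plan is to dispatch the four parts in order, with (c) and (d) building on (a) and (b).

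I would begin with (a). Since $\cP$ is a countable collection of segment pairings, the endpoints of pairings together with the finitely many vertices of $P$ form a countable subset of $\partial P$ that contains $\pi\I(\cV)$: any other point of $\partial P$ lies in the interior of some pairing and hence has exactly two preimages, neither a vertex of $P$, so its $\pi$-image is planar. Thus $\cV$ is countable and $\rmm_G(\cV)=0$. The set $\bcV$ is closed by definition, so planar points form an open subset of $G$. For nowhere-density, and for the sharper statement $\rmm_G(\bcV)=0$, the plan is to show that $\pi\I(\bcV)$ has empty interior in $\partial P$ and in fact has Lebesgue measure zero: the argument combines the countability of $\pi\I(\cV)$ with a compactness/lifting argument relating accumulation of vertices in $G$ to accumulation of endpoints in $\partial P$, using that any open arc in $\pi\I(\bcV)$ would lie in the interior of a single pairing and project homeomorphically to an arc in $\bcV\setminus\cV$. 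The measure-zero claim is the most delicate point and I expect it to be the main obstacle.

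For (b), denote by $d_G^{\text{int}}$ the intrinsic metric induced from $S$, and by $d_G^{\text{quot}}$ the quotient metric on $G=\partial P/d_{\partial P}^\cP$. For $d_G^{\text{int}}\leq d_G^{\text{quot}}$ I would project each $\cP$-chain $((p_i,q_i))$ to the concatenation in $G$ of the $\pi$-images of the minimising $\partial P$-arcs from $p_i$ to $q_i$: because $\pi(q_i)=\pi(p_{i+1})$ these images link up, and because each such arc lies in a single segment the projection preserves length (Remark~\ref{rmk:distance-decreasing} becomes an equality here). Conversely, to establish $d_G^{\text{quot}}\leq d_G^{\text{int}}$, I would lift a rectifiable path $\gamma$ in $G$ to a $\cP$-chain in $\partial P$ by partitioning its parameter interval so that each sub-arc lies in the image of a single pairing segment; lifting each sub-arc into $\partial P$ produces a chain whose total length equals $|\gamma|_G$. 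Handling the (at most countable) partition points corresponding to vertices requires a short approximation argument justified by part~(a).

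For (c), on the set of planar points $\pi$ is locally a length-preserving two-to-one map; restricted to the interior of a single paired segment it is a length-preserving homeomorphism onto its image in $G$. Applying Lemma~\ref{lem:hausdorff-nonincreasing} in both directions to this local isometry gives, for any Borel set $A\sbs G$ of planar points, $\mu^1_{\partial P}(\pi\I(A))=2\mu^1_G(A)$. Since $\rmm_{\partial P}=\mu^1_{\partial P}$ on $\partial P$ and $\rmm_G(A)=\rmm_{\partial P}(\pi\I(A))$ by definition, this yields $\mu^1_G(A)=\tfrac12\rmm_G(A)$. Part~(a) then extends the identity across $\bcV$, on which both measures vanish. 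The Hausdorff-dimension claim follows because $\mu^1_G(G)=\tfrac12\,\rmm_{\partial P}(\partial P)<\infty$, while $G$ contains arcs of positive diameter so its $\mu^1$-measure is positive by Lemma~\ref{lem:hausdorff-connected}.

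Finally for (d), any arc $\gamma\sbs G$ satisfies $\mu^1_G(\gamma)\leq\mu^1_G(G)<\infty$ by (c). A standard fact (produce an arclength parametrisation, using that a simple arc of finite $\mu^1$ in a metric space is rectifiable) then makes $\gamma$ rectifiable, so Lemma~\ref{lem:hausdorff-arc} gives $|\gamma|_G=\mu^1_G(\gamma)=\tfrac12\rmm_G(\gamma)$, completing the lemma.
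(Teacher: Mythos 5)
Parts (c) and (d) of your plan are essentially sound and close to the paper's argument, but (a) and the hard half of (b) contain genuine gaps. In (a), the claim that every point of $\partial P$ other than a pairing endpoint or a vertex of $P$ lies in the interior of some pairing --- and hence that $\pi^{-1}(\cV)$ is countable --- is false. Fullness of $\cP$ is a measure statement, not a countability statement: the pairings cover $\partial P$ only up to a set of Lebesgue measure zero, and the uncovered set can be uncountable. The paper's own Example~\ref{ex:generalexample} (Figure~\ref{fig:cantorscheme}) has a Cantor set of unpaired points, all contained in $\pi^{-1}$ of a single $\infty$-vertex, so both your countability claim and the ``compactness/lifting'' plan built on it collapse. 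Moreover, what you single out as the main obstacle, $\rmm_G(\bcV)=0$, is actually the easy part: $\pi^{-1}(\bcV)$ is disjoint from the set of interior-pair points (an interior pair projects to a planar point), and that set's complement in $\partial P$ has $\rmm_{\partial P}$-measure zero by fullness, so the pushforward measure of $\bcV$ vanishes at once. (Note also that in (c) you will need $\mu^1_G(\bcV)=0$ as well; this follows from the same containment together with Lemma~\ref{lem:hausdorff-nonincreasing}, not from (a) by itself.)

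In (b), the direction $d^{\mathrm{int}}_G\le d^{\mathrm{quot}}_G$ is fine --- only the non-increase of Remark~\ref{rmk:distance-decreasing} is needed; your parenthetical claim that the projection preserves the length of the minimising $\partial P$-arcs is unnecessary and in general untrue, since such an arc need not lie in a single segment. The converse direction is where the proposal would fail: a path in $G$ need not admit a finite partition into sub-arcs each lying in the image of a single pairing segment, because vertices of $G$ can be dense along an arc (insert tiny folds at a dense set of points between sub-pairings of a long pairing), so the path meets the images of infinitely many pairings, whereas a $\cP$-chain is by definition finite; in addition, at a switch point the terminal lift of one piece and the initial lift of the next lie over the same point of $G$ but are in general only $\sim_\cP$-equivalent, not $\cP$-paired, so the concatenation is not literally a chain (this second issue is patchable with arbitrarily cheap auxiliary chains, but the finiteness issue is not). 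The missing idea is the one the paper uses: partition greedily with respect to the finitely many \emph{sides} of $P$, taking $t_{i+1}$ maximal so that $\gamma(t_{i+1})$ has a lift on the side carrying a lift of $\gamma(t_i)$; finiteness of the set of sides forces termination, and consecutive partition points then have lifts on a common side whose $d_{\partial P}$-distance is controlled by the $d_S$-distance of their images. Your ``short approximation argument justified by part (a)'' does not supply a substitute for this device.
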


\begin{proof} \mbox{}
\begin{enumerate}[a)]
\item The set of planar points is open in~$G$ by definition (it is the set
of points which are neither vertices nor accumulations of vertices),
and is dense in~$G$ since it contains the image under the continuous
surjection~$\pi$ of the (dense) set of points of~$\partial P$ which
belong to interior pairs. The set of vertices and singular points is
the complement of the set of planar points, and is therefore closed
and nowhere dense.

$\pi^{-1}(\bcV)$ is contained in the complement of the set of points
belonging to interior pairs, which has zero $\rmm_{\partial
  P}$-measure, so that $\rmm_G(\bcV)=0$.
\item Observe first that the intrinsic metrics $d_P$ on~$P$ and
  $d_{\partial P}$ on~$\partial P$ satisfy $d_{\partial P}(\tx,\ty)
  \ge d_P(\tx,\ty)$ for all $\tx, \ty\in\partial P$ (since any path
  in~$\partial P$ is also a path in~$P$), and that $d_{\partial
    P}(\tx,\ty) =d_P(\tx,\ty)=d_{\R^2}(\tx,\ty)$ if $\tx$ and $\ty$ lie
  in the same side of~$P$.

Let $d^i_G$ denote the intrinsic metric on~$G$ as a subset
of~$(S,d_S)$, and $d^q_G$ denote the metric on~$G$ considered as the
quotient metric space of $(\partial P, d_{\partial P})$ under the
relation $\cP$. It will be shown that $d^i_G \le d^q_G$ and $d^q_G \le
d^i_G$, which will establish the result.

\textbf{To show $d^i_G \le d^q_G$:} Let $x,y\in G$, and
write $D=d^q_G(x,y)$. Let~$\veps>0$ be any positive number. A
path~$\gamma$ in~$G$ from~$x$ to~$y$ with length $|\gamma|_S <
D+\veps$ will be constructed, which will establish the result since
$d^i_G(x,y) \le |\gamma|_S$.

Let $\tx,\ty\in\partial P$ with $\pi(\tx)=x$ and $\pi(\ty)=y$. By
definition of~$d^q_G$, there is a $\cP$-chain
$((\tp_i,\tq_i))_{i=0}^k$ from $\tx$ to $\ty$ in $\partial P$ with
length
\[\rmL^{\cP}((\tp_i,\tq_i)) = \sum_{i=0}^k d_{\partial
  P}(\tp_i,\tq_i) < D+\veps.\] For each~$i$, let $\tgamma_i$ be a path
from $\tp_i$ to $\tq_i$ in~$\partial P$ of length $d_{\partial
  P}(\tp_i, \tq_i)$ (the metric~$d_{\partial P}$ is
strictly intrinsic by Theorem~\ref{thm:compact-intrinsic-strict}), and
let $\gamma_i = \pi\circ\tgamma_i$. Since $\tq_i\cP\tp_{i+1}$ for
each~$i$, the concatenation of the paths $\gamma_i$ is a path $\gamma$
in~$G$ from~$x$ to~$y$; and $|\gamma_i|_{S} \le |\tgamma_i|_P \le
|\tgamma_i|_{\partial P}=d_{\partial P}(\tp_i,\tq_i)$ (the first
inequality is from Remark~\ref{rmk:distance-decreasing}, the second
follows from $d_P \le d_{\partial P}$, and the equality is by choice
of the $\tgamma_i$). Hence $|\gamma|_S < D+\veps$ as required.

\textbf{To show $d^q_G \le d^i_G$:} Let~$x,y\in G$, and write
$D=d^i_G(x,y)$. Let~$\veps>0$ be any positive number. A $\cP$-chain
in $\partial P$ which connects points $\tx$ above~$x$ and $\ty$
above~$y$ will be constructed with length less than $D+\veps$,
which will establish the result since $d^q_G(x,y)$ is less than or
equal to the length of such a chain.

By definition of~$d^i_G$ there is a path~$\gamma\co[0,1]\to G$
from~$x$ to~$y$ with $|\gamma|_S < D+\veps$. That is, 
\[\sum_{i=0}^{k-1} d_S(\gamma(t_i), \gamma(t_{i+1})) < D+\veps\]
for any partition $0=t_0<t_1<\cdots<t_k=1$.

Construct such a partition as follows.  Let~$\te_0$ be a side of~$P$
which contains a point~$\tx_0$ with $\pi(\tx_0)=\gamma(0)=x$, and
let $t_1\in[0,1]$ be the greatest parameter for which there is a point
$\tx_1\in\te_0$ with $\pi(\tx_1)=\gamma(t_1)$.

If $t_1\not=1$, then there must be a side $\te_1\not=\te_0$ of~$P$
containing either $\tx_1$ or a point identified with
it. Let~$t_2\in[t_1,1]$ be the greatest parameter for which there is a
point $\tx_2\in\te_1$ with $\pi(\tx_2)=\gamma(t_2)$. If $t_2\not=1$,
then there is a side $\te_2$ distinct from $\te_1$ and $\te_0$
containing $\tx_2$ or a point identified with it. Let~$t_3\in[t_2,1]$ be
the greatest parameter for which there is a point $\tx_3\in\te_2$ with
$\pi(\tx_3)=\gamma(t_3)$. 

Since~$P$ has only finitely many sides, this process terminates after
a finite number of steps, yielding (after removing repeated parameters
if necessary) a partition $0=t_0<t_1<\cdots<t_k=1$ with the property
that, for each~$i$, there are points $\tp_i$ and $\tq_i$ above
$\gamma(t_i)$ and $\gamma(t_{i+1})$ which lie on the same side
of~$P$. In particular, $d_{\partial P}(\tp_i, \tq_i) =
d_S(\gamma(t_i), \gamma(t_{i+1}))$.

The sequence $((\tp_i,\tq_i))_{i=0}^{k-1}$ is therefore a $\cP$-chain
in $\partial P$ with length
\[\sum_{i=0}^{k-1}d_{\partial P}(\tp_i, \tq_i) = \sum_{i=0}^{k-1}
d_S(\gamma(t_i), \gamma(t_{i+1})) < D+\veps\]
as required.
\item  
Because $\pi\co\partial P\to G$ is distance non-increasing, it follows
from Lemma~\ref{lem:hausdorff-nonincreasing} that
\[\mu^1_G(\ol\cV) \le \mu^1_G(\pi(\ol E)) \le \mu^1_{\partial P}(\ol
E) = \rmm_{\partial P}(\ol E) = 0,\] where~$E\subset\partial P$ is the
set of endpoints of segments in segment pairings. Since
$\rmm_G(\ol\cV)=0$ as well, it is enough to show that $\mu^1_G =
\frac{1}{2}\rmm_G$ on $G\setminus \ol\cV$. However this set has
countably many connected components, each an open arc which is the
homeomorphic and locally isometric image under~$\pi$ of exactly two
disjoint open arcs in~$\partial P$, and the result follows.

Since~$\mu^1_G(G)>0$ is finite, $G$ has Hausdorff dimension~1 as
required.

\item Let~$\gamma$ be an arc in~$G$, consider a partition of~$\gamma$
  with points $x_0,\ldots,x_n\in\gamma$, and let~$\gamma_i$ be the
  subarc of~$\gamma$ with endpoints $x_i$ and $x_{i+1}$. Then, by
  Lemma~\ref{lem:hausdorff-connected},
\[\sum_{i=0}^{n-1} d_S(x_i, x_{i+1}) \le
\sum_{i=0}^{n-1}\diam_G(\gamma_i) \le \sum_{i=0}^{n-1} \mu^1_G(\gamma_i)
\le \mu^1_G(\gamma),\]
so that~$\gamma$ is rectifiable. That $|\gamma|_G =
\frac{1}{2}\rmm_G(\gamma)$ is immediate from~c) and
Lemma~\ref{lem:hausdorff-arc}. 
\end{enumerate}
\end{proof}

The main focus of this article is on paper-folding schemes whose
quotient is a surface and, in particular, a sphere.  For clarity of
exposition, attention will be concentrated on {\em plain} folding
schemes: these are both the most common and the simplest type of paper
foldings and for them the paper space is always a sphere and the scar
is always a dendrite (Theorem~\ref{thm:plain-top-structure}).

\begin{defns}[Unlinked pairing, plain paper-folding scheme]
\label{defns:linked-plain}
Let~$\gamma$ be a polygonal arc or polygonal simple closed curve.

Two pairs of (not necessarily distinct) points $\{x,x'\}$ and
$\{y,y'\}$ of~$\gamma$ are {\em unlinked} if one pair is contained in
the closure of a connected component of the complement of the
other. Otherwise they are {\em linked}.

A symmetric and reflexive relation~$\rmR$ on $\gamma$ is {\em
  unlinked} if any two unrelated pairs of related points are unlinked:
that is, if $x\,\rmR\,x'$, $y\,\rmR\,y'$, and neither $x$ nor~$x'$ is
related to either~$y$ or~$y'$, then $\{x,x'\}$ and $\{y,y'\}$ are
unlinked. 

An interior disjoint collection~$\cP$ of segment pairings on~$\gamma$
is {\em unlinked} if the corresponding relation~$\cP$ is unlinked.

A paper-folding scheme~$(P,\cP)$ is {\em plain} if~$P$ is a single
polygon and~$\cP$ is unlinked.
\end{defns}

\begin{example}
\label{ex:generalexample}
Consider the unit square $P=\{(a,b)\in\R^2:\,0\leq a,b\leq 1\}$ and pick
a decreasing sequence of real numbers $(a_i)_{i\in\N}$ such that
$\sum a_i= 1/2$.  Define the following segment pairings on $\partial P$
(Figures~\ref{fig:hsscheme} and~\ref{fig:cantorscheme}). The two vertical
sides are paired and the top side is folded in half. On the bottom
side define countably many folds of length $a_i$ (i.e., pairings of
two segments with one endpoint in common, each of length $a_i$),
with pairwise disjoint interiors.

In Figure~\ref{fig:hsscheme} the folds are placed in order of
decreasing length from right to left with coincident endpoints. The
complement of the folds is the bottom left vertex of~$P$: all of the
fold endpoints are identified with this vertex in the quotient
space~$S$.

In Figure~\ref{fig:cantorscheme}, the folds are arranged in such a way
that they are all disjoint. The closure of the complement of the folds
is a Cantor set of Lebesgue measure~0, uncountably many of whose
points are unpaired. This Cantor set is identified to a point in the
quotient space~$S$.

In both examples (and for any other way of arranging the folds) the
paper-folding scheme is unlinked, and hence the quotient space is a
topological sphere by Theorem~\ref{thm:plain-top-structure}
below. Notice also that the scars corresponding to the two schemes are
isometric $\infty$-ods (Definition~\ref{defn:infty-od}).

These simple examples will be revisited in
Example~\ref{ex:HorseshoeCantor} below.

\begin{figure}[htbp]
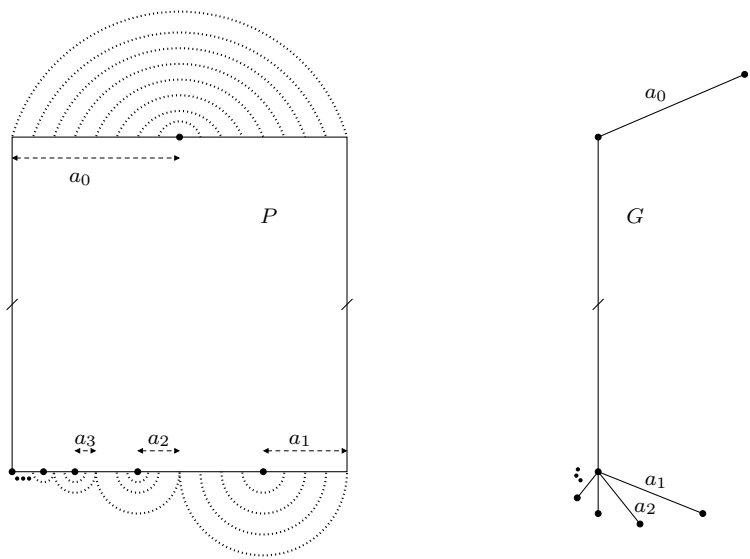

\lab{a0}{a_0}{}\lab{a1}{a_1}{}\lab{a2}{a_2}{}\lab{a3}{a_3}{}
\lab{P}{P}{}\lab{G}{G}{}
\begin{center}
\pichere{0.6}{hsscheme}
\end{center}
\caption{A paper-folding scheme and the associated scar.}
\label{fig:hsscheme}
\end{figure}

\begin{figure}[htbp]
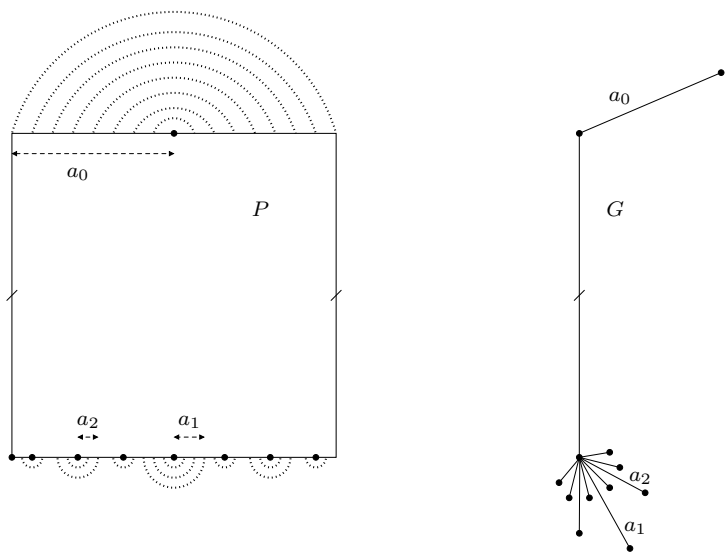

\lab{a0}{a_0}{}\lab{a1}{a_1}{}\lab{a2}{a_2}{}\lab{a3}{a_3}{}
\lab{P}{P}{}\lab{G}{G}{}
\begin{center}
\pichere{0.6}{cantorscheme}
\end{center}
\caption{Another paper-folding scheme and the associated scar.}
\label{fig:cantorscheme}
\end{figure}
\end{example}

\section{The topological and metric structure of paper spaces}
\label{sec:topol-struct-plain}

The first main theorem of this section,
Theorem~\ref{thm:plain-top-structure}, states that the paper space
associated to a plain paper folding is a sphere, and its scar is a
dendrite. The tools used to prove this result are the dendrite
  quotient theorem (Theorem~\ref{thm:dendritequot}), which gives
sufficient conditions for a quotient of a continuum to be a dendrite;
and Moore's theorem (Theorem~\ref{thm:Moore}), which gives
sufficient conditions for a quotient of a sphere to be a sphere.

 In Section~\ref{subsec:general-top-struct} the topological structure
 of general paper spaces is considered: it is shown that they are
 paper surfaces provided that the boundary identifications are only
 finitely linked, and the structure of the scar in this case is
 described (Theorem~\ref{thm:top-structure}). Finally,
 Section~\ref{subsec:metric-structure} contains a discussion of the
 metric structure of paper spaces.

\subsection{The plain paper folding structure theorem}
\label{subsec:plain-top-structure}

Many of the technical lemmas needed in the proof of the plain paper
folding structure theorem are common to the general case and are not
simplified by the plainness assumption. Thus in the beginning of this
section $(P,\cP)$ is a general paper folding scheme: that is, $\cP$ is
a full interior disjoint collection of segment pairings on the
boundary $\partial P$ of a multipolygon~$P$; the equivalence relation
induced by the semi-metric $d_P^\cP$ is denoted $\sim_\cP$; the paper
space is denoted $S$, the scar $G$, and the projection map $\pi\co
P\to S$; and if $x\in P$, its $\sim_\cP$-equivalence class is denoted
$[x]$.

\begin{defns}[Plain arc, maximal plain arc]
\label{defns:unlinked-plain}
Let $(P,\cP)$ be a paper-folding scheme. An arc $\gamma\sbs
\partial P$ is {\em plain} if:
\begin{enumerate}[a)]
\item Every pairing in $\cP$ which intersects
the interior of $\gamma$ is contained in $\gamma$ (that is, if
$\ssegpair{\alpha}$ is a segment pairing and either~$\alpha$
or~$\alpha'$ intersects the interior of~$\gamma$, then both $\alpha$
and $\alpha'$ are contained in~$\gamma$); and 
\item The restriction of $\cP$ to $\gamma$ is unlinked
  (Definitions~\ref{defns:linked-plain}).
\end{enumerate}
A component $\gamma$ of $\partial P$ is {\em plain} if it is
$\cP$-saturated and the restriction of $\cP$ to $\gamma$ is unlinked.

A plain arc is {\em maximal} if it is not strictly contained in any
plain arc or component. 
\end{defns}

\begin{rmks}\mbox{}
\label{rmks:unlinkedplain}
\begin{enumerate}[a)]
\item Recall (Definitions~\ref{defns:linked-plain}) that a paper
  folding scheme~$(P,\cP)$ is itself called plain if $P$ is a single
  polygon and $\cP$ is unlinked. With the definitions just introduced
  this is equivalent to saying that $P$ consists of a single polygon
  whose boundary $\partial P$ is plain as just defined.
\item 
  Every plain arc contains at least one pairing: pairings are dense in
  $\partial P$ and so intersect the interior of --- and thus must be
  contained in --- any plain arc or component.
\item If every pairing which intersects the interior of $\gamma$ is
  contained in $\gamma$, then the same holds for the complement of
  $\gamma$ in $\partial P$: every pairing which intersects $\partial
  P\setminus\gamma$ is contained in $\ol{\partial
  P\setminus\gamma}$. 
\item The union of two plain arcs with non-empty intersection is
  plain. Hence any two distinct maximal plain arcs are disjoint.
\end{enumerate}
\end{rmks}

\begin{lem}\label{lem:plainendpts}
If $\gamma$ is a plain arc, then its endpoints are
$\sim_\cP$-equivalent.  
\end{lem}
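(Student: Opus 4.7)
The plan is to show that $d_P^\cP(p,q)=0$, so that $p\sim_\cP q$ by definition. Parametrize $\gamma$ by arc length via $\sigma\co[0,L]\to\gamma$ with $p=\sigma(0)$ and $q=\sigma(L)$. For each pairing $\ssegpair{\alpha}\in\cP$ contained in $\gamma$, write $\alpha=\sigma([a,b])$ and $\alpha'=\sigma([c,d])$ with $b\le c$, and call $[a,d]\sbs[0,L]$ its \emph{range}. Since the pairing is length-preserving and orientation-reversing, the outer endpoints are identified: $\sigma(a)\,\cP\,\sigma(d)$.

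First I would establish two structural facts. \emph{Laminarity of ranges:} any two distinct ranges are either disjoint or one is properly contained in the other. This is essentially a reformulation of the unlinked hypothesis: if two ranges overlapped without nesting, then choosing the midpoints of the component intervals as paired representatives $\{x,x'\}$ and $\{y,y'\}$ (or making small perturbations if the midpoints coincide) would exhibit a linked configuration, contradicting the unlinked condition of Definitions~\ref{defns:linked-plain}. \emph{Coverage:} the union of all ranges has Lebesgue measure $L$ in $[0,L]$. This follows because pairings cover $\partial P$ up to measure zero (as $|\cP|$ equals half the length of $\partial P$ and the pairings are interior disjoint); every pairing meeting the interior of $\gamma$ lies entirely inside $\gamma$ by plainness; and each pairing sits inside its range.

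With these two facts in hand, fix $\veps>0$. By finite additivity, finitely many ranges have union of measure greater than $L-\veps$. By laminarity, the maximal elements of this finite collection are pairwise disjoint and have the same union. Relabeling, we obtain pairwise disjoint ranges $[a_{i_1},d_{i_1}]<\cdots<[a_{i_n},d_{i_n}]$ (so $d_{i_j}<a_{i_{j+1}}$) with $\sum_{j=1}^n(d_{i_j}-a_{i_j})>L-\veps$. Consider the $\cP$-chain from $p$ to $q$ in the sense of Definition~\ref{defn:quotmetric} given by
\[p=\sigma(0)\;\longrightarrow\;\sigma(a_{i_1})\;\sim_{\cP}\;\sigma(d_{i_1})\;\longrightarrow\;\sigma(a_{i_2})\;\sim_{\cP}\;\cdots\;\sim_{\cP}\;\sigma(d_{i_n})\;\longrightarrow\;\sigma(L)=q,\]
where each $\longrightarrow$ is a paid move along $\gamma$ and each $\sim_{\cP}$ is a free outer-endpoint identification via one of the chosen pairings. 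Since $\sigma$ is arc-length parametrized inside $\partial P\sbs P$, a paid move between $\sigma(s)$ and $\sigma(t)$ has $d_P$-length at most $|s-t|$, and the total paid length is bounded by
\[a_{i_1}+\sum_{j=1}^{n-1}\bigl(a_{i_{j+1}}-d_{i_j}\bigr)+\bigl(L-d_{i_n}\bigr)=L-\sum_{j=1}^n\bigl(d_{i_j}-a_{i_j}\bigr)<\veps.\]
As $\veps$ is arbitrary, $d_P^\cP(p,q)=0$ and hence $p\sim_\cP q$.

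The main obstacle is the laminarity step. The unlinked hypothesis is a pointwise condition on pairs of identified points, whereas laminarity is a geometric statement about the enclosing intervals, which include the enclosed region $[b,c]$ between the two paired segments --- a region not itself covered by the pairing. Translating one into the other requires a careful case analysis on the possible relative positions of the four endpoints $a,b,c,d$ of one pairing and the four endpoints of another whose range overlaps it without nesting, checking in each case that one can exhibit a linked pair of paired points. Once laminarity is established, the coverage estimate and the chain construction are routine.
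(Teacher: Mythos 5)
Your proof is correct and follows essentially the same route as the paper: both show $d_P^\cP$ between the endpoints is less than any $\veps$ by taking, via fullness, a finite family of pairings in $\gamma$ covering it up to length $\veps$, building a $\cP$-chain that jumps across each pairing through the identification of its two outer endpoints (the orientation-reversing fold structure), and using unlinkedness to guarantee the paid gaps avoid the chosen pairings and so total less than $\veps$ — the paper just organizes this as a greedy leftmost-endpoint induction where you instead pass to maximal elements of the laminar family of ranges. The laminarity step you flag does go through: interior-disjointness forces two crossing ranges into the configuration $a<b\le a'<b'\le c<d\le c'<d'$, and the four segment midpoints are then unrelated paired points in alternating (linked) position, contradicting unlinkedness, which is exactly where the paper invokes plainness inside its induction.
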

\begin{proof}
Let~$a$ and $b$ be the endpoints of~$\gamma$: for convenience,
they will be referred to as the left and right endpoints respectively,
and the points of~$\gamma$ will be ordered from $a$ to $b$.

Let~$\veps>0$ be any positive number. It will be shown that
$d^\cP_P(a,b)<\veps$, which will establish the result. 

By fullness, there is a finite collection of segment pairings
$\{\llangle\alpha_r,\alpha_r'\rrangle\}_{r=1}^N$ contained in $\gamma$ which
cover $\gamma$ to within length~$\veps$. Label the segments so that
$\alpha_r$ is to the left of~$\alpha_r'$ for each~$r$.

Let~$p_0=a$, and construct a $\cP$-chain from $a$ to $b$
inductively as follows: for each~$i\ge 0$, let~$q_i$ be the leftmost
point of $[p_i,b]$ which is a left hand endpoint of
some~$\alpha_{r(i)}$, and let~$p_{i+1}$ be the right hand endpoint of
$\alpha'_{r(i)}$, so that $q_i\, \cP\, p_{i+1}$. Since the
subcollection of segment pairings is finite, there is some~$k$ such
that $[p_k,b]$ is disjoint from all of the~$\alpha_r$:
set~$q_k=b$. By construction, $p_0\le q_0 < p_1 \le q_1 <
\cdots <p_k \le q_k$.

The intervals $(p_i,q_i)$ are disjoint from all of the segments
$\alpha_r$ by choice of $q_i$. They are also disjoint from the
$\alpha_r'$. For if $(p_i, q_i)$ were to contain
some~$\alpha_r'$, then~$\alpha_r$ couldn't lie in any $(p_j, q_j)$
with $j<i$ as above; and if $\alpha_r$ were to lie in
$(q_j, p_{j+1})$ then the segments
$\alpha_{r(j)}<\alpha_r<\alpha'_{r(j)} < \alpha'_r$ would be linked,
contradicting plainness.

Hence \[d^\cP_P(a,b) \le \rmL^\cP((p_i,q_i)) = \sum_{i=0}^k d(p_i, q_i) <
\veps\]
as required.

\end{proof}

\begin{defn}[Subpairing]
\label{defn:subpairing}
Let~$\llangle\alpha,\alpha'\rrangle$ be a pairing in~$\cP$,
$\beta$~be a subarc of~$\alpha$, and $\beta'$ be the corresponding
subarc of~$\alpha'$. Then $\llangle\beta,\beta'\rrangle$ is said to
be a {\em subpairing} of $\llangle\alpha,\alpha'\rrangle$.
\end{defn}

\begin{lem}
\label{lem:separatingpairing}
Let~$\gamma$ be a plain arc or component of~$\partial P$, and
$\llangle\alpha,\alpha'\rrangle$ be a pairing in~$\cP$, or a
subpairing of a pairing in~$\cP$, which is contained
in~$\gamma$. Let~$z\in \gamma\setminus(\interior{\alpha}\cup
\interior{\alpha'})$, and if $\gamma$ is an arc then assume that
$\interior{\alpha}\cup \interior{\alpha'}$ separates~$z$ from the
endpoints of~$\gamma$.

Then the~$\cP$-semi-distance along~$\partial P$ from $z$ to any
point~$w$ in a different component of $\partial P \setminus
(\interior{\alpha} \cup \interior{\alpha'})$ is bounded below by the
length~$L$ of $\llangle\alpha,\alpha'\rrangle$:
\[d_{\partial P}^\cP(z,w)\ge 
 L := \left|\llangle\alpha,\alpha'\rrangle\right|.\]
\end{lem}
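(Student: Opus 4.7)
The plan is to construct a 1-Lipschitz, $\cP$-invariant function $\phi\co\partial P\to[0,L]$ satisfying $\phi(z)=L$ and $\phi(w)=0$, and then deduce the bound by a telescoping argument. Given any $\cP$-chain $((p_i,q_i))_{i=0}^k$ from $z$ to $w$, the $\cP$-invariance of $\phi$ gives $\phi(p_0)=\phi(z)$, $\phi(p_{i+1})=\phi(q_i)$ and $\phi(q_k)=\phi(w)$, while the 1-Lipschitz property yields $|\phi(q_i)-\phi(p_i)|\le d_{\partial P}(p_i,q_i)$. Therefore
\[L \;=\; |\phi(w)-\phi(z)| \;=\; \left|\sum_{i=0}^k\bigl(\phi(q_i)-\phi(p_i)\bigr)\right| \;\le\; \sum_{i=0}^k d_{\partial P}(p_i,q_i) \;=\; \rmL^\cP((p_i,q_i)),\]
and taking the infimum over all such chains yields $d^\cP_{\partial P}(z,w)\ge L$.

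To define $\phi$, parameterize $\alpha$ and $\alpha'$ by arclength as $\alpha(t),\alpha'(t)$ for $t\in[0,L]$ so that the pairing identifies $\alpha(t)\sim\alpha'(t)$ (this is possible since the pairing is length-preserving and orientation-reversing). Let $C_z$ be the component of $\partial P\setminus(\interior\alpha\cup\interior{\alpha'})$ containing $z$: the separation hypothesis in the arc case --- or the fact that $\gamma$ is a simple closed curve in the component case --- guarantees that $C_z\subset\gamma$ and is disjoint from $\partial P\setminus\gamma$. Choose the parameterization orientation so that $\alpha(L),\alpha'(L)\in\overline{C_z}$ while $\alpha(0),\alpha'(0)$ lie in the other adjacent component. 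Then set
\[\phi(x)=\begin{cases} t & \text{if $x=\alpha(t)$ or $x=\alpha'(t)$,}\\ L & \text{if $x\in C_z$,}\\ 0 & \text{otherwise.}\end{cases}\]
Clearly $\phi(z)=L$ and $\phi(w)=0$. Continuity holds because the boundary values of $\phi$ on $\alpha,\alpha'$ match those of the adjacent constant pieces. The 1-Lipschitz property follows because $\phi$ varies with slope~$1$ along the two segments and is locally constant elsewhere, while points in different components of $\partial P$ are at infinite $d_{\partial P}$-distance.

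The main content of the proof is the verification of $\cP$-invariance. Invariance under the pairing $\llangle\alpha,\alpha'\rrangle$ is built into the construction. If $\llangle\alpha,\alpha'\rrangle$ is a subpairing of some $\llangle\halpha,\halpha'\rrangle\in\cP$, then the extending portions $\halpha\setminus\alpha$ and $\halpha'\setminus\alpha'$ are also paired, but the length-preserving, orientation-reversing nature of the pairing together with the linear positioning of $\alpha,\alpha'$ inside $\halpha,\halpha'$ pairs each such point with a partner that lies on the same side of $\interior\alpha\cup\interior{\alpha'}$ --- either both in $\overline{C_z}$ or both in its complement --- so $\phi$ is preserved. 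For any other pairing $\llangle\beta,\beta'\rrangle\in\cP$, either $\beta\cup\beta'$ is disjoint from $\gamma$ (or from $\interior\gamma$ in the arc case), in which case both segments lie in $\{\phi=0\}$ since $C_z\subset\gamma$; or the plain arc/component condition forces $\beta\cup\beta'\subset\gamma$, and then interior-disjointness with $\alpha,\alpha'$ together with the unlinkedness of $\cP$ restricted to $\gamma$ places both $\beta$ and $\beta'$ in the closure of a single component of $\gamma\setminus(\interior\alpha\cup\interior{\alpha'})$, on which $\phi$ is constant.

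The main obstacle is this case analysis for $\cP$-invariance, especially the subpairing case, which requires checking that the additional identifications outside $\alpha,\alpha'$ are still $\phi$-preserving, and the careful choice of orientation so that $C_z$ is matched with the endpoints $\alpha(L),\alpha'(L)$. Once $\phi$ is in hand with these two properties, the telescoping bound is immediate.
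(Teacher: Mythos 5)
Your strategy --- an explicit $1$-Lipschitz, $\cP$-invariant potential $\phi$ plus the telescoping bound over $\cP$-chains --- is a legitimate route and is packaged differently from the paper's proof, which works directly on an arbitrary $\cP$-chain: there, unlinkedness is used to show that the component $\gamma_1$ of $\partial P\setminus(\interior{\alpha}\cup\interior{\alpha'})$ containing $z$ and the rest $\gamma_2$ are $\cP$-saturated, the last chain point in $\gamma_1$ and the first subsequent point in $\gamma_2$ are located, and the arclength parameters along $\alpha,\alpha'$ are telescoped. Both arguments hinge on the same two facts (the orientation-reversing pairing matches points at equal distance from the ends adjacent to your $C_z$, and unlinkedness confines every other identification to one side), and your telescoping step, the Lipschitz and continuity checks, the normalization $\phi(z)=L$, $\phi(w)=0$, and the treatment of the subpairing extension $\halpha\setminus\alpha\leftrightarrow\halpha'\setminus\alpha'$ are all correct.

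One step of your invariance check is, however, misstated. For a pairing $\llangle\beta,\beta'\rrangle$ contained in $\gamma$ and distinct from the one containing $\llangle\alpha,\alpha'\rrangle$, you assert that unlinkedness places $\beta$ and $\beta'$ in the closure of a \emph{single} component of $\gamma\setminus(\interior{\alpha}\cup\interior{\alpha'})$. In the arc case this is false: when $\alpha\cup\alpha'$ is nested between $\beta$ and $\beta'$ (order $\beta<\alpha<\alpha'<\beta'$ along $\gamma$), the two pairings are unlinked, yet $\beta$ and $\beta'$ lie in the two distinct outer components. This does not sink the proof, because both outer components carry $\phi\equiv 0$, so $\phi$ is still preserved; the only configuration that must be excluded --- and is excluded by precisely the linking argument you gesture at, applied to interior pairs of the two pairings --- is the one in which exactly one of $\beta,\beta'$ meets $\overline{C_z}$ (where $\phi=L$) while the other does not. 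So the claim should be weakened from ``same component'' to ``not separated with exactly one of them on the $C_z$ side'', which is the statement the paper encodes as $\cP$-saturation of $\gamma_1$ and $\gamma_2$; with that repair your proof is complete.
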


\begin{proof}
Let~$\gamma_1$ be the component of
$\partial P \setminus (\interior{\alpha} \cup \interior{\alpha'})$
which contains~$z$, and let~$\gamma_2$ denote $\partial P \setminus
(\gamma_1 \cup \interior{\alpha} \cup \interior{\alpha'})$. Since
all of the pairings in~$\cP$ are unlinked with
$\llangle\alpha,\alpha'\rrangle$, both~$\gamma_1$ and $\gamma_2$
are~$\cP$-saturated. 

Let~$((p_i,q_i))_{i=0}^k$ be a $\cP$-chain in $\partial P$ from $z$ to
$w$. By saturation of $\gamma_1$, if $q_r\in \gamma_1$ then
$p_{r+1}\in\gamma_1$: hence the last point of the chain which lies in
$\gamma_1$ must be some~$p_r$; and by saturation of~$\gamma_2$, the
first point of the chain after~$p_r$ which lies in~$\gamma_2$ must be
some $q_s$ with $s\ge r$.

If $s=r$ then $d_{\partial P}(p_r, q_r) \ge L$. If $s>r$, then the
points $q_r,p_{r+1},q_{r+1},\ldots, p_s$ all lie in
$\interior{\alpha} \cup \interior{\alpha'}$. Let
$\xi,\xi':[0,L] \to
\partial P$ be parameterisations of $\alpha$ and $\alpha'$ by arc
length, and let~$t_i$ be parameters such that $q_i$ and $p_{i+1}$
are equal to $\xi(t_i)$ or $\xi'(t_i)$ for $r\le i < s$. Then
\[d_{\partial P}^\cP(z,w)\ge \sum_{i=r}^s d_{\partial P}(p_r,
  q_r) \ge t_r + \sum_{i=r+1}^{s-1}|t_{i}-t_{i-1}| + |L-t_{s-1}| \ge L\]
as required.
\end{proof}

Recall that an arc~$\gamma$ is plain if the pairing relation~$\cP$
has the properties that: every pairing in~$\cP$ which intersects
$\interior{\gamma}$ is contained in~$\gamma$; and the restriction
of~$\cP$ to $\gamma$ is unlinked. The following lemma states that
analogous properties also hold for the equivalence relation $\sim_\cP$.

\begin{lem}\label{lem:plainequivsaturated}
Let $\gamma$ be a plain arc and let $a,b$ denote its
endpoints. Then $\sim_\cP$ is unlinked on
$\gamma$ and $\gamma\setminus[a] = \gamma\setminus[b]$ is
$\sim_\cP$-saturated. Similarly, if $\gamma$ is a plain
component of $\partial P$, then $\sim_\cP$ is unlinked on $\gamma$ and
$\gamma$ is $\sim_\cP$-saturated.   
\end{lem}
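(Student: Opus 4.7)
The equality $\gamma\setminus[a]=\gamma\setminus[b]$ is immediate from Lemma~\ref{lem:plainendpts} (which gives $[a]=[b]$), so the real content is the $\sim_\cP$-saturation of $\gamma\setminus[a]$ and the unlinkedness of $\sim_\cP$ on~$\gamma$. My strategy combines Lemma~\ref{lem:separatingpairing} with Lemma~\ref{lem:metricstructureofG}(b), the latter allowing me to translate a positive lower bound on $d_{\partial P}^\cP$ into a genuine non-equivalence under~$\sim_\cP$. The technical engine is the following \emph{sandwich lemma}: for every $x\in\gamma$ with $x\not\sim_\cP a$, there is a pairing or subpairing $\llangle\alpha,\alpha'\rrangle\sbs\gamma$ with $\alpha$ strictly on the $a$-side of~$x$ and $\alpha'$ strictly on the $b$-side. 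I would prove this by contradiction: if $x$ lies in no interior of any pairing segment, then absence of a sandwich forces every pairing of $\cP|_\gamma$ to lie entirely in $[a,x]$ or entirely in $[x,b]$, which makes $[a,x]$ a plain arc and gives $a\sim_\cP x$ by Lemma~\ref{lem:plainendpts}; if $x\in\interior\alpha$ for some pairing, the orientation-reversing identification turns sub-segments of~$\alpha$ on the $a$-side of~$x$ into sub-segments of~$\alpha'$ on the $b$-side, producing the desired sandwich.

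For the saturation claim I argue by contradiction. Suppose $x\in\gamma\setminus[a]$ and $y\sim_\cP x$ with $y\notin\gamma$. Since equivalence classes in $\Int P$ are singletons, $y\in\partial P\setminus\gamma$. The sandwich lemma provides $\llangle\alpha,\alpha'\rrangle\sbs\gamma$ for which $x$ lies in the middle component of $\partial P\setminus(\interior\alpha\cup\interior\alpha')$ while $y$ lies in the outer component (which contains all of $\partial P\setminus\gamma$). Lemma~\ref{lem:separatingpairing} gives $d_{\partial P}^\cP(x,y)\ge L>0$, where $L$ is the length of the sandwich. However, by Lemma~\ref{lem:metricstructureofG}(b) the intrinsic metric on~$G$ agrees with $d_{\partial P}^\cP$, so $x\sim_\cP y$ forces $d_{\partial P}^\cP(x,y)=0$, a contradiction.

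For the unlinkedness claim, assume there are $\sim_\cP$-linked pairs $\{x,x'\},\{y,y'\}\sbs\gamma$ with $[x]\neq[y]$; after relabelling I may assume the order on~$\gamma$ is $a<x<y<x'<y'<b$. If any pairing of $\cP|_\gamma$ has one segment in~$(x,x')$ and the other outside, then Lemma~\ref{lem:separatingpairing} with Lemma~\ref{lem:metricstructureofG}(b) contradicts $x\sim_\cP x'$; similarly for $\{y,y'\}$. So I would show that the alternative is impossible: using the unlinkedness of~$\cP$ together with subpairing refinements to handle segments that straddle a marked point, every pairing of $\cP|_\gamma$ must then lie entirely in one of the sub-arcs $[a,x]$, $(x,y)$, $(y,x')$, $(x',y')$, $[y',b]$, or span from $[a,x]$ to $[y',b]$. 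In particular no pairing meets $\interior[x,y]$ except those contained in~$[x,y]$, so $[x,y]$ is a plain arc; Lemma~\ref{lem:plainendpts} then gives $x\sim_\cP y$, contradicting $[x]\neq[y]$.

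The plain component case is treated by the same pattern: its $\cP$-saturation is built into the definition, and the sandwich and classification arguments adapt to the circular structure (the role of $(a,b)$-separation being played by any auxiliary choice of cut point). The main obstacle is the case analysis in the unlinkedness proof, specifically the bookkeeping needed to dispose of pairings whose segments have marked points in their interior (handled by splitting into subpairings) and the special treatment required when one of the classes $[x]$, $[y]$ coincides with $[a]$, in which case the saturation statement for $\gamma\setminus[a]$ is unavailable and the sandwich/separation machinery must be invoked directly on the remaining pair.
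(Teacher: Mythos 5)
Your proposal is correct and follows essentially the same route as the paper: the same dichotomy (either a separating pairing or subpairing exists, in which case Lemma~\ref{lem:separatingpairing} confines the equivalence class to $\gamma$, or else $[a,x]$ and $[x,b]$ are plain and Lemma~\ref{lem:plainendpts} gives $x\sim_\cP a$), and the same reduction of a linked configuration to plainness of the intermediate arc $[x,y]$ via Lemma~\ref{lem:plainendpts}. Your explicit appeal to Lemma~\ref{lem:metricstructureofG}b) to convert the lower bound on $d_{\partial P}^\cP$ into non-equivalence under $\sim_\cP$ makes precise a step the paper leaves implicit, but it is a repackaging rather than a different method.
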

\begin{proof}
\textbf{Unlinkedness:} Assume that~$\gamma$ is
a plain arc: the case where it is a plain component of $\partial P$ is
analogous. Let~$x$, $x'$, $y$, and $y'$ be points
of~$\gamma$ with $x\sim_\cP x'$ and $y\sim_\cP y'$, such that
the pairs $\{x,x'\}$ and $\{y,y'\}$ are linked (and in
particular all four points are distinct). It is necessary to show that
all four points are $\sim_\cP$ equivalent. 

Without loss of generality, suppose that $a \le x < y < x' <
y' \le b$ with respect to an order on~$\gamma$. The three arcs
$[x,y]$, $[y,x']$, and $[x',y']$ are all plain, by
Lemma~\ref{lem:separatingpairing}: for example, if a point of
$(x,y)$ were paired with a point of $[a,x)$ or a point of
  $(x',b]$, the segment pairing realizing this would separate
$x$ from $x'$, contradicting $x\sim_\cP x'$; while if it were
paired with a point of $(y,x']$ the segment pairing would separate
  $y$ from $y'$.  Hence $x\sim_\cP y\sim_\cP x'\sim_\cP y'$
  by Lemma~\ref{lem:plainendpts}.

\textbf{Saturation:} Let~$\gamma$ be a plain arc, and
$x\in\interior{\gamma}$. It is required to show that either
$x\sim_\cP a$, or $[x]\sbs\gamma$.

If there is a pairing or subpairing in~$\gamma$ which separates $x$
from $a$ and $b$, then it separates $x$ from $\partial P
\setminus \gamma$, and $[x]\sbs\gamma$ by
Lemma~\ref{lem:separatingpairing}.

If there is no such pairing, then the arcs $[a,x]$ and $[x,b]$
are both plain, and $[x] = [a] = [b]$ by Lemma~\ref{lem:plainendpts}.

If $\gamma$ is a plain component of~$\partial P$ then it is clear
that no point of $\gamma$ can be $\sim_\cP$-equivalent to a point in
another boundary component of~$P$.
\end{proof}

\begin{thm}[Topological structure of a plain paper folding]
\label{thm:plain-top-structure}
The quotient $S$ of a plain paper-folding scheme is a topological
sphere, and its scar~$G$ is a dendrite.
\end{thm}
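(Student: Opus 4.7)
I would split the argument into two parts, establishing first that $G$ is a dendrite and then using this to show that $S$ is a topological sphere.

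\emph{Part 1 (the scar is a dendrite).} Since $\partial P$ is compact, Theorem~\ref{thm:compact-metr-top-agree} identifies $G$ with the topological quotient $\partial P/\sim_\cP$. I would apply the dendrite quotient theorem (Theorem~\ref{thm:dendritequot}) to the decomposition $\cG_0$ of $\partial P$ into $\sim_\cP$-classes. Non-separatedness follows from unlinkedness (Lemma~\ref{lem:plainequivsaturated}): if points $a,b$ of a class $g$ were to separate points $y,y'$ of another class $g'$ on the circle $\partial P$, then $\{a,b\}$ and $\{y,y'\}$ would be linked. For the dendritic property, given $g\in\cG_0$ and $y\notin g$, I would locate $y$ in an open arc $I$ of $\partial P\setminus g$ and use fullness of $\cP$, together with the nested structure guaranteed by unlinkedness, to find a pairing $\lang\alpha,\alpha'\rang$ with $\alpha,\alpha'\subset I$ lying on opposite sides of $y$. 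Interior points $p\in\interior\alpha,\,p'\in\interior\alpha'$ each lie in exactly one pairing, so $g'=\{p,p'\}$ is a full class, and Lemma~\ref{lem:separatingpairing} ensures $g'$ separates $y$ from $g$.

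\emph{Part 2 ($S$ is a topological sphere).} I would embed $P$ as a closed disk in $S^2$, with complementary disk $D'=\ol{S^2\setminus P}$, and extend $\pi\co\partial P\to G$ to a continuous surjection $\Pi\co D'\to G$ whose fibers are connected. To build $\Pi$, I would exploit the unlinked structure of $\cP$: for each pairing $\lang\alpha,\alpha'\rang$ choose a foliated bigon in $D'$ whose leaves join paired points, arranging the bigons from distinct pairings to be pairwise nested or disjoint. Taking closures and unions (e.g.\ using hyperbolic convex hulls for limit classes such as Cantor sets) yields connected fibers $T_g=\Pi^{-1}(g)$ that extend each $\sim_\cP$-class. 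The resulting decomposition $\cG$ of $S^2$ consisting of the fibers $T_g$ together with singletons $\{x\}$ for $x\in\Int P$ is musc (monotone by connectedness of fibers; upper semi-continuous from closedness of $\sim_\cP$ and continuity of $\Pi$), and no element separates $S^2$: each $T_g$ lies in $D'$, while $\Int P$ is connected and meets every component of $S^2\setminus T_g$ through the non-empty arc complement $\partial P\setminus\pi^{-1}(g)$.

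By Moore's theorem (Theorem~\ref{thm:Moore}), $S^2/\cG\cong S^2$. The $\cG$-classes partition $S^2$ into points of $\Int P$ and fibers $T_g$, one for each $g\in G$, so the underlying set of $S^2/\cG$ is naturally $\Int P\sqcup G$, which is $S$. The induced continuous bijection $S^2/\cG\to S$ between compact Hausdorff spaces is a homeomorphism, giving $S\cong S^2$. The main obstacle is the continuous construction of $\Pi$: the $\sim_\cP$-classes can be intricate (as in Example~\ref{ex:generalexample}, where a Cantor set collapses to a point), so the fibers extending them must sometimes fill two-dimensional subregions of $D'$ while remaining pairwise disjoint. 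The dendrite structure of $G$ from Part~1 together with the nested/unlinked structure of $\cP$ make this simultaneous extension possible.
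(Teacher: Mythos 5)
Your plan is essentially the paper's: identify the metric and topological quotients via compactness, prove that $G$ is a dendrite by applying Theorem~\ref{thm:dendritequot} to the decomposition of $\partial P$ into $\sim_\cP$-classes, and obtain the sphere by filling the complementary disk of $P$ with a structure dictated by the unlinked pairings and applying Moore's theorem (Theorem~\ref{thm:Moore}). Within that shared outline there are two places where your sketch skips the step that carries the weight. In Part~1, the existence of a pairing with both segments in $I$ on opposite sides of $y$ does not follow from fullness and unlinkedness alone — for a general point of a general arc no such straddling pairing need exist. It holds here only because $y\notin g$, and the argument is by dichotomy: if every pairing meeting $I$ lay on one side of $y$ (or had $y$ as a common endpoint), then both subarcs $[x_1,y]$ and $[y,x_2]$ would be plain, and Lemma~\ref{lem:plainendpts} would force $y\sim_\cP x_1\in g$, a contradiction. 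This is exactly the case analysis in the paper's proof, and it needs to be said; as written, your "use fullness\ldots to find a pairing" is an assertion, not a deduction.

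The more substantive gap is in Part~2: everything rests on the existence of the continuous map $\Pi\co D'\to G$ — equivalently, on the fibers $T_g$ being pairwise disjoint, connected, non-separating, realizing exactly the $\sim_\cP$-classes, and forming an upper semi-continuous decomposition — and you leave this as an acknowledged "main obstacle," asserting that the nested structure makes it possible. That verification is the actual content of the paper's argument. The paper makes the filling concrete: it places $P$ as the exterior of the unit disk, joins each interior pair by the hyperbolic geodesic with those endpoints (disjointness of the geodesics is precisely unlinkedness), takes the closed lamination $\cL$, and declares the decomposition elements to be closures of complementary components of $\cL$, remaining geodesics together with their endpoints, and points. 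It then proves that this decomposition realizes $\sim_\cP$, via the equivalence that $x\sim_\cP y$ iff $[x,y]$ is plain iff $x$ and $y$ lie in a common gap closure or on a common leaf (using Lemmas~\ref{lem:plainendpts} and~\ref{lem:separatingpairing}), and it establishes upper semi-continuity by a direct convergence argument on geodesics. Your foliated-bigon/convex-hull variant can be made to work and would yield the same decomposition in substance, but until the fibers are pinned down concretely and the continuity of $\Pi$ (equivalently usc) is actually proved — closedness of $\sim_\cP$ by itself does not deliver it, since one must also control how the two-dimensional fibers accumulate — the proof is incomplete at precisely its hardest step.
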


\begin{proof}
Since both $\partial P$ and $P$ are compact, the scar~$G$ and the
paper space~$S$ are homeomorphic to the topological quotients
$\partial P/{\sim_\cP}$ and $P/{\sim_\cP}$ respectively by
Theorem~\ref{thm:compact-metr-top-agree}.

To show that~$G$ is a dendrite, it is enough by
Theorem~\ref{thm:dendritequot} to show that the decomposition~$\cG$ of
$\partial P$ into equivalence classes of~$\sim_\cP$ is
dendritic. That~$\cG$ is non-separated is immediate by
Lemma~\ref{lem:plainequivsaturated}: saying that $\sim_\cP$ is
unlinked is the same as saying that $\cG$ is non-separated. Thus it
remains to show that if $[x]\in\cG$ and $y\not\in[x]$, then there is
some $[z]\in\cG$ which separates $y$ and~$[x]$.

Let~$A\sbs\partial P$ be the maximal arc containing~$y$ which is
disjoint from~$[x]$. Thus~$A$ is an open arc with endpoints $x_1, x_2
\in[x]$ (possibly $x_1=x_2$ if $[x]$ is a single point). Since
$\sim_\cP$ is unlinked, $A$ is $\sim_\cP$-saturated.

If there are paired points~$z_1$ and~$z_2$ in different components
of~$A\setminus\{y\}$, pick nearby interior paired points $z_1'$ and
$z_2'$, so that $[z_1']=[z_2']$ does not contain~$y$. Then
$[z_1']$ separates $y$ and $[x]$ as required.

If there are no such points then $[x_1,y]$ and $[y,x_2]$ are
plain, so that $y\in[x]$ by Lemma~\ref{lem:plainendpts}, a
contradiction. 

\medskip\medskip

To show that the paper space~$S$ is a sphere,
it is convenient to change coordinates by a suitable homeomorphism so
that~$\partial P$ is the unit circle~$S^1$ in $\widehat\C$, and~$P$ is
the exterior of the unit disk $\D\sbs\csph$: the unit disk $\D$
will be regarded as the hyperbolic plane. This change of coordinates
is purely for convenience: the constructions don't use hyperbolic
geometry in any essential way.

Connect the points $x,y$ of each interior pair in $\partial P$ with
the geodesic having them as endpoints.  Because~$\cP$ is unlinked,
these geodesics are pairwise disjoint. Let $\cL$ be the geodesic
lamination obtained by taking the closure of the union of such
geodesics. Because the equivalence relation~$\sim_\cP$ is closed,
every geodesic in~$\cL$ joins a pair of equivalent points.

If~$U$ is a component of $\D\setminus\cL$, let~$\bU$ denote its
closure in~$S$. Then~$\bU$ is a closed disk, whose boundary is the
union of some geodesics (including endpoints) in~$\cL$ joining
equivalent points which are not in interior pairs, and other points
of~$\partial P$.

Let~$x,y\in\partial P$ be distinct points which are not in interior
pairs, and let~$[x,y]\sbs\partial P$ be an interval with these
endpoints. Then the following conditions are equivalent:
\begin{enumerate}[i)]
\item $x\sim_\cP y$.
\item $[x,y]$ is plain.
\item $x$ and $y$ lie in the closure~$\bU$ of the same component~$U$
  of $\D\setminus\cL$ or are the endpoints of a geodesic in~$\cL$.
\end{enumerate}

For \textbf{i)$\iff$ii)}, suppose that~$[x,y]$ is not
plain. Since~$\cP$ is unlinked, this means that there is a pairing
$\llangle \alpha,\alpha'\rrangle$ which intersects~$(x,y)$ but is not
contained in~$[x,y]$. The assumption that~$x$ and~$y$ are not in
interior pairs means that one of the segments, say~$\alpha$, is
contained in~$[x,y]$ while the other, $\alpha'$, is disjoint from
$(x,y)$. Thus~$x$ and~$y$ lie in different components of $\partial P
\setminus (\interior\alpha\cup\interior{\alpha'})$, so that
$x\not\sim_\cP y$ by Lemma~\ref{lem:separatingpairing}. The converse
is immediate from Lemma~\ref{lem:plainendpts}.

\medskip

For \textbf{ii)$\iff$iii)}: Suppose that $[x,y]$ is plain and that the
geodesic with endpoints~$x$ and~$y$ is not in~$\cL$. Then there are no
geodesics in~$\cL$ with one endpoint in~$(x,y)$ and the other disjoint
from~$[x,y]$, and hence the geodesic with endpoints~$x$ and~$y$ lies
in a single component~$U$ of $\D\setminus\cL$, so that
$x,y\in\bU$. If~$[x,y]$ is not plain then as above there is a pairing
$\llangle\alpha,\alpha'\rrangle$ with $\interior\alpha \sbs (x,y)$ and
$\interior{\alpha'} \cap [x,y]=\emptyset$, and any geodesic connecting
paired points in $\interior\alpha$ and $\interior{\alpha'}$, together
with its endpoints, separates~$x$ from~$y$ in~$\bDD$.

\medskip

Therefore two distinct points~$x,y\in\partial P$ satisfy $x\sim_\cP y$
if and only if either they are the endpoints of a geodesic in~$\cL$,
or they lie in the closure of the same component of $\D\setminus\cL$,
so that a decomposition $\cG$ of~$\csph$ which realizes $\sim_\cP$ can
be constructed with the following elements:
\begin{enumerate}[I)]
\item Closures $\bU$ of components of~$\D\setminus\cL$;
\item Geodesics in~$\cL$, together with endpoints, which are not
  contained in elements of type~I); and
\item Points which are not contained in elements of types~I) and~II).
\end{enumerate}

The elements of~$\cG$ are closed disks, arcs, and points, and so are
connected and do not separate~$\csph$. To complete the proof using
Moore's theorem, it is therefore only required to show that~$\cG$ is
upper semi-continuous. So let~$x_n\to x$ and~$y_n\to y$ be sequences
in~$\csph$ such that $x_n$ and $y_n$ belong to the same decomposition
element for all~$n$: it is required to show that~$x$ and~$y$ belong to
the same decomposition element.

If~$x_n$ and~$y_n$ belong to elements of type~III) for infinitely
many~$n$, then~$x_n=y_n$ for these values of~$n$, and
hence~$x=y$. Passing to a subsequence, it can therefore be assumed
that none of the~$x_n$ or~$y_n$ belong to an element of type~III).

Suppose that infinitely many of the~$x_n$ and~$y_n$ belong to
geodesics~$\gamma_n$ of type~II). If there is no positive lower bound
on the distance in~$\partial P$ between the endpoints of
the~$\gamma_n$ then $x=y$. If there is such a lower bound then the
geodesics~$\gamma_n$ converge to a geodesic (together with
endpoints) $\gamma$ in $\cL$ which contains~$x$ and~$y$.

It can therefore be assumed that all of the~$x_n$ and~$y_n$ belong to
elements~$\bU_n$ of type~I). For each~$n$, let $\alpha_n\subset \bU_n$
be the geodesic arc with endpoints~$x_n$ and~$y_n$, and let~$\gamma_n$
be the geodesic containing~$\alpha_n$. If there is no lower bound on
the distance in~$\partial P$ between the endpoints of the~$\gamma_n$,
then~$x=y$. If there is such a lower bound then the
geodesics~$\gamma_n$ converge to a geodesic~$\gamma$ containing~$x$
and~$y$.

If either~$x=y$ or~$\gamma$ is a geodesic in~$\cL$, then~$x$ and~$y$
lie in the same decomposition element. Otherwise, the geodesic arc
$\alpha\subset\gamma$ connecting~$x$ and~$y$ is disjoint from $\cL$
(if it intersected a geodesic in~$\cL$, then so would~$\alpha_n$ for
large~$n$), and hence~$x$ and~$y$ lie in the same component
of~$\D\setminus\cL$. 
\end{proof}

\medskip

\begin{example}
\label{ex:HorseshoeCantor}
Here the construction in the above proof is considered for the
paper-folding schemes of Example~\ref{ex:generalexample}. Recall
that~$P$ is a square, the top side of~$P$ is folded in half, the
vertical sides are paired, and folds of lengths~$a_i$ are placed along
the bottom side, either continguously from right to left
(Figure~\ref{fig:hsscheme}) or disjointly
(Figure~\ref{fig:cantorscheme}).

Figure~\ref{fig:hsident} depicts the decomposition of~$\csph$ which
realizes $\sim_\cP$ in the scheme of Figure~\ref{fig:hsscheme}. There
is a single decomposition element of type~I) which is
denoted~$g_\infty$: it includes countably many points of~$\partial P$
(the fold endpoints and the bottom left corner of~$P$), and countably
many geodesics joining these points in pairs. The other geodesics
(including their endpoints on~$\partial P$) are decomposition elements
of type~II), and the points of $\Int(P)$ are decomposition elements of
type~III).

\begin{figure}[htbp]
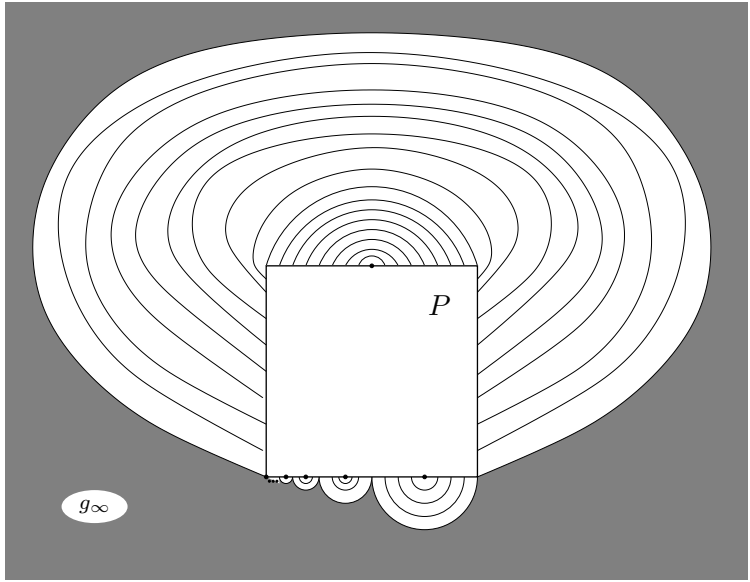

\psfrag{P}{$P$}
\lab{ginfty}{g_\infty}{}
\begin{center}
\pichere{0.6}{hsident}
\end{center}
\caption{The musc decomposition realising $\sim_\cP$ in the paper-folding scheme of
  Figure~\ref{fig:hsscheme}.}
\label{fig:hsident}
\end{figure}

The decomposition for the scheme of Figure~\ref{fig:cantorscheme} is
similar: in this case the single decomposition element~$g_\infty$ of
type~I) intersects~$\partial P$ in a Cantor set, only countably many
of whose points are connected by geodesics.
\end{example}

\subsection{Surface paper folding schemes}
\label{subsec:general-top-struct}
In this section the topological structure of the paper space~$S$ of a
general paper-folding scheme~$(P,\cP)$ is considered. It is clear that
linked identifications around $\partial P$ create handles in the paper
space, and that if there are infinitely many such handles then~$S$
cannot be a compact surface. The main result of this section,
Theorem~\ref{thm:top-structure}, states that this is the only
obstruction to~$S$ being a paper surface, and describes the scar~$G$
in the surface case: each component of~$G$ (corresponding to a
component of~$S$) is the union of countably many dendrites (one
corresponding to each maximal plain arc in~$\partial P$) and a finite
connected graph (corresponding to the linked identifications), each
dendrite being attached to the connected graph at a single point.

The first step is to reduce to the case in which~$P$ is a single
polygon: or, more accurately, a single closed disk, since the
polygonal structure of~$P$ will be lost in the constructions which
follow.

Let~$\Upsilon$ be the finite graph whose vertices are the
components~$P_i$ of~$P$, and which has an edge joining $P_i$ and $P_j$
if and only if there is a pairing
$\llangle\alpha,\alpha'\rrangle\in\cP$ with $\alpha \sbs \partial
P_i$ and $\alpha'\sbs\partial P_j$. Then the connected components
of~$S$ correspond bijectively to the connected components
of~$\Upsilon$. In the remainder of the section it will be assumed
that~$\Upsilon$ is connected (if not, apply the results to each
connected component in turn).

Let~$T$ be a spanning subtree of~$\Upsilon$. Pick one pairing
corresponding to each edge of~$T$, and realize the associated
identifications. The resulting space is a topological disk~$P'$, and
the remaining pairings induce a full interior disjoint
collection~$\cP'$ of arc pairings along~$\partial P'$. It
therefore suffices to consider $(P',\cP')$. As mentioned above, $P'$
is not a polygon, but the polygonal structure is irrelevant to the
topological structure of the quotient: the word ``segment'' will be
understood to mean ``arc'', and the length of an arc in~$\partial P'$
is given by the minimal length of a subset of~$\partial P$ which
projects to it.

In the remainder of this section, then, it will be assumed that~$P$ is
a topological disk with rectifiable boundary, and that~$\cP$ is a full
interior disjoint collection of arc pairings on~$\partial P$. It can
also be assumed that~$(P,\cP)$ is not plain, since otherwise the
results of Section~\ref{subsec:plain-top-structure} apply.

Let~$\Pi=\{\gamma_i\,:\,i\in I\}$ be the set of maximal plain arcs
in~$\partial P$, where~$I$ is a finite or countable index set. These
arcs are mutually disjoint (Remarks~\ref{rmks:unlinkedplain}d)), and
each component of $\partial P \setminus\bigcup_{i\in I}\gamma_i$ is an open
arc (between any two elements of~$\Pi$ lies an arc~$\alpha$ of a
pairing of~$\cP$ whose interior is not contained in any plain arc).

Let~$\cC$ be the simple closed curve obtained from~$\partial P$ by
collapsing each element of~$\Pi$ to a point (in fact the maximal plain
arcs will not be collapsed: rather the identifications on them will be
realised, and in particular their endpoints will be identified). The
pairings in~$\cP$ which are not contained in any element of~$\Pi$
induce a full interior disjoint collection~$\cQ$ of arc pairings
on~$\cC$, with respect to which~$\cC$ contains no plain arcs, since
the preimage in~$\partial P$ of such a plain arc would be a plain arc
intersecting the complement of the~$\gamma_i$.

The aim now is to decompose~$\cC$ into maximal {\em unlinked} arcs.

\begin{defns}[Unlinked arc, maximal unlinked arc]
\label{defn:unlinked-arc}
An arc~$\gamma$ in~$\cC$ is {\em unlinked} if
\begin{enumerate}[a)]
\item Every arc~$\beta_i$ in a pairing in~$\cQ$ which intersects
  $\interior{\gamma}$ is contained in~$\gamma$;
\item $\overline{\bigcup_i \beta_i'}$ is an arc~$\gamma'$, where
  the~$\beta_i'$ are the arcs paired with the arcs~$\beta_i$ of~a);
  and 
\item The restriction of $\cQ$ to $\gamma\cup\gamma'$ is unlinked on~$\cC$.
\end{enumerate}
The arc~$\gamma'$ is said to be {\em paired} with~$\gamma$. An unlinked
arc~$\gamma$ is {\em maximal} if it is not strictly contained in any
other unlinked arc.
\end{defns}

\begin{rmks}\mbox{}
\label{rmk:unlinked-disjoint}
\begin{enumerate}[a)]
\item An unlinked arc~$\gamma$ is disjoint from its paired arc~$\gamma'$, as
otherwise $\gamma\cup\gamma'$ would be a plain arc.
\item 

Let~$\gamma$ be an unlinked arc with pair~$\gamma'$. Then the
restriction of $\sim_\cQ$ to $\gamma\cup \gamma'$ is the arc pairing
$\ssegpair{\gamma}$. 
\end{enumerate}
\end{rmks}

\begin{lem}
\label{lem:maximal-unlinked}
$\cC$ can be written uniquely as a union of maximal unlinked arcs and
points which are not contained in any unlinked arc. The maximal
unlinked arcs intersect only at their endpoints.
\end{lem}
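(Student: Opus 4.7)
The plan is to construct, for each point $x \in \cC$ that lies in some unlinked arc, a canonical maximal unlinked arc $\gamma_x$ through $x$ by taking the union of all unlinked arcs containing $x$. The uniqueness of the decomposition and the endpoint-only intersection property then follow from a \emph{nesting} property for unlinked arcs that share an interior point.

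The core technical step is this nesting property: if $\gamma_1$ and $\gamma_2$ are unlinked arcs whose interiors share a point, then one contains the other. To prove it, I would assume for contradiction that neither contains the other. Since $\cC$ contains no plain arcs, their union cannot be all of $\cC$, so up to cyclic relabeling one has $\gamma_1 = [a,c]$ and $\gamma_2 = [b,d]$ with $a<b<c<d$ on $\cC$. By condition~(a) of Definition~\ref{defn:unlinked-arc} applied to $\gamma_1$ and $\gamma_2$ in turn, every arc of a pairing intersecting $(b,c)$ has its pair in both $\gamma_1'$ and $\gamma_2'$; meanwhile, no pairing intersecting $(a,b)$ has its pair in $\interior{\gamma_2}$, nor does any pairing intersecting $(c,d)$ have its pair in $\interior{\gamma_1}$. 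A case analysis on the cyclic position of $\gamma_1'$ and $\gamma_2'$ around $\cC$ then produces either a contradiction with these constraints or a pair of identifications linked on $\gamma_1 \cup \gamma_1'$ (or $\gamma_2 \cup \gamma_2'$), contradicting condition~(c).

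Given the nesting property, define $\gamma_x$ as above for any $x$ in some unlinked arc. The family of unlinked arcs through $x$ is totally ordered by inclusion, and since $\cC$ has no plain arcs, $\gamma_x$ is a proper arc of $\cC$. I would then verify the three conditions of Definition~\ref{defn:unlinked-arc} for $\gamma_x$: condition~(a) is automatic because any pairing arc meeting $\interior{\gamma_x}$ meets the interior of some member of the chain and is contained there; for condition~(b), I would show that if $\gamma \sbs \delta$ are two unlinked arcs in the chain then $\gamma' \sbs \delta'$ (any arc paired with one meeting $\interior{\gamma}$ is also paired with one meeting $\interior{\delta}$), so the paired arcs form a nested chain whose closure is an arc $\gamma_x'$; condition~(c) passes to the union because any two linked pairs in $\gamma_x \cup \gamma_x'$ would already lie together in $\gamma \cup \gamma'$ for some element $\gamma$ of the chain.

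It then follows immediately that every $x$ in some unlinked arc is contained in a unique maximal unlinked arc (namely $\gamma_x$), that two distinct maximal unlinked arcs cannot share an interior point, and hence they meet only at endpoints. The points lying in no unlinked arc form the residual set mentioned in the statement. The main obstacle is the nesting property itself, whose proof requires a somewhat delicate combinatorial analysis of the possible cyclic positions of the paired arcs $\gamma_1'$ and $\gamma_2'$. A secondary difficulty is the verification of condition~(b) for $\gamma_x$ when the chain of unlinked arcs is infinite: one must ensure that the accumulation of paired arcs does not produce a degenerate or disconnected limit, which follows from the nestedness established above together with the absence of plain arcs in $\cC$.
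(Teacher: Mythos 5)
There is a genuine gap: your core ``nesting'' claim is false. Two unlinked arcs whose interiors meet need not be comparable by inclusion. For a concrete configuration, let the pairing arcs occur on~$\cC$ in the cyclic order $\beta_1,\beta_2,\beta_3,u_1,u_2,\beta_3',\beta_2',\beta_1',u_1',u_2'$, with $\beta_i$ paired to $\beta_i'$ (a ``rainbow'' pattern) and $u_j$ paired to $u_j'$ (these last two pairings are linked with each other, which prevents any plain arcs, so the standing hypothesis on~$\cC$ is satisfied). Then $\gamma_1=\overline{\beta_1\cup\beta_2}$ and $\gamma_2=\overline{\beta_2\cup\beta_3}$ each satisfy all three conditions of the definition of an unlinked arc (their paired arcs are $\overline{\beta_2'\cup\beta_1'}$ and $\overline{\beta_3'\cup\beta_2'}$ respectively, and the rainbow pattern is unlinked), their interiors overlap along $\beta_2$, yet neither contains the other. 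So the case analysis you sketch cannot close: no contradiction arises and no linked pair is produced. Since everything downstream in your argument (total ordering of the arcs through a point, the canonical $\gamma_x$, condition~(b) for the union of a chain) is derived from nesting, the proof as proposed does not go through. A further overreach is the claim that every point lying in some unlinked arc lies in a \emph{unique} maximal unlinked arc: distinct maximal unlinked arcs may share an endpoint (this is exactly what the lemma allows), so $\gamma_x$ defined as the union of \emph{all} unlinked arcs containing~$x$ need not be unlinked, or even an arc, when $x$ is such a shared endpoint.

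What is actually true, and what the paper uses, is the weaker union-closure property: if two unlinked arcs have intersecting interiors, their union is again an unlinked arc (in the example above, $\gamma_1\cup\gamma_2$ is unlinked, paired with $\overline{\beta_3'\cup\beta_2'\cup\beta_1'}$). This already gives that two distinct maximal unlinked arcs meet only at endpoints. For existence, the paper anchors the construction not at an arbitrary point but at a pairing arc $\beta$ of~$\cQ$: the union $\gamma_\beta$ of all unlinked arcs containing~$\beta$ is directed by union-closure (any two such arcs share the interior of~$\beta$), is an arc possibly missing one or both endpoints, and one checks that adjoining the endpoints preserves conditions~(a)--(c), so $\gamma_\beta$ is the maximal unlinked arc containing~$\beta$. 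Finally, a point of~$\cC$ lying in none of these arcs lies in no unlinked arc at all, since any arc through it meets some pairing arc~$\beta$ and hence~$\gamma_\beta$. If you want to salvage your point-based construction, restrict to points~$x$ lying in the \emph{interior} of some unlinked arc and replace nesting by union-closure; but the nesting lemma itself should be abandoned.
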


\begin{proof}
If~$\gamma_1$ and~$\gamma_2$ are unlinked arcs whose interiors
intersect, then~$\gamma_1\cup\gamma_2$ is also an unlinked arc. In
particular, two distinct maximal unlinked arcs can only intersect at
their endpoints.

Let~$\beta$ be an arc in a pairing in~$\cQ$. Then~$\beta$ is itself an
unlinked arc. Let~$\gamma_\beta$ be the union of all unlinked arcs
containing~$\beta$. Then~$\gamma_\beta$ is either an arc, or an arc
without one or both of its endpoints, which satisfies the conditions
of Definition~\ref{defn:unlinked-arc} except for the possible absence
of endpoints of $\gamma_\beta$ and $\gamma_\beta'$. However, this
means that the conditions also hold for $\gamma_\beta$ together with
its endpoints, so that~$\gamma_\beta$ is in fact an arc.

Hence every arc in a pairing in~$\cQ$ is contained in a unique maximal
unlinked arc, and these arcs intersect only at their endpoints. If~$x$
is a point of~$\cC$ which is not contained in any of these arcs, then
any arc containing~$x$ must intersect a paired arc~$\beta$ and hence
the maximal arc~$\gamma_\beta$: therefore~$x$ is not contained in any
unlinked arc.
\end{proof}

\begin{defn}[Finitely linked]
\label{defn:finitely-linked}
$(P,\cP)$ is said to be {\em finitely linked} if there are only
finitely many maximal unlinked arcs in the decomposition of
Lemma~\ref{lem:maximal-unlinked}. 
\end{defn}

\begin{rmk}
\label{rmk:finitely-linked}
If~$(P,\cP)$ is finitely linked, then there cannot be any points
of~$\cC$ which are disjoint from all of the maximal unlinked arcs: that
is, $\cC$~can be written uniquely as a union of finitely many maximal
unlinked arcs.
\end{rmk}

The following lemma is the main tool used in the proof of the converse
of Theorem~\ref{thm:top-structure} below, that if~$(P,\cP)$ is not
finitely linked then~$S$ is not a compact surface. The setting of the
lemma is slightly different from that of the remainder of the paper
--- $P$ is a compact surface with a single boundary component~$\cC$
rather than a topological disk --- but the concepts of {\em arc
  pairings on~$\cC$}, {\em plain arcs}, and {\em finitely linked} are
defined analogously.

\begin{lem}
\label{lem:infinitely-linked}
Let~$P$ be a compact surface with a single boundary component~$\cC$,
and~$\cP$ be a full interior disjoint collection of arc
pairings on~$\cC$ which is such that $\cC$ contains no plain arcs, and
is not finitely linked. Then there are maximal unlinked arcs~$\alpha$
and~$\beta$, with paired arcs $\alpha'$ and~$\beta'$, such
that~$\ssegpair{\alpha}$ and~$\ssegpair{\beta}$ are
linked; and carrying out the identifications corresponding to these
arc pairings yields a compact surface~$\hP$ having a single boundary
component~$\hcC$, with the properties that:
\begin{itemize}
\item the genus of~$\hP$ is one greater than that of~$P$;
\item the induced arc pairings~$\hcP$ on~$\hcC$ are such that~$\hcC$
  contains no plain arcs and~$\hcP$ is not finitely linked.
\end{itemize}
\end{lem}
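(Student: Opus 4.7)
My plan begins with the existence of linked maximal unlinked arc pairs. Suppose for contradiction that every two distinct pairs $\ssegpair{\alpha}$ and $\ssegpair{\beta}$ in the (infinite) family of maximal unlinked arc pairs of $\cP$ are unlinked. Fix one such pair $\ssegpair{\alpha}$ and consider the two components $I_1,I_2$ of $\cC\setminus(\interior{\alpha}\cup\interior{\alpha'})$. By the supposition each other maximal unlinked arc, together with its pair, lies in a single $I_j$. Since every pairing of $\cP$ is contained in a unique maximal unlinked arc (as in the proof of Lemma~\ref{lem:maximal-unlinked}), every pairing intersecting $\interior{I_j}$ is contained in $I_j$. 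Further, the restriction of $\cP$ to $I_j$ is unlinked: two pairings in $I_j$ either lie in a common maximal unlinked arc (unlinked by definition) or lie in distinct pairs (unlinked by assumption). Hence $I_j$ is a plain arc, contradicting the hypothesis; so some linked pair exists.

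Having chosen linked $\alpha,\alpha',\beta,\beta'$ (mutually disjoint by Remark~\ref{rmk:unlinked-disjoint}(a) within each pair and by linkedness across them), I realize the identifications $\ssegpair{\alpha}$ and $\ssegpair{\beta}$. Each is the standard local gluing of two disjoint boundary arcs of a surface, so $\hP$ is a compact surface. I track the Euler characteristic by giving $\cC$ the CW structure with the eight endpoints as vertices and the eight sub-arcs as edges; each identification collapses four vertices to two and two edges to one, lowering $\chi$ by one, so $\chi(\hP)=\chi(P)-2$. For the boundary count, the linked cyclic order $\alpha,\beta,\alpha',\beta'$ around $\cC$ implies that realizing $\ssegpair{\alpha}$ alone splits $\cC$ into two circles, one containing the trace of $\beta$ and the other that of $\beta'$, which are then fused into a single circle $\hcC$ by realizing $\ssegpair{\beta}$. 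Substituting into $\chi=1-2g$ for an orientable surface with one boundary component yields genus $g+1$.

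It remains to verify that $\hcC$ contains no plain arcs and that $\hcP$ is not finitely linked. The induced pairings $\hcP$ are exactly those of $\cP$ not contained in $\alpha\cup\alpha'$ or $\beta\cup\beta'$, and all their arcs lie in the four complementary arcs $I_1,\dots,I_4$ of $\alpha\cup\alpha'\cup\beta\cup\beta'$ in $\cC$, joined in $\hcC$ at the four identified vertices. For no plain arcs: if $\hgamma\subset\hcC$ were plain under $\hcP$, its preimage in $\cC$ is a finite disjoint union of arcs in the $I_k$'s joined through at most four identified vertices traversed by $\hgamma$; inserting a suitably oriented sub-arc of $\alpha$ (or $\alpha'$) and of $\beta$ (or $\beta'$) across each traversed vertex assembles a genuine arc in $\cC$ satisfying the plain conditions, contradicting the hypothesis on $\cC$. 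For infinite linkedness: infinitely many maximal unlinked arcs of $\cP$ lie entirely in some $\interior{I_k}$ (only at most eight of them can touch the boundary of the $I_k$'s), and each such $\eta$ persists as a maximal unlinked arc in $\hcP$ because any putative enlargement in $\hcC$ either stays in $\interior{I_k}$ (where $\hcP$ and $\cP$ agree, contradicting maximality in $\cP$) or crosses an identified vertex, which a careful analysis traces back to a corresponding enlargement in $\cP$.

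The principal obstacle I expect is the verification that no plain arc of $\hcC$ exists: at each identified vertex traversed by $\hgamma$ one must choose the correct insertion ($\alpha$ versus $\alpha'$, and orientation) so that the reconstituted arc in $\cC$ satisfies both clauses of the plain definition, and check that the extra pairings contained in $\alpha\cup\alpha'$ and $\beta\cup\beta'$ cooperate. The analogous step for preservation of infinite linkedness is combinatorially cleaner, but still demands vigilance at the four identified vertices, because in principle a single large maximal unlinked arc in $\hcP$ could attempt to absorb many smaller maximal unlinked arcs of $\cP$ through merging at those vertices.
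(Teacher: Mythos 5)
Your opening steps are sound: the contradiction argument showing that if all maximal unlinked pairs were mutually unlinked then a complementary component of $\interior{\alpha}\cup\interior{\alpha'}$ would be a plain arc is a legitimate (and mildly different) route to the existence of a linked pair, and your Euler-characteristic and boundary-component count for the genus statement is fine (the paper simply asserts this part). The genuine gap is in the second half. Consider your ``no plain arcs in $\hcC$'' step, and write the cyclic order on $\cC$ as $\alpha,\hcC_1,\beta,\hcC_2,\alpha',\hcC_3,\beta',\hcC_4$. Because the identifications are orientation-reversing, the identified vertex at which $\hcC_1$ ends is the image of the \emph{initial} endpoint of $\beta$ together with the \emph{final} endpoint of $\beta'$, and the piece of $\hcC$ adjacent to $\hcC_1$ there is $\hcC_4$, not $\hcC_2$. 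So when your putative plain arc $\hgamma$ traverses an identified vertex, the two lifted pieces in $\cC$ terminate at an endpoint of $\beta$ and at an endpoint of $\beta'$, which are \emph{not} adjacent in $\cC$: no sub-arc of $\beta$, $\beta'$, $\alpha$ or $\alpha'$ joins them. Any arc of $\cC$ containing both lifted pieces must contain one of the two arcs between those endpoints, hence at least one entire glued arc; saturation (condition a) of plainness) then forces the partner arc in as well, and since $\ssegpair{\alpha}$ and $\ssegpair{\beta}$ are linked the assembled arc contains linked pairs and violates condition b). Thus the reconstituted arc is never plain for reasons having nothing to do with $\hgamma$, and no contradiction with the hypothesis on $\cC$ can be extracted this way. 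This is exactly why the paper does not reduce the claim to ``no plain arcs in $\cC$'': it derives the contradiction from the maximality of the elements of $\cA$ (its property~i)), treating separately the cases where infinitely many, or only finitely many, elements of $\cA$ have image inside the putative plain arc.

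A related gap affects your ``not finitely linked'' step. Your claim that every maximal unlinked arc $\eta$ lying in the interior of a complementary piece persists as a maximal unlinked arc of $\hcP$ is not justified: an unlinked arc of $\hcC$ may absorb $\pi(\eta)$ together with a neighbour whose image, or whose pair's image, reaches one of the identified points $p_i$, and the case you defer (an enlargement that ``crosses an identified vertex'' and is ``traced back'' to an enlargement in $\cP$) is precisely where tracing back fails, since adjacency at $p_i$ in $\hcC$ corresponds to non-adjacent points of $\cC$ and the cyclic order of the pieces $\hcC_i$ is reversed. What the paper proves, using its property~i) (no two disjoint segments of $\cA$ pair with reversed order), is weaker but sufficient: any unlinked arc of $\hcC$ contains the images of at most three elements of $\hcA$, one of which must meet a $p_i$, so at most $24$ elements lose maximality and infinitely many maximal unlinked arcs survive. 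As written, your proposal establishes the existence of the linked pair and the genus statement, but not the two bulleted properties of $\hcP$.
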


\begin{proof}
First some notation will be introduced. Let~$\cA$ be the set of
maximal unlinked arcs on~$\cC$, and $\phi\co\cA\to\cA$ be the
involution which takes each maximal unlinked arc to its pair. Fix an
orientation of~$\cC$, and for each pair of distinct points
$x,y\in\cC$, write $[x,y]$ for the arc in~$\cC$ with initial and final
endpoints~$x$ and~$y$. 

A {\em segment} in~$\cA$ is a set of the form
\[\lseg x,y\rseg  := \{\gamma\in\cA\,:\,\gamma\subset [x,y]\},\]
where $x$ and $y$ are distinct points of~$\cC$ which do not lie in the
interior of any element of~$\cA$. A segment is {\em non-trivial} if it
has more than one element. Notice that each segment is endowed with a
total order~$<$ induced by the orientation on~$\cC$. Similarly, given
distinct elements~$\gamma$ and~$\delta$ of~$\cA$, write $\lseg
\gamma, \delta\rseg $ for the segment~$\lseg x,y\rseg $,
where~$x$ is the initial point of~$\gamma$ and~$y$ is the final point
of~$\delta$, provided that these points do not coincide; and write
$[\gamma,\delta]$ for the arc $[x,y]$ in~$\cC$.

The set~$\cA$ has the following properties:
\begin{enumerate}[i)]
\item (The elements of~$\cA$ are maximal.) There are no two
  non-trivial disjoint segments $\lseg x,y\rseg $ and $\lseg
  x',y'\rseg $ with the properties that: $\gamma\in\lseg
  x,y\rseg$ if and only if $\phi(\gamma)\in \lseg x',y'\rseg $;
  and $\gamma < \delta$ in $\lseg x,y\rseg $ if and only if
  $\phi(\gamma) > \phi(\delta)$ in $\lseg x',y'\rseg $. (If there
  were such segments, then arcs in each segment could be coalesced to
  give larger unlinked arcs.)
\item(There are no plain arcs.) Let~$\gamma\in\cA$, and suppose that
  $\phi(\delta)\in \lseg \gamma, \phi(\gamma)\rseg $ whenever
  $\delta\in \lseg \gamma, \phi(\gamma)\rseg $. Then there are
  $\alpha, \beta\in \lseg \gamma, \phi(\gamma)\rseg $ such that
  $\alpha<\beta<\phi(\alpha)<\phi(\beta)$ with respect to the order on
  $\lseg\gamma,\phi(\gamma)\rseg$. (Otherwise $[\gamma,
    \phi(\gamma)]$ would be a plain arc.)
\end{enumerate}

The existence of the maximal unlinked arcs~$\alpha$ and~$\beta$ in the
statement of the lemma is immediate from property~ii). Let~$\gamma$ be
any element of~$\cA$. If there is some~$\delta\in\cA$ with $\delta \in
\lseg \gamma, \phi(\gamma)\rseg $ but $\phi(\delta)\not\in\lseg
\gamma, \phi(\gamma)\rseg $, then set $\alpha = \gamma$ and $\beta =
\delta$. If there is no such~$\delta$, then suitable arcs~$\alpha$
and~$\beta$ exist by property~ii).

It will be assumed in the remainder of the proof that the arcs
$\alpha$, $\alpha'$, $\beta$, and~$\beta'$ have no common endpoints:
only minor modifications are required in the case where they share
some endpoints.

Carrying out the identifications corresponding to these arcs clearly
yields a compact surface~$\hP$ with genus one greater than that
of~$P$, having a single boundary component~$\hcC$. Let $\pi\co \cC
\setminus \Int(\alpha\cup\alpha'\cup\beta\cup\beta') \to \hcC$ be the
natural projection: notice that~$\pi$ does not respect the cyclic
orders around~$\cC$ and~$\hcC$. The image under~$\pi$ of the endpoints
of the identified arcs consists of four points $p_1, p_2, p_3, p_4\in
\hcC$. Denote by $\hcC_1$, $\hcC_2$, $\hcC_3$, and $\hcC_4$ the four
components of~$\hcC \setminus\{p_1,p_2,p_3,p_4\}$; and let~$\hcA = \cA
\setminus\{\alpha, \beta, \alpha', \beta'\}$. Notice that the $\hcC_i$
can be regarded as segments in both $\cC$ and $\hcC$, and that each is
totally ordered by the orientation of~$\cC$ (and consistently by the
induced orientation of~$\hcC$). If the $\hcC_i$ are labelled so that
they appear in the order $\hcC_1, \hcC_2, \hcC_3, \hcC_4$
around~$\cC$, then they appear in the reverse order $\hcC_4,
\hcC_3, \hcC_2, \hcC_1$ around~$\hcC$.

If $\gamma\in\hcA$ then $\pi(\gamma)$ is an unlinked arc, with pair
$\pi(\phi(\gamma))$, but it need not be maximal. However, it will be
shown that there are only finitely many~$\gamma\in\hcA$ for
which~$\pi(\gamma)$ is not maximal, which will establish that~$\hcP$
is not finitely linked, as required.

To prove this, suppose that~$\Gamma$ is a subset of~$\hcA$ such that
$\overline{\bigcup_{\gamma\in\Gamma}\pi(\gamma)}$ is an unlinked
arc. Suppose that $\Gamma$ contains two arcs $\gamma$ and $\delta$
such that $\pi(\gamma) < \pi(\delta)$ lie in a single~$\hcC_i$, and
$\pi(\phi(\gamma))$ and $\pi(\phi(\delta))$ also lie in a
single~$\hcC_j$. Then the segment $\lseg \gamma, \delta\rseg $ would
be unlinked, contradicting the assumption that the elements of~$\cA$
are maximal unlinked arcs. Hence~$\Gamma$ contains at most three arcs,
which are adjacent, and there is some~$\gamma\in\Gamma$ such that
either $\pi(\gamma)$ or $\pi(\phi(\gamma))$ contains one of the
points~$p_i$. There are therefore at most~24 elements $\gamma$
of~$\hcA$ for which $\pi(\gamma)$ is not maximal, as required.

It remains to show that~$\hcC$ contains no plain arcs. Suppose for a
contradiction that it does contain some plain
arc~$\delta$. Let
\[\Delta = \{\gamma\in\hcA\,:\, \pi(\gamma)\subset\delta\},\]
and observe that $\phi(\Delta)=\Delta$ since~$\delta$ is
saturated. Moreover, if $\gamma\in\Delta$ then $\pi(\gamma)$ and
$\pi(\phi(\gamma))$ lie in different components~$\hcC_i$, as otherwise
either $[\gamma, \phi(\gamma)]$ or $[\phi(\gamma), \gamma]$ would be a
plain arc in~$\cC$. In particular, $\delta$ contains at least one of
the points~$p_i$ in its interior.

If~$\Delta$ is infinite, then pick an infinite segment~$\lseg
x,y\rseg\subset\Delta$ contained in a
single~$\hcC_i$. Since~$\delta$ is plain, the arcs
$\{\phi(\gamma)\,:\,\gamma\in \lseg x,y\rseg\}$ lie in the union
of the other three components in exactly the opposite order to their
pairs in~$\lseg x,y\rseg$. Picking a subsegment so that the
$\phi(\gamma)$ lie in a single component gives a contradiction to
property~i) above.

On the other hand, if~$\Delta$ is finite then it contains an arc
$\gamma$ such that~$\pi(\gamma)$ and $\pi(\phi(\gamma))$ share a
common endpoint, which must be one of the points~$p_i$. Suppose,
without loss of generality, that the common endpoint is~$p_1$, which
is the image under~$\pi$ of the initial point of~$\alpha$ and the
final point of~$\phi(\alpha)$. If the endpoint of~$\gamma$
(respectively~$\phi(\gamma)$) coincides with the initial point
of~$\alpha$ then the segment~$\lseg \gamma,\alpha\rseg$
(respectively $\lseg \phi(\gamma),\alpha\rseg$) gives a
contradiction to property~i) above.
\end{proof}

\begin{thm}
\label{thm:top-structure}
A connected paper-folding scheme~$(P,\cP)$ is a surface paper folding
scheme if and only if it is finitely linked. In this case, the
scar~$G$ is a local dendrite, which can be written as $G=C\cup\Gamma$,
where
\begin{itemize}
\item $C$ is a finite connected graph in~$S$ with the property that
  any simple closed curve in~$C$ is homotopically non-trivial; and
\item $\Gamma$ is a union of finitely or countably many disjoint
  dendrites, with diameters decreasing to~$0$, each of which
  intersects~$C$ exactly once.
\end{itemize}
\end{thm}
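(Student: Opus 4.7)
I would prove the theorem in stages, beginning with the spanning-tree reduction preceding Lemma~\ref{lem:maximal-unlinked} so as to assume~$P$ is a single topological disk. The two implications of the biconditional are then handled separately, and the structural description of~$G$ is read off from the construction used in the ``if'' direction.

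For \emph{surface paper folding $\Rightarrow$ finitely linked}, I would argue contrapositively. If~$(P,\cP)$ is not finitely linked, iterating Lemma~\ref{lem:infinitely-linked} produces a sequence of compact surfaces $P=P_0,P_1,P_2,\ldots$, where the genus of~$P_{n+1}$ is strictly greater than that of~$P_n$, each carrying a paper-folding scheme which remains non-finitely-linked and has the same paper space~$S$. Since~$S$ is obtained from each~$P_n$ by realising further identifications that can only add handles or cross-caps, its genus dominates that of every~$P_n$, contradicting~$S$ being a closed surface of finite genus.

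For \emph{finitely linked $\Rightarrow$ surface}, I would build~$S$ in two stages. First, by Lemma~\ref{lem:maximal-unlinked} and Remark~\ref{rmk:finitely-linked}, decompose~$\cC$ as the union of finitely many maximal unlinked arcs $\alpha_1,\alpha_1',\ldots,\alpha_n,\alpha_n'$ paired by~$\cQ$, and realise the $n$ arc pairings $\llangle\alpha_i,\alpha_i'\rrangle$ on~$P$. This classical finite identification yields a compact surface~$P^*$ whose boundary is a disjoint union of circles, each formed from one or more plain arcs~$\gamma_j$ with endpoints identified (Lemma~\ref{lem:plainendpts}); the image~$C$ of~$\cC$ in~$P^*$ is a finite connected graph. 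A simple closed curve in~$C$ that bounded a disk in~$S$ would allow one to coalesce arcs so as to enlarge an unlinked arc or create a plain arc on~$\cC$, contradicting the maximality in Lemma~\ref{lem:maximal-unlinked} or the no-plain-arcs property of~$\cC$, so every simple closed curve in~$C$ is homotopically non-trivial. Second, for each plain arc~$\gamma_j$, regard it together with the disk in~$P^*$ it bounds as a (topological) plain paper-folding scheme; Theorem~\ref{thm:plain-top-structure} then produces a sphere whose scar is a dendrite~$D_j$, attached to~$P^*$ at the single point~$[a_j]=[b_j]$. Gluing these spheres onto~$P^*$ produces~$S$, with $G=C\cup\bigcup_j D_j$. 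Lemma~\ref{lem:plainequivsaturated} ensures each~$D_j$ meets~$C$ only at~$[a_j]$, and $\diam D_j\le|\gamma_j|/2$ by Remark~\ref{rmk:distance-decreasing}. Since the~$\gamma_j$ are pairwise disjoint subarcs of~$\partial P$, their lengths are summable, so $\diam D_j\to 0$.

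The main obstacle will be formalising the second stage rigorously: I must verify that the space obtained by attaching countably many dendrites to~$P^*$ is a Hausdorff topological surface equal to~$S$, not a pathological set-theoretic quotient. This reduces to checking that the full decomposition of~$P$ into $\sim_\cP$-classes is upper semicontinuous, after which Theorem~\ref{thm:compact-metr-top-agree} identifies the metric and topological quotients. The key inputs will be the shrinking-diameter property together with the saturation statement in Lemma~\ref{lem:plainequivsaturated}, which jointly forbid limits of equivalent sequences from converging to inequivalent points. Once this is established, $G$ is a local dendrite because each point of~$G$ has a neighbourhood meeting only finitely many~$D_j$ together with a portion of the finite graph~$C$, and each~$D_j$ is a dendrite by Theorem~\ref{thm:plain-top-structure}.
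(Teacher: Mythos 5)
Your overall architecture (spanning-tree reduction, Lemma~\ref{lem:infinitely-linked} for one direction, the maximal unlinked arc decomposition for the other, and the diameter estimate $\diam D_j\le|\gamma_j|/2\to 0$) matches the paper, but both directions have genuine gaps. In the ``surface $\Rightarrow$ finitely linked'' direction you iterate Lemma~\ref{lem:infinitely-linked} starting from $P=P_0$ itself. The lemma's hypotheses require a surface with a single boundary component carrying arc pairings with \emph{no plain arcs}, and $\partial P$ will in general contain (typically infinitely many) maximal plain arcs. To start the iteration you must first realize $\sim_{\text{Plain}}$ and pass to the disk bounded by~$\cC$ with the induced pairings~$\cQ$ --- and carrying out these infinitely many plain identifications so that the quotient is still a disk with the same eventual paper space is itself the Moore-theorem construction; it is exactly what the paper does (``carry out the identifications realizing $\sim_{\text{Plain}}$ as above, and remove the complementary component of~$\cC$'') before invoking the lemma. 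Your argument as written never clears this hypothesis, and your genus-monotonicity claim (``further identifications can only add handles or cross-caps'') should be replaced by the cleaner observation that all remaining identifications occur on~$\hcC$, so $\hP$ minus a boundary collar embeds in~$S$, forcing $S$ to contain embedded subsurfaces of arbitrarily large genus.

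In the ``finitely linked $\Rightarrow$ surface'' direction you reverse the paper's order: finitely many linked identifications first, then the plain ones. This is where the real work sits, and your description of the intermediate object~$P^*$ is not correct as stated: realizing only the pairings not contained in plain arcs does \emph{not} identify the endpoints $a_j,b_j$ of a maximal plain arc (that identification is Lemma~\ref{lem:plainendpts}, which uses chains through the pairings \emph{inside}~$\gamma_j$), the arc pairings of maximal unlinked arcs live on~$\cC$ rather than on~$\partial P$, a plain arc does not bound a disk in~$P^*$, and $S$ is not obtained by ``gluing spheres onto~$P^*$'' but by showing that the quotient of a collar of each boundary circle of~$P^*$ by the remaining plain identifications is a closed disk containing a dendrite (equivalently, by capping the circle and applying the plain theory to the capped disk). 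More importantly, the step you flag as the ``main obstacle'' --- upper semicontinuity of the full decomposition, i.e.\ that the quotient is locally Euclidean at scar points, including accumulation points of the attachment points of the~$D_j$ --- is precisely the content that must be proved, and shrinking diameters plus Lemma~\ref{lem:plainequivsaturated} do not by themselves yield it. The paper's order of operations is designed to avoid this: all plain identifications (the infinitely many ones) are performed in a single application of Moore's theorem, with the musc verification done explicitly via a geodesic lamination that includes the geodesics~$g_\gamma$ spanning the maximal plain arcs, after which only finitely many classical identifications along~$\cC$ remain. Finally, your justification of the local dendrite property (``each point of~$G$ has a neighbourhood meeting only finitely many~$D_j$'') fails when attachment points accumulate; the correct argument is that small closed balls in~$G$ contain no simple closed curve, as in Lemma~\ref{lem:injradius}.
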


\begin{proof}
Suppose first that~$(P,\cP)$ is finitely linked. The
identifications~$\sim_\cP$ on~$\partial P$ will be done in two
steps. First those arising from pairings contained in plain arcs are
carried out: applying Moore's theorem as in the proof of
Theorem~\ref{thm:plain-top-structure}, these yield a topological
sphere on which the simple closed curve~$\cC$ bounds a disk whose
interior contains only points of~$\csph\setminus P$. Removing this
disk and carrying out the remaining (finitely many) identifications
along~$\cC$ yields a topological surface. The details are similar to
those of the proof of Theorem~\ref{thm:plain-top-structure}, and will
only be sketched. 

As in that proof, regard~$\partial P$ as the unit circle in~$\csph$,
so that~$P$ is the exterior of the unit disk~$\bDD$, which is regarded
as the hyperbolic plane. Connect the points of each interior pair
in~$\partial P$ which is contained in a plain arc with the geodesic
joining them, and include also the geodesic $g_\gamma$ joining the
endpoints of each maximal plain arc~$\gamma$. Let~$\cL$ be the
geodesic lamination obtained by taking the closure of the union of
these geodesics. For each maximal plain arc~$\gamma$, let~$D_\gamma$
denote the disk bounded by $\gamma\cup g_\gamma$.

The musc decomposition~$\cG$ of~$\csph$ with the following elements
then realizes the equivalence relation~$\sim_{\text{Plain}}$
on~$\partial P$ corresponding to the set of pairings contained in
plain arcs:
\begin{enumerate}[I)]
\item Closures of components of $D_\gamma \setminus\cL$, for each
  maximal plain arc~$\gamma$;
\item Geodesics in~$\cL$ (together with endpoints) which are not
  contained in elements of type~I); and
\item Points which are not contained in elements of types~I) and~II).
\end{enumerate}

The quotient of~$\csph$ by $\sim_{\text{Plain}}$ is therefore a
topological sphere containing the simple closed curve~$\cC$ obtained
from~$\partial P$ by collapsing each maximal plain arc to a point. One
complementary component of~$\cC$ is (the projection of)
$\csph\setminus\left(P\cup\bigcup_{\gamma\text{
    plain}}D_\gamma\right)$, while the other contains (the projection
of) the interior of~$P$, together with a dendrite corresponding to
each maximal plain arc~$\gamma$, attached to~$\cC$ at a single point.
Removing the former complementary component yields a topological disk
bounded by~$\cC$, and carrying out the (finitely many) identifications
on~$\cC$ gives the compact surface~$S$: the bounding curve~$\cC$
projects to a finite graph~$C$ in~$S$ in which every simple closed
curve is homotopically non-trivial, and the scar~$G$ consists of this
graph together with the dendrites arising from maximal plain arcs.

For the converse, suppose that~$(P, \cP)$ is not finitely
linked. Carry out the identifications realizing~$\sim_{\text{Plain}}$
as above, and remove the complementary component of~$\cC$
corresponding to the exterior of~$P$. Using
Lemma~\ref{lem:infinitely-linked}, it is possible to carry out
repeated additional pairs of identifications, each of which adds~1 to
the genus of the resulting surface. It follows that~$S$ contains a
surface (with boundary) of genus~$n$ for all~$n$, so that~$S$ cannot
be a compact surface as required.
\end{proof}

The injectivity radius of~$G$, which gives the length of the shortest
simple closed curve in the scar, will be important in
Section~\ref{sec:cxstructure}. 

\begin{defn}[Injectivity radius of $G$]\label{defn:injradius}
The {\em injectivity radius} $\br$ of~$G$ is defined by
\[\br = \frac{1}{2}\, \inf\{|\gamma|_G\,:\, \gamma \text{ is a simple
  closed curve contained in }G\}.\]
\end{defn}

\begin{lem}\label{lem:injradius}
Let~$G$ be the scar of a finitely linked paper-folding scheme. Then
the injectivity radius of $G$ satisfies $\br>0$. For every $x\in G$
and every $0\leq r<\br$, the closed ball $\bB_G(x;r)$ is a dendrite
and is thus contractible.
\end{lem}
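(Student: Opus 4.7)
The plan is to exploit the structural decomposition from Theorem~\ref{thm:top-structure}: $G = C \cup \Gamma$, where $C$ is a finite connected graph and $\Gamma = \bigsqcup_{i\in I} D_i$ is a countable or finite disjoint union of dendrites with diameters decreasing to $0$, each $D_i$ attached to $C$ at a single point $p_i$. I first observe that every simple closed curve in $G$ must lie in $C$: if such a curve met some $D_i$, it would either lie entirely in $D_i$ (impossible, as $D_i$ contains no simple closed curves) or have to exit and re-enter $D_i$ through the single attachment point $p_i$, contradicting simplicity. Since the finite metric graph $C$ has only finitely many simple cycles, each of positive length, the infimum $2\br$ is a positive minimum.

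For the second statement, fix $x\in G$ and $r<\br$. The ball $\bB_G(x;r)$ is closed in the compact space $G$, and is path-connected because $G$ is compact and intrinsic (Lemma~\ref{lem:metricstructureofG}(b)) and therefore geodesic by Theorem~\ref{thm:compact-intrinsic-strict}: every $y$ in the ball is joined to $x$ by a geodesic of length at most $r$, which remains in the ball. So $\bB_G(x;r)$ is a continuum. If it is contained in a single $D_i$, it is a subcontinuum of that dendrite, hence a dendrite by Theorem~\ref{thm:pprtsdendrites}(e). Otherwise, observe that intrinsic distances in $G$ between points of $C$ agree with intrinsic distances in $C$ alone, since any excursion into some $D_j$ enters and exits through $p_j$ and can be trimmed. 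Taking $q\in C$ to be $x$ (if $x\in C$) or the attachment point of the dendrite containing $x$, we thus have $\bB_G(x;r)\cap C = \bB_C(q;s)$ for some $s\le r<\br$. A standard universal-cover argument --- the simply connected cover $\tilde C$ is a tree, and two distinct lifts of the same point of $\bB_C(q;s)$ would produce a loop of length $<2\br$ and hence an embedded simple cycle of length less than $2\br$, a contradiction --- shows that $\bB_C(q;s)$ lifts isometrically to a ball in $\tilde C$, and is therefore a finite subtree $T$ of $C$.

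To conclude, $\bB_G(x;r)$ is the union of $T$ with the pieces $\bB_G(x;r)\cap D_i$, each a subcontinuum of $D_i$ (hence a dendrite) attached to $T$ at the single point $p_i$. Because the diameters of the $D_i$ decrease to $0$, only finitely many of these pieces exceed any given positive diameter, securing both compactness and local connectedness of the union. Any simple closed curve in $\bB_G(x;r)$ would have to lie within $T$ or within a single $D_i$-piece, neither of which contains one; hence $\bB_G(x;r)$ is a locally connected continuum without simple closed curves, i.e., a dendrite. Contractibility then follows from Theorem~\ref{thm:pprtsdendrites}(d). The main obstacle I anticipate is this last bookkeeping: verifying that attaching a possibly countable family of vanishingly small dendrites at discrete points preserves local connectedness, which is where the hypothesis $\diam D_i \to 0$ is decisive.
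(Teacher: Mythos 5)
Your proof is correct and follows essentially the same route as the paper: both rest on the decomposition $G=C\cup\Gamma$ from Theorem~\ref{thm:top-structure} to get $\br>0$ (every simple closed curve lies in the finite graph $C$), observe that $\bB_G(x;r)$ is a continuum because $d_G$ is intrinsic, and show it meets each dendrite of $\Gamma$ in a subcontinuum (hence a dendrite) and meets $C$ in a tree. The only divergence is at the end: the paper concludes via the separation criterion of Theorem~\ref{thm:pprtsdendrites}a), which sidesteps the local-connectedness bookkeeping (using $\diam D_i\to 0$) that your direct verification of the dendrite definition requires, while your universal-cover argument that $\bB_C(q;s)$ is a tree supplies a detail the paper merely asserts.
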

\begin{proof}
$\br>0$ since the graph~$C$ of Theorem~\ref{thm:top-structure} is
  finite, and hence there are only finitely many simple closed curves
  in~$G$.

Let~$r<\br$. Then $\bB_G(x;r)$ is a continuum (it is connected since
the metric~$d_G$ is intrinsic) which contains no simple closed
curves. Each dendrite in the decomposition of~$G$ intersects
$\bB_G(x;r)$ in a subcontinuum, and hence in a dendrite
(Theorem~\ref{thm:pprtsdendrites}e)); and similarly~$C$
intersects~$\bB_G(x;r)$ in a tree. Hence any two distinct points
of~$\bB_G(x;r)$ are separated by a third point of~$\bB_G(x;r)$,
establishing by Theorem~\ref{thm:pprtsdendrites}a) that it is a
dendrite as required.
\end{proof}

\subsection{The metric structure of paper spaces}
\label{subsec:metric-structure}

Let~$S$ be a paper space. Near planar points,~$S$ is Euclidean: that
is, such points have an open neighborhood which is isometric to an
open ball in~$\R^2$. Similarly, near a regular vertex~$x$, $S$ is
isometric to the apex of a cone, with cone angle equal to the sum of
the internal angles of the multipolygon~$P$ at the points
of~$\pi^{-1}(x)$. The following definitions formalize this
statement. Throughout this section $(P,\cP)$ is an arbitrary
paper-folding scheme with scar~$G$ and associated paper space~$S$.

\begin{defn}[Cone angle at non-singular points]
Let~$x\in G$ be non-singular with $\pi^{-1}(x) =
\{\tx_1,\ldots,\tx_k\}$, and let~$\teta_i$ be the internal angle
of~$P$ at~$\tx_i$. The {\em cone angle} at~$x$ is the number $\eta(x)
= \teta_1 + \cdots + \teta_k$.
\end{defn}

In particular, the cone angle at a regular point~$x\in G$ is $\eta(x)=2\pi$.

\begin{defn}[Metric cone: see~\cite{BBI}]
 Let $X$ be a topological space. The {\em cone}
  $\opna{Cone}(X)$ over $X$ is the (topological) quotient of
  $[0,\infty)\times X$ by the equivalence relation which collapses
    $\{0\}\times X$ to a point, that is, whose only non-trivial class
    is $\{0\}\times X$. This point in the quotient is the {\em origin}
    or {\em apex} of the cone and is also denoted~$0$. If $(X,d)$ is a
    metric space, then it is possible to make $\opna{Cone}(X)$ into a
    metric space by defining a distance as follows: if $p=[t,x],
    q=[s,y]\in\Cone(X)$, set
\[d_c(p,q)=
\begin{cases}
\sqrt{t^2+s^2-2st\cos\left(d(x,y)\right)}, &\text{ if } d(x,y)\leq\pi \\
t+s, &\text{ if } d(x,y)\geq\pi.
\end{cases} \]
\end{defn}

Let $S^1_r$ denote the circle of radius $r>0$ in~$\R^2$ with the
intrinsic metric. Then the cone $\opna{Cone}(S^1_r)$ is locally
isometric to the plane itself, except at the apex for $r\neq 1$, and
$\opna{Cone}(S^1_1)$ is globally isometric to $\plane$. Moreover, two
cones on circles $\Cone(\sone_{r_1}),\Cone(\sone_{r_2})$ are globally
isometric only if $r_1=r_2$.

\begin{defn}[Conic-flat surface]
A {\em conic-flat surface} is a metric space which is locally
isometric to cones on circles: for every point~$x$, there
exist~$r,\veps>0$ and $c\in \Cone(S^1_r)$ such that $B(x;\veps)$ is
isometric to $B_{\Cone(S^1_r)}(c;\veps)$. 
\end{defn}

\begin{thm}[Metric structure of paper spaces]\mbox{}
\label{thm:metric-structure}
\begin{enumerate}[a)]
\item Let~$x$ be a non-singular point of~$G$. Then there exists~$r>0$ such
that $B_S(x;r)$ is isometric to
$B_{\Cone(S^1_{\eta(x)/2\pi})}(0;r)$. In particular, $S\setminus\cV^s$
is a conic-flat surface.
\item The metric~$d_S$ on a paper space~$S$ is strictly intrinsic. 
\end{enumerate}
\end{thm}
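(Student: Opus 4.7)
For part~(b), I would combine three facts from the background: the intrinsic multipolygon metric $d_P$, Lemma~\ref{lem:intrinsic-quotient-intrinsic} (quotients of intrinsic metrics are intrinsic), and Theorem~\ref{thm:compact-intrinsic-strict} (compact intrinsic metrics are strictly intrinsic). Since $S$ is compact --- by Theorem~\ref{thm:compact-metr-top-agree} it is homeomorphic to the topological quotient of the compact space~$P$ --- these together give that $d_S = d_P^\cP$ is strictly intrinsic.

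For part~(a), the plan is to build an explicit local cone model near a non-singular point $x\in G$, and to show that the quotient projection realizes it as a ball in $S$. Let $\pi^{-1}(x)=\{\tx_1,\dots,\tx_k\}$. Non-singularity of $x$ prevents any $\tx_i$ from being an accumulation of vertices of $\partial P$ (pairing endpoints or vertices of $P$): such an accumulation would project either to an accumulation of vertices of $G$ at $x$, or to $x$ itself with infinite valence. I would therefore fix $r_0>0$ small enough that (i) the balls $B_P(\tx_i;r_0)$ are pairwise disjoint; (ii) $B_P(\tx_i;r_0)\cap\partial P = B_{\partial P}(\tx_i;r_0)$ and contains no vertex of $\partial P$ other than possibly $\tx_i$; and (iii) every pairing in $\cP$ meeting $\bigcup_i B_{\partial P}(\tx_i;r_0)$ has length at least $r_0$, so its paired segment also lies in $\bigcup_l B_{\partial P}(\tx_l;r_0)$.

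With this choice, each $B_P(\tx_i;r_0)$ is a Euclidean sector of opening angle $\teta_i$, bounded in $\partial P$ by at most two radial segments. Condition~(iii) produces an involution matching each such radial segment to a radial segment at some $\tx_l$, and gluing $\bigsqcup_i \ol{B_P(\tx_i;r_0)}$ along this involution yields, by elementary planar geometry, a metric space $M$ isometric to $\ol{B_{\Cone(S^1_{\eta(x)/2\pi})}(0;r_0)}$ with $\eta(x)=\sum_i\teta_i$. The natural projection $\iota\colon M\to S$ is distance non-increasing by Remark~\ref{rmk:distance-decreasing}, giving one inequality of the isometry.

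The main obstacle is the reverse inequality: $\cP$-chains computing $d_S$ must not shortcut by escaping $\bigcup_i B_P(\tx_i;r_0)$. For this I would track $D(\tz):=d_P(\tz,\pi^{-1}(x))$ along a chain $((\tp_j,\tq_j))$ that starts at a preimage of~$x$. Condition~(i) forces $D(\tz)=d_P(\tz,\tx_i)$ whenever $\tz\in B_P(\tx_i;r_0)$. Free moves contribute at most their length to changes in $D$. For a jump $\tq_j\sim_\cP\tp_{j+1}$ taken while $D(\tq_j)<r_0$, condition~(iii) guarantees that the pairing involved is one of the local pairings; combined with~(ii), which forces the arc-length and $d_P$ measurements to agree along the relevant boundary portion, this gives $D(\tp_{j+1})\le D(\tq_j)$. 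Iterating, $D$ never exceeds the accumulated free-move length; so any chain of length less than $r_0/2$ remains inside $\bigcup_i B_P(\tx_i;r_0)$ and can be interpreted as a chain in $M$ of the same length. Taking $r:=r_0/4$ yields $d_S(\iota(y_1),\iota(y_2))=d_M(y_1,y_2)$ for $y_1,y_2\in\ol{B_M(0;r)}$, giving the isometry $B_M(0;r)\cong B_S(x;r)$. Finally, if $x$ is a planar point interior to $P$, then $\pi$ is an isometry on a small neighborhood, which is the flat Euclidean case $\eta(x)=2\pi$, and together these cases show $S\setminus\cV^s$ is conic-flat.
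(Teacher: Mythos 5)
Part~(b) of your proposal is exactly the paper's argument (the paper's proof of (b) is the same two citations plus compactness), and part~(a) is omitted in the paper altogether, with the comment that it is ``straightforward and technical'' and unused later; so for (a) there is no published proof to compare against, only your sketch. Your overall strategy for (a) --- build a local cone model by gluing the sectors $B_P(\tx_i;r_0)$ along an arc-length-respecting matching of radial boundary germs, get one inequality from the fact that projections do not increase distance, and get the other by showing that short $\cP$-chains cannot leave $\bigcup_i B_P(\tx_i;r_0)$ --- is the natural route and is essentially sound.

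However, there is a genuine gap in how you set up the local picture. Your conditions (ii) and (iii) are justified by the claim that non-singularity of $x$ prevents pairing endpoints from accumulating at any $\tx_i$, ``since such an accumulation would project to an accumulation of vertices of $G$ or give $x$ infinite valence.'' That claim is false: as the paper itself points out in the remarks following Definition~\ref{defn:origamipts}, endpoints of paired segments need not project to vertices at all, because a single pairing may be presented as a union of subpairings. Subdividing one pairing $\ssegpair{\alpha}$ at points accumulating at an interior point of $\alpha$ produces a legitimate paper-folding scheme in which pairing endpoints accumulate at a point whose image in $G$ is planar (every such endpoint has a two-point fiber containing no vertex of $P$). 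For such a scheme no $r_0$ satisfies your (ii) or (iii), and both are load-bearing: (iii) is what you use to produce the radial-segment involution defining $M$, and (ii)+(iii) are what give the key estimate $D(\tp_{j+1})\le D(\tq_j)$ at a jump. The correct replacement is to use non-singularity in the form it actually takes: there is $\veps>0$ such that $B_G(x;\veps)\setminus\{x\}$ contains no vertices, so every boundary point within $\veps$ of $\pi^{-1}(x)$ other than the $\tx_i$ has $\#\pi^{-1}(\pi(\cdot))\le 2$ and no vertex of $P$ in its fiber; combined with closedness of $\sim_\cP$ (equivalently, compactness of $P$, which localizes partners of nearby points to a neighborhood of $\pi^{-1}(x)$) and the length-preserving, orientation-reversing nature of pairings, this forces the possibly infinitely many pairings meeting the germs at the $\tx_i$ to assemble into a single arc-length-respecting involution of the $2k$ germs --- if two adjacent local pairings sent the point at parameter $s$ to different targets, the common endpoint would project to a vertex arbitrarily close to $x$. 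One should also note (it follows from your own chain estimate, since all $\tx_i$ lie in one $\sim_\cP$-class) that this involution yields a single cycle through the $k$ sectors, so that $M$ is one cone of angle $\eta(x)$ rather than a wedge of cones. With these corrections your chain argument goes through and the theorem follows.
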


\begin{proof}
The proof of Part~a) is straightforward and technical and is omitted:
this statement is not used in the remainder of the paper. Part~b) is
immediate from Lemma~\ref{lem:intrinsic-quotient-intrinsic} and
Theorem~\ref{thm:compact-intrinsic-strict} since the metric~$d_P$
on~$P$ is intrinsic, and~$P$ (and hence~$S$) is compact.
\end{proof}

Notice that even if~$S$ is a paper surface, the metric on the
conic-flat surface $S\setminus\cV^s$ need not be strictly intrinsic.

\section{The conformal structure on paper surfaces}
\label{sec:cxstructure}

If $(P,\cP)$ is a surface paper-folding scheme and~$S$ is the quotient
paper surface, then there is a natural conformal structure on the set
$\Planar(S)$ of planar points coming from the local Euclidean
structure. The question addressed in this section is whether or not
this complex structure extends uniquely across non-planar points
of~$S$.

The complex structure extends readily across regular vertices of~$G$
using the conic-flat structure on~$S\setminus\cV^s$ described in
Section~\ref{subsec:metric-structure}: at a cone point of
angle~$\eta$, the map $z\mapsto z^{2\pi/\eta}$ can be used to
introduce conformal coordinates. Thus the case of interest is that of
isolated singular points.  Theorem~\ref{thm:main} below provides a
criterion for the complex structure to extend uniquely across such a
point. In particular, if all singular points are isolated, and this
criterion holds at each of them, then~$S$ is a Riemann
surface. Similar conditions which guarantee that the complex structure
extends uniquely across a more general singular set can also be
obtained, and this will be the subject of a forthcoming paper.

The question of whether the complex structure extends uniquely across
an isolated singular point is clearly a local one, at least as far
as~$S$ is concerned. However, both the results and the techniques of
this section will be central in Section~\ref{sec:modcont}, where a
global modulus of continuity for a uniformizing map is obtained. A
more global approach is therefore taken than is necessary for the
results of this section alone.

Again, the local nature of the problem means that there is no
essential distinction between plain and non-plain surface
paper-foldings. For simplicity of exposition, however, the details of
the construction and proof (from
Section~\ref{subsec:cxstruct-foliatedcollar} onwards) will be carried
out in the plain case: the minor modifications needed for non-plain
examples will be described at the end of
Section~\ref{subsec:cxstruct-proof}.

The main theorem is stated in Section~\ref{subsec:cxstruct-statement},
and examples of its application are given in
Section~\ref{subsec:cxstruct-example}. The idea of the proof (which
was inspired by similar constructions in~\cite{EaGa,ugpa,eqgpa};
see also~\cite{FNR}) is to construct a nested sequence of annuli
with divergent module sum zooming down to the singular point~$q$, and
to apply Lemmas~\ref{lem:conformalpnct} and~\ref{lem:summod}. In order
to be able to construct these annuli in a way which makes it possible
to estimate their modules, a {\em foliated collaring} of the
polygon~$P$ is used: this is described in
Section~\ref{subsec:cxstruct-foliatedcollar}. The annuli themselves
are constructed in Section~\ref{subsec:cxstruct-annuli}, and some
technical lemmas needed to estimate their modules are given in
Section~\ref{subsec:cxstruct-geometry-polygon-constants}, before the
proof of Theorem~\ref{thm:main} is given in
Section~\ref{subsec:cxstruct-proof}.

\subsection{Statement of results}
\label{subsec:cxstruct-statement}

\begin{defns}[$m(q;r)$, $n(q;r)$, planar radius]
\label{defn:planarradius} 
Let~$G$ be the scar of a paper-folding scheme~$(P,\cP)$, $q\in G$, and
$r>0$. Recall (Notation~\ref{notation:ballsandcircles}) that
$C_G(q;r)$ denotes the set of points of~$G$ at $d_G$-distance
exactly~$r$ from~$q$. Define
\[n(q;r) := \#C_G(q;r) \in\N\cup\{\infty\}.\]
A radius~$r>0$ is said to be {\em planar for~$q$}, or {\em
  $q$-planar}, if all points of~$C_G(q;r)$ are planar.

Define also
\[m(q;r) := \rmm_G(B_G(q;r)) \in (0,\rmm_G(G)].\]
\end{defns}

In the statement and proof of the following result, recall
(Definition~\ref{defn:injradius} and Lemma~\ref{lem:injradius})
that~$\br$ denotes the injectivity radius of~$G$, and that
$\bB_G(q;r)$ is a dendrite for all $q\in G$ and all $r<\br$.

\begin{lem}\label{lem:planarradius}
If $r\in(0,\br)$ is a planar radius for $q\in G$, then $n(q;r)$ is
finite and is locally constant on both variables $q,r$. Moreover,
given $q\in G$, the set of radii which are not planar for $q$ is a
closed subset of~$(0,\br)$ of measure zero.
\end{lem}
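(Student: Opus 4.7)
The plan is to exploit the local arc structure of the scar~$G$ at planar points together with the dendrite structure of $\bB_G(q;r')$ for $r'<\br$ (Lemma~\ref{lem:injradius}). First I would argue that every planar point $x\in G$ has a closed arc neighborhood $\bar I_x\sbs G$, parametrized by arc length as $\gamma_x\co[-a_x,b_x]\to G$ with $\gamma_x(0)=x$, that consists entirely of planar points: a planar point has two preimages in the interiors of paired segments of $\partial P$, so by Lemma~\ref{lem:metricstructureofG}b) sufficiently small $d_G$-balls around~$x$ are images of two arcs in $\partial P$ glued by the pairing, and openness of the planar set (Lemma~\ref{lem:metricstructureofG}a)) lets this arc be shrunk to lie entirely inside the planar set.

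Given a planar $r\in(0,\br)$ and $x\in C_G(q;r)$, choose $\bar I_x\sbs\bB_G(q;r')\setminus\{q\}$ for some $r'\in(r,\br)$. Inside the dendrite $\bB_G(q;r')$ the arc from~$q$ to~$x$ is unique and enters $\bar I_x$ through exactly one endpoint, say $p^-_x:=\gamma_x(-a_x)$; concatenating it with the appropriate subarc of $\bar I_x$ gives the unique arc from~$q$ to $\gamma_x(t)$, so
\[
d_G(q,\gamma_x(t)) \;=\; d_G(q,p^-_x) + (t+a_x) \;=\; r+t
\]
for all $t\in[-a_x,b_x]$. Strict monotonicity of this function implies that $x$ is isolated in $C_G(q;r)$. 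Because $C_G(q;r)$ is a closed subset of the compact dendrite $\bB_G(q;r)$ and consists of isolated points, it is finite.

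For local constancy near a planar pair $(q_0,r_0)$ with $C_G(q_0;r_0)=\{x_1,\ldots,x_k\}$ and the $\bar I_i:=\bar I_{x_i}$ fixed, each component of $\bB_G(q_0;r')\setminus\bar I_i$ is open; so for $q$ sufficiently close to~$q_0$, $q$ remains in the same component as~$q_0$, and the geodesic from~$q$ to~$x_i$ still enters $\bar I_i$ through $p^-_i$. The monotonicity argument above, applied at~$q$, then yields a unique $t\in(-a_i,b_i)$ with $d_G(q,\gamma_i(t))=r$, provided $(q,r)$ is close enough to $(q_0,r_0)$. Upper semi-continuity of $(q,r)\mapsto C_G(q;r)$, which follows directly from the 1-Lipschitz continuity of $d_G$, rules out further points of $C_G(q;r)$ outside $\bigcup_i\bar I_i$. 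Hence $n(q;r)=k$ on a neighborhood of $(q_0,r_0)$. I expect the main technical hurdle to be justifying the stability of the entry endpoint $p^-_i$ under perturbation of~$q$; it is precisely the dendrite structure that delivers it, by forcing the geodesic to enter $\bar I_i$ through the endpoint lying in the same complementary component as~$q$.

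For the final statement, the set of non-planar radii in $(0,\br)$ is $d_G(q,\cdot)(\bcV)\cap(0,\br)$, the image of the closed set~$\bcV$ under the continuous map $d_G(q,\cdot)$. Closedness of the non-planar set is then immediate from compactness of~$\bcV$. Since $\mu^1_G(\bcV)=\tfrac{1}{2}\rmm_G(\bcV)=0$ by Lemma~\ref{lem:metricstructureofG}a),c), and $d_G(q,\cdot)\co G\to\R$ is 1-Lipschitz, Lemma~\ref{lem:hausdorff-nonincreasing} yields that its image has zero 1-dimensional Hausdorff measure, hence zero Lebesgue measure.
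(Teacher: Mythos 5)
Your proof is correct, and it rests on the same structural ingredients as the paper's: the dendrite structure of $\bB_G(q;r')$ for $r'<\br$ (Lemma~\ref{lem:injradius}), the strictly intrinsic metric, the local interval structure of~$G$ at planar points, and Lemma~\ref{lem:hausdorff-nonincreasing} for the closedness/measure-zero statement, which you prove exactly as the paper does. Where you differ is the counting mechanism. The paper uses compactness of~$\bcV$ once and for all: planarity of~$r$ gives a uniform $\delta>0$ with $|d_G(s,q)-r|>\delta$ for all $s\in\bcV$, so every ball $B_G(x;\delta)$ with $x\in C_G(q;r)$ is isometric to an interval containing no other point of $C_G(q;r)$; finiteness then follows from finiteness of $\rmm_G(G)$ (a packing argument), and local constancy from a symmetric intermediate-value count in which each such interval captures exactly one point of $C_G(q';r')$ and vice versa. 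You instead get finiteness from isolation plus compactness of $C_G(q;r)$, and local constancy from monotonicity of $d_G(q,\cdot)$ along fixed arcs $\bar I_i$, stability of the entry endpoint, and upper semicontinuity of $(q,r)\mapsto C_G(q;r)$ (for which you need compactness of~$G$ in addition to the Lipschitz property of $d_G(q,\cdot)$). The entry-endpoint stability you flag as the main hurdle does hold and for the reason you indicate: since $q$ and $q_0$ lie in the same component of the complement of $\bar I_i$ in the dendrite, if their arcs to $x_i$ entered $\bar I_i$ through different endpoints one could produce a path from $q_0$ to $x_i$ avoiding the point $p^-_i$ of the unique arc $[q_0,x_i]_G$, a contradiction; note also that the two endpoints of $\bar I_i$ are adjacent to \emph{different} complementary components, as otherwise the dendrite would contain a simple closed curve. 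Both routes work; the paper's uniform-$\delta$ bookkeeping is lighter, while yours makes the level-set geometry more explicit and avoids the measure argument.

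One wording correction: a planar point need not have its two preimages in the \emph{interiors} of paired segments --- as the paper observes after Definition~\ref{defn:origamipts}, a pairing may be split into two, and the common endpoint of the resulting segments projects to a planar point. What you actually need (and what holds, and is the same fact the paper invokes without proof when asserting that $B_G(x;\delta)$ is isometric to an interval) is that a ball about a planar point consisting entirely of planar points is the locally isometric image of two arcs of $\partial P$ glued along the pairing relation, hence isometric to an interval; with that formulation your construction of the arcs $\bar I_x$ goes through unchanged.
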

\begin{proof}
Recall (Lemma~\ref{lem:metricstructureofG}a)) that the set~$\ol\cV$ of
non-planar points is a compact subset of~$G$ with zero
$\rmm_G$-measure. If $r>0$ is planar for~$q$ there is therefore
some~$\delta>0$ such that $|d(s,q)-r|>\delta$ for all $s\in\ol\cV$.

It follows that for each $x\in C_G(q;r)$, the ball $I_x=B_G(x;\delta)$
is isometric to an interval of length~$2\delta$. Since~$\bB_G(q;r)$ is a
dendrite and the metric~$d_G$ is strictly intrinsic, $I_x$ cannot
contain any other point of~$C_G(q;r)$ (otherwise the unique shortest
path from~$q$ to one of the points would be strictly shorter than that
to the other). This establishes that~$n(q;r)$ is finite,
since~$\rmm_G(G)$ is finite.

For the local constantness of~$n(q;r)$, observe that if
$d_G(q,q')<\delta/4$ and $|r-r'|<\delta/4$, then each~$I_x$ contains a
point of~$C_G(q';r')$, and hence $n(q';r')\ge n(q;r)$. On the other
hand, since $|d(s,q')-r'|>\delta/2$ for all $s\in\ol\cV$, the same
argument works the other way round to show that $n(q;r)\ge n(q';r')$
as required.

$\ol\cV$ has zero $\rmm_G$-measure, and hence zero $\mu^1_G$-measure
by Lemma~\ref{lem:metricstructureofG}c). Since the function $G\to\R$
defined by $s\mapsto d_G(q,s)$ is distance non-increasing, it follows
from Lemma~\ref{lem:hausdorff-nonincreasing} that the set $\{d_G(q,s)\,:\,
s\in\ol\cV\}$ of non-planar radii for~$q$ has zero measure. 

Since $\ol\cV$ is compact and $x\mapsto d_G(q,x)$ is a continuous map
$G\to\R$, the set of non-planar radii together with zero is closed
in~$\R$, and hence the set of non-planar radii is closed in $(0,\br)$.
\end{proof}

It is now possible to state one of the main theorems of this paper:
\begin{thm}
\label{thm:main}
Let~$(P,\cP)$ be a surface paper-folding scheme with associated paper
surface~$S$ and scar~$G\sbs S$.  If~$q\in G$ is an isolated singular
point, then the complex structure on $S\setminus\cV^s$ extends uniquely
across~$q$ provided that
\begin{equation}\label{eq:divint1}
 \int_0 \frac{\rmd r}{m(q;r)+r\cdot n(q;r)} =
 \infty.
\end{equation}
In particular, if all singular points are isolated and the integral
condition above holds for every one of them, then $S$ is a compact
Riemann surface.
\end{thm}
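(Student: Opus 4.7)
The plan is to construct, in a small neighborhood of the isolated singular point~$q$, a nested sequence of concentric annular regions~$R_k$ around~$q$ whose modules sum to infinity, and then to invoke Lemma~\ref{lem:summod} and Lemma~\ref{lem:conformalpnct} to deduce that~$q$ is a removable puncture for the conformal structure on~$S$. First I would fix $r_0\in(0,\br)$ small enough that $\bB_S(q;r_0)\setminus\{q\}$ contains no singular points, which is possible because $q$ is isolated in~$\cV^s$. On the conic-flat part provided by Theorem~\ref{thm:metric-structure}(a) the canonical conformal structure extends across each regular vertex of cone angle~$\eta$ by $z\mapsto z^{2\pi/\eta}$, making $\bB_S(q;r_0)\setminus\{q\}$ into a Riemann surface that is topologically a punctured disk. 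Using Lemma~\ref{lem:planarradius}, I would then select a sequence of planar radii $r_0>r_1>r_2>\cdots\downarrow 0$ with $r_k/r_{k-1}\in[1/3,2/3]$, and set
\[R_k \;:=\; B_S(q;r_{k-1})\setminus\bB_S(q;r_k), \qquad k\geq 1,\]
each of which is a concentric annular region in $\bB_S(q;r_0)\setminus\{q\}$, nested as $k$ increases. Uniqueness of the extension of the complex structure, once it is known to exist, follows at once from Theorem~\ref{thm:removability2} applied to the transition map between two hypothetical charts at~$q$.

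The heart of the argument is a lower bound for $\mod R_k$, obtained from inequality~(\ref{eq:modest}) applied to the conformal metric~$\nu$ on the conic-flat locus coming from the Euclidean metric on~$P$ (legitimate because~$\pi$ is a local isometry off~$G$). The $\nu$-distance between the two boundary components of $R_k$ is at least $r_{k-1}-r_k$, since the $\nu$-length of any path in~$R_k$ equals its $d_S$-length and $d_S$ satisfies the triangle inequality (Theorem~\ref{thm:metric-structure}(b)). The $\nu$-area of $R_k$ equals the Euclidean area of $\pi^{-1}(R_k)\subset P$, which I would estimate by recognising $\pi^{-1}(\bB_S(q;r_{k-1}))$ as a variable-width tubular neighborhood of $\pi^{-1}(\bB_G(q;r_{k-1}))\subset\partial P$ --- the width at a point $\tp$ being $r_{k-1}-d_G(\pi(\tp),q)$ --- together with fan-shaped end caps of area $O(r_{k-1}^{\,2})$ at each of the finitely many preimages in $\partial P$ of the $n(q;r_{k-1})$ points of $C_G(q;r_{k-1})$. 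Since the Lebesgue length of $\pi^{-1}(\bB_G(q;r_{k-1}))$ is exactly $m(q;r_{k-1})$ by the definition of $\rmm_G$ as a pushforward measure, this gives
\[\Area_\nu(R_k)\;\leq\; C_0\bigl(m(q;r_{k-1})\,r_{k-1}+n(q;r_{k-1})\,r_{k-1}^{\,2}\bigr)\]
for a constant $C_0$ depending only on the minimum interior angle of~$P$. Combining with~(\ref{eq:modest}) and $r_{k-1}-r_k\geq r_{k-1}/3$ yields
\[\mod R_k \;\geq\; \frac{(r_{k-1}-r_k)^2}{C_0\,r_{k-1}\bigl(m(q;r_{k-1})+r_{k-1}\,n(q;r_{k-1})\bigr)} \;\geq\; \frac{c\,r_{k-1}}{m(q;r_{k-1})+r_{k-1}\,n(q;r_{k-1})}\]
for a universal $c>0$.

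Because~$m$ is monotone and $n$ is locally constant on the dense open set of planar radii (Lemma~\ref{lem:planarradius}), a Riemann-sum comparison (choosing the $r_k$ carefully along planar radii on which $n$ does not fluctuate too wildly between consecutive terms) identifies the sum $\sum_k r_{k-1}/(m(q;r_{k-1})+r_{k-1}\,n(q;r_{k-1}))$ with the integral $\int_0^{r_0}\rmd r/(m(q;r)+r\,n(q;r))$ up to a multiplicative constant; hypothesis~(\ref{eq:divint1}) therefore forces $\sum_k\mod R_k=\infty$. Lemma~\ref{lem:summod} then implies that $\mod R(t)\to\infty$ as $t\downarrow 0$ along planar radii, where $R(t):=B_S(q;r_0)\setminus\bB_S(q;t)$ contains every $R_k$ with $r_k\geq t$. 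Working in a uniformizing chart for the Riemann surface $\bB_S(q;r_0)\setminus\{q\}$, Lemma~\ref{lem:conformalpnct} now shows that the bounded complementary component of $\bigcup_{t>0}R(t)$ must be a single point, necessarily~$q$; this is exactly the condition that the conformal structure extends across~$q$, whence the final assertion that~$S$ is a compact Riemann surface follows immediately.

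The principal obstacle I anticipate is the area estimate in the second paragraph: one has to justify the fan-cap picture by a direct polar-coordinate computation that exploits a lower bound on the interior angles of~$P$, and one must use the isolation of~$q$ in~$\cV^s$ to ensure that $\pi^{-1}(\bB_G(q;r_0))$ is a bounded union of analytic arcs of $\partial P$, ruling out accumulations of vertices arbitrarily close to~$q$. In the non-plain case no new difficulty arises near~$q$, because the graph~$C$ of Theorem~\ref{thm:top-structure} is finite and hence disjoint from a sufficiently small neighborhood of the isolated singular point~$q$, so the local analysis reduces to the plain case.
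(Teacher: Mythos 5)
Your overall strategy --- a nested family of annuli around~$q$ with divergent module sum, each module bounded below via~(\ref{eq:modest}) with the flat conformal metric, followed by Lemmas~\ref{lem:summod} and~\ref{lem:conformalpnct} --- is the paper's strategy. The gap is in the regions you use: differences of metric balls $B_S(q;r_{k-1})\setminus\bB_S(q;r_k)$, and specifically in the area estimate for them. The quantities $m(q;r)=\rmm_G(B_G(q;r))$ and $n(q;r)=\#C_G(q;r)$ are defined with the \emph{intrinsic scar metric} $d_G$, whereas your regions are $d_S$-balls, and on~$G$ one only has $d_S\le d_G$, with the inequality possibly far from equality: a path in~$S$ can shortcut through the interior of~$P$ (or through other identifications) between scar points that are $d_G$-far apart. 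Consequently $B_S(q;r)$ may contain points of~$G$ whose $d_G$-distance from~$q$ greatly exceeds~$r$, together with two-dimensional neighborhoods of them, and none of this is accounted for by $m(q;r)$; likewise the ``width $r_{k-1}-d_G(\pi(\tp),q)$'' description of $\pi^{-1}(\bB_S(q;r_{k-1}))$ presupposes an inequality of the form $d_S(q,x)\ge d_G(q,\psi(x))+d_S(x,G)$, which is false in general. So the bound $\Area_\nu(R_k)\le C_0\bigl(m(q;r_{k-1})r_{k-1}+n(q;r_{k-1})r_{k-1}^2\bigr)$ is unjustified, and this is exactly the point the paper's foliated collar is built to handle: the disk $D(q;r)=\psi_{h(r)}^{-1}(\bB_G(q;r))$ is defined from the \emph{$d_G$-ball} and the vertical foliation, so its lift to~$P$ is precisely a union of polytrapezoids whose area derivative is bounded by $\frac{5\bh}{4\br}\left(m(q;r)+r\cdot n(q;r)\right)$ (Lemmas~\ref{lem:darea-tti} and~\ref{lem:area'bound}); the price paid is that the separation of the boundary components of $\Ann(q;r,s)$, which is trivial for metric balls, then needs a genuine argument (Lemmas~\ref{lem:cap-distance} and~\ref{lem:Annulus-boundary-separation}).

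Two further points need attention. You assert, but do not prove, that $\bB_S(q;r_0)\setminus\{q\}$ is a punctured disk and that each $R_k$ is a concentric annular region; for metric balls centered at a scar point (with cone points accumulating at~$q$, and with possible $d_S$-shortcuts as above) this is not automatic, and the paper proves the corresponding statement for its regions only at $q$-planar radii and only because of their explicit foliated structure (Lemma~\ref{lem:dqr-is-disk}). Finally, your Riemann-sum comparison is hand-waved: $m(q;r)+r\cdot n(q;r)$ is not monotone in~$r$ (since $n$ is not), so a term-by-term comparison between $\sum_k r_{k-1}/\bigl(m(q;r_{k-1})+r_{k-1}n(q;r_{k-1})\bigr)$ and the integral in~(\ref{eq:divint1}) can fail for an unlucky choice of the $r_k$, and ``choosing the $r_k$ carefully'' would itself require an argument; the paper sidesteps this by bounding $\mod\Ann(q;r,s)$ below by an honest integral $\int_r^s C^2\,\rmd u/\Area_S'(D(q;u))$, exploiting monotonicity and continuity of the area of $D(q;u)$ (Lemma~\ref{lem:modest}), and only then substituting the area bound.
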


\begin{rmk}
\label{rmk:divergence}
The integral in the statement diverges at planar points in~$G$ and at
regular vertices. At a regular $k$-vertex with $k<\infty$, or at a
planar point ($k=2$), $n(q;r)=k$ and $m(q;r)=2kr$ for all
sufficiently small~$r$. Therefore \[\int_{\veps} \frac{\rmd
  r}{m(q;r)+r\cdot n(q;r)}\] goes like~$-\ln
\veps$.  The proof of Theorem~\ref{thm:main} presented in this
section also shows that if $q\in G$ is a non-singular point, then
the complex structure on $S\setminus\left(\cV^s\cup\{q\}\right)$
extends uniquely across~$q$.
\end{rmk}

\subsection{Example: $\infty$-od singularities}
\label{subsec:cxstruct-example}
A case of special interest is that of $\infty$-od singularities: they
have already appeared in Example~\ref{ex:generalexample} (see also
Example~\ref{ex:HorseshoeCantor}), and are common in dynamical applications.

\begin{defn}[$\infty$-od singularity]
An isolated singularity~$q\in G$ is an {\em $\infty$-od singularity}
if there is some~$r_0>0$ such that $\bB_G(q;r_0)$ is an $\infty$-od
(Definition~\ref{defn:infty-od}). 
\end{defn}

Suppose that $q\in G$ is an $\infty$-od singularity. Then
if $0<r\leq r_0$, every point $x\in C_G(q;r)$ is joined to $q$ by
a unique arc $\ol{xq}$ all of whose interior points are planar and
whose length is $r$, and these arcs intersect only at~$q$. By
Lemma~\ref{lem:metricstructureofG}d), $\rmm_G(\ol{xq})=2r$. There are
as many such arcs in $\bB_G(q;r)$ as there are points in $C_G(q;r)$,
from which it follows that $m(q;r)\geq 2r\cdot n(q;r)$. This
proves the following
\begin{cor}\label{cor:inftyod}
For the complex structure on $\cR$ to extend uniquely across an
$\infty$-od singularity $q\in G$ it is sufficient that
\begin{equation}\label{eq:divint2}
\int_0\frac{\rmd r}{m(q;r)}=\infty.
\end{equation}
\qed
\end{cor}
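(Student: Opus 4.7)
The plan is to derive this as a direct corollary of Theorem~\ref{thm:main} by bounding the denominator $m(q;r) + r\cdot n(q;r)$ above in terms of $m(q;r)$ alone, using the geometric structure of an $\infty$-od singularity.

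First, I would invoke the observation made in the paragraph preceding the statement: for an $\infty$-od singularity $q$ with $\bB_G(q;r_0)$ isometric to an $\infty$-od, and for every $r\in(0,r_0]$, each point $x\in C_G(q;r)$ is connected to $q$ by a unique planar arc of length $r$, so $\rmm_G(\ol{xq})=2r$ by Lemma~\ref{lem:metricstructureofG}d). Because these arcs are pairwise disjoint except at $q$ and all lie in $\bB_G(q;r)$, summing gives
\[
m(q;r) \;=\; \rmm_G(B_G(q;r)) \;\ge\; 2r\cdot n(q;r).
\]

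Next, I would use this to control the integrand in Theorem~\ref{thm:main}: the inequality rearranges to $r\cdot n(q;r)\le \tfrac{1}{2} m(q;r)$, whence
\[
m(q;r) + r\cdot n(q;r) \;\le\; \tfrac{3}{2}\,m(q;r)
\qquad\text{for all }r\in(0,r_0].
\]
Taking reciprocals and integrating on a small interval $(0,\veps]\subset(0,r_0]$ yields
\[
\int_0^{\veps} \frac{\rmd r}{m(q;r)+r\cdot n(q;r)} \;\ge\; \frac{2}{3}\int_0^{\veps}\frac{\rmd r}{m(q;r)}.
\]
Hence if the right-hand side diverges (the hypothesis~(\ref{eq:divint2})), so does the left-hand side, which is precisely the hypothesis~(\ref{eq:divint1}) of Theorem~\ref{thm:main}. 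Applying that theorem then gives the desired unique extension of the complex structure across $q$.

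There is no significant obstacle here: the entire argument is a routine comparison of integrands, and the key geometric input (the lower bound $m(q;r)\ge 2r\cdot n(q;r)$) has already been recorded in the discussion immediately preceding the corollary's statement. The only minor point worth noting is that the comparison is valid only for $r\le r_0$, but since the integrals in both~(\ref{eq:divint1}) and~(\ref{eq:divint2}) are evaluated near $0$, this causes no difficulty.
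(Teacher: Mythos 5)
Your proposal is correct and follows exactly the paper's route: the paragraph preceding the corollary establishes $m(q;r)\ge 2r\cdot n(q;r)$ for an $\infty$-od singularity, and the corollary is then immediate by comparing the integrand of~(\ref{eq:divint2}) with that of~(\ref{eq:divint1}) and invoking Theorem~\ref{thm:main}, which is precisely your argument. No gaps; the restriction to $r\le r_0$ is harmless for the same reason you state.
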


\begin{example}\label{ex:both} 
Consider Example~\ref{ex:generalexample}. Notice that in the criterion
of Corollary~\ref{cor:inftyod} only the measure
$m(q;r)=\rmm_G(B_G(q;r))$ is taken into account and the way in which
the identifications about the $\infty$-od singularity~$q$ are done is
not relevant. Thus there is no distinction between the two ways of
arranging the folds in the example, since the resulting scars are
isometric. What is important is the asymptotics of the decreasing
sequence~$(a_n)$ of fold lengths.

Write $N(r):=n(q;r) = \max\{n:\; a_n\ge r\}$ for $r>0$. Then
\begin{equation}\label{eq:measbg}
m(q;r)= 2r \cdot N(r) + 2\sum_{n> N(r)}a_n.
\end{equation}

Suppose first that $a_n\asymp 1/{n^k}$, for some $k>1$: that is,
$C_1/n^k \le a_n\le C_2/n^k$ for some positive constants~$C_1$
and~$C_2$.  In this case, $n\le (C_1/r)^{1/k}$ implies $a_n\ge r$, and
$n>(C_2/r)^{1/k}$ implies $a_n<r$, so that $(C_1/r)^{1/k} \le N(r) \le
(C_2/r)^{1/k}$. Using~(\ref{eq:measbg}) with the lower bound
for~$N(r)$ in the first term, the upper bound for~$N(r)$ in the
second, and lower bounds for~$a_n$ gives $m(q;r) \ge C_3 r^{1-1/k}$
for sufficiently small~$r$, so that
\[\int_0\frac{\rmd r}{m(q;r)}\le 
\int_0\frac{\rmd r}{C_3r^{1-1/k}}<\infty.\] 
This means the criterion
cannot be used to guarantee that the complex structure extends
uniquely across the point~$q$, and the authors know no way of
determining whether or not it does so extend.

\smallskip
If, on the other hand, $a_n\asymp 1/\lambda^n$ for some $\lambda>1$,
then 
\[\frac{\ln(C_1/r)}{\ln\lambda} \le N(r) \le \frac{\ln(C_2/r)}{\ln\lambda},\]
and
it follows from~(\ref{eq:measbg}) that
\begin{eqnarray*}
\frac{1}{2}m(q;r)
  &\le&  \frac{r}{\ln\lambda}\ln\frac{C_2}{r} + \frac{C_2}{\lambda^{N(r)}(\lambda-1)}\\
&\le&\frac{r}{\ln\lambda}\left(\ln C_2 + \ln\frac{1}{r}\right) +
\frac{C_2r}{C_1(\lambda-1)} \\
&\le& C_3r\ln\frac{1}{r} \qquad\text{ for $r$ sufficiently small,}
\end{eqnarray*}
so that
\[2\int_0\frac{\rmd r}{m(q;r)}\ge
\int_0\frac{\rmd r}{C_3r\ln (1/r)}=\infty.\] This time
Corollary~\ref{cor:inftyod} applies and thus the quotient space is a
complex sphere.

\end{example}
\begin{rmk}\label{rmk:Cantor}
If the Cantor construction of Example~\ref{ex:generalexample} is that of the
standard middle thirds Cantor set, then listing the edges of the
$\infty$-od in decreasing order gives
\[a_n= \frac{1}{6\cdot 3^k}\;\quad\text{for}\quad 2^k\le n <
2^{k+1},\] 
which implies that
\[\frac{1}{6}\left(\frac{1}{n}\right)^{\frac{\ln 3}{\ln 2}}\le a_n\le 
\frac{1}{2}\left(\frac{1}{n}\right)^{\frac{\ln 3}{\ln 2}}.\]
Thus this is an example in which the hypothesis of
Corollary~\ref{cor:inftyod} is not satisfied.
\end{rmk}

\subsection{Foliated collaring of~$P$}
\label{subsec:cxstruct-foliatedcollar}
Let~$(P,\cP)$ be a surface paper-folding scheme with associated paper
surface~$S$ and scar~$G$, and let~$q$ be an isolated singular point
of~$G$. In order to show that the complex structure on
$S\setminus\cV^s$ extends uniquely across~$q$ under appropriate
conditions, a nested sequence of annuli zooming down to~$q$ will be
defined. These annuli will be constructed in a foliated neighborhood
of~$q$ arising from a foliated collar of~$P$, which is described in
this section.

For simplicity of exposition, the case in which~$(P,\cP)$ is plain
will be considered first. Thus~$P$ is a single polygon, $G$ is a
dendrite, and~$S$ is a topological sphere
(Definitions~\ref{defns:linked-plain} and
Theorem~\ref{thm:plain-top-structure}). 

The collar~$\tQ$ of~$P$ is constructed as a union of trapezoids whose
bases are the sides of~$P$; whose vertical sides bisect the angles at
the vertices of~$P$; and which have fixed height~$\bh$, chosen small
enough that the trapezoids are far from degenenerate and intersect
only along their vertical sides. It has a {\em horizontal foliation}
by leaves parallel to~$\partial P$, and a {\em vertical foliation} by
leaves joining the base and the top of each trapezoid: see
Figure~\ref{fig:collar}. The following paragraphs define the collar
and foliations more carefully, and set up the notation which will be
used.

\begin{figure}[htbp]
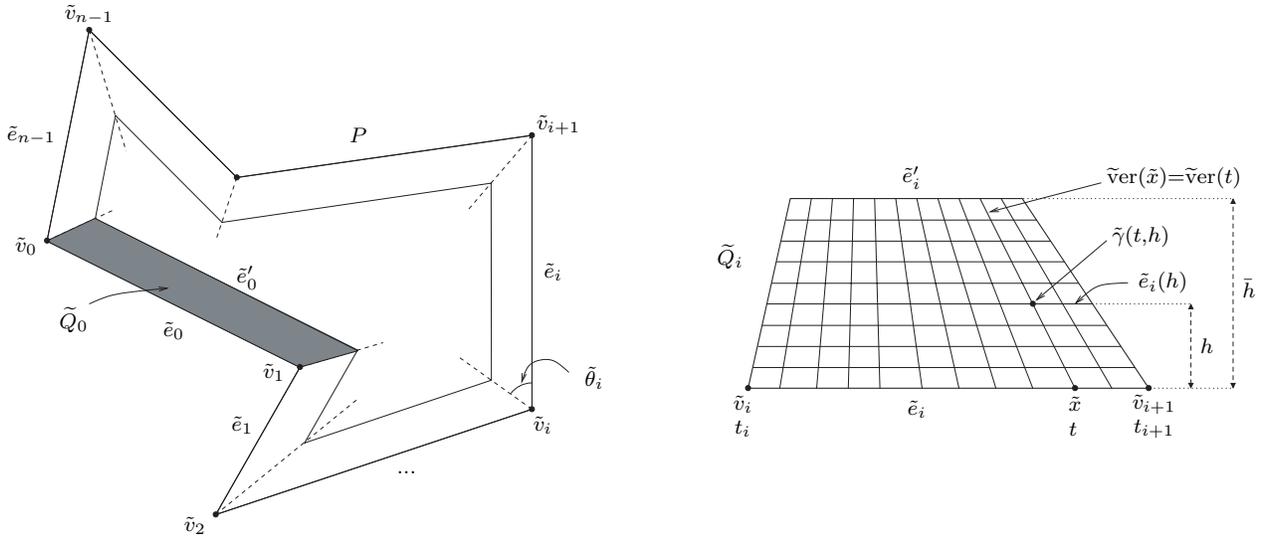

\lab{e0}{\te_0}{}\lab{X}{\te'_0}{}\lab{e1}{\te_1}{}\lab{ei}{\te_{i}}{}
\lab{en1}{\te_{n-1}}{}
\lab{v0}{\tv_0}{}\lab{v1}{\tv_1}{}\lab{v2}{\tv_2}{}\lab{vn1}{\tv_{n-1}}{}
\lab{vi1}{\tv_{i}}{}\lab{vi}{\tv_{i+1}}{}
\lab{P}{P}{}
\lab{Q}{\tQ_0}{}
\lab{ti1}{\ttheta_{i}}{l}
\lab{Qi}{\tQ_i}{r}
\lab{x}{\tx}{}\lab{verx}{\tver(\tx) = \tver(t)}{l}\lab{gxh}{\tgamma(t,h)}{l}
\lab{h}{h}{}\lab{bh}{\bh}{}\lab{Y}{\te'_i}{}\lab{eh}{\te_i(h)}{l}
\lab{t1}{t_i}{}
\lab{tx}{t}{}
\lab{t2}{t_{i+1}}{}
\lab{...}{\ldots}{}
\begin{center}
\pichere{1.0}{collar}
\end{center}
\caption{The collar~$\tQ$ and its foliations}
\label{fig:collar}
\end{figure}

\subsubsection{The collar~$\tQ$}
\label{sec:the-collar-tq}

Choose a labeling~$\tv_i$ ($i=0,\ldots,n-1$) of the vertices of~$P$
listed counterclockwise around~$\partial P$, and let~$\te_i$ be the
side of~$P$ with endpoints~$\tv_i$ and~$\tv_{i+1}$ (here and
throughout, subscripts on cyclically ordered objects are taken~$\mod
n$). Write~$L=|\partial P|$, and
let~$\tgamma_0\co[0,L)\to\partial P$ be the order-preserving
parameterization of~$\partial P$ by arc-length with
$\tgamma_0(0)=\tv_0$. Denote by~$t_i\in[0,L)$ the parameter with
$\tgamma_0(t_i)=\tv_i$. 

A {\em trapezoid} is a quadrilateral in~$\R^2$ with two parallel
sides, which are called its {\em base} and its {\em top}: the other
sides are called {\em vertical sides}. The {\em height} of the
trapezoid is the distance between the parallel lines containing its
base and its top.

Pick a height~$\bh$ small enough that the trapezoids~$\tQ_i$ which
have bases~$\te_i$, heights~$\bh$, and vertical sides along the rays
bisecting the internal angles of~$P$ satisfy:
\begin{enumerate}[a)]
\item The lengths of the tops of the trapezoids are between half and
  twice the lengths of their bases; and
\item The trapezoids intersect only along their vertical sides.
\end{enumerate}
 This height~$\bh$ is an important quantity in the construction, and
 will remain fixed throughout the remainder of the section.  Denote
 the top of~$\tQ_i$ by $\te_i'$, and let~$\ttheta_i$ be half of the
 internal angle of~$\partial P$ at~$\tv_i$: thus the internal angles
 of~$\tQ_i$ at the endpoints of its base are $\ttheta_i$ and
 $\ttheta_{i+1}$. Condition~a) above is that
\begin{equation}
\label{eq:K}
\frac{1}{2} \le \frac{|\te_i'|}{|\te_i|} \le 2 \qquad \text{for }i=0,\ldots,n-1.
\end{equation}

Let \[\tQ = \bigcup_{i=0}^{n-1} \tQ_i,\]
a closed collar neighborhood of~$\partial P$ in~$P$.

\subsubsection{The foliations $\tHor$ and $\tVer$}
\label{sec:foliations}
For each $h\in[0,\bh]$, let $\te_i(h)\subset\tQ_i$ be the segment
parallel to the base of~$\tQ_i$ at height~$h$, so that
$\te_i(0)=\te_i$ and $\te_i(\bh)=\te_i'$. Then the union $\thor(h)$ of
these segments is a polygonal simple closed curve: these simple closed
curves are the leaves of the {\em horizontal foliation}
\[\tHor = \left\{\thor(h)\,:\, h\in[0,\bh]\right\}\]
of~$\tQ$. The parameter~$h$ is called the {\em height} of the leaf
$\thor(h)$. 

Write
\[\tQ(h) = \bigcup_{h'\in[0,h]}\thor(h'),\] 
the subset of~$\tQ$ consisting of leaves with heights not
exceeding~$h$: $\tQ(h)\subset \tQ = \tQ(\bh)$ is therefore also a
closed collar neighborhood of~$\partial P$ for each~$h\in(0,\bh]$.

To construct the vertical foliation, let~$\varphi_i\co\te_i\to\te_i'$
be the orientation-preserving scaling from~$\te_i$ to
$\te_i'$. For each $\tx = \tgamma_0(t) \in \te_i$, denote by
$\tver(\tx)$ or $\tver(t)$ the straight line segment which joins $\tx$
to $\varphi_i(\tx)$. These segments are the leaves of the {\em
  vertical foliation}
\[\tVer = \left\{\tver(t)\,:\, t\in[0,L)\right\}\]
of $\tQ$. 

Define $\ttheta\co[0,L)\setminus\{t_0,\ldots,t_{n-1}\} \to(0,\pi)$ by
  setting $\ttheta(t)$ to be the angle between $\partial P$ and
  $\tver(t)$ at $\tgamma_0(t)$: that is, the angle between the
  oriented side of $\partial P$ containing $\tgamma_0(t)$ and the leaf
  $\tver(t)$ pointing into~$P$. This function has well-defined limits
  as~$t$ approaches each~$t_i$ from the left or the right:
  $\ttheta(t_i^-)=\pi-\ttheta_i$, and $\ttheta(t_i^+) =
  \ttheta_i$. The notation $\ttheta(\tx)=\ttheta(t)$ will also be used
  when $\tx = \tgamma_0(t)$.

The foliations $\tHor$ and $\tVer$ yield a parameterization
\[\tgamma\co [0,L) \times [0,\bh] \to \tQ\]
of $\tQ$, where $\tgamma(t,h)$ is the unique point of $\tver(t) \cap
\thor(h)$.

For each~$h\in(0,\bh]$, denote by $\tpsi_h\co \tQ(h)\to\partial P$ the
retraction of $\tQ(h)$ onto $\partial P$ which slides each point along
its vertical leaf:
\[
\tpsi_h(\gamma(t,h')) = \gamma(t,0) \qquad \text{(all $t\in[0,L)$ and
    $h'\in[0,h]$)}.
\]
In particular, $\tpsi_{\bh}$ is a retraction which squashes all
of~$\tQ$ onto~$\partial P$.

\subsubsection{The foliations on~$S$}
The projections to the paper surface~$S$ of the structures defined
above are denoted by removing tildes (see
Figure~\ref{fig:foliations}). Thus $Q = \pi(\tQ)$ is a closed disk
neighborhood of the scar~$G$, and similarly $Q(h) = \pi(\tQ(h))$ is a
closed subdisk neighborhood for each $h\in(0,\bh]$.  $Q$~has
horizontal and vertical foliations $\Hor = \pi(\tHor)$ and $\Ver =
\pi(\tVer)$. The leaves of~$\Hor$ are projections of leaves
of~$\tHor$: $\hor(h) = \pi(\thor(h))$. The leaves of $\Ver$, however,
are unions of projections of leaves of~$\tVer$: for each~$x\in G$, the
leaf of~$\Ver$ containing~$x$ is defined to be
\[\ver(x) := \bigcup \left\{ \pi(\tver(\tx))\,:\,\tx\in\pi^{-1}(x)
\right\}. \]
Thus $\ver(x)$ is an arc if and only if $\#\pi^{-1}(x)\le 2$. If~$x$ is
a $k$-vertex for $k>2$ then $\ver(x)$ is a star with~$k$
branches. Note, however, that if~$x$ is an $\infty$-vertex then
$\ver(x)$ is not an $\infty$-od in the sense of
Definition~\ref{defn:infty-od}, since the lengths of its branches do
not converge to zero.

\begin{figure}[htbp]
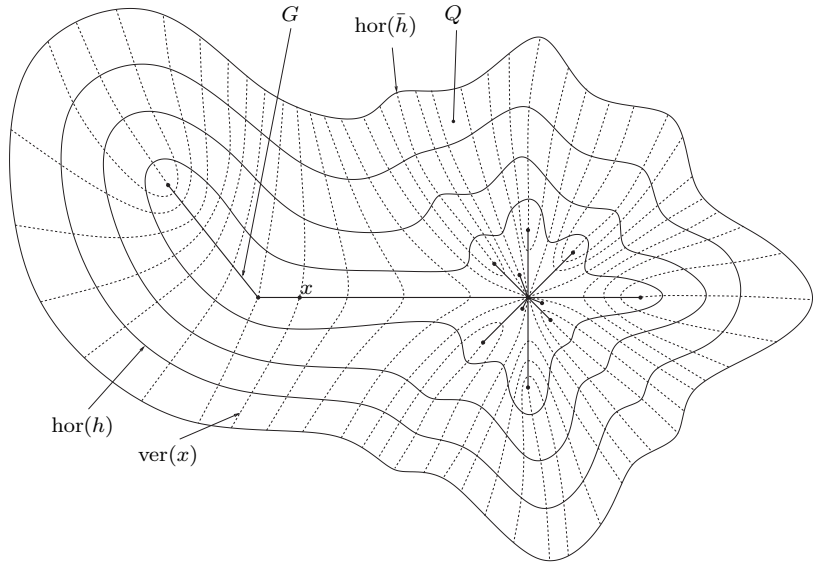

\lab{G}{G}{b}\lab{x}{x}{b}\lab{vx}{\ver(x)}{t}\lab{hh}{\hor(h)}{t}
\lab{Q}{Q}{b}
\lab{hbh}{\hor(\bh)}{b}
\begin{center}
\pichere{0.65}{foliations}
\end{center}
\caption{The foliations projected to $S$}
\label{fig:foliations}
\end{figure}

The disks~$Q(h)$ for $0<h\le \bh$ are similarly foliated by horizontal
leaves $\hor(h')$ with $0< h'\le h$, and vertical leaves~$\ver_h(x)$,
which are the leaves $\ver(x)$ trimmed at their intersection with
$\hor(h)$.

The composition $\gamma := \pi\circ\tgamma\co [0,L)\times[0,\bh]\to Q$
  parameterizes~$Q$, although it is not injective on~$G$. Notice that
  because the retractions $\tpsi_h\co\tQ(h)\to\partial P$ fix
  $\partial P$ pointwise, the compositions
\[\psi_h := \pi \circ \tpsi_h \circ \pi^{-1} \co Q(h)\to G\]
are well-defined retractions of $Q(h)$ onto $G$.

\subsection{The system of annuli $\Ann(q;r,s)$}
\label{subsec:cxstruct-annuli}
Let~$q$ be a point of the scar~$G$. In this section annuli
$\Ann(q;r,s)$ about~$q$ will be constructed for each pair of
$q$-planar radii $r<s$. The annuli will be defined as differences of
two topological closed disks: $\Ann(q;r,s) = \Int(D(q;s)) \setminus
D(q;r)$.

 There are naturally two parameters~$r$ and~$h$ involved in
 constructing such disks about~$q$ using the foliations of~$Q$, which
 describe respectively the vertical and the horizontal leaves which
 will form its boundary. Here, though, the ratio between these
 parameters will be fixed. The ratio is determined by a
 number~$\br>0$, which in this section can be chosen arbitrarily: in
 the non-plain case, however, it will become the injectivity radius
 of~$G$ (Definition~\ref{defn:injradius}).

Choose $\br>0$, then, and define a function $h:[0,\br]\to[0,\bh/2]$ by
\[h(r) := \left(\frac{\bh}{2\br}\right)\,r,\]
which will fix the parameter~$h$ given the radius~$r$.

\begin{defn}[$D(q;r)$]
\label{defn:Dqr}
Let~$q\in G$ and let $r\in (0,\br]$. The subset $D(q;r)$ of~$Q$ is
defined by
\[D(q;r) := \psi_{h(r)}^{-1}\left(\bB_G(q;r)\right).\]
\end{defn}

Alternatively, $D(q;r)$ is the intersection of~$Q(h(r))$ with the
union of the vertical leaves $\ver(x)$ with $d_G(q,x)\le r$.

\begin{lem}
\label{lem:dqr-is-disk}
Let~$q\in G$ and $r\in(0,\br]$ be a $q$-planar radius. Write
  $n=n(q;r)$ and $h=h(r)$. Then $D(q;r)$ is a topological closed
  disk, whose boundary is composed of~$n$ disjoint subarcs of the
  horizontal leaf $\hor(h)$, and the~$n$ trimmed vertical
  leaves~$\ver_h(x)$ with $x\in C_G(q;r)$.
\end{lem}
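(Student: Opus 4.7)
The plan is to analyze the lift $\pi^{-1}(D(q;r))\cap\tQ(h)$, recognize it as a disjoint union of topological disks (``strips''), and descend via $\pi$ to identify $D(q;r)$ as a closed disk with the prescribed boundary. First I would lift $\bB_G(q;r)$ to $\partial P$. Since $r$ is $q$-planar, Lemma~\ref{lem:planarradius} gives $n=n(q;r)<\infty$, and every point of $C_G(q;r)$ is planar with exactly two preimages in $\partial P$, both interior points of segment pairings. Combined with the fact that $\bB_G(q;r)$ is a proper sub-dendrite of $G$ (Lemma~\ref{lem:injradius}) whose topological boundary in $G$ is $C_G(q;r)$, one concludes that $\pi^{-1}(\bB_G(q;r))$ is a proper closed subset of the circle $\partial P$ whose frontier consists of the $2n$ distinct points $\pi^{-1}(C_G(q;r))$ and which has no isolated points. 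Therefore $\pi^{-1}(\bB_G(q;r))$ is a disjoint union of exactly $n$ closed arcs $\talpha_1,\dots,\talpha_n$.

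Next, from $\psi_h=\pi\circ\tpsi_h\circ\pi^{-1}$ one obtains $\pi^{-1}(D(q;r))\cap\tQ(h)=\bigsqcup_{i=1}^n\tpsi_h^{-1}(\talpha_i)$, each summand being the union of the trimmed vertical leaves over $\talpha_i$. The parameterization $\tgamma$ restricts to a homeomorphism on each trapezoid, and adjacent trapezoids share a common vertical leaf at each polygon vertex, so each strip $\tpsi_h^{-1}(\talpha_i)$ is homeomorphic to $\talpha_i\times[0,h]$ and thus is a closed topological disk whose boundary decomposes as the base $\talpha_i\subset\partial P$, a top arc in $\thor(h)$, and two vertical side leaves at the endpoints of $\talpha_i$.

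I would then descend to $D(q;r)$. The projection $\pi$ makes identifications within the strips only along the bases $\talpha_i\subset\partial P$: the interiors of the strips, their tops, and their vertical side leaves---the last anchored at the preimages of planar points of $C_G(q;r)$, hence interior to sides of $P$---all lie in $\Int(P)$, on which $\pi$ is injective. A direct local analysis then shows that $D(q;r)$ is a $2$-manifold with boundary: points of $\bB_G(q;r)\setminus C_G(q;r)$ and points above them have full neighborhoods in $D(q;r)$; at $x\in C_G(q;r)$, planarity forces $\ver_h(x)$ to be a transverse arc separating a small tubular neighborhood of $x$ into two halves, exactly one of which lies in $D(q;r)$; and at the remaining boundary points (on $\hor(h)$, or at corners where arcs of $\hor(h)$ meet the $\ver_h(x)$) one obtains half- or quarter-plane neighborhoods. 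The space $D(q;r)$ is compact (closed in $S$) and connected (being a union of connected vertical-leaf fibers over the connected base $\bB_G(q;r)$), and the vertical-leaf parameterization yields a continuous deformation retract of $D(q;r)$ onto $\bB_G(q;r)$, which is contractible as a dendrite; hence $D(q;r)$ is simply connected. A compact, connected, simply connected $2$-manifold with boundary is a closed topological disk, and its single boundary circle must---by the construction above---consist of the $n$ projected top arcs in $\hor(h)$ together with the $n$ trimmed vertical leaves $\ver_h(x)$ for $x\in C_G(q;r)$.

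The delicate point will be the local $2$-manifold structure near $x\in C_G(q;r)$, where two strips of the lift are glued along $\ver_h(x)$ by the pairing relation. The planarity of $x$---exactly the reason for choosing a $q$-planar $r$---is what ensures this gluing produces an honest arc of boundary rather than a more complicated local picture.
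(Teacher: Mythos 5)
Your route is correct but genuinely different from the paper's. The paper argues entirely downstairs in the quotient: it observes that, by $q$-planarity, $\partial_{Q(h)}D(q;r)=\psi_h^{-1}\left(C_G(q;r)\right)$ and that each fiber $\psi_h^{-1}(x_i)=\ver_h(x_i)$ is an arc meeting $\hor(h)=\partial_S Q(h)$ only at its endpoints, i.e.\ a cross cut in the closed disk $Q(h)$; the separation properties of these $n$ cross cuts (each separates $Q(h)$ into two pieces, with $q$ and all other cross cuts on one side) then force $\partial_S D(q;r)$ to be the simple closed curve made of the $\ver_h(x_i)$ and $n$ subarcs of $\hor(h)$. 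You instead lift to the polygon: you show $\pi^{-1}\left(\bB_G(q;r)\right)$ is a union of $n$ disjoint closed arcs of $\partial P$ with the $2n$ preimages of $C_G(q;r)$ as endpoints, take the $n$ trapezoidal strips $\tpsi_h^{-1}(\talpha_i)$ over them (these are exactly the polytrapezoids $\tT_i(q;r)$ which the paper only introduces afterwards, in Section~\ref{subsec:cxstruct-geometry-polygon-constants}, for the area and distance estimates), and identify the quotient as a disk via a local manifold-with-boundary analysis, a vertical-leaf deformation retraction onto the contractible dendrite $\bB_G(q;r)$, and the classification of compact simply connected surfaces with boundary. The paper's argument is shorter but leans on $Q(h)$ already being known to be a closed disk (the plain-case topology); yours is more hands-on, recovers the strip decomposition that is in any case needed later, and makes the role of planarity explicit at the local level. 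One small correction to your closing remark: at a point $x\in C_G(q;r)$ the two strips are not glued along $\ver_h(x)$ --- the pairing relation identifies only points of $\partial P$, so the gluing is along the base arcs of $G$ just inside the ball (with the two anchor points identified to $x$), and $\ver_h(x)$ emerges as the resulting free boundary arc; your earlier description (a transverse arc at $x$ cutting a tubular neighborhood into two halves, exactly one lying in $D(q;r)$) is the accurate one, and with it the local analysis goes through.
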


\begin{proof} 
(See Figure~\ref{fig:disks}.)  Write
  $C_G(q;r)=\{x_0,\ldots,x_{n-1}\}$. By definition, the boundary
  $\partial_{Q(h)}D(q;r)$ of
  $D(q;r)=\psi_h^{-1}\left(\bB_G(q;r)\right) \subset Q(h)$ in~$Q(h)$
  is contained in $\psi_h^{-1}\left(C_G(q;r)\right)$. Moreover, since~$r$ is
  $q$-planar, every neighborhood of each point of~$C_G(q;r)$ contains
  both points which are closer to~$q$ and points which are further
  away (cf. the proof of Lemma~\ref{lem:planarradius}), and hence
\[\partial_{Q(h)} D(q;r) = \psi_h^{-1}\left(C_G(q;r)\right) = 
\bigcup_{i=0}^{n-1} \psi_h^{-1}(x_i).\]

Since the points~$x_i$ are planar, each $\psi_h^{-1}(x_i) =
\ver_h(x_i)$ is an arc which intersects~$\hor(h) = \partial_S Q(h)$
exactly at its endpoints: that is, a cross cut in~$Q(h)$. For
each~$i$, $Q(h)\setminus\ver_h(x_i)$ has exactly two components, one
of which intersects~$G$ in the complement of $\bB_G(q;r)$. Therefore
every $\ver_h(x_j)$ with $j\not=i$ is contained in the same component
as~$q$. It follows that $\partial_S D(q;r)$ is the simple closed curve
composed of the arcs $\ver_h(x_i)$ and the~$n$ subarcs of~$\hor(h)$
joining the endpoints of consecutive cross cuts in the cyclic order
around~$\hor(h)$.
\end{proof}

\begin{figure}[htbp]
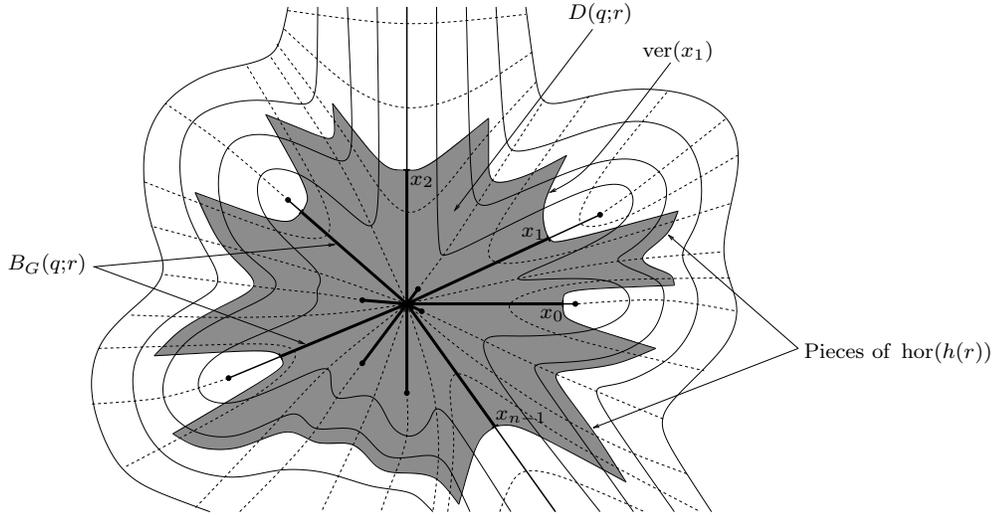

\lab{x0}{x_0}{}\lab{x1}{x_1}{}\lab{x2}{x_2}{}\lab{xn1}{x_{n-1}}{l}
\lab{verx1}{\ver(x_1)}{}\lab{D}{D(q;r)}{}
\lab{bg}{B_G(q;r)}{r}
\lab{pieces}{\text{Pieces of }\hor(h(r))}{l}
\begin{center}
\pichere{0.6}{disks}
\end{center}
\caption{The disk $D(q;r)$}
\label{fig:disks}
\end{figure}

The annuli which will be used in the proof of Theorem~\ref{thm:main}
can now be defined.

\begin{defn}[$\Ann(q;r,s)$]
Let $q\in G$, and $r,s\in(0,\br]$ be $q$-planar radii with $r<s$. The
  subset $\Ann(q;r,s)$ of~$Q$ is defined by
\[\Ann(q;r,s) = \Int(D(q;s)) \setminus D(q;r).\] 
\end{defn}

By Lemma~\ref{lem:dqr-is-disk}, and since $D(q;r)\subset\Int(D(q;s))$,
$\Ann(q;r,s)$ is an open annular region with~$q$ in its bounded
complementary component (the complementary component not containing
$\partial Q$).

\subsection{Polygon constants and geometry of the trapezoid construction}
\label{subsec:cxstruct-geometry-polygon-constants}
The goal now is to find lower bounds on the conformal modules of the
annuli~$\Ann(q;r,s)$. In order to use~(\ref{eq:modest}), it is
necessary to estimate the distance in~$S$ between the two boundary
components of the annulus, and the area of the annulus. This will be
done by lifting to the polygon~$P$, where the preimage of the
disk~$D(q;r)$ is a union of {\em polytrapezoids}.

The bounds will be expressed in terms of the {\em polygon constants}
of~$P$: this will make it possible, in Section~\ref{sec:modcont}, to
obtain uniform moduli of continuity for uniformizing maps in families
of polygons with bounded constants.

\begin{defn}[Polygon constants]
\label{defn:constsofP}
The {\em polygon constants} of~$P$ are the numbers $\bh$, $\br$,
and~$|\partial P|$.
\end{defn}

\begin{rmks} \mbox{}
\label{rmks:polygon-constants}
\begin{enumerate}[a)]
\item Of course there is some freedom in the choice of~$\bh$, and total
 freedom in the choice of~$\br$ in the plain case. The important point
 is that, in order to prevent the constructions from becoming
 degenerate in families of examples, $\br$ and~$\bh$ should be
 bounded away from zero, and $|\partial P|$ should be bounded above.
\item The internal semi-angles~$\ttheta_i$ of~$P$ are bounded away
  from~$0$ and~$\pi$ in terms of the polygon constants, since the
  vertical edge of~$\tQ_{i-1}$ and~$\tQ_i$ emanating from~$\tv_i$ has
  length $\bh/\sin\ttheta_i$ and is contained in~$P$: hence
\begin{equation}
\label{eq:bound-theta}
\sin \ttheta_i \ge \frac{2\bh}{|\partial P|} \qquad \text{for
}i=0,\ldots n-1.
\end{equation}
\end{enumerate}
\end{rmks}

\medskip\medskip

The following elementary plane geometry lemma bounds the
derivative of the cotangent of the function $\ttheta(\tx)$ of
Section~\ref{sec:foliations}.

\begin{lem}
\label{lem:bound-dcotx}
Let~$\tx$ lie in the interior of one of the sides~$\te_i$ of~$P$. Then
\[\left|\frac{\rmd}{\rmd\tx} \cot\ttheta(\tx)\right| \le \frac{1}{\bh}.\]
\end{lem}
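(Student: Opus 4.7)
The plan is to carry out this estimate by a direct coordinate computation inside a single trapezoid $\tQ_i$, exploiting the fact that the vertical foliation on $\tQ_i$ is determined by an affine scaling between base and top.

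First I would set up Cartesian coordinates so that the side $\te_i$ lies along the real axis, oriented in the positive $x$-direction, with $\tx = (t,0)$ for $t \in (0, |\te_i|)$. Since $\te_i'$ is parallel to $\te_i$ at height $\bh$, it has the form $\{(a + s, \bh) : s \in [0, |\te_i'|]\}$ for some real $a$ (which encodes the offset caused by the bisecting vertical sides of $\tQ_i$). The scaling $\varphi_i\co\te_i\to\te_i'$ then sends $(t,0)$ to $(a + kt, \bh)$, where $k := |\te_i'|/|\te_i|$.

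Next, the leaf $\tver(\tx)$ is the segment from $(t,0)$ to $(a + kt, \bh)$, so the vector pointing into $P$ along this leaf is $(a + (k-1)t,\ \bh)$. Since $\partial P$ is oriented in the direction of the positive $x$-axis along $\te_i$, the angle $\ttheta(\tx) \in (0,\pi)$ between them satisfies
\[
\cot\ttheta(\tx) \;=\; \frac{a + (k-1)t}{\bh},
\]
and this formula is valid regardless of sign (since the $y$-component $\bh > 0$ is fixed). Differentiating with respect to $t$ gives $\frac{\rmd}{\rmd\tx}\cot\ttheta(\tx) = (k-1)/\bh$.

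Finally, the bound comes immediately from condition~(\ref{eq:K}): the ratio $k = |\te_i'|/|\te_i|$ lies in $[1/2, 2]$, so $|k-1|\le 1$, and the result follows. There is no real obstacle here — this is a one-line consequence of the affine nature of $\varphi_i$ plus the trapezoid-shape constraint~(\ref{eq:K}) built into the choice of~$\bh$; the only thing to be careful about is that the formula $\cot\ttheta = (a+(k-1)t)/\bh$ is derived from the vector pointing into $P$ and is consistent with the convention $\ttheta(t_i^+) = \ttheta_i$, $\ttheta(t_i^-) = \pi - \ttheta_i$ at the vertices.
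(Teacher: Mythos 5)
Your proof is correct, and it takes a somewhat different (and arguably cleaner) route than the paper's. The paper argues geometrically: it extends the vertical sides of the trapezoid~$\tQ_i$ to their common apex, observes that the leaf through~$\tx$ lies on the line through~$\tx$ and that apex, and reads off $|\cot\ttheta(\tx_1)-\cot\ttheta(\tx_2)|$ from similar triangles, which forces a three-way case split according to whether $|\te_i|$ is greater than, less than, or equal to $|\te_i'|$ (the last case being the degenerate ``parallel sides'' situation where the apex is at infinity), yielding the bounds $1/H\le 1/(2\bh)$ and $1/(H-\bh)\le 1/\bh$ with $H\ge 2\bh$ coming from~(\ref{eq:K}). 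Your coordinate computation exploits instead that $\varphi_i$ is affine, so that $\cot\ttheta(t)=\bigl(a+(k-1)t\bigr)/\bh$ with $k=|\te_i'|/|\te_i|$, giving the constant derivative $(k-1)/\bh$ and the bound $|k-1|\le 1$ directly from~(\ref{eq:K}); this handles all three of the paper's cases uniformly (including $k=1$), and in fact recovers the paper's sharper constant $1/(2\bh)$ when $k\le 1$, since then $|k-1|\le 1/2$. The one point you rightly flag — that $\cot\ttheta$ equals the ratio of the horizontal component of the inward-pointing leaf vector to its vertical component $\bh>0$, consistently with $\ttheta\in(0,\pi)$ and the orientation convention — is handled correctly, since with $\partial P$ positively oriented the polygon lies on the side $y>0$ in your coordinates. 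Both arguments ultimately rest on the same elementary fact that $\cot\ttheta$ is affine in arc length along~$\te_i$, so the difference is one of presentation rather than substance, but your version is self-contained and free of case analysis.
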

\begin{proof}
Suppose first that $|\te_i|>|\te_i'|$. Extend the vertical sides of
the trapezoid $\tQ_i$ to form a triangle of height~$H$:
by~(\ref{eq:K}), $H\ge 2\bh$. The vertical leaf through $\tx\in\te_i$
is a segment of the straight line joining $\tx$ to the apex of this
triangle.

Now if $\tx_1$, $\tx_2$ lie in~$\te_i$, then the distance between
$\tx_1$ and $\tx_2$ is given by
$H|\cot(\ttheta(\tx_1))-\cot(\ttheta(\tx_2))|$. Hence
\[\left|\frac{\rmd}{\rmd\tx} \cot\ttheta(\tx)\right| = \frac{1}{H} \le
\frac{1}{2\bh}.\]

If $|\te_i|<|\te_i'|$, then again extend the vertical sides of~$\tQ_i$
to form a triangle of height~$H\ge 2\bh$. In this case the distance
between~$\tx_1$ and~$\tx_2$ is given by
$(H-\bh)|\cot(\ttheta(\tx_1))-\cot(\ttheta(\tx_2))|$, so that
\[\left|\frac{\rmd}{\rmd\tx} \cot\ttheta(\tx)\right| = \frac{1}{H-\bh} \le
\frac{1}{\bh}.\]

If $|\te_i|=|\te_i'|$, then $\ttheta(\tx)$ is constant on~$\te_i$.
\end{proof}

Now let~$q$ be a point of~$G$, which will remain fixed throughout the
remainder of this section (so the dependence of many objects on~$q$
will not be explicitly noted). Let~$(r_1,r_2)\subset(0,\br]$ be an
  interval of~$q$-planar radii, and choose $r\in(r_1,r_2)$. As before
  (see Lemma~\ref{lem:dqr-is-disk} and its proof), write~$n=n(q;r)$,
  and let~$C_G(q;r) = \{x_0(r),\ldots,x_{n-1}(r)\}$, labelling its
  points in the counterclockwise direction around~$\hor(h(r))$.

Since~$G$ is a dendrite, each of the points~$x_i(r)$ is connected
to~$x_{i+1}(r)$ by a unique arc~$\alpha_i=[x_i(r),x_{i+1}(r)]_G$
in~$G$ (Theorem~\ref{thm:pprtsdendrites}b)). These arcs, which are
cross cuts in~$D(q;r)$, form the boundaries in~$D(q;r)$ of~$n$ closed
subdisks $T_i(q;r)$ ($0\le i\le n-1$) (see Figure~\ref{fig:tiqr}). As
a subset of~$S$, $T_i(q;r)$ is bounded by:
\begin{itemize}
\item The arc~$\alpha_i$;
\item $\ver^+_{h(r)}(x_i(r))$ and $\ver^-_{h(r)}(x_{i+1}(r))$,
  segments of vertical leaf with one endpoint at $x_{i}(r)$ and
  $x_{i+1}(r)$ respectively -- denote their other endpoints $z_i^+(r)$
  and $z_{i+1}^-(r)$; and
\item $\Lambda_i(r)$, an arc of~$\hor(h(r))$ from $z_i^+(r)$
  to $z_{i+1}^-(r)$ in the counterclockwise direction.
\end{itemize}

Since the union of the arcs~$\alpha_i$ is a tree whose set of
endpoints is \mbox{$C_G(q;r)\subset\partial D(q;r)$,}
\begin{equation}
\label{eq:tiqr}
D(q;r)=\bigcup_{i=0}^{n-1} T_i(q;r),
\end{equation}
and the subsets~$T_i(q;r)$ intersect only along the
arcs~$\alpha_i$. (If a topological disk~$D$ contains a tree~$T$ which
intersects~$\partial D$ precisely at its endpoints, then components
of~$D\setminus T$ correspond bijectively to pairs of consecutive
endpoints on~$\partial D$.)

\begin{figure}[htbp]
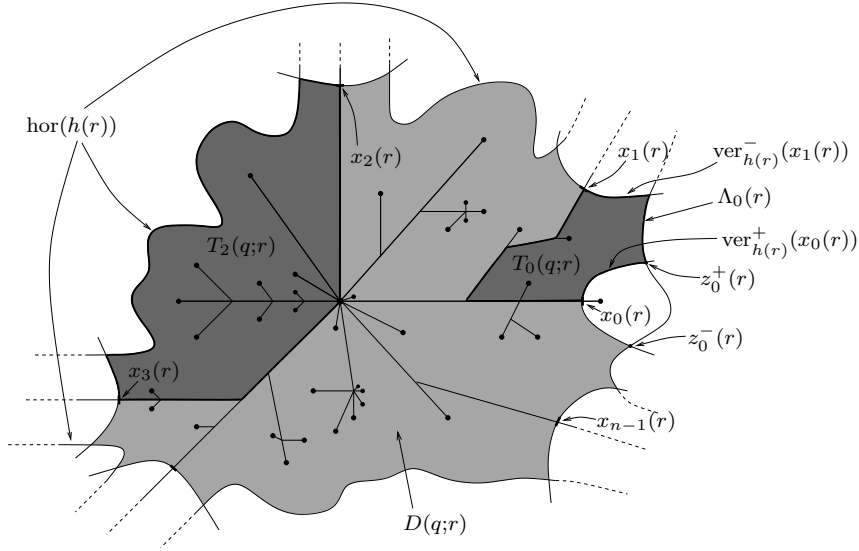

\lab{t0}{T_0(q;r)}{}
\lab{t2}{T_2(q;r)}{}
\lab{x0}{x_0(r)}{l}\lab{x1}{x_1(r)}{l}
\lab{x2}{x_2(r)}{l}\lab{x3}{x_3(r)}{}
\lab{xn-1}{x_{n-1}(r)}{l}
\lab{z0+}{z_0^+(r)}{l}\lab{z0-}{z_0^-(r)}{l}
\lab{hor}{\hor(h(r))}{}
\lab{ver+}{\ver^+_{h(r)}(x_0(r))}{l}
\lab{ver-}{\ver^-_{h(r)}(x_1(r))}{l}
\lab{lambda}{\Lambda_0(r)}{l}
\lab{d}{D(q;r)}{l}
\begin{center}
\pichere{0.6}{tiqr}
\end{center}
\caption{The regions~$T_i(q;r)$}
\label{fig:tiqr}
\end{figure}

Now $\Lambda_i(r)$ lies in $Q\setminus G$, and hence has a
well-defined preimage $\tLambda_i(r) = \pi^{-1}(\Lambda_i(r))$, which
is an arc of $\thor(h(r))$ whose initial and final endpoints in the
{\em clockwise} direction along $\thor(h(r))$ are $\tz_i^+(r)$ and
$\tz_{i+1}^-(r)$, the preimages of $z_i^+(r)$ and $z_{i+1}^-(r)$ (see
Figure~\ref{fig:polytrapezoid}). Similarly, $\ver^+_{h(r)}(x_i(r))$
and $\ver^-_{h(r)}(x_{i+1}(r))$ have well-defined preimage arcs
$\tver_{h(r)}(\tx_i^+(r))$ and $\tver_{h(r)}(\tx_{i+1}^-(r))$, with
endpoints $\tx_i^+(r)$ and $\tx_{i+1}^-(r)$ in $\partial P$.

Write $\tDelta_i(r)$ for the union $\tLambda_i(r) \cup
\tver_{h(r)}(\tx_i^+(r)) \cup \tver_{h(r)}(\tx_{i+1}^-(r))$, so
that
\[\pi\left(\bigcup_{i=0}^{n-1}\tDelta_i(r)\right) = \partial D(q;r).\]

Suppose that $\tx_{i+1}^-(r) \in\te_k$ and
$\tx_i^+(r)\in\te_\ell$. The arc $\tL_i(r)$ of $\partial P$ from
$\tx_{i+1}^-(r)$ to $\tx_{i}^+(r)$ in the counterclockwise direction
therefore contains the sides $\te_{k+1},\ldots,\te_{\ell-1}$, and
segments of the sides $\te_k$ and $\te_\ell$. The {\em polytrapezoid}
(union of trapezoids of the same height glued along their vertical
sides) $\tT_i(q;r)$ bounded by $\tL_i(r)$ and $\tDelta_i(r)$ satisfies
$\pi\left(\tT_i(q;r)\right)=T_i(q;r)$, and $\pi\co\tT_i(q;r)\to
T_i(q;r)$ is a homeomorphism away from $\tL_i(r)$. Notice that
\[\bigcup_{i=0}^{n-1} \tL_i(r) = \pi^{-1}\left(\bB_G(q;r)\right),\]
and hence
\begin{equation}
\label{eq:sumli}
\sum_{i=0}^{n-1}\left|\tL_i(r)\right| =
\rmm_G\left(\bB_G(q;r)\right) = m(q;r).
\end{equation}

Denote by $\theta_0(r)$ and $\theta_1(r)$ the internal angles of the
polytrapezoid at the vertices $\tx_{i+1}^-(r)$ and $\tx_i^+(r)$
respectively (the other internal angles along~$\tL_i(r)$ are
$2\ttheta_{k+1}, \ldots, 2\ttheta_\ell$, independent
of~$r\in(r_1,r_2)$). 

\begin{figure}[htbp]
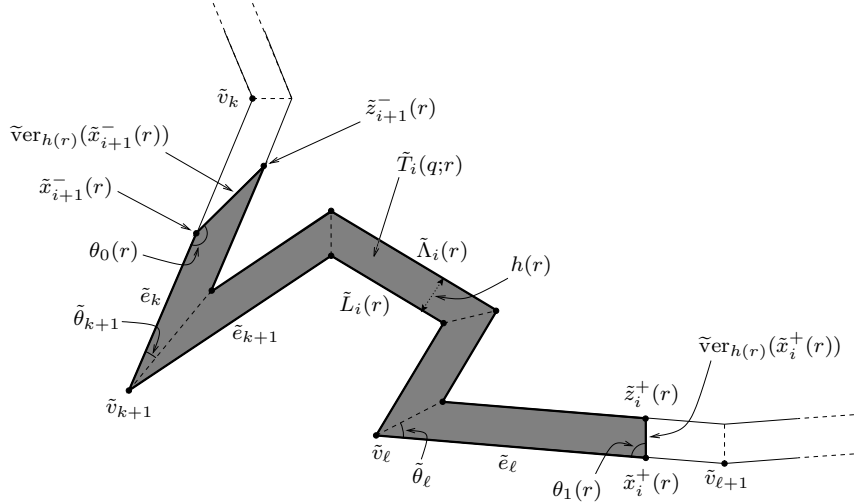

\lab{L}{\tL_i(r)}{}\lab{L'}{\tLambda_i(r)}{}\lab{h}{h(r)}{bl}
\lab{verx-}{\tver_{h(r)}(\tx_{i+1}^-(r))}{br}
\lab{verx+}{\tver_{h(r)}(\tx_{i}^+(r))}{bl}
\lab{ek}{\te_k}{}\lab{ek+1}{\te_{k+1}}{}\lab{el}{\te_\ell}{}
\lab{vk}{\tv_k}{}\lab{vl+1}{\tv_{\ell+1}}{}
\lab{x-}{\tx_{i+1}^-(r)}{r}\lab{z-}{\tz_{i+1}^-(r)}{bl}
\lab{vk+1}{\tv_{k+1}}{}\lab{vl}{\tv_{\ell}}{}
\lab{x+}{\tx_{i}^+(r)}{t}\lab{z+}{\tz_{i}^+(r)}{b}
\lab{t0}{\theta_0(r)}{r}\lab{t1}{\theta_1(r)}{tr}
\lab{tk+1}{\ttheta_{k+1}}{r}\lab{tl}{\ttheta_{\ell}}{}
\lab{t}{\tT_i(q;r)}{bl}
\begin{center}
\pichere{0.6}{multitrapezoid}
\end{center}
\caption{The polytrapezoid $\tT_i(q;r)$}
\label{fig:polytrapezoid}
\end{figure}

\begin{lem}
\label{lem:area-tti}
\[\Area(\tT_i(q;r))=h(r) \left|\tL_i(r)\right| -
\frac{h(r)^2}{2}\left(\cot\theta_0(r)+\cot\theta_1(r) +
2\sum_{j=k+1}^{\ell}\cot\ttheta_j\right).\]
\end{lem}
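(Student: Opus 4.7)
The plan is to decompose $\tT_i(q;r)$ into Euclidean trapezoidal pieces using the vertical leaves of $\tVer$ emanating from the interior vertices $\tv_{k+1},\ldots,\tv_\ell$ on $\tL_i(r)$. By construction of the foliation, the vertical leaf through $\tv_j$ bisects the internal angle at $\tv_j$, lies entirely in $\tQ_{j-1}\cap\tQ_j$, and reaches height $\bh \ge h(r)$. Slicing along these leaves and truncating at the horizontal leaf $\thor(h(r))$ expresses $\tT_i(q;r)$ as the union of: a left end-piece contained in $\tQ_k$ with base the segment of $\te_k$ from $\tx_{i+1}^-(r)$ to $\tv_{k+1}$; full-width truncations of the trapezoids $\tQ_{k+1},\ldots,\tQ_{\ell-1}$ at height $h(r)$; and a right end-piece contained in $\tQ_\ell$ with base the segment of $\te_\ell$ from $\tv_\ell$ to $\tx_i^+(r)$. (Degenerate cases $k=\ell$ or $k+1=\ell$ correspond, respectively, to a single trapezoidal piece or to two end-pieces only.)

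Each piece is a genuine Euclidean trapezoid (possibly a triangle) because all four bounding sides are straight segments: the base lies on a side of $P$, the two lateral sides are straight segments of $\tVer$, and the top is a straight segment of $\thor(h(r))$ parallel to the base. For such a trapezoid with base length $b$, height $h$ and internal angles $\alpha$ and $\beta$ at the base vertices, elementary plane geometry gives top length $b - h\cot\alpha - h\cot\beta$, hence area $h b - \tfrac{h^2}{2}(\cot\alpha + \cot\beta)$, valid with signs regardless of whether $\alpha$ or $\beta$ is obtuse.

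I then apply this area formula piece by piece. The left end-piece has base length equal to the segment from $\tx_{i+1}^-(r)$ to $\tv_{k+1}$ and base angles $\theta_0(r)$ and $\ttheta_{k+1}$; each middle piece in $\tQ_j$ ($k+1\le j\le \ell-1$) has base $\te_j$ and angles $\ttheta_j$ and $\ttheta_{j+1}$; the right end-piece has base length equal to the segment from $\tv_\ell$ to $\tx_i^+(r)$ and angles $\ttheta_\ell$ and $\theta_1(r)$. Summing the bases gives $|\tL_i(r)|$. In the sum of cotangent contributions, each interior vertex $\tv_j$ with $k+1\le j\le \ell$ is shared between two adjacent pieces and therefore contributes $\cot\ttheta_j$ twice, while $\theta_0(r)$ and $\theta_1(r)$ each appear only once; this yields exactly the bracketed expression in the statement.

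This is essentially a bookkeeping argument, and the only subtlety is ensuring the angle-bisection property of the vertical foliation at interior vertices so that the two adjacent pieces sharing a vertical leaf both have internal angle $\ttheta_j$ there. That is built into the definition of the trapezoids $\tQ_j$ in Section~\ref{sec:the-collar-tq} (their vertical sides lie along the angle bisectors at the vertices of~$P$), so no real obstacle arises; it only remains to verify the formula in the degenerate cases, which reduces to reading the sum $\sum_{j=k+1}^{\ell}$ as empty when $k=\ell$.
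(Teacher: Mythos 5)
Your proof is correct and follows essentially the same route as the paper: decompose the polytrapezoid into its constituent trapezoids (cut along the vertical leaves through the interior vertices, truncated at height $h(r)$), use the elementary identity $|e'|=|e|-h(\cot\theta_1+\cot\theta_2)$ to write each area as $h|e|-\tfrac{h^2}{2}(\cot\theta_1+\cot\theta_2)$, and sum, with each bisected interior angle $\ttheta_j$ counted twice. Your write-up merely makes explicit the bookkeeping that the paper leaves implicit.
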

\begin{proof}
Observe that if a trapezoid has base~$e$, top~$e'$, height~$h$, and
internal angles $\theta_1$ and $\theta_2$ at its base vertices, then
\begin{equation}
\label{eq:trapbaseangle}
|e'| = |e| - h(\cot\theta_1 + \cot\theta_2).
\end{equation}
The result follows immediately on summing the areas $h(|e|+|e'|)/2$ of
the constituent trapezoids of~$\tT_i(q;r)$.
\end{proof}

\begin{lem}
\label{lem:darea-tti}
\[0<\frac{\rmd}{\rmd r}\Area\left(\tT_i(q;r)\right) \le \frac{5\bh}{4\br}\left( \left|\tL_i(r)\right| +
 r\right).\] 
\end{lem}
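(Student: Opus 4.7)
The plan is to differentiate the formula of Lemma~\ref{lem:area-tti} term by term, exploit a cancellation to obtain a clean expression for $\frac{d}{dr}\Area(\tT_i(q;r))$, and bound the result using Lemma~\ref{lem:bound-dcotx}.

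First I would note that on the planar interval $(r_1,r_2)$ the set of indices $\{k+1,\ldots,\ell\}$ appearing in Lemma~\ref{lem:area-tti} is locally constant, so
\[
K := 2\sum_{j=k+1}^{\ell}\cot\ttheta_j
\]
is independent of $r$ on this interval. I would also establish that $\frac{d}{dr}|\tL_i(r)|=2$: since $r$ is $q$-planar, Lemma~\ref{lem:metricstructureofG}b) implies that $\pi$ restricts to a local isometry between a neighborhood of each $x_i(r)\in C_G(q;r)$ in $G$ and a neighborhood of each of its two preimages in $\partial P$. Hence the endpoints $\tx_{i+1}^-(r)$ and $\tx_i^+(r)$ of $\tL_i(r)$ each move at unit speed along $\partial P$ as $r$ varies; the directions of motion must expand $\tL_i(r)$, because $n(q;r)$ is locally constant on the planar interval (Lemma~\ref{lem:planarradius}), so no components of $\pi^{-1}(\bB_G(q;r))$ can merge or split, while $\sum_i|\tL_i(r)|=m(q;r)$ increases with $r$.

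Differentiating the formula of Lemma~\ref{lem:area-tti} then yields
\[
\frac{d\Area}{dr} = h'(r)|\tL_i(r)| + 2h(r) - h(r)h'(r)\bigl(\cot\theta_0(r)+\cot\theta_1(r)+K\bigr) - \frac{h(r)^2}{2}\frac{d(\cot\theta_0+\cot\theta_1)}{dr}.
\]
The key simplification comes from comparing Lemma~\ref{lem:area-tti} with the standard trapezoid area formula $\Area(\tT_i(q;r))=\frac{h(r)}{2}(|\tL_i(r)|+|\tLambda_i(r)|)$ (equivalently, summing (\ref{eq:trapbaseangle}) over the constituent trapezoids), which yields the identity
\[
h(r)\bigl(\cot\theta_0(r)+\cot\theta_1(r)+K\bigr) = |\tL_i(r)|-|\tLambda_i(r)|.
\]
Substituting this into the third term above and cancelling with the first collapses the derivative to
\[
\frac{d\Area}{dr} = h'(r)|\tLambda_i(r)| + 2h(r) - \frac{h(r)^2}{2}\frac{d(\cot\theta_0+\cot\theta_1)}{dr}.
\]

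To bound the right-hand side, I would use $|\tLambda_i(r)|\le 2|\tL_i(r)|$ (since $|\partial_t\tgamma(t,h)|\le 1+h/\bh\le 2$ by~(\ref{eq:K})) and $\left|\frac{d(\cot\theta_0+\cot\theta_1)}{dr}\right|\le 2/\bh$, the latter following from Lemma~\ref{lem:bound-dcotx} once $\theta_0(r),\theta_1(r)$ are identified (up to the relation $\cot(\pi-\cdot)=-\cot$) with $\ttheta$ evaluated at the unit-speed moving endpoints. Substituting $h'(r)=\bh/(2\br)$ and $h(r)=\bh r/(2\br)$, and using $r\le\br$, gives
\[
\frac{d\Area}{dr} \le \frac{\bh}{\br}|\tL_i(r)| + \frac{\bh r}{\br} + \frac{\bh r^2}{4\br^2} \le \frac{\bh}{\br}\left(|\tL_i(r)|+\frac{5r}{4}\right) \le \frac{5\bh}{4\br}\bigl(|\tL_i(r)|+r\bigr),
\]
the claimed upper bound. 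For positivity, since $h(r)\le\bh/2$ one has $2h(r)-h(r)^2/\bh \ge \frac{3}{2}h(r)>0$, which strictly dominates the (possibly negative) third term even after discarding the nonnegative first term $h'(r)|\tLambda_i(r)|$; thus $\frac{d\Area}{dr}>0$.

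The main obstacle is the bookkeeping behind the crucial cancellation: correctly identifying the internal angles $\theta_0(r),\theta_1(r)$ with $\ttheta$ at the moving endpoints (so that Lemma~\ref{lem:bound-dcotx} can be chain-ruled through the endpoint motion), and justifying both the unit-speed motion and the correct direction of motion of $\tx_{i+1}^-(r)$ and $\tx_i^+(r)$ along $\partial P$ as $r$ varies through $q$-planar values.
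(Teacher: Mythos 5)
Your proposal is correct and follows essentially the same route as the paper: differentiate the formula of Lemma~\ref{lem:area-tti} with $\frac{\rmd}{\rmd r}|\tL_i(r)|=2$, use the identity from~(\ref{eq:trapbaseangle}) to collapse the bracket to $|\tLambda_i(r)|$, and then bound via $|\tLambda_i(r)|\le 2|\tL_i(r)|$ and Lemma~\ref{lem:bound-dcotx}, exactly as in the paper's computation. The extra justifications you supply (unit-speed endpoint motion for $\frac{\rmd}{\rmd r}|\tL_i(r)|=2$, and the explicit positivity estimate $2h(r)-h(r)^2/\bh\ge\tfrac32 h(r)$ where the paper simply calls positivity obvious) are sound and only elaborate the same argument.
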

\begin{proof}
The first inequality in the statement is obvious.

Differentiating the expression for the area given by
Lemma~\ref{lem:area-tti}, recalling that $h(r) = (\bh/2\br)r$, and
observing that $\frac{\rmd}{\rmd r}\left|\tL_i(r)\right| = 2$ and that 
\[\left|\tLambda_i(r)\right| = \left|\tL_i(r)\right| -
h(r)\left(\cot\theta_0(r)+\cot\theta_1(r)+2\sum_{j=k+1}^{\ell}\cot\ttheta_j
\right)\]
by~(\ref{eq:trapbaseangle}),
gives
\begin{eqnarray*}
\frac{\rmd}{\rmd r}\Area\left(\tT_i(q;r)\right) &=&
\frac{\rmd h(r)}{\rmd r} \left[
\left|\tL_i(r)\right| - h(r)\left(\cot\theta_0(r)+\cot\theta_1(r)+2\sum_{j=k+1}^{\ell}\cot\ttheta_j\right)
\right] \\
&&\quad + h(r)\left[\frac{\rmd}{\rmd r} \left|\tL_i(r)\right| - \frac{h(r)}{2}
  \frac{\rmd}{\rmd r}\left(\cot\theta_0(r)+\cot\theta_1(r)\right)
  \right] \\
&\le&\frac{\bh}{2\br}\left|\tLambda_i(r)\right| +
\frac{\bh r}{2\br}\left(2+\frac{h(r)}{\bh}\right)\\
&\le& \frac{\bh}{\br} \left|\tL_i(r)\right| + \frac{5\bh}{4\br}\, r
\end{eqnarray*}
as required, using $\left|\tLambda_i(r)\right| \le
2\left|\tL_i(r)\right|$ (which follows from~(\ref{eq:K})) and
Lemma~\ref{lem:bound-dcotx}.
\end{proof}

In order to estimate the $d_S$-distance between the boundary
components of~$\Ann(q;r,s)$, where $r_1<r<s<r_2$, it is necessary to
find a lower bound for the distance between $\tDelta_i(r)$ and
$\tDelta_i(s)$, which project to part of the inner and the outer
boundary components of $\Ann(q;r,s)$ respectively.

\begin{lem}
\label{lem:cap-distance}
Let $(r_1,r_2)$ be an interval of $q$-planar radii. If
$r_1<r<s<r_2$ then
\[d\left(\tDelta_i(r), \tDelta_i(s)\right) \ge (s-r)
\min\left\{\frac{\bh}{2\br}, \frac{1}{2}\right\}.\]
\end{lem}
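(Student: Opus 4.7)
The plan is to do a case analysis on the positions of $\tp \in \tDelta_i(r)$ and $\tq \in \tDelta_i(s)$, relying on two complementary lower bounds. The first comes from the height function $H \co \tQ \to [0,\bh]$ defined by $H(\tgamma(t,h)) = h$: within each trapezoid $H$ equals the perpendicular distance from the base and is therefore $1$-Lipschitz in the Euclidean metric, and since its values agree on shared vertical sides it is $1$-Lipschitz on all of $\tQ$, giving $|\tp-\tq| \ge |H(\tp)-H(\tq)|$. The second is a horizontal estimate inside a single trapezoid. Since $(r_1,r_2)$ consists of $q$-planar radii and $n(q;\cdot)$ is constant on it by Lemma~\ref{lem:planarradius}, the foot points $\tx_i^+(r)$ cannot cross a vertex of $P$ as $r$ varies (such a crossing would produce a vertex of $G$ at $x_i(r)$, violating planarity). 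Consequently $\tx_i^+(r)$ and $\tx_i^+(s)$ lie in the interior of a common side $\te_\ell$ of $P$ at arc-length $s-r$ apart, so the leaves $\tver(\tx_i^+(r))$ and $\tver(\tx_i^+(s))$ both sit in the trapezoid $\tQ_\ell$; similarly for the $\tx_{i+1}^-$ side in some $\tQ_k$. A direct computation in trapezoid coordinates, using $|\te_\ell'|/|\te_\ell| \in [1/2,2]$ from~(\ref{eq:K}), will show that at any common height $h\in[0,\bh]$ these two leaves are horizontally separated by $(s-r)\bigl[1 + (h/\bh)(|\te_\ell'|/|\te_\ell| - 1)\bigr] \in [(s-r)/2,\,2(s-r)]$.

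With these in place the case analysis is straightforward. If $\tp \in \tLambda_i(r)$ and $\tq \in \tLambda_i(s)$, then $H(\tp)=h(r)$ and $H(\tq)=h(s)$, so $|\tp-\tq| \ge h(s)-h(r) = (s-r)\bh/(2\br)$. If $\tp$ and $\tq$ lie on corresponding same-side vertical legs of $\tDelta_i(r)$ and $\tDelta_i(s)$, the two leaves sit in a common trapezoid and the horizontal-separation bound, combined with the orthogonal decomposition in that trapezoid into $H$-direction and the direction parallel to the base, yields $|\tp-\tq| \ge (s-r)/2$. In the remaining configurations, either $|H(\tp)-H(\tq)| \ge (s-r)\bh/(2\br)$ and the height bound already suffices, or both points lie low enough to be forced onto corresponding vertical legs in a common trapezoid, where the horizontal argument takes over.

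The main obstacle will be the ``opposite-side'' case, in which $\tp$ lies on the vertical leg of $\tDelta_i(r)$ based at $\tx_{i+1}^-(r)$ (in $\tQ_k$) while $\tq$ lies on the leg of $\tDelta_i(s)$ based at $\tx_i^+(s)$ (in $\tQ_\ell$), possibly with $k\neq\ell$, so that the single-trapezoid horizontal estimate is unavailable. The resolution is that any path between $\tp$ and $\tq$ inside the annular strip $\tT_i(q;s)\setminus\Int\tT_i(q;r)$ must either climb to height at least $h(s)-h(r)$ at some point (in which case $H$ yields the bound) or skirt the outer base $\tL_i(s)$, along which the arc from $\tx_{i+1}^-(r)$ to $\tx_i^+(s)$ has length $|\tL_i(r)|+(s-r)\ge s-r$. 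Taking the minimum over all configurations yields the stated bound $(s-r)\min\{\bh/(2\br),\,1/2\}$.
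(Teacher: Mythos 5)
Your overall strategy is the paper's: a vertical estimate coming from the change of height between the two caps, plus a horizontal estimate coming from~(\ref{eq:K}), with the constant $\min\{\bh/2\br,1/2\}$ arising as the worse of the two. But the two steps that carry the horizontal estimate do not hold up as written. The central one is the same-side leg case. Your separation formula $(s-r)\bigl[1+(h/\bh)(|\te_\ell'|/|\te_\ell|-1)\bigr]$ controls the displacement between the two leaves only at a \emph{common} height, and the ``orthogonal decomposition into the $H$-direction and the base-parallel direction'' does not convert this into $|\tp-\tq|\ge (s-r)/2$, because the nearest pair of points on the two legs need not lie at equal heights. The vertical leaves are in general oblique to $\partial P$ (their angle with the side is only controlled through~(\ref{eq:bound-theta})), so a point high on the inner leg and a point low on the outer leg can have arbitrarily small base-parallel displacement: in a trapezoid whose legs are parallel and make a small angle $\theta$ with the base (sharp vertex at one end, reflex vertex at the other), the two leaves are parallel segments at perpendicular distance $(s-r)\sin\theta$, and the minimizing pair has zero horizontal displacement. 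So the inequality you want cannot be extracted from the equal-height separation; any proof of this step has to engage with the obliqueness of the foliation, which your computation never does. The paper organizes this case differently: it fixes $\tp$ on the outer leg, extends $\tver_{h(r)}(\tx_i^+(r))$ to a full cross-cut of $\tQ(h(s))$, observes that this cross-cut separates $\tp$ from all of $\tDelta_i(r)$, and thereby reduces the whole estimate to the distance from $\tp$ to that single extended leaf, which it then bounds via~(\ref{eq:K}). Your pairwise case analysis never performs such a reduction, and that is also why you are forced into an ``opposite-side'' case at all.

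That opposite-side case is the second genuine gap. You bound lengths of paths constrained to lie in the strip $\tT_i(q;s)\setminus\Int \tT_i(q;r)$, but the quantity in the lemma is a distance between the two caps (Euclidean in the plane, or at most the intrinsic distance in $P$): a minimizing segment or path has no reason to stay in that strip --- it may cut through $\tT_i(q;r)$ or leave the polytrapezoid across $\thor(h(s))$ --- so the dichotomy ``climb to height $h(s)-h(r)$ or skirt $\tL_i(s)$'' is not available, and in any case proximity to a boundary arc of length at least $s-r$ does not by itself force a path to be long. In the paper's organization this case simply does not occur: bounding the distance from a point of one leg of $\tDelta_i(s)$ to the \emph{whole} of $\tDelta_i(r)$, the separating extended leaf handles the opposite leg and the horizontal arc simultaneously, and the case $\tp\in\tLambda_i(s)$ is disposed of by the height estimate alone. (A smaller point: your gluing argument that $H$ is $1$-Lipschitz on all of $\tQ$ for the Euclidean metric needs justification when the straight segment between the two points leaves $\tQ$; the configuration you actually need --- one point at height $h(s)$, the other at height at most $h(r)$ --- is the one to verify.)
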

\begin{proof}
Let~$\tp\in\tDelta_i(s)$. If $\tp\in\tLambda_i(s)$, then every point
of~$\tDelta_i(r)$ is distance at least $h(s)-h(r) =
\frac{\bh}{2\br}(s-r)$ from $\tp$.

If $\tp\in\tver_{h(s)}(\tx_i^+(s))$, then no point of $\tDelta_i(r)$
can lie closer to~$\tp$ than the closest point of
$\tver_{h(s)}(\tx_i^+(r))$ (the arc obtained by extending
$\tver_{h(r)}(\tx_i^+(r)) \subset\tDelta_i(r)$ to
$\thor(h(s))$). Hence $d(\tp, \tDelta_i(r)) \ge (s-r)/2$
by~(\ref{eq:K}). The proof if $\tp\in\tver_{h(s)}(\tx_{i+1}^-(s))$ is
similar. 
\end{proof}

\subsection{Proof of Theorem~\ref{thm:main}}
\label{subsec:cxstruct-proof}
Initially in this section, $(P,\cP)$ is again a plain
paper-folding scheme with associated paper sphere~$S$ and
scar~$G\subset S$, and $q\in G$. The first step is to interpret the
geometric results of the previous section in the sphere~$S$.

\begin{lem}
\label{lem:area'bound}
For all planar radii $r\in(0,\br)$,
\[0 < \frac{\rmd}{\rmd r} \Area_S(D(q;r)) \le \frac{5\bh}{4\br}
\left(m(q;r) + r\cdot n(q;r)\right).\]
\end{lem}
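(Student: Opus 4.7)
The plan is to decompose $D(q;r)$ into the polytrapezoid pieces $\tT_i(q;r)$ studied in Section~\ref{subsec:cxstruct-geometry-polygon-constants} and apply the derivative bound from Lemma~\ref{lem:darea-tti} termwise, summing using~(\ref{eq:sumli}).

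First, I fix a planar radius $r\in(0,\br)$. By Lemma~\ref{lem:planarradius}, $n(q;\cdot)$ is locally constant at $r$, so there is an open interval $(r_1,r_2)\ni r$ of $q$-planar radii on which $n(q;s)\equiv n := n(q;r)$; moreover, the points $x_i(s)\in C_G(q;s)$ can be labelled to depend continuously on $s\in(r_1,r_2)$, so the polytrapezoids $\tT_i(q;s)$ of Section~\ref{subsec:cxstruct-geometry-polygon-constants} are well defined and vary continuously with $s$ throughout this interval.

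Next, I translate area in $S$ to area in~$P$. By~(\ref{eq:tiqr}), $D(q;s)=\bigcup_{i=0}^{n-1}T_i(q;s)$, and the pieces meet only along the arcs $\alpha_i\subset G$, which have zero 2-dimensional Lebesgue measure in $S$. Since $\pi\co\tT_i(q;s)\to T_i(q;s)$ is a homeomorphism away from the 1-dimensional set $\tL_i(s)\subset\partial P$, it is locally isometric on an open full-measure subset, and hence area-preserving. Therefore
\[
\Area_S(D(q;s))=\sum_{i=0}^{n-1}\Area(\tT_i(q;s)) \qquad\text{for all }s\in(r_1,r_2).
\]

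Now I differentiate. Each summand is differentiable in $s$ by Lemma~\ref{lem:darea-tti}, which also gives the positivity of each derivative (hence the left inequality $0<\frac{\rmd}{\rmd r}\Area_S(D(q;r))$) and the upper bound
\[
\frac{\rmd}{\rmd s}\Area(\tT_i(q;s))\;\le\;\frac{5\bh}{4\br}\bigl(|\tL_i(s)|+s\bigr).
\]
Summing over $i=0,\ldots,n-1$ at $s=r$, using~(\ref{eq:sumli}) to identify $\sum_{i}|\tL_i(r)|=m(q;r)$, and noting that $\sum_{i}r=r\cdot n(q;r)$, yields
\[
\frac{\rmd}{\rmd r}\Area_S(D(q;r))\;\le\;\frac{5\bh}{4\br}\bigl(m(q;r)+r\cdot n(q;r)\bigr),
\]
as required. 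There is no serious obstacle here; the only point requiring care is the continuous-in-$s$ labelling of the $x_i(s)$ (and hence the $\tT_i(q;s)$) in the neighborhood $(r_1,r_2)$, which is exactly what Lemma~\ref{lem:planarradius} is designed to provide.
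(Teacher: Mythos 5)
Your proof is correct and follows essentially the same route as the paper: decompose $D(q;r)$ via~(\ref{eq:tiqr}) into the polytrapezoids $\tT_i(q;r)$, apply the derivative bound of Lemma~\ref{lem:darea-tti} to each piece, and sum using~(\ref{eq:sumli}). The extra care you take (local constancy of $n(q;r)$ from Lemma~\ref{lem:planarradius} and area-preservation of $\pi$ off the scar) is implicit in the paper's terser argument.
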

\begin{proof}
Write $n=n(q;r)$. From~(\ref{eq:tiqr}) it follows that
\[\Area_S(D(q;r)) = \sum_{i=0}^{n-1}
\Area_{\R^2}\left(\tT_i(q;r)\right).\]
Hence, by Lemma~\ref{lem:darea-tti}, 
\begin{eqnarray*}
0 < \frac{\rmd}{\rmd r} \Area_S(D(q;r)) &\le& \frac{5\bh}{4\br}
\sum_{i=0}^{n-1}\left(\left|\tL_i(r)\right| + r\right)\\
&=& \frac{5\bh}{4\br}\left(m(q;r) + r\cdot n(q;r)\right),
\end{eqnarray*}
by~(\ref{eq:sumli}) as required.
\end{proof}

\begin{lem}
\label{lem:Annulus-boundary-separation}
Let~$(r_1,r_2)$ be an interval of~$q$-planar radii, and
$r_1<r<s<r_2$. Let~$\xi(r)$ and $\xi(s)$ denote the boundary
components of~$\Ann(q;r,s)$. Then
\[d_S(\xi(r),\xi(s)) \ge  (s-r)
\min\left\{\frac{\bh}{2\br}, \frac{1}{2}\right\}.\]
\end{lem}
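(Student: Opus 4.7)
This lemma is the $S$-analog of the upstairs estimate in Lemma~\ref{lem:cap-distance}, and I would prove it by lifting paths in $S$ to $\cP$-chains in $P$ via the quotient-metric description of $d_S$ and bounding the chain lengths using an auxiliary radial function compatible with the pairings.  Since $d_S$ is strictly intrinsic by Theorem~\ref{thm:metric-structure}b), $d_S(\xi(r),\xi(s))$ is realized as the length of some rectifiable path $\gamma\colon[0,1]\to\overline{\Ann(q;r,s)}$ joining $\xi(r)$ to $\xi(s)$.  By Definition~\ref{defn:quotmetric}, for any $\veps>0$ there exists a $\cP$-chain $((\tp_i,\tq_i))_{i=0}^k$ in $P$ with $\tp_0\in\pi^{-1}(\xi(r))=\bigcup_j\tDelta_j(r)$, $\tq_k\in\pi^{-1}(\xi(s))=\bigcup_j\tDelta_j(s)$, and $\sum_i d_P(\tp_i,\tq_i)\le |\gamma|_S+\veps$; after subdividing the moves (inserting trivial pairing steps $\tx\cP\tx$) I may also assume each pair $(\tp_i,\tq_i)$ lies in a single polytrapezoidal annular region $\tA_{j_i}:=\tT_{j_i}(q;s)\setminus\Int(\tT_{j_i}(q;r))$.

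The main step is to construct an upstairs radial function $\trho\colon\pi^{-1}(\overline{\Ann(q;r,s)})\to[r,s]$ satisfying: (a) $\trho\equiv r$ on every $\tDelta_j(r)$ and $\trho\equiv s$ on every $\tDelta_j(s)$; (b) $\trho$ is $(1/c)$-Lipschitz with respect to $d_P$ on each $\tA_j$, where $c=\min\{\bh/(2\br),1/2\}$; and (c) $\trho(\tx)=\trho(\ty)$ whenever $\tx\cP\ty$.  Given such a function, telescoping along the chain --- moves contribute via (b) and jumps contribute nothing via (c) --- yields
\[
s-r \;=\; \trho(\tq_k)-\trho(\tp_0) \;=\; \sum_{i=0}^{k}\bigl(\trho(\tq_i)-\trho(\tp_i)\bigr) \;\le\; \frac{1}{c}\sum_{i=0}^{k} d_P(\tp_i,\tq_i) \;\le\; \frac{|\gamma|_S+\veps}{c},
\]
and letting $\veps\to 0$ gives the claimed inequality.

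The principal obstacle is constructing $\trho$ with all three properties simultaneously.  A natural prescription is to set $\trho(\tx)=d_G(q,\pi(\tx))$ on $\partial P\cap\pi^{-1}(\overline{\Ann(q;r,s)})$, which enforces (c) automatically and matches (a) at the corners where each $\tDelta_j(\cdot)$ meets $\partial P$; one then interpolates over each $\tA_j$ so that (a) and (b) hold.  Verifying the $(1/c)$-Lipschitz bound~(b) is the quantitative heart of the matter, and parallels the proof of Lemma~\ref{lem:cap-distance}: the factor $\bh/(2\br)$ comes from $h(s)-h(r)=(\bh/(2\br))(s-r)$ controlling how $\trho$ can vary across the horizontal leaves $\thor(h(r))$ and $\thor(h(s))$, and the factor $1/2$ comes from the trapezoid-ratio bound~(\ref{eq:K}) controlling its variation along the extensions of the vertical leaves $\tver_{h(r)}(\tx_j^{\pm}(r))$.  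No additional patching is needed between adjacent trapezoids within a single $\tT_j(q;s)$ because $\tHor$ and $\tVer$ are defined globally on $\tQ$ in Section~\ref{sec:foliations}.
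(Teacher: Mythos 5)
Your route is genuinely different from the paper's (which stays in $S$, takes a length-minimizing path between the two boundary components --- such a path automatically lies in $\overline{\Ann(q;r,s)}$ because it is shortest --- and inducts on the number of its excursions into the scar, applying Lemma~\ref{lem:cap-distance} to each piece lying in a single $T_i(q;\cdot)$). Your test-function-plus-telescoping scheme could also work, but as written it has a genuine gap at exactly the point where you lose the ``shortest path stays in the annulus'' feature: the assertion that, after subdividing, each pair $(\tp_i,\tq_i)$ of the $\cP$-chain may be assumed to lie in a single polytrapezoidal region $\tA_{j_i}$ is unjustified and false in general. Chain points, and the $d_P$-geodesics realizing the costs $d_P(\tp_i,\tq_i)$, need not meet $\pi^{-1}\bigl(\overline{\Ann(q;r,s)}\bigr)$ at all: they can cut through $\pi^{-1}(D(q;r))$, through the uncollared interior of $P$, or through the part of the collar lying over $G\setminus \bB_G(q;s)$. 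Hence property (b), stated only on the sets $\tA_j$, does not support the telescoping inequality. The repair is to define $\tilde\rho$ on all of $P$ --- clamp it to $r$ on $\pi^{-1}(D(q;r))$ and to $s$ off $\pi^{-1}(\Int(D(q;s)))$ --- and to prove it is $(1/c)$-Lipschitz with respect to $d_P$ globally; since distinct collar pieces can be Euclidean-close while $d_P$-far, this requires a gluing argument along paths (using that $d_P$ is intrinsic and subdividing a path at its passages between the pieces), not just the piecewise bounds.

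The second weakness is that the function itself is never constructed: ``interpolate so that (a) and (b) hold'' is where all the content lives, and Lemma~\ref{lem:cap-distance} is a two-set separation statement, not a Lipschitz bound for a function, so ``parallels the proof of'' is not yet an argument. The construction that works is to take $\tilde\rho\equiv u$ on each level curve $\tDelta_j(u)$ for $u\in[r,s]$; equivalently $\tilde\rho=\max\bigl\{(2\br/\bh)\,h,\ d_G\bigl(q,\psi(\pi(\cdot))\bigr)\bigr\}$ clamped to $[r,s]$, which has your prescribed boundary values and is automatically constant on $\cP$-classes. Then the $(1/c)$-Lipschitz bound on $\tA_j$ (even with respect to Euclidean distance, hence a fortiori $d_P$) is \emph{precisely} the estimate $d\bigl(\tDelta_j(u),\tDelta_j(v)\bigr)\ge c\,|u-v|$, i.e.\ Lemma~\ref{lem:cap-distance} applied to every pair of radii in $(r_1,r_2)$; it should be cited in that form rather than left as an unspecified interpolation. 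With these two repairs --- the explicit level-set definition and the clamped global extension with a path-gluing Lipschitz argument --- your telescoping does yield the same constant $\min\{\bh/2\br,\,1/2\}$ as the paper, at the cost of machinery the paper's induction avoids.
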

\begin{proof}
Let~$D:=d_S(\xi(r),\xi(s))$, and let~$p'\in\xi(r)$ and~$q'\in\xi(s)$
be points with $d_S(p',q')=D$. Since~$d_S$ is strictly intrinsic by
Theorem~\ref{thm:metric-structure}b), there is an arc~$\gamma'$
from~$p'$ to~$q'$ of length~$D$, and apart from its endpoints this
arc, being the shortest from~$\xi(r)$ to~$\xi(s)$, must lie
in~$\Ann(q;r,s)$.

It will be shown that, for any $r\le u<v\le s$, the length of any
arc~$\gamma$ from~$\xi(u)$ to~$\xi(v)$ with
$\interior{\gamma}\subset\Ann(q;u,v)$ is at least $C(v-u)$, where
$C=\min\left\{\frac{\bh}{2\br}, \frac{1}{2}\right\}$, which will
establish the result.

It can be assumed without loss of generality that there are no subarcs
of~$\gamma$ whose endpoints lie on the same component of~$G\cap
\overline{\Ann(q;u,v)}$ and whose interiors lie entirely in the
  interior of a single~$T_i(q;v)$, since such a subarc could be
  replaced with a shorter subarc lying in~$G$. Therefore the
  intersection of~$\interior{\gamma}$ with~$G$ has a finite number~$N$
  of components. The result will be shown by induction on~$N$.

If~$N=0$, then the arc lies in a single~$T_i(q;v)$, and the result is
immediate by Lemma~\ref{lem:cap-distance}. If $N>0$, then let
$t_1=\inf\{t>0\,:\,\gamma(t)\in G\}$ and
$t_2=\inf\{t>t_1\,:\,\gamma(t)\not\in G\}$, so that $\gamma([t_1,t_2])
\subset G$. Split $\gamma$ into three subarcs $\gamma_1\co[0,t_1] \to
S$, $\gamma_2\co[t_1,t_2]\to S$, and $\gamma_3\co[t_2,1]\to S$ (one
of these arcs may degenerate to a point). Let
$d(q,\gamma(t_1))=w_1\in[u,v]$ and $d(q,\gamma(t_2))=w_2\in[u,v]$. By
the inductive hypothesis, $\gamma_1$ has length at least $C(w_1-u)$
and $\gamma_3$ has length at least $C(v-w_2)$, while $\gamma_2$ lies
entirely in a single $T_i(q;w_2)$ and hence has length at least
$C(w_2-w_1)$ by Lemma~\ref{lem:cap-distance}.
\end{proof}

\begin{lem}
\label{lem:modest}
Let $r$ and $s$ be $q$-planar radii with $0<r<s<\br$. Then
\[\mod\Ann(q;r,s) \ge \int_r^s \frac{C^2\,\rmd u}{\Area_S'(D(q;u))},\]
where
\[C := \min\left\{\frac{\bh}{2\br}, \frac{1}{2}\right\} \qquad
\text{and}\qquad \Area_S'(D(q;r)):= \frac{\rmd\Area_S(D(q;r))}{\rmd
  r}. \]
\end{lem}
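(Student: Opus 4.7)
The plan is to subdivide $\Ann(q;r,s)$ into many thin concentric sub-annuli, apply the length-area inequality~(\ref{eq:modest}) to each using a canonical Euclidean conformal metric, and then sum via Lemma~\ref{lem:summod} and pass to an integral. The conformal metric to use is $\nu\,|\rmd z|$, the push-forward through $\pi$ of the Euclidean metric on~$P$: it is a bona fide conformal metric on the conic-flat part $S\setminus\cV^s$ (which, by isolatedness of~$q$, contains $\Ann(q;r,s)$ after shrinking $\br$ if necessary), with $\nu$-lengths of paths agreeing with $d_S$-lengths and $\nu$-area agreeing with the Euclidean area lifted to~$P$.

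First I would use Lemma~\ref{lem:planarradius} to pick a partition $r=r_0<r_1<\cdots<r_N=s$ by $q$-planar radii, which are dense in $(0,\br)$. The sub-annuli $\Ann(q;r_i,r_{i+1})$ are then pairwise nested and concentric inside $\Ann(q;r,s)$, so Lemma~\ref{lem:summod} gives
\[
\mod\Ann(q;r,s) \ \geq\ \sum_{i=0}^{N-1}\mod\Ann(q;r_i,r_{i+1}).
\]
Next I would apply~(\ref{eq:modest}) to each summand. Any $\nu$-arc in the sub-annulus joining its boundary circles $\xi(r_i)$ and $\xi(r_{i+1})$ is also a $d_S$-path in~$S$ of the same length, so
\[
d_\nu(\xi(r_i),\xi(r_{i+1})) \ \geq\ d_S(\xi(r_i),\xi(r_{i+1})) \ \geq\ C(r_{i+1}-r_i)
\]
by Lemma~\ref{lem:Annulus-boundary-separation}. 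The $\nu$-area of the sub-annulus equals $\Area_S(D(q;r_{i+1}))-\Area_S(D(q;r_i))$ by definition of the~$D(q;r)$, and the absolute continuity of $r\mapsto\Area_S(D(q;r))$ (afforded by the bounded derivative in Lemma~\ref{lem:area'bound}) rewrites this as $\int_{r_i}^{r_{i+1}}\Area_S'(D(q;u))\,\rmd u$. Combining yields
\[
\mod\Ann(q;r,s)\ \geq\ \sum_{i=0}^{N-1}\frac{C^2(r_{i+1}-r_i)^2}{\int_{r_i}^{r_{i+1}}\Area_S'(D(q;u))\,\rmd u}.
\]

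The conclusion then follows by taking the supremum over partitions, using the elementary identity
\[
\sup_{\text{partitions of }[r,s]}\ \sum_i\frac{(r_{i+1}-r_i)^2}{\int_{r_i}^{r_{i+1}}A(u)\,\rmd u} \ =\ \int_r^s\frac{\rmd u}{A(u)},
\]
valid for any positive measurable integrable~$A$: Cauchy-Schwarz applied term-by-term yields the $\leq$ direction, while refining the partition so that $A$ is nearly constant on each subinterval yields the reverse. The hard part will be rigorously executing this last refinement, since $A=\Area_S'(D(q;\cdot))$ is only known to be measurable and bounded (via Lemma~\ref{lem:area'bound}) rather than continuous; the remedy is a standard measure-theoretic approximation, for instance via the Lebesgue differentiation theorem or a Lusin-type truncation of~$A$ onto the set where it is well controlled.
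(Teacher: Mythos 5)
Your overall strategy --- the length--area inequality~(\ref{eq:modest}) with the flat metric, the separation estimate of Lemma~\ref{lem:Annulus-boundary-separation}, superadditivity~(\ref{eq:summod}), and a supremum over partitions --- is the same as the paper's, but your handling of non-planar radii leaves two genuine gaps. First, you apply Lemma~\ref{lem:Annulus-boundary-separation} to each sub-annulus $\Ann(q;r_i,r_{i+1})$, but that lemma (like Lemma~\ref{lem:cap-distance} behind it) is stated and proved only for radii lying inside an interval $(r_1,r_2)$ consisting \emph{entirely} of $q$-planar radii: its proof relies on the combinatorial structure of the disks $D(q;u)$ (the number $n(q;u)$ and the regions $T_i(q;u)$) being constant on the interval, and this structure changes at non-planar radii. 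Your partition only makes the endpoints $r_i$ planar; since the non-planar radii form a closed null set which can perfectly well be nonempty in $(r,s)$ (this is the typical situation near a singular point), every finite partition of $[r,s]$ will in general have subintervals containing non-planar radii, on which the separation estimate is not available as stated.

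Second, the rewriting $\Area_S(D(q;r_{i+1}))-\Area_S(D(q;r_i))=\int_{r_i}^{r_{i+1}}\Area_S'(D(q;u))\,\rmd u$ requires absolute continuity of the monotone function $u\mapsto\Area_S(D(q;u))$, and ``bounded derivative'' (Lemma~\ref{lem:area'bound} controls the derivative only at planar radii) does not give this: the Cantor function is monotone with bounded derivative off a closed null set and is not absolutely continuous. For a monotone function only $\int_{r_i}^{r_{i+1}}\Area_S'\le \Area_S(D(q;r_{i+1}))-\Area_S(D(q;r_i))$ holds automatically, which is exactly the wrong direction: replacing the difference by the integral makes the denominator smaller and so strengthens the claimed lower bound on the modulus, so without a genuine AC argument this step is unjustified. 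The paper's proof is organized precisely to avoid both problems: it first proves the estimate on intervals $[r_1,r_2]$ consisting entirely of planar radii, where Lemma~\ref{lem:Annulus-boundary-separation} applies and $\Area_S'(D(q;u))$ is continuous and positive (so the supremum over partitions converges to the integral by a Riemann-type argument, the denominators being kept as raw area differences), and then covers $[r,s]$ up to the closed null set of non-planar radii by disjoint collections of such intervals, summing the moduli via~(\ref{eq:summod}) once more. Your Lebesgue-differentiation/Fatou remedy for the final partition identity is sound as far as it goes, but it does not repair these two earlier steps; you must either extend the separation and area estimates across non-planar radii (which needs new arguments) or restructure the proof along the paper's lines.
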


\begin{proof}
Let~$[r_1,r_2]$ be an interval of $q$-planar radii. Then
\[\mod\Ann(q;r_1,r_2) \ge \frac{C^2
  (r_2-r_1)^2}{\Area_S(\Ann(q;r_1,r_2))}\] by~(\ref{eq:modest}) and
Lemma~\ref{lem:Annulus-boundary-separation}. 

For any partition $r_1=s_0<s_1<\cdots<s_k=r_2$ of $[r_1,r_2]$, it
follows from~(\ref{eq:summod}) that
\begin{eqnarray*}
\mod\Ann(q;r_1,r_2) &\ge& \sum_{j=1}^{k} \mod\Ann(q;s_{j-1},s_j) \\
&\ge& C^2 \sum_{j=1}^{k} \frac{(s_j-s_{j-1})^2}{\Area_S(D(q;s_j)) -
  \Area_S(D(q;s_{j-1}))},
\end{eqnarray*}
and taking the supremum over all partitions of~$[r_1,r_2]$ gives
\[\mod\Ann(q;r_1,r_2) \ge \int_{r_1}^{r_2} \frac{C^2\,\rmd
  u}{\Area_S'(D(q;u))},\]
since $\Area_S'(D(q;u))$ is continuous and positive at planar radii.
Hence for any collection of disjoint intervals $[r_1^k,r_2^k]\subset[r,s]$ of
$q$-planar radii, (\ref{eq:summod}) gives
\[\mod\Ann(q;r,s) \ge \sum_k \int_{r_1^k}^{r_2^k} \frac{C^2\,\rmd
  u}{\Area_S'(D(q;u))}.\]
Taking the supremum over such collections of disjoint intervals, using
the fact that the set of non-planar radii is closed and has zero
measure, gives the result.

\end{proof}

Combining the results of Lemmas~\ref{lem:modest}
and~\ref{lem:area'bound} gives that, for all $q$-planar radii
$0<r<s<\br$,
\begin{equation}
\label{eq:combined}
\mod\Ann(q;r,s) \ge \int_r^s \frac{C^2\,\rmd
  u}{\frac{5\bh}{4\br}\left(
m(q;r) + r\cdot n(q;r)\right)},
\end{equation}
where $C=\min\left\{\frac{\bh}{2\br}, \frac{1}{2}\right\}$. This
motivates the following definition:
\begin{defn}[Paper-folding goodness function]
\label{defn:goodnessfn}
Let~$(P,\cP)$ be a surface paper folding scheme, and let~$P$ have
polygon constants $\bh$ and $\br$ (and~$|\partial P|$). Let
\[M = \frac{1}{5}\min\left\{\frac{\br}{\bh},
\frac{\bh}{\br}\right\},\]
and define a function $\iota\co G\times(0,\br)\to[0,\infty)$ by
\[
\iota(q;r):= 
\begin{cases}
\dfrac{M}{m(q;r)+r\cdot n(q;r)} & 
              \text{ if } n(q;r)<\infty, \\ \\
0 & \text{ otherwise}.
\end{cases}
\]
$\iota$ is called a {\em paper-folding goodness function} for~$(P,\cP)$.
\end{defn}

\begin{rmks}
\label{rmks:pfgoodnessfn}
If $r$ is a $q$-planar radius then $\iota$ is non-zero and continuous
at $(q;r)$ by Lemma~\ref{lem:planarradius} (it is clear that
$m(q;r)$ is continuous at such a~$(q;r)$). Hence, using
Lemma~\ref{lem:planarradius} again, for each~$q\in G$ the set of
radii~$r$ at which $\iota(q;r)=0$ or $\iota(q;r)$ is discontinuous has
measure zero.

As mentioned before, goodness functions are not uniquely determined
since there is some freedom in the choice of polygon constants. What
is important about them in this section is that their integrals down
to zero be divergent. The rate of divergence will be important in
Section~\ref{sec:modcont}, where moduli of continuity are discussed.
\end{rmks}

Inequality~(\ref{eq:combined}) now becomes:
\begin{equation}
\label{eq:modbound}
\mod\Ann(q;r,s)\geq \int_{r}^{s}\iota(q;t)\rmd t
\end{equation}
for every pair of planar radii $0<r<s<\br$.

\medskip
\begin{proof}[Proof of Theorem~\ref{thm:main}]
Let~$q\in G$ be an isolated singular point. Pick a $q$-planar radius
\mbox{$r_0\in(0,\br)$} small enough that~$q$ is the only singular
point in~$D(q;r_0)$, and let \mbox{$W= D(q;r_0)\setminus\{q\}$.}
Then~$W$ is a Riemann surface, since it is contained
in~$S\setminus\cV^s$ where there is a well-defined conformal
structure. Since~$W$ is topologically a disk minus a point, it is
conformally homeomorphic to a plane domain (see~\cite{AhSa}, for
example). In order to prove that~$q$ is a puncture of the complex
structure on~$W$, observe that~(\ref{eq:modbound}) yields, for
$q$-planar radii $0<r<r_0$,
\begin{eqnarray*}
\mod\Ann(q;r,r_0)&\geq&\int_r^{r_0}\iota(q;s)\rmd s.
\end{eqnarray*}
Since this integral diverges as $r\searrow 0$ by hypothesis, the result
follows from Lemma~\ref{lem:conformalpnct} and Riemann's Removable
Singularity Theorem.

Only minor modifications are needed to treat the case of non-plain
surface paper-folding schemes~$(P,\cP)$. The constant~$\bh$ should be
chosen in such a way that foliated collars of height~$\bh$ can be
constructed for all of the polygons~$P$, and the constant~$\br$ should
be chosen to be the injectivity radius of~$G$. It then follows from
Lemma~\ref{lem:injradius} that $\bB_G(q;r)$ is a dendrite for
all~$q\in G$ and all $r<\br$, and the constructions and proof go
through exactly as in the plain case. 
\end{proof}

\section{Modulus of continuity}
\label{sec:modcont}

Let $(P,\cP)$ be a surface paper-folding scheme with associated paper
surface~$S$ and scar~$G$. Suppose that the hypothesis of
Theorem~\ref{thm:main} is satisfied at some point~$q\in G$ other than
a non-isolated singularity (as explained in
Remark~\ref{rmk:divergence}, the hypothesis is necessarily satisfied
when~$q$ is a planar point or regular vertex). Then for any sufficiently
small $q$-planar radius~$r$, \mbox{$D=\Int(D(q;r))$} is a conformal disk, and
by the Uniformization Theorem there is a conformal isomorphism $u\co
D\to\D$ to the unit disk $\D\subset\C$. In this section a modulus of
continuity for~$u$ is obtained.

\begin{defn}[Modulus of continuity]
\label{defn:modcont}
Let $\rho\co [0,\delta)\to[0,\infty)$, for some~$\delta>0$, be a
    continuous and strictly increasing function with $\rho(0)= 0$. A
    function $f\co (X,d_X)\to(Y,d_Y)$ between metric spaces has {\em
      modulus of continuity} $\rho$ at $x_0\in X$ if, for every $x\in
    X$ with $d_X(x_0,x)<\delta$,
\[ d_Y\left(f(x_0),f(x)\right)\leq \rho\left(d_X(x_0,x)\right).\]
$f$ is said to have {\em modulus of continuity} $\rho$ if the
inequality above holds for every $x_0\in X$.
\end{defn}
\begin{rmk}
If $\cF$ is a family of functions all of whose members have the same
modulus of continuity, then $\cF$ is {\em uniformly equicontinuous:}
for all $\veps>0$ there exists $\eta>0$ such that if two points are at
distance less than $\eta$ then their images under any function in
$\cF$ (whose domain contains the two points) are at distance less than
$\veps$. Notice that there is no requirement for the domains of the
functions in $\cF$ to coincide: indeed, in the extreme case they may
have empty intersection.
\end{rmk}

\subsection{Modulus of continuity at a point} 
\label{subsec:localmodcont}
Here a local version of the main result of this section is presented:
it provides an illustration of the ideas of the proof without the
technical details which obscure the argument in the global case.

Let~$(P,\cP)$ be a surface paper-folding scheme, $q$ be an isolated
singularity in the scar~$G$, and suppose that the hypothesis of
Theorem~\ref{thm:main} is satisfied at~$q$, so that the complex
structure on~$S\setminus\cV^s$ extends uniquely across~$q$. It will be
shown how it is possible to obtain a modulus of continuity for a
uniformizing map~$u$ from a disk neighborhood of~$q$ to the unit disk
in the complex plane.

Pick a $q$-planar radius~$r_0\in(0,\br)$ small enough that
$D:=\Int(D(q;r_0))$ contains no singularities other than~$q$. Let
$u\co D\to\D$ be a conformal uniformizing chart, where~$\D$ is the
open unit disk in the complex plane: normalize~$u$ so that $u(q)=0$.

It is shown in Lemma~\ref{lem:ballindisk} below that there are numbers
$\delta>0$ and~$A>0$, depending only on the polygon constants of~$P$,
such that
\[B_S(q;r) \subset D(q;Ar) \qquad\text{for all }r\le\delta.\]
Pick any $x\in D$ with $d_S(q,x) < \min\{\delta, r_0/A\}$, and write
$R:=A\,d_S(q,x)<r_0$, so that $x\in D(q;R)$. Assume that~$R$ is a
$q$-planar radius (if not, increase it by an arbitrarily small
amount). Then $\Ann(q;R,r_0)$ separates~$q$ and~$x$ from~$\partial
D(q;r_0)$, and hence $u(\Ann(q;R,r_0))$ separates $0=u(q)$ and~$u(x)$
from the unit circle in~$\C$. It follows from~(\ref{eq:modbound}) and
the Gr\"otzsch annulus theorem (Theorem~\ref{thm:GAT}) that
\begin{eqnarray*}
\int_R^{r_0} \iota(q;t)\,\rmd t &\le& \mod\Ann(q;R,r_0) \\
&=& \mod u(\Ann(q;R,r_0))\\
&\le& \frac{1}{2\pi} \ln\frac{4}{|u(x)|}.
\end{eqnarray*}
Hence, recalling that $R=A\,d_S(q,x)$ (or~$R$ is greater than
$A\,d_S(q,x)$ by an arbitrarily small amount in the case where~$R$ is
not $q$-planar),
\begin{equation}\label{eq:babymodcont}
 |u(x)|\leq \frac{4}{\exp\left(\displaystyle2\pi
\int_{A\, d_S(q,x)}^{r_0}\iota(q;t)\,\rmd t 
\right)},
\end{equation}
 which is the desired modulus of continuity.

\begin{rmk}
\label{rmk:mod-one-mod-other}
Since the projection $\pi\co P\to S$ is distance non-increasing, a
modulus of continuity~$\rho_q$ for~$u$ at~$q$ is also a modulus of
continuity for the composition $\phi:=u\circ\pi$ at the points
of~$\pi^{-1}(q)$: if $q=\pi(\tq)$ and $x=\pi(\tx)$, then
\begin{eqnarray*}
|\phi(\tq)-\phi(\tx)| &=& |u(q)-u(x)| \\
                   &\leq& \rho_q(d_S(q,x)) \\
                   &\leq& \rho_q(d_P(\tq,\tx)),
\end{eqnarray*}
since $d_S(q,x)\leq d_P(\tq,\tx)$ and $\rho_q$ is increasing. 
\end{rmk}

\begin{example}\label{ex:bothcont}
Consider Examples~\ref{ex:generalexample} and~\ref{ex:both} and
suppose as in the second case of Example~\ref{ex:both} that the fold
lengths~$a_n$ satisfy $a_n \asymp 1/\lambda^n$ for some
$\lambda>1$. As shown in that example, the hypothesis of
Theorem~\ref{thm:main} holds at the unique singularity~$q_0\in G$, and
hence the paper surface~$S$ is conformally isomorphic to the Riemann
sphere. Choose a uniformizing map $u\co S\to\csph$ with $u(q_0)=0$,
and let~$\phi\co P\to\csph$ be the composition $\phi:=u\circ\pi$. Thus
$\phi(0)=0$: an explicit modulus of continuity for~$\phi$ at~$0$ will now be
found. 

From the calculations of Example~\ref{ex:both}, a paper-folding
goodness function at~$q_0$ can be taken to be
\[\iota(q_0;r) = \frac{1}{Cr\ln\frac{1}{r}},\]
for some positive constant~$C$, so that
\begin{equation*}
\int_{R}^{r_0}\iota(q_0;t)\rmd t =
\frac{1}{C}\left(\ln\ln\frac{1}{R}
-\ln\ln\frac{1}{r_0}\right).
\end{equation*}
Choose~$r_0$ small enough that $u(D(q;r_0))\subset \D$. Substitution
in~(\ref{eq:babymodcont}) yields, for $\tx$ close to~$0$ (how close
depends on the choice of uniformizing map~$u$)
\[|\phi(\tx)|\le 
4\left[\frac{\ln(1/r_0)}{\ln(1/\left(A|\tx|\right))}\right]^{2\pi/C}.\] 
\end{example}

\subsection{Global modulus of continuity}
\label{subsec:globalmodcont}
In the remainder of Section~\ref{sec:modcont}, the following
assumptions will be made:
\begin{enumerate}[a)]
\item $(P,\cP)$ is a plain paper-folding scheme;
\item there are only finitely many singular points in the scar~$G$;
  and 
\item at each of these singular points, the hypothesis of
  Theorem~\ref{thm:main} is satisfied.
\end{enumerate}
It will be shown how to obtain a global modulus of continuity for a
suitably normalized uniformizing map~$u\co S\to\csph$ from the paper
sphere~$S$ to the Riemann sphere (and hence also for the composition
$\phi = u\circ\pi\co P\to\csph$).

Let~$\iota(q;r)$ be a paper-folding goodness function
for~$(P,\cP)$. Then
\begin{equation}\label{eq:assumption}
\int_0\iota(q;t)\rmd t =\infty
\end{equation}
for every $q\in G$: at singular points this is the assumption~c)
above, while at other points the integral diverges as explained in
Remark~\ref{rmk:divergence}. 

Recall (Definition~\ref{defn:constsofP}) that the polygon constants
of~$P$ are the numbers~$\bh$, $\br$, and $|\partial P|$: here~$\bh$ is
the height of the collaring of
Section~\ref{subsec:cxstruct-foliatedcollar}, and, since the
paper-folding is plain, $\br>0$ can be chosen arbitrarily. A sensible
choice is $\br=\bh$, which maximises the constant~$M$ in the goodness
function~$\iota$, with $M=1/5$.

For each $h\in(0,\bh]$, write
\[P_h:=P\setminus \tQ(h),\]
an open disk in $P\subset\C$ which is the complement of the collaring
of height~$h$.

\medskip
 To fix a uniformizing map $u\co S\to\csph$, pick points
$\tp_0\in \partial P$ and $\tp_\infty\in P_\bh$. Set
$p_0:=\pi(\tp_0)\in G$ and $p_\infty:=\pi(\tp_\infty)\in S\setminus
Q(\bh)$, and define $u\co S\to\csph$ to be the isomorphism with the
following normalization:
\begin{itemize}
\item $u(p_0)=0$;
\item $u(p_\infty)=\infty$; and
\item the reciprocal $\Phi = 1/\phi$ of the composition
  $\phi:=u\circ\pi\co P\to\csph$ satisfies $\Phi'(\tp_\infty)=1$.
\end{itemize}
Observe that $\phi$ is injective and meromorphic in $\Int(P)$ with a
simple pole at $\tp_\infty$.

\begin{lem}
\label{lem:R}
There is a constant~$R=R(\bh)>0$, depending only on~$\bh$, such that
\[\phi\left(\tQ(\bh/2)\right) \subset \D_R :=\left\{z\in\C\,:\, |z|
< R\right\}\]
(or, equivalently, $u(Q(\bh/2)) \subset \D_R$).
\end{lem}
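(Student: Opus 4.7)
The argument combines the maximum principle with the Koebe distortion theorem (Theorem~\ref{thm:Koebedistn}) applied to the univalent holomorphic function $\Phi:=1/\phi\co\Int(P)\to\C$. Since $\pi|_{\Int(P)}$ is a homeomorphism and $u$ is an isomorphism, $\phi=u\circ\pi$ is injective and meromorphic on $\Int(P)$, with a unique simple pole at $\tp_\infty$. Hence $\Phi$ is univalent holomorphic on $\Int(P)$ with $\Phi(\tp_\infty)=0$ and $\Phi'(\tp_\infty)=1$ by the chosen normalization. Because $\tp_\infty\in P_\bh$ lies outside $\tQ(\bh/2)$, $\phi$ is holomorphic on $\Int(\tQ(\bh/2))$ and continuous on $\tQ(\bh/2)$, so by the maximum principle it suffices to bound $|\phi|$ on the two boundary components of this collar annulus, namely $\partial P$ and $\thor(\bh/2)$.

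To bound $|\phi|$ on $\thor(\bh/2)$ I would apply Theorem~\ref{thm:Koebedistn} to $\Phi$ at the base point $\tp_\infty$ with some height $h\in(0,\bh/2)$, obtaining a biLipschitz constant $\kappa$ (depending only on polygon constants) for $\Phi$ on $P_h$. Because $\Phi(\tp_\infty)=0$ and the intrinsic distance in $P_h$ from any point of $\thor(\bh/2)\subset P_h$ to $\tp_\infty\in P_\bh$ is bounded below in terms of polygon constants, this yields a uniform lower bound $|\Phi|\ge c_1>0$ on $\thor(\bh/2)$, and hence $|\phi|\le 1/c_1$ there.

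The principal difficulty is bounding $|\phi|$ on $\partial P$, equivalently bounding the Euclidean diameter of $u(G)$. For this I would exploit the annulus $\cA:=\Int(\tQ(\bh))\setminus\tQ(\bh/2)\subset\Int(P)$, on which $\phi$ is a conformal isomorphism. Its image $\phi(\cA)\subset\csph\setminus\{0,\infty\}$ is an annulus of modulus $\mod\cA$, which is bounded below by $(\bh/2)^2/\Area(\cA)$ via the conformal metric estimate (\ref{eq:modest}) with $\nu\equiv 1$. The bounded complementary component of $\phi(\cA)$ contains $u(G)\ni 0$, while the other contains a neighborhood of $\infty$ whose size (after the Mobius inversion $w\mapsto 1/w$) is controlled by the same Koebe biLipschitz estimate applied to $\Phi|_{P_\bh}$. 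Theorem~\ref{thm:GAT} applied to the inverted annulus then converts the modulus lower bound into an upper bound on $\diam u(G)$ in terms only of polygon constants.

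Combining the two boundary estimates via the maximum principle yields the required $R$. The hardest step is the third: the Gr\"otzsch inequality must be applied after the Mobius inversion, combining the modulus lower bound on $\phi(\cA)$ with the Koebe-type control on the location of the outer boundary of the inverted annulus, to produce a uniform bound on $\diam u(G)$ depending only on polygon constants (in particular on $\bh$, once the other polygon constants are regarded as fixed).
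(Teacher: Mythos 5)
The genuine problem is in your third step, which is also where you locate the main difficulty. After inversion you have the annular region $A'=\Phi(\cA)$ whose \emph{bounded} complementary component contains $0$ together with a disk $\bDD_\sigma$ (by Koebe applied near $\tp_\infty$), and whose \emph{unbounded} complementary component $E_\infty$ contains $\{1/u(x):x\in G\}$; what you need is that $\mod A'\ge m_0$ forces $\dist(0,E_\infty)$ to be bounded below. Theorem~\ref{thm:GAT} applied ``to the inverted annulus'' does not give this: with the marked points $0$ and a point of modulus $\sigma$ sitting in the bounded complementary component, Gr\"otzsch only yields an inequality of the form $m_0\le\mod A'\le\frac{1}{2\pi}\ln(4K/\sigma)$ once you know $A'\subset\D_K$ for some known $K$ --- an upper bound on the modulus, i.e.\ a consistency check, which says nothing about how far $E_\infty$ is from $0$ (and $A'$ need not be contained in the disk of radius $\dist(0,E_\infty)$, so you cannot take $K$ to be that distance). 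To close this step you need either Teichm\"uller's module theorem, which is not in the paper's toolkit, or one more normalization: pick $z_2\in E_\infty$ nearest to $0$, apply $w\mapsto\sigma z_2/w$, observe that the image annulus lies in $\D_{|z_2|}$ because its other complementary component now contains $\{|w|\ge|z_2|\}$, and only then apply Theorem~\ref{thm:GAT} to the points $0$ and $\sigma$ to get $|z_2|\ge\frac{\sigma}{4}e^{2\pi m_0}$. Note also that even when repaired, your constants ($\kappa$ through $\diam_{P_h}P_h$, and $m_0$ through $\Area(\cA)\lesssim\bh|\partial P|$) depend on $|\partial P|$ as well as $\bh$, so you prove a weaker statement than the asserted $R=R(\bh)$ (harmless for the application in Section~\ref{sec:dynam-appl}, where all polygon constants are uniform, but not what the lemma claims).

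More importantly, the Koebe input you invoke inside step 3 already finishes the whole proof, and this is exactly the paper's argument: $\tQ(\bh/2)$ is disjoint from $B_\C(\tp_\infty;\bh/2)\subset\Int(P)$; Koebe's one-quarter theorem applied to the normalized univalent map $\Phi$ on this ball gives $\Phi\bigl(B_\C(\tp_\infty;\bh/2)\bigr)\supset\bDD_{\bh/8}$; since $u$ is injective on $S$ and $\pi(\tQ(\bh/2))$ is disjoint from $\pi\bigl(B_\C(\tp_\infty;\bh/2)\bigr)$, every point of $\tQ(\bh/2)$ where $\Phi$ is defined satisfies $|\Phi|\ge\bh/8$, i.e.\ $|\phi|\le 8/\bh=:R(\bh)$ --- no maximum principle, no modulus estimates, no separate treatment of $\partial P$, and a constant depending only on $\bh$. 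Even within your own framework step 3 is redundant: $u$ is holomorphic on all of $S\setminus\{p_\infty\}$, in particular across the scar $G$, so the maximum principle applied on $S$ to the closed disk $Q(\bh/2)$ reduces the lemma to bounding $|u|$ on the single curve $\hor(\bh/2)$, which your step 2 already does; it is only the decision to work with $\phi$ on $P$, where holomorphicity and injectivity fail along $\partial P$, that created the need for the annulus/Gr\"otzsch detour.
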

\begin{proof}
Since it lies in~$P_\bh$, the point $\tp_\infty$ is distance at
least~$\bh/2$ from any point of $\tQ(\bh/2)$: that is,
$\tQ(\bh/2) \subset P \setminus B_\C(\tp_\infty, \bh/2)$. Therefore
\[\Phi\left(\tQ(\bh/2)\right) \subset \Phi\left(P\setminus
B_\C(\tp_\infty; \bh/2)\right) = \csph \setminus
\Phi\left(B_\C(\tp_\infty; \bh/2)\right),\] where $\Phi = 1/\phi$.
Since~$\Phi$ is univalent in $B_\C(\tp_\infty; \bh/2)$ with
$\Phi(\tp_\infty)=0$ and $\Phi'(\tp_\infty)=1$, Koebe's one-quarter
theorem gives a radius $s(\bh)>0$ such that
$\Phi\left(B_\C(\tp_\infty; \bh/2)\right) \supset \bDD_{s(\bh)}$, and
the result follows with $R(\bh)=1/s(\bh)$.
\end{proof}

\bigskip

The main theorem which will be proved in this section is:
\begin{thm}\label{thm:modcont}
Let~$(P,\cP)$ be a plain paper-folding scheme with only finitely many
singular points in its scar~$G$, at each of which the conditions of
Theorem~\ref{thm:main} hold. Then the uniformizing map~$\phi\co
P\to\csph$ has a modulus of continuity $\brho$, with respect to the
Euclidean metric on~$P$ and the spherical metric on~$\csph$, which
depends only on the polygon constants of~$P$ and on the paper-folding
goodness function $\iota\co G\times(0,\br)\to[0,\infty)$.
\end{thm}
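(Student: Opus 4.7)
My plan is to assemble a global modulus of continuity by combining a biLipschitz estimate on the ``interior'' part of~$P$ (away from the collar) with the local Gr\"otzsch-type estimate of Section~\ref{subsec:localmodcont} applied uniformly at each point of the scar.

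For the first ingredient, observe that $\Phi=1/\phi$ is univalent on~$\Int(P)$ with $\Phi(\tp_\infty)=0$ and $\Phi'(\tp_\infty)=1$, so Theorem~\ref{thm:Koebedistn} applied with $h=\bh/2$ yields a biLipschitz constant~$\kappa$ for~$\Phi$ on $P_{\bh/2}=P\setminus\tQ(\bh/2)$ depending only on~$\bh$ and $\diam_{P_{\bh/2}}P_{\bh/2}$, and hence only on the polygon constants of~$P$. Since the inversion $z\mapsto 1/z$ is an isometry of $(\csph,d_{\csph})$, this translates directly into a biLipschitz estimate for~$\phi$ on~$P_{\bh/2}$ from the Euclidean metric to the spherical metric.

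For the second ingredient, the argument of Section~\ref{subsec:localmodcont} gives, at each~$q\in G$ and for each sufficiently small $q$-planar radius~$r_0$, the bound
\[|u_q(x)|\le 4\exp\left(-2\pi\int_{A\,d_S(q,x)}^{r_0}\iota(q;t)\,\rmd t\right),\]
where $u_q\co\Int(D(q;r_0))\to\D$ is the local uniformizer with $u_q(q)=0$, and where~$A$ and the admissible choices of~$r_0$ depend only on the polygon constants and on the (finite) singular set. Because $\phi(Q(\bh/2))\subset\D_R$ by Lemma~\ref{lem:R}, the map $\phi\circ u_q^{-1}\co\D\to\D_R$ is univalent, and the Koebe one-quarter and distortion theorems upgrade this to $|\phi(x)-\phi(q)|\le C\,|u_q(x)|$ for $|u_q(x)|\le 1/2$, with $C$ depending only on~$R$. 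To combine these local bounds for a pair $\tx,\ty\in P$ with $|\tx-\ty|=\delta$, I would split according to whether both points lie in~$P_{\bh/2}$ (apply the Koebe step) or at least one, say~$\tx$, lies in~$\tQ(\bh/2)$. In the latter case I would pick a basepoint~$q\in G$ adapted to both points: for~$\tx$ close to~$\partial P$ the vertical projection~$\pi(\tpsi_\bh(\tx))$ works; for~$\tx$ deeper in the collar, a mixed estimate via an intermediate point on~$\partial P$ trades the height against~$\delta$.

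The main obstacle will be the uniformity in~$q\in G$ of the local modulus. The integral $\int_{A\delta}^{r_0}\iota(q;t)\,\rmd t$ depends on~$q$, and its rate of divergence as $\delta\downarrow 0$ is only guaranteed at the finitely many isolated singular points. Since these are isolated, a compactness argument should give a uniform lower bound for the integral that still diverges as $\delta\downarrow 0$: partitioning~$G$ into small neighborhoods of the singular points together with the compact complementary regular part, one can dominate $\iota(q;t)$ from below, for $t$ large compared to the separation scale, by a single divergent function of~$t$ alone built from the finitely many singular-point profiles and the logarithmic divergence of Remark~\ref{rmk:divergence} elsewhere. The final modulus~$\brho(\delta)$ is then expressed explicitly in terms of this function and the polygon constants, as claimed.
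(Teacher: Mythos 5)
Your overall architecture is the paper's: a Koebe-distortion Lipschitz bound on the complement of a collar (Theorem~\ref{thm:Koebedistn}, as in Lemma~\ref{lem:modcontininterior}), a Gr\"otzsch-type bound near the scar built from the annuli $\Ann(\psi(q);\cdot\,,\br)$ together with Lemma~\ref{lem:R}, and compactness plus finiteness of the singular set for uniformity; the detour through a local uniformizer $u_q$ followed by a distortion step is an acceptable variant of the paper's direct application of Theorem~\ref{thm:GAT} to $u\left(\Ann(\psi(q);\mu(q,t),\br)\right)$ after a M\"obius normalization. But the step you describe as ``a mixed estimate \ldots trades the height against $\delta$'' is a genuine missing idea, not a routine detail. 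For $\tq$ in the collar at height $h$ and $\tx$ with $|\tq-\tx|=t\ll h$, every basepoint $q'\in G$ adapted to $\tq$ (for instance $\pi(\tpsi(\tq))$) lies at distance roughly $h$ from both points, so the triangle inequality through $\phi$-values at $q'$ yields a bound of the order of the collar estimate at scale $h$, which does not tend to $0$ as $t\to 0$ with $h$ fixed: as written this is not a modulus of continuity. The paper supplies the missing mechanism in Case~2 of Lemma~\ref{lem:modcontaroundG}, nesting the Euclidean annulus $B_S(q';h)\setminus\bB_S(q';t)$, of modulus $\frac{1}{2\pi}\ln(h/t)$, inside $\Ann(\psi(q');\mu(q',h),\br)$ to gain the factor $t/h$; equivalently one can use a Schwarz--Koebe bound at scale $h$ (Lipschitz constant of order $R/h$) and balance it against the collar bound at scale $h$. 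Note also that cutting the interior region at $P_{\bh/2}$ leaves intermediate heights uncovered by your two cases; the paper takes the interior region to be $P_{\delta/2}$ with $\delta$ the fixed constant of~(\ref{eq:delta-A}), so the collar estimate only has to handle heights at most $\delta$, and even there the $t\le h_q$ case still requires the extra factor.

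The second gap is the uniformity in $q$. What you need --- that $\int_r^{\br}\iota(q;s)\,\rmd s\to\infty$ as $r\to 0$ uniformly in $q\in G$ --- is true (it is Lemma~\ref{lem:properties-I}b)), but the mechanism you propose, a pointwise lower bound on $\iota(q;t)$ by a single divergent profile assembled from the singular-point profiles plus a uniform logarithmic bound ``elsewhere'', is not justified. For $q$ close to, but distinct from, a singular point and $t$ below $d_G(q,\cV^s)$, the ball $B_G(q;t)$ can still contain many of the accumulating vertices, so neither $m(q;t)=O(t)$ nor a uniform bound on $n(q;t)$ holds, and the regular profile $c/t$ of Remark~\ref{rmk:divergence} is unavailable with a constant independent of $q$; at those scales the singular profile is at the wrong scale as well, since no doubling property of $m(q^*;\cdot)$ is assumed. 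The paper avoids pointwise domination entirely: it proves joint continuity of $(q,r)\mapsto\int_r^{\br}\iota(q;s)\,\rmd s$ and its uniform divergence by excising small intervals around the finitely many radii at which a given $q_0$ sees a singular point or vertex, using local constancy of $n(q;s)$ and continuity of $m(q;s)$ off those radii, then invoking compactness of $G$; continuity of $\rho$ (Lemma~\ref{lem:continuity-rho}) then allows the collar modulus to be taken as $2\max_{q\in Q(\delta)}\rho(q,t)$ and combined with the interior Lipschitz constant by a maximum. Your argument needs a replacement of this kind for the domination claim (or a proof of that claim, which seems difficult in general).
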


Recall that the spherical metric~$d_{\csph}$ on~$\csph$ is defined by
\[d_{\csph}(w,z) = \inf_{\gamma\in\Gamma} \left(2\int_\gamma
\frac{|\rmd z|}{1+|z|^2}\right) ,\] where $\Gamma$ is the set of paths
in~$\csph$ from~$w$ to~$z$.  The only properties of the spherical
metric which will be used here are: $d_{\csph}(1/w,1/z) =
d_{\csph}(w,z)$ for all $w,z\in\csph$; and $d_{\csph}(w,z) \le 2|w-z|$
for all $w,z\in\C$.

\bigskip

\subsection{Two technical lemmas}

Recall from Section~\ref{sec:foliations}
that~$\tpsi_\bh:\tQ(\bh)\to\partial P$ is the retraction obtained by
sliding points along leaves of~$\tVer$: for economy of notation,
this retraction will henceforth be denoted~$\tpsi$, and similarly
$\psi$ rather than $\psi_\bh$ will be used to denote the retraction
$\pi\circ\tpsi\circ\pi^{-1}\co Q(\bh)\to G$. The first lemma in this
section essentially says that~$\psi$ is Lipschitz with constant
determined by the polygon constants.

\begin{lem}
\label{lem:lipschitz-psi}
Let~$\alpha$ be a rectifiable path in~$Q(\bh)$. Then
\[|\psi\circ\alpha|_G \le \frac{|\partial P|}{\bh}\,|\alpha|_S.\]
\end{lem}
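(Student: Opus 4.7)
The plan is to prove an infinitesimal Lipschitz bound on $\tpsi$ upstairs and then reassemble using a path decomposition. Write $K := |\partial P|/\bh$. First, I show $|\tpsi\circ\tbeta|_{\partial P}\le K\,|\tbeta|_\C$ for any rectifiable path $\tbeta$ in $\tQ(\bh)$. In the foliated coordinates $(t,h)\mapsto\tgamma(t,h)$ on a trapezoid $\tQ_i$, the retraction is given by $\tgamma(t,h)\mapsto\tgamma(t,0)$, so its differential sends $v = a\,\partial_t\tgamma + b\,\partial_h\tgamma$ to $a\,\partial_t\tgamma(t,0)$. Since $\tgamma_0$ is arc-length parametrized, $|\partial_t\tgamma(t,0)|=1$, and hence $|D\tpsi(v)|=|a|$. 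Denoting by $\ttheta$ the Euclidean angle between $\partial_t\tgamma$ and $\partial_h\tgamma$ (the function $\ttheta(t)$ of Section~\ref{sec:foliations}), the component of $v$ perpendicular to $\partial_h\tgamma$ has absolute value $|a|\,|\partial_t\tgamma|\sin\ttheta$, so $|a|\,|\partial_t\tgamma|\sin\ttheta\le|v|$. The spread bound~(\ref{eq:K}) gives $|\partial_t\tgamma(t,h)| = |\te_i(h)|/|\te_i|\ge 1/2$, and the angular bound~(\ref{eq:bound-theta}) gives $\sin\ttheta\ge 2\bh/|\partial P|$; together these yield $|D\tpsi(v)|\le K|v|$. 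Integrating along $\tbeta$ (and noting that $\tpsi$ is continuous at the vertical sides between trapezoids, where it collapses to a common vertex) gives the desired path inequality.

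Next I transfer this bound to $\alpha$ by lifting it off the scar. Let $E := \alpha^{-1}(G)\subset[0,1]$ be the closed set where $\alpha$ hits the scar, with complement $\bigsqcup_i(a_i,b_i)$. On each $(a_i,b_i)$, $\alpha$ takes values in $Q(\bh)\setminus G$, where $\pi$ restricts to an isometric homeomorphism from $\tQ(\bh)\setminus\partial P$; so $\alpha|_{(a_i,b_i)}$ admits a unique continuous lift $\talpha_i\co[a_i,b_i]\to\tQ(\bh)$ with endpoints on $\partial P$, and with $|\talpha_i|_\C$ equal to the $d_S$-length of $\alpha|_{[a_i,b_i]}$. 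By construction $\psi\circ\alpha|_{[a_i,b_i]} = \pi\circ\tpsi\circ\talpha_i$, so its $G$-length is at most $|\tpsi\circ\talpha_i|_{\partial P} \le K|\talpha_i|_\C$ by the first step together with the length-nonincreasing property of $\pi$ (Remark~\ref{rmk:distance-decreasing}). On $E$, $\tpsi$ fixes $\partial P$ pointwise, so $\psi|_G = \mathrm{id}_G$; thus $\psi\circ\alpha$ agrees with $\alpha$ on $E$, contributing the same length $|\alpha|_E$ to both $|\psi\circ\alpha|_G$ and $|\alpha|_S$ (which agree on pieces in $G$ by Lemma~\ref{lem:metricstructureofG}b)).

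Summing over the interval decomposition and using $K\ge 1$ --- which holds because $\bh$ is at most the diameter of $P$, itself bounded by $|\partial P|/2$ --- gives $|\psi\circ\alpha|_G \le |\alpha|_E + K\sum_i|\talpha_i|_\C \le K|\alpha|_S$. The main obstacle is the infinitesimal estimate of the first step, which requires both bounds built into the foliated-collar construction: (\ref{eq:K}) to control $|\partial_t\tgamma|$ from below and (\ref{eq:bound-theta}) to control $\sin\ttheta$ from below, both expressed in terms of the polygon constants.
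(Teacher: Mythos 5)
Your proposal is correct and follows essentially the same route as the paper's (sketch) proof: an infinitesimal Lipschitz bound for $\tpsi$ inside the collar, with the constant $|\partial P|/\bh$ extracted from exactly the two estimates~(\ref{eq:K}) and~(\ref{eq:bound-theta}), then globalized along the path. Your pointwise computation in the foliated coordinates (in place of the paper's projection-from-the-apex picture) and your decomposition of $\alpha$ into the closed set $\alpha^{-1}(G)$ and the lifted excursions --- needed because a path meeting the scar does not lift globally, a point the paper's sketch glosses over with ``lift $\alpha$'' --- are, if anything, more careful than the original; only the final summation over the countable decomposition is left at the same sketch level as the paper's own argument.
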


\begin{proof}[Sketch proof]
Lift~$\alpha$ to a path in~$\tQ(\bh)$, and consider a point
$\tgamma(t,h)$ of~$\alpha$ lying in a trapezoid~$\tQ_i$. As in the
proof of Lemma~\ref{lem:bound-dcotx}, extend the vertical sides
of~$\tQ_i$ to form a triangle~$T$ of height~$H\ge 2\bh$.  The action
of~$\tpsi$ in~$\tQ_i$ is projection from the apex of~$T$ onto the base
of~$\tQ_i$.

An infinitesimal arc through~$\tgamma(t,h)$ is maximally stretched
by~$\tpsi$ when it is perpendicular to the line segment from
$\tgamma(t,h)$ to the apex of~$T$, in which case it is stretched by a
factor $\frac{H}{(H-h)\sin\ttheta(t)}$. The result follows
using~(\ref{eq:bound-theta}) and $H\ge 2h$.
\end{proof}

The following lemma relates metric balls $B_S(q,\delta)$ in $S$ to the
disks $D(\psi(q);\mu)$ for $q$ in a thinner collar
$Q(\delta)=\pi\circ\tQ(\delta)$ (see Figure~\ref{fig:ballindisk}).

\begin{figure}[htbp]
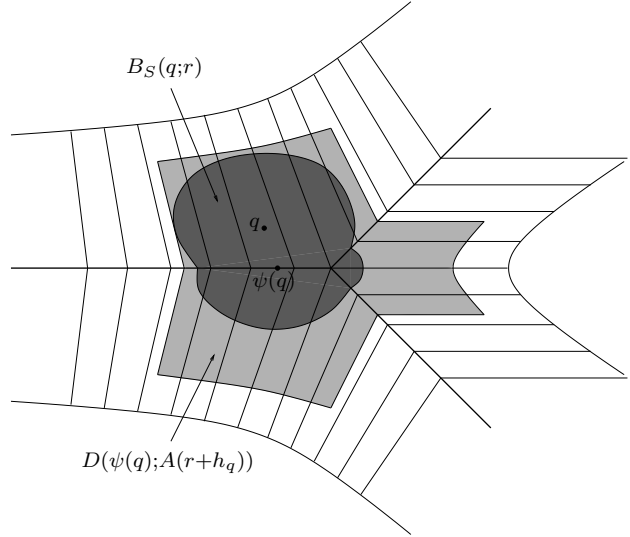

\lab{q}{q}{}\lab{pq}{\psi(q)}{}
\lab{B}{B_S(q;r)}{b}
\lab{D}{D\left(\psi(q);A(r+h_q)\right)}{t}
\begin{center}
\pichere{0.5}{ballindisk}
\end{center}
\caption{The disk $D\left(\psi(q);A(r+h_q)\right)$ and the ball
$B_S(q;r)$.}
\label{fig:ballindisk}
\end{figure}

\begin{lem}
\label{lem:ballindisk}
Let 
\begin{equation}
\label{eq:delta-A}
\delta=\frac{1}{4}\min\left\{\bh,\br, \frac{2\bh\br}{|\partial
  P|}\right\}\qquad \text{and}\qquad A = \frac{\br}{2\delta} =
2\max\left\{\frac{\br}{\bh}, 1, \frac{|\partial P|}{2\bh} \right\}.
\end{equation} 
Then for
every $q\in Q(\delta)$ and every $r\in[0,\delta]$,
\[\bB_S(q;r) \subset D(\psi(q); A(r+h_q)),\]
where $h_q\in[0,\delta]$ is the height of~$q$ (i.e. $q\in\hor(h_q)$).
\end{lem}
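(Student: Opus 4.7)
The plan is to take a shortest path $\gamma$ in $S$ from $q$ to a given point $x \in \bB_S(q;r)$, whose existence with $|\gamma|_S \le r$ is guaranteed by strict intrinsicness (Theorem~\ref{thm:metric-structure}b)), and then show that both the height of $x$ and the scar-distance $d_G(\psi(q), \psi(x))$ are controlled linearly in $r$ and $h_q$. Concretely, setting $s = A(r+h_q)$, we must establish (i)~$x \in Q(h(s))$, and (ii)~$\psi(x) \in \bB_G(\psi(q); s)$.

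First I would check that $\gamma$ stays in $Q(\bh)$. If some $z \in\gamma$ lay outside $Q(\bh)$, then every path from $z$ to $G$ would have to lift to a path in $P$ from a point of $P \setminus \tQ(\bh)$ to $\partial P$, and such a path has length at least the Euclidean perpendicular distance of $\bh$; hence $d_S(z,G) \ge \bh$. But $d_S(z,G) \le d_S(z,q) + h_q \le r + \delta \le 2\delta \le \bh/2$, contradicting $\delta \le \bh/4$. Once $\gamma$ lies in $Q(\bh)$, it can be lifted to a concatenation of arcs in $\tQ(\bh)$ meeting only at $\partial P$ (where the height is~$0$), each of length equal to its $S$-length. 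Since the height function is $1$-Lipschitz along any such arc in the Euclidean metric (heights are perpendicular distances), and reaches $0$ at every jump, an easy case analysis on whether $\gamma$ crosses $G$ gives $h_x \le h_q + r$.

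Next, applying Lemma~\ref{lem:lipschitz-psi} to $\gamma$ gives
\[
d_G(\psi(q),\psi(x)) \le |\psi\circ\gamma|_G \le \frac{|\partial P|}{\bh}\,|\gamma|_S \le \frac{|\partial P|}{\bh}\,r.
\]
It remains to feed these two bounds into the definitions of $D(\psi(q);s)$ and $h(s) = \tfrac{\bh}{2\br}s$. For~(i), $h_x \le h_q + r \le \tfrac{\bh}{4\delta}(r+h_q) = h(s)$ because $\delta \le \bh/4$, so $x \in Q(h(s))$ and $\psi_{h(s)}(x) = \psi(x)$. For~(ii),
\[
d_G(\psi(q),\psi(x)) \le \frac{|\partial P|}{\bh}\,r \le \frac{\br}{2\delta}\,r \le \frac{\br}{2\delta}(r+h_q) = s,
\]
where the middle inequality uses the bound $\delta \le \bh\br/(2|\partial P|)$ from the definition of~$\delta$. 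Thus $\psi(x) \in \bB_G(\psi(q);s)$, so $x \in D(\psi(q);s)$ as required.

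The main subtlety is the step showing that $\gamma$ cannot leave $Q(\bh)$: it relies on interpreting $d_S(z,G)$ via lifts to $P$ and on the fact that $\tQ(\bh)$ is a perpendicular-distance collar of $\partial P$ of height $\bh$. The height bound and the Lipschitz estimate for $\psi$ are essentially infinitesimal; the whole proof is really a matter of selecting the constants $\delta$ and $A$ so that the three separately-derived bounds—$h_x \le h_q+r$, $d_G(\psi(q),\psi(x)) \le (|\partial P|/\bh)r$, and confinement in $Q(\bh)$—simultaneously fit inside the target $D(\psi(q); A(r+h_q))$.
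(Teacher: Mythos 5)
Your overall route is the paper's: confine everything to the collar $Q(\bh)$, bound the height of~$x$ by $h_q+r$ using the $1$-Lipschitz height function, bound $d_G(\psi(q),\psi(x))$ by $\frac{|\partial P|}{\bh}\,r$ via Lemma~\ref{lem:lipschitz-psi}, and check that the choices of $\delta$ and $A$ in~(\ref{eq:delta-A}) make both bounds fit into $D(\psi(q);A(r+h_q))$. Those last steps, including the arithmetic with $h(s)=\frac{\bh}{2\br}s$, are correct and agree with the paper's proof.

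The genuine problem is your confinement step. First, the assertion that a path in~$P$ from a point of $P\setminus\tQ(\bh)$ to $\partial P$ has length at least~$\bh$ is exactly the content that needs an argument: plainness does not make~$P$ convex, so this is not a tautology about ``perpendicular distance'' (it is true, but the clean justification is the $1$-Lipschitz height function, extended by the constant~$\bh$ on $P\setminus\tQ(\bh)$). Second, and more seriously, your chain $d_S(z,G)\le d_S(z,q)+h_q$ presupposes $d_S(q,G)\le h_q$, which is false in general: at a reflex vertex $\tv_i$ (semi-angle $\ttheta_i>\pi/2$) the trapezoid $\tQ_i$ overhangs the end of~$\te_i$, and for~$\tq$ in that overhang the perpendicular foot lies off the side, so $d_S(q,G)=d_P(\tq,\partial P)$ can exceed~$h_q$ and is only bounded by $h_q/\sin\ttheta_i$, with $\sin\ttheta_i$ controlled only by~(\ref{eq:bound-theta}). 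Repairing the chain with that bound gives $d_S(z,G)\le \delta+h_q\frac{|\partial P|}{2\bh}\le \frac{\bh}{4}+\frac{\br}{4}$, which contradicts $d_S(z,G)\ge\bh$ only when $\br<3\bh$ --- and $\br$ is a free constant in the plain case, so the argument as written can fail. The fix is to drop the detour through $d_S(z,G)$ and use the very tool you invoke in your next sentence: the height function is $1$-Lipschitz on $Q(\bh)$ and equals~$\bh$ on $\hor(\bh)$, which is the frontier of $Q(\bh)$ in~$S$, so $d_S(q,\hor(\bh))\ge \bh-h_q\ge \tfrac{3}{4}\bh>\delta\ge r$ and hence $\bB_S(q;r)\subset Q(\bh)$; this is precisely the paper's one-line confinement argument, after which your height estimate and the Lipschitz estimate for~$\psi$ go through verbatim.
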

\begin{rmk}
Notice that $A(r+h_q) \le A(\delta+\delta)=\br$, so that
$D(\psi(q);A(r+h_q))$ is defined (even when~$A(r+h_q)$ is not a
$\psi(q)$-planar radius, in which case it may not be a disk).
\end{rmk}
\begin{proof}[Proof of Lemma~\ref{lem:ballindisk}]
Observe first that the $d_S$-distance from~$q$ to the
boundary~$\hor(\bh)$ of $Q=Q(\bh)$ is at least
$\bh-h_q\ge\bh-\delta\ge 3\bh/4>\delta\ge r$, so that $\bB_S(q;r)
\subset Q(\bh)$.

Write $K=A(r+h_q)$. Since $D(\psi(q); K) =
\psi_{h(K)}^{-1}\left(\bB(q;K)\right)$ where $h(K)= \bh K/2\br$
(Definition~\ref{defn:Dqr}), it is required to show that every
point~$x\in S$ with $d_S(q,x)\le r$ has height $h_x\le \bh K/2\br$ and
satisfies $d_G(\psi(q),\psi(x)) \le K$.

The former property is immediate since $h_x\le r+h_q = K/A \le \bh
K/2\br$.  For the latter, since~$d_S$ is strictly intrinsic
(Theorem~\ref{thm:metric-structure}b)), there is a path in~$S$
from~$q$ to~$x$ of length~$d_S(q,x)$, and this path must lie in
$Q(\bh)$ since the distance from~$q$ to~$\hor(\bh)$ is greater
than~$r$. The image under~$\psi$ of this path is a path in~$G$ from
$\psi(q)$ to $\psi(x)$, whose length is no more than
$\frac{|\partial P|}{\bh} d_S(q,x)$
by Lemma~\ref{lem:lipschitz-psi}. Hence 
\[d_G(\psi(q),\psi(x)) \le \frac{|\partial P|}{\bh}\,r \le A(r+h_q)=K\] 
as required.
\end{proof}

\subsection{Modulus of continuity in a collar}
\label{sec:modcont-collar}
Let~$\delta$ and~$A$ be given by~(\ref{eq:delta-A}), and
\mbox{$R=R(\bh)>0$} be the constant given by Lemma~\ref{lem:R}. Define
functions $\mu\co Q(\delta)\times[0,\delta)\to[0,\br)$ and $\rho \co
Q(\delta)\times[0,\delta) \to[0,\infty)$ by
\[\mu(q,t) := A(t+h_q)\]
(so $\mu(q,t) < 2A\delta = \br$) and
\begin{equation}\label{eq:modcont}
\rho_q(t)=\rho(q,t):=
\begin{cases}
0, & \text{if } t=0;
\\ \\
\dfrac{8Rt}{h_q\cdot
        \exp\left(2\pi\displaystyle\int^\br_{\mu(q,h_q)}\iota(\psi(q);s)\rmd
	s\right)}, 
       & \text{if } 0< t\le h_q;
\\ \\
 \dfrac{8R}{\exp\left(2\pi\displaystyle\int^\br_{\mu(q,t)}
\iota(\psi(q);s)\rmd s\right)}, & \text{if } h_q\le t \le \delta. 
\end{cases}
\end{equation}
It will be shown in this section that, for every $\tq\in\tQ(\delta)$,
$\rho_{\pi(\tq)}$ is a modulus of continuity for~$\phi\co P\to\csph$
  at~$\tq$. 

\begin{rmk}
\label{rmk:rho}
For each~$q\in Q(\delta)$, $\rho_q\co[0,\delta)\to [0,\infty)$ is a
  modulus of continuity in the sense of Definition~\ref{defn:modcont}:
\begin{enumerate}[a)]
\item It is continuous in~$(0,\delta)$ and strictly increasing since: 
\begin{itemize}
\item when $t\in(0,h_q]$, $\rho_q(t)$ is proportional to~$t$; 
\item when $t\in(h_q,\delta)$, $\rho_q(t)$ is inversely proportional
  to $\displaystyle{\exp\left(2\pi\int_{A(t+h_q)}^\br
    \iota(\psi(q);s)\,\rmd s\right)}$, which depends continuously
  on~$t$, and is strictly decreasing since $A(t+h_q)$ is strictly
  increasing and $\iota$ is positive except on a set of measure zero;
  and
\item when $t=h_q>0$, the two expressions for $\rho_q$ clearly agree.
\end{itemize}
\item It is continuous at~$0$ since
\begin{itemize}
\item If $q\in G$ (i.e. $h_q=0$), the second expression
  in~(\ref{eq:modcont}) is used, and the integral in its denominator
  diverges as $t\to 0$ by assumption; and
\item If $q\not\in G$ (i.e. $h_q>0$), the first expression
  in~(\ref{eq:modcont}) is used for $t$ sufficiently small, which
  clearly converges to~$0$ as $t\to 0$. 
\end{itemize}
\end{enumerate}
The (joint) continuity of~$\rho\co
Q(\delta)\times[0,\delta)\to[0,\infty)$ will be proved in
  Lemma~\ref{lem:continuity-rho} below.
\end{rmk}

\begin{example}
For the simplest possible example, suppose that $q\in G$ is a planar
point, and that~$\br$ is small enough that $m(q;r)=4r$ and
$n(q;r)=2$ for all $r\le \br$: hence $\iota(q;r) =
M/6r$. Then~(\ref{eq:modcont}) gives a modulus of continuity
\begin{eqnarray*}
\rho_q(t) &=& \frac{8R}{\exp\left(\displaystyle\int_{At}^\br \frac{\pi M}{3s} \,\rmd s
\right)} \\
&=& 8R \left(\frac{A}{\br}\right)^{\frac{\pi M}{3}} t^{\frac{\pi M}{3}}.
\end{eqnarray*}
Since~$M\le 1/5$ (Definition~\ref{defn:goodnessfn}), the exponent $\pi
M/3$ lies in $(0,1)$. This modulus of continuity is of course not
optimal, since the uniformizing map is Lipschitz near planar points.
\end{example}

\begin{lem}\label{lem:modcontaroundG}
If $\tq\in\tQ(\delta)$, then $\phi\co P\to\csph$ has modulus of
continuity~$\rho_{\pi(\tq)}$ at~$\tq$. That is, if $\tq\in\tQ(\delta)$
and $\tx\in P$ with $d_P(\tq,\tx)<\delta$, then
\[\left|\phi(\tq)-\phi(\tx)\right| \le
\rho_{\pi(\tq)}\left(d_P(\tq,\tx)\right). \]
\end{lem}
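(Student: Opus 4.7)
The plan is to split according to whether $t := d_P(\tq, \tx)$ is at least or less than the height $h_q$, matching the case distinction in the definition~(\ref{eq:modcont}) of $\rho_q$. Writing $q = \pi(\tq)$ and $x = \pi(\tx)$, the projection $\pi$ is distance non-increasing (Remark~\ref{rmk:distance-decreasing}), so $d_S(q, x) \le t$, and it suffices to bound $|\phi(\tq) - \phi(\tx)| = |u(q) - u(x)|$ above by $\rho_q(t)$.

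For the case $t \ge h_q$, I would reproduce the annulus argument of Section~\ref{subsec:localmodcont} in the present global setting. By Lemma~\ref{lem:ballindisk} both $q$ and $x$ lie in the disk $D(\psi(q); \mu(q, t))$, which is nested in $D(\psi(q); \br) \subset Q(\bh/2)$. Consequently $u(\Ann(\psi(q); \mu(q, t), \br))$ is a conformal annulus which, by Lemma~\ref{lem:R}, lies inside $\D_R$ and whose bounded complementary component contains both $u(q)$ and $u(x)$. Translating by $-u(q)$ and rescaling by $1/(2R)$ places this annulus inside the unit disk with $0$ and $(u(x)-u(q))/(2R)$ both in its bounded complement, whereupon the Gr\"otzsch Annulus Theorem (Theorem~\ref{thm:GAT}) yields
\[
\mod \Ann(\psi(q); \mu(q, t), \br) \;\le\; \frac{1}{2\pi}\,\ln\frac{8R}{|u(q) - u(x)|}.
\]
Combining with the module bound~(\ref{eq:modbound}) --- the minor technicality that $\mu(q,t)$ and $\br$ may fail to be $\psi(q)$-planar being handled by approximation within the planar radii using Lemma~\ref{lem:planarradius} and taking limits --- this yields $|u(q)-u(x)| \le \rho_q(t)$, matching the second branch of~(\ref{eq:modcont}) exactly.

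For the case $t < h_q$, the point $\tq$ lies in $\Int(P)$, and the straight Euclidean segment from $\tq$ to $\tx$ lies in $P$ with Euclidean length at most $t$. I would apply the Schwarz lemma to the holomorphic function $g(\tw) := \phi(\tw) - \phi(\tq)$ on the Euclidean disk $B_\C(\tq, h_q)$: each point $\tw$ on the bounding circle is joined to $\tq$ by a radial segment inside $P$, so $d_P(\tq, \tw) = h_q$, and the first case at $t = h_q$ bounds $|g(\tw)| \le \rho_q(h_q)$. The scaled Schwarz lemma then produces
\[
|\phi(\tx) - \phi(\tq)| \;\le\; \rho_q(h_q)\,\frac{|\tx - \tq|}{h_q} \;\le\; \frac{t}{h_q}\,\rho_q(h_q) \;=\; \rho_q(t),
\]
matching the first branch of~(\ref{eq:modcont}).

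The main obstacle is the geometric verification that the full Euclidean disk $B_\C(\tq, h_q)$ sits inside $P$ for every $\tq \in \tQ(\delta)$, so that $\phi$ is holomorphic there and the radial segments used above really do lie in $P$. This could fail near an acute internal polygon angle if $\delta$ were chosen carelessly; the lower bound $\sin\ttheta_i \ge 2\bh/|\partial P|$ from~(\ref{eq:bound-theta}), combined with the factor $\bh\br/(2|\partial P|)$ built into the definition~(\ref{eq:delta-A}) of $\delta$, should be precisely what is needed to force $B_\C(\tq, h_q) \subset P$ by a direct angular calculation in the trapezoid containing $\tq$. If despite this a portion of the disk escapes $P$, one would instead work on the connected component of $B_\C(\tq, h_q) \cap \Int(P)$ containing $\tq$, extending the boundary bound across arcs of $\partial P$ using continuity of $\phi = u \circ \pi$ and applying the first case at points of $\partial P$ whose intrinsic distance from $\tq$ is controlled by the polygon constants. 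Executing this geometric step cleanly is the main technical content of the proof.
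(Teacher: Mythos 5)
Your case $t\ge h_q$ is essentially the paper's Case~1: both place $q$ and $x$ inside $D(\psi(q);\mu(q,t))$ via Lemma~\ref{lem:ballindisk}, bound $\mod\Ann(\psi(q);\mu(q,t),\br)$ below by~(\ref{eq:modbound}) (perturbing non-planar radii), and above by Gr\"otzsch after normalizing into the unit disk --- the paper uses the M\"obius map $z\mapsto R(z-u(q))/(R^2-z\overline{u(q)})$ where you translate and rescale by $1/(2R)$, and both land on $\frac{1}{2\pi}\ln\frac{8R}{|u(q)-u(x)|}$. Where you genuinely diverge is $t<h_q$: the paper stays in~$S$, nests the round annulus $B_S(q;h_q)\setminus\bB_S(q;t)$ inside $\Ann(\psi(q);\mu(q,h_q),\br)$, adds modules by Lemma~\ref{lem:summod}, and uses $\mod\left(B_S(q;h_q)\setminus\bB_S(q;t)\right)=\frac{1}{2\pi}\ln\frac{h_q}{t}$; you instead lift to~$P$ and run the Schwarz lemma on $B_\C(\tq;h_q)$, with the boundary bound supplied by the already-established estimate at distance~$h_q$ together with the maximum principle. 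These are the same estimate in different clothing: your factor $t/h_q$ is the exponentiated form of the paper's extra module $\frac{1}{2\pi}\ln(h_q/t)$, and the identity $\rho_q(t)=(t/h_q)\,\rho_q(h_q)$ for $t\le h_q$ is exactly what makes either route produce the first branch of~(\ref{eq:modcont}); yours is arguably more elementary, avoiding module superadditivity in this case. Importantly, the geometric fact you flag as the main obstacle --- that the Euclidean disk $B_\C(\tq;h_q)$ lies in~$P$, i.e.\ $\dist(\tq,\partial P)\ge h_q$ --- is not an extra cost of your method: the paper's step asserting that $B_S(q;h_q)$ is isometric to a plane metric ball requires precisely the same lower bound on the distance from~$\tq$ to~$\partial P$, and the paper simply asserts it, whereas you at least indicate why the bisector geometry of the trapezoids and the collar conditions force it. One caution: your fallback plan (Schwarz on a component of $B_\C(\tq;h_q)\cap\Int(P)$, with boundary values propagated across arcs of~$\partial P$) would not straightforwardly recover the linear factor $t/h_q$ and should be dropped in favour of proving the disk containment outright, which is what the paper's argument implicitly presupposes anyway.
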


\begin{proof}
Notice that $\phi(\tq)$ and $\phi(\tx)$ lie in~$\C$ (since
$\phi^{-1}(\infty)=\tp_\infty\not\in\tQ(\bh)$), and so
$|\phi(\tq)-\phi(\tx)|$ makes sense.

As explained in Remark~\ref{rmk:mod-one-mod-other}, a modulus of
continuity for $u\co S\to\csph$ is also a modulus of continuity for
$\phi=u\circ\pi$, and it therefore suffices to obtain the former. So
let~$q\in Q(\delta)$, and let~$x\in S$ be such that 
\[t:=d_S(q,x) < \delta.\] It is required to show that $|u(q)-u(x)| \le
\rho_q(t)$. 

\noindent\textbf{Case 1: $h_q \le t$}

$x\in D(\psi(q);\mu(q,t))$ by Lemma~\ref{lem:ballindisk}. Assuming at
first that $\mu(q,t)$ is a $\psi(q)$-planar radius, this means that
the annular region $\Ann(\psi(q); \mu(q,t), \br)$ separates~$q$
and~$x$ from $S\setminus Q(\bh/2)$. Hence, by Lemma~\ref{lem:R}, the
image $u\left(\Ann(\psi(q); \mu(q,t), \br)\right)$ separates~$u(q)$
and~$u(x)$ from the circle $\{|z|=R\}$. The modulus of continuity can
thus be obtained from the Gr\"otzsch annulus theorem
(Theorem~\ref{thm:GAT}) just as in Section~\ref{subsec:localmodcont}:
\begin{eqnarray*}
\int_{\mu(q,t)}^\br \iota(\psi(q);s)\,\rmd s &\le &
\mod\Ann(\psi(q);\mu(q,t), \br) \\
&=& \mod u\left(\Ann(\psi(q);\mu(q,t), \br)\right) \\
&\leq& \mod\Gr\left(\left|
\frac{u(q)-u(x)}{R-\frac{u(x)\overline{u(q)}}{R}}
\right|\right) \\
 &\leq& \mod\Gr\left(\frac{|u(q)-u(x)|}{2R}\right)\\
    &\leq& \frac{1}{2\pi}\ln\frac{8R}{|u(q)-u(x)|}
\end{eqnarray*}
(where the M\"obius transformation
$z\mapsto\frac{R(z-u(q))}{R^2-z\overline{u(q)}}$ has been used to move
$u(q)$ to~$0$ and the circle $|z|=R$ to the circle~$|z|=1$), giving
\[|u(q)-u(x)|\leq 
\dfrac{8R}{\displaystyle\exp\left(2\pi\int_{\mu(q,t)}^\br
\iota(\psi(q);s)\rmd s\right)}\]
as required.

If $\mu(q;t)$ is not $\psi(q)$-planar, then increasing it by an
arbitrarily small amount to a planar radius gives the result.

\noindent\textbf{Case 2: $0< t\le h_q$}
Since $x\in D(\psi(q);\mu(q,t)) \subset D(\psi(q);\mu(q,h_q))$, the
annular region $\Ann(\psi(q); \mu(q,h_q), \br)$ separates~$q$ and~$x$
from $S\setminus Q(\bh/2)$ (if $\mu(q,h_q)$ is not a planar radius, then
perturb as in Case~1). Moreover, since $\bB_S(q;h_q) \subset
D(\psi(q); \mu(q,h_q))$ by Lemma~\ref{lem:ballindisk}, the annular
region $B_S(q;h_q) \setminus \bB_S(q;t)$ is nested inside the first
annulus and also separates~$q$ and~$x$ from $S\setminus
Q(\bh/2)$. Arguing as in Case~1, it follows that
\[
\mod\left(B_S(q;h_q)\setminus\bB_S(q;t)\right)+
\mod\Ann\left(\psi(q);\mu(q,h_q),\br\right)
    \leq \frac{1}{2\pi}\ln\frac{8R}{|u(q)-u(x)|}.
\]
Since $B_S(q;h_q)$ is isometric to a plane metric ball,
\[\mod\left(B_S(q;h_q)\setminus\bB_S(q;t)\right)=
\frac{1}{2\pi}\ln\frac{h_q}{t},\]
and hence
\[\frac{1}{2\pi}\ln\frac{h_q}{t}+\int^\br_{\mu(q,h_q)}
\iota(\psi(q);s)\rmd s 
\leq \frac{1}{2\pi}\ln\frac{8R}{|u(q)-u(x)|}
\]
so that 
\[|u(q)-u(x)|\leq 
\dfrac{8Rt}{\displaystyle{h_q}\cdot\exp\left(2\pi\int_{\mu(q,h_q)}^\br
\iota(\psi(q),s)\rmd s\right)}\]
as required.
\end{proof}

In order to use Lemma~\ref{lem:modcontaroundG} to construct a global
modulus of continuity in~$Q(\delta)$, the maximum over the
functions~$\rho_q$ will be taken. To show that this maximum is itself
continuous it is necessary to prove that~$\rho\co
Q(\delta)\times[0,\delta] \to [0,\infty)$ is continuous: this is the
  aim of the remainder of this section.

\begin{lem}
\label{lem:properties-I}
Let~$(P,\cP)$ be a plain paper-folding scheme with only finitely many
singular points in its scar~$G$, at each of which the conditions of
Theorem~\ref{thm:main} hold. Define $I\co G\times(0,\br) \to
(0,\infty)$ by
\[I(q,r) = \int_r^{\br} \iota(q;s) \rmd s.\]
Then
\begin{enumerate}[a)]
\item $I$ is continuous, and
\item For all~$K>0$, there is some~$\eta>0$ such that $I(q,r)>K$ for
  all $q\in G$ and all $r\in(0,\eta)$.
\end{enumerate}
\end{lem}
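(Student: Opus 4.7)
The plan is to prove part~(a) via the Dominated Convergence Theorem and then to derive part~(b) from part~(a) using the hypothesis~\eqref{eq:assumption} together with compactness of~$G$.

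For part~(a), I first establish a uniform integrable bound on the integrand. Since the scar~$G$ is a dendrite, hence connected, the intermediate value theorem applied to $d_G(q,\cdot)$ along an arc yields $n(q;s)\ge 1$ whenever $s\le\diam_G G$, so
\[
\iota(q;s)\le \frac{M}{s\cdot n(q;s)}\le \frac{M}{s} \qquad (s\le\diam_G G,\ q\in G),
\]
while $\iota(q;s)\le M/\rmm_G(G)$ for $s>\diam_G G$; in particular $\iota(q;\cdot)$ is dominated on each strip $[r_\ast,\br]$, $r_\ast>0$, by an integrable function independent of~$q$. Next I verify that $\iota$ is jointly continuous at every $(q,s)$ with $s$ a $q$-planar radius: local constancy of $n$ at such points is Lemma~\ref{lem:planarradius}, and continuity of $m(q;s)=\rmm_G(B_G(q;s))$ follows because $B_G(q_n;s_n)\triangle B_G(q;s)$ is contained in a shrinking annulus of width $|s_n-s|+d_G(q,q_n)$ about the finite set $C_G(q;s)$, near each of whose points $G$ is locally an arc, so its $\rmm_G$-measure tends to zero. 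Since the set of non-$q$-planar radii in $(0,\br)$ has measure zero (Lemma~\ref{lem:planarradius}), applying DCT to $I(q_n,r_n)=\int_{r_n}^{\br}\iota(q_n;s)\,\rmd s$ with $(q_n,r_n)\to(q,r)$ yields the joint continuity of~$I$.

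For part~(b), fix $K>0$. For each $q\in G$, the hypothesis~\eqref{eq:assumption} lets me choose $\eta_q\in(0,\br)$ with $I(q,\eta_q)>K+1$. The continuity from part~(a) then provides an open neighborhood $U_q$ of $q$ in $G$ such that $I(q',\eta_q)>K$ for all $q'\in U_q$, and, since $r\mapsto I(q',r)$ is monotonically decreasing, the same inequality holds for every $r\in(0,\eta_q]$. Compactness of $G$ supplies a finite subcover $U_{q_1},\ldots,U_{q_N}$, and $\eta:=\min_i\eta_{q_i}$ then has the required property. The only nontrivial step is the verification of continuity of $m(q;s)$ at a $q$-planar radius used in part~(a); once this is in hand, both halves of the lemma reduce to standard DCT and compactness arguments.
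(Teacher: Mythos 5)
Your proposal is correct, and it takes a route that is genuinely different from the paper's in both parts. For part a), the paper fixes $(q_0,r_0)$ and bounds $\int_{r_0}^{\br}\left(\iota(q;s)-\iota(q_0;s)\right)\rmd s$ directly, by excising small intervals of radii around the finitely many distances from $q_0$ to singular points and then around the finitely many remaining vertex distances, using local constancy of $n$ and continuity of $m$ on what is left; you instead integrate over a fixed interval and invoke dominated convergence, using that $\iota(q_n;\cdot)\to\iota(q;\cdot)$ at every $q$-planar radius (a full-measure set by Lemma~\ref{lem:planarradius}) together with a uniform bound on $[r/2,\br]$. The ingredients are the same, but the DCT formulation is shorter and avoids the excision bookkeeping. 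For part b), the paper redoes a local excision estimate at each $q_0$ and then uses compactness of~$G$; you derive b) from a) directly, via divergence of the integral (\ref{eq:assumption}), monotonicity of $r\mapsto I(q,r)$, openness of $\{q':I(q',\eta_q)>K\}$, and compactness — a clean reduction that works just as well.

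Two small points need repair, though neither is a genuine gap. First, the intermediate value theorem gives $n(q;s)\ge 1$ only for $s\le\max_{y\in G}d_G(q,y)$, which may be smaller than $\diam_G G$; but when $n(q;s)=0$ one has $B_G(q;s)=G$ and hence $\iota(q;s)=M/\rmm_G(G)$, so the dominating bound $\iota(q;s)\le\max\left\{M/s,\,M/\rmm_G(G)\right\}$ holds in all cases and your DCT step is unaffected. Second, the symmetric difference $B_G(q_n;s_n)\triangle B_G(q;s)$ is contained in the level-set annulus $\{y\in G:\,|d_G(q,y)-s|\le\veps_n\}$ with $\veps_n=|s_n-s|+d_G(q,q_n)$, but this set is not in general contained in the $\veps_n$-neighborhood of $C_G(q;s)$: an edge of the dendrite that branches off below level~$s$ and terminates at distance just under~$s$ contributes points of the annulus far from $C_G(q;s)$. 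The conclusion you want still holds: these annuli are compact and decrease to the finite set $C_G(q;s)$ of planar points, which is $\rmm_G$-null, so their measure tends to zero by continuity from above of the finite measure $\rmm_G$ (equivalently, by compactness they are eventually contained in the union of small arcs around the points of $C_G(q;s)$, where your local-arc estimate applies). With that adjustment the joint continuity of $m$ at $q$-planar radii — which the paper itself only asserts as clear in Remark~\ref{rmks:pfgoodnessfn} — is justified, and your proof goes through.
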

\begin{proof}
 Recall that
\[I(q,r) = \int_r^{\br} \iota(q;s)\rmd s = M \int_r^{\br} \frac{\rmd
  s}{m(q;s) + s\cdot n(q;s)},\]
where $m(q;s)=\rmm_G(B_G(q;s))$. 

\begin{enumerate}[a)]
\item Let~$(q_0;r_0)\in G\times(0,\br)$:
it will be shown that $I$ is continuous at~$(q_0;r_0)$.
Now for $(q;r)\in G\times(0,\br)$,
\[I(q,r) - I(q_0,r_0) = \int_{r_0}^{\br} \iota(q;s)\rmd s - \int_{r_0}^\br
\iota(q_0;s) \rmd s + \int_r^{r_0} \iota(q;s)\rmd s.\] The final
integral converges to zero as $(q,r)\to (q_0,r_0)$, since $\iota(q;s)$
is bounded above by $M/r_0$ for $s\ge r_0/2$. Hence it suffices
to prove that, for all~$\veps>0$,
\[\left|\int_{r_0}^{\br} \left(\iota(q;s) - \iota(q_0;s)\right)\rmd
s\right|  
= M\left|\int_{r_0}^{\br} \left( 
\frac{1}{m(q;s)+s\cdot n(q;s)} - \frac{1}{m(q_0;s)+s\cdot n(q_0;s)}
\right)\right|
< \veps\]
provided $d_G(q,q_0)$ is sufficiently small.

Because the absolute value of the integrand is bounded above
by~$M/r_0$, a small open interval of radius~$\delta_1$ can be
excised from the range of integration about each of the finitely many
values of~$s$ for which there is a singularity at distance~$s$
from~$q_0$, without changing the integral by more
than~$\veps/3$. Again, since there are now only finitely many values
of~$s$ in the range of integration for which there is a vertex at
distance~$s$ from~$q_0$, further open intervals of
radius~$\delta_2\le\delta_1$ can be excised about each of these values
without changing the integral by more than an additional~$\veps/3$. 

Now if $d_G(q,q_0)<\delta_2/2$ then $n(q_0;s)=n(q;s)$ in each of the
remaining intervals of integration. Moreover, $m(q;s)$ is continuous
in both variables for $q\in B_G(q_0;\delta_2/2)$ and $s$ in a single
interval of integration, and the result follows.

\item Let~$q_0\in G$. Since~(\ref{eq:divint1}) holds at~$q_0$, there
  is some~$\veps>0$ such that
\[M\int_{\veps}^\br \frac{\rmd s}{m(q_0;s) + s\cdot n(q_0;s)} > 3K.\]
The integrand is bounded above by~$M/\veps$ so, as in part~a),
there is a subset~$L$ of $[\veps, \br]$ and a number~$\delta>0$ such
that:
\begin{itemize}
\item $L$ consists of finitely many intervals;
\item $(d_G(q_0,q^*)-\delta, d_G(q_0,q^*)+\delta)$ is disjoint
  from~$L$ for any non-planar point~$q^*$ of~$G$; and
\item \[M\int_L \frac{\rmd s}{m(q_0;s) + s\cdot n(q_0;s)} > 2K.\]
\end{itemize} 
Now if $s\in L$ then $n(q_0;s) = n(q;s)$ for $d_G(q,q_0)<\delta/2$;
and $m$ is continuous in $B_G(q_0;\delta/2) \times L$. There is
therefore some~$\delta'\in(0,\delta/2)$ such that
\[M\int_L \frac{\rmd s}{m(q;s) + s\cdot n(q;s)} > K\]
provided that $d_G(q;q_0)<\delta'$. Hence
\[I(q,r) > \int_{\veps}^{\br}\iota(q;s)\rmd s \ge \int_L\iota(q;s)\rmd
s > K\] for all~$q$ with $d_G(q,q_0)<\delta'$ and all
$r\in(0,\veps)$. The result follows by compactness of~$G$.
\end{enumerate}

\end{proof}

\begin{lem}
\label{lem:continuity-rho}
Let~$(P,\cP)$ be a plain paper-folding scheme with only finitely many
singular points in its scar~$G$, at each of which the conditions of
Theorem~\ref{thm:main} hold. Then the function $\rho\co
Q(\delta)\times[0,\delta) \to [0,\infty)$ of~(\ref{eq:modcont}) is
  continuous. 
\end{lem}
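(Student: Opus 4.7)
The plan is to verify continuity of $\rho$ by analyzing the piecewise definition~(\ref{eq:modcont}) in terms of four ingredients, each of which is itself continuous: the height function $q\mapsto h_q$ on $Q(\delta)$ (continuous because $h_q$ is just the second coordinate under the parametrization $\gamma\co[0,L)\times[0,\bh]\to Q$), the retraction $\psi\co Q(\bh)\to G$, the function $\mu(q,t)=A(t+h_q)$, and the integral $I(q,r)=\int_r^{\br}\iota(q;s)\,\mathrm{d}s$, whose continuity on $G\times(0,\br)$ is exactly Lemma~\ref{lem:properties-I}a). On the open region where $(q,t)\in Q(\delta)\times(0,\delta)$ satisfies $t<h_q$ (first case of~(\ref{eq:modcont})), the denominator involves $I(\psi(q),\mu(q,h_q))$ with $\mu(q,h_q)=2Ah_q>0$, so composition gives continuity; similarly on the open region $t>h_q$ (third case) the denominator involves $I(\psi(q),\mu(q,t))$ with $\mu(q,t)\ge At>0$. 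Along the seam $\{t=h_q>0\}$ the two formulas agree (the factor $t/h_q$ equals $1$), and each extends continuously to the closed side, so continuity across the seam follows.

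The only remaining points are those with $t=0$, and there the key question is whether $\rho(q_n,t_n)\to 0$ for every sequence $(q_n,t_n)\to(q_0,0)$ in $Q(\delta)\times[0,\delta)$. I would split into two sub-cases. If $q_0\notin G$, then $h_{q_0}>0$; for $n$ large we have $t_n<h_{q_n}$, so the first formula applies, and since $\iota\ge 0$ the exponential factor is at least $1$ while $h_{q_n}\to h_{q_0}>0$, hence
\[\rho(q_n,t_n)\le\frac{8R\,t_n}{h_{q_n}}\longrightarrow 0.\]

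The genuinely delicate case is $q_0\in G$, where $h_{q_0}=0$ and $\psi(q_0)=q_0$. This is where I expect the main obstacle and where Lemma~\ref{lem:properties-I}b) does the essential work. In the sub-case $t_n\le h_{q_n}$, note that $\mu(q_n,h_{q_n})=2Ah_{q_n}\to 0$, so part~b) of Lemma~\ref{lem:properties-I} forces $I(\psi(q_n),\mu(q_n,h_{q_n}))\to\infty$ \emph{uniformly in the basepoint}, and since $t_n/h_{q_n}\le 1$ one gets
\[\rho(q_n,t_n)\le\frac{8R}{\exp\!\bigl(2\pi\,I(\psi(q_n),\mu(q_n,h_{q_n}))\bigr)}\longrightarrow 0.\]
In the sub-case $t_n\ge h_{q_n}$, the third formula applies and $\mu(q_n,t_n)=A(t_n+h_{q_n})\to 0$, so Lemma~\ref{lem:properties-I}b) again yields $I(\psi(q_n),\mu(q_n,t_n))\to\infty$ and $\rho(q_n,t_n)\to 0$. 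Combining both sub-cases gives continuity at $(q_0,0)$.

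The main obstacle, as noted, is the point $t=0$ with $q_0\in G$: without the uniform-in-basepoint divergence provided by Lemma~\ref{lem:properties-I}b), one could imagine a sequence $q_n\to q_0$ along which the singular structure degenerated in a way that made the integral $I(\psi(q_n),r)$ fail to blow up as $r\to 0$; it is precisely the fact that the divergence at all isolated singular points is uniform, together with the finiteness of the singular set, that lets us conclude.
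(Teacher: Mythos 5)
Your argument is correct and follows essentially the same route as the paper: continuity for $t>0$ via Lemma~\ref{lem:properties-I}a) together with continuity of $h_q$, $\psi$, $\mu$ and agreement of the two formulas along $t=h_q$, then continuity at $t=0$ by the factor $t/h_q$ when $h_{q_0}>0$ and by the uniform divergence of Lemma~\ref{lem:properties-I}b) when $q_0\in G$. The only cosmetic difference is that the paper packages the $t>0$ case analysis into a single formula using $\xi(q,t)=\max\left(1,h_q/t\right)$, whereas you glue the two expressions along the seam, which is equally valid.
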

\begin{proof}
Observe that if $\xi\co Q(\delta) \times(0,\delta) \to [1,\infty)$ is
  defined by
\[\xi(q,t) = \max\left(1, \frac{h_q}{t}\right),\]
then~(\ref{eq:modcont}) can be written
\[
\rho(q,t)=
\begin{cases}
0, & \text{if } t=0;
\\ \\
\dfrac{8R}{\xi(q,t)\cdot
        \exp\left(2\pi\displaystyle\int^\br_{\mu(q,t\xi(q,t))}\iota(\psi(q);s)\rmd
	s\right)}, 
       & \text{if } t>0.
\end{cases}
\]
Since $\xi$, $\mu$, and $\psi$ are continuous, the continuity
of~$\rho$ at points~$(q_0,t_0)$ with $t_0>0$ follows from
Lemma~\ref{lem:properties-I}a). 

It remains to prove that~$\rho$ is continuous at all points~$(q_0,0)$
with~$q_0\in Q(\delta)$: that is, that $\rho(q,t)\to 0$ as $(q,t)\to
(q_0,0)$ with $t>0$. 

If $h_{q_0}>0$ then $\xi(q,t) = h_q/t$ for all $(q,t)$ close to
$(q_0,0)$, so that $\xi(q,t)\to\infty$, and hence $\rho(q,t)\to 0$, as
$(q,t)\to(q_0,0)$.

If $h_{q_0}=0$ (i.e. $q_0\in G$) then, since $\xi(q,t)\ge 1$ for
all~$(q,t)$, the result is immediate from
Lemma~\ref{lem:properties-I}b) and the continuity of~$\psi$.
\end{proof}

\subsection{Modulus of continuity in the interior}

In Section~\ref{sec:modcont-collar}, a modulus of continuity for~$\phi$ was
obtained at points~$\tq\in\tQ(\delta)$, where~$\delta$ is given
by~(\ref{eq:delta-A}). In this section, a modulus of continuity is
derived at points $\tq$ in the complement $P_{\delta/2}$ of
$\tQ(\delta/2)$. These overlapping moduli of continuity will then be
glued together in Section~\ref{sec:proof-modcont} to give the required
global modulus.

\begin{lem}
\label{lem:modcontininterior}
Let~$\tq,\tx\in P_{\delta/2}$. Then
\[d_\csph(\phi(\tq),\phi(\tx)) \le \kappa\,d_P(\tq,\tx),\]
where
\begin{equation}
\label{eq:kappa}
\kappa =
2\exp\left(\frac{16\diam_{P_{\delta/2}}P_{\delta/2}}{\delta}\right) \le
2\exp\left(\frac{32\left |\partial P\right|}{\delta}\right)
\end{equation}
(recall that $d_\csph$ denotes the spherical metric on~$\csph$). That is,
$\phi$ is Lipschitz in $P_{\delta/2}$ with a constant which depends
only on the polygon constants of~$P$.
\end{lem}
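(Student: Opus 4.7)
The plan is to derive the bound from the classical Koebe distortion theorem (Theorem~\ref{thm:Koebedistn}) applied to the reciprocal $\Phi := 1/\phi\co\Int(P)\to\C$. Since $\phi$ is injective and meromorphic on $\Int(P)$ with only a simple pole at $\tp_\infty$, $\Phi$ is univalent, and by the normalisation of $u$ in Section~\ref{subsec:globalmodcont} one has $\Phi(\tp_\infty)=0$ and $\Phi'(\tp_\infty)=1$. Setting $h:=\delta/2$, the definition~(\ref{eq:delta-A}) of $\delta$ gives $h=\delta/2\le \bh/8<\bh$, so $\tp_\infty\in P_{\bh}\subset P_{\delta/2}$, and $P_{\delta/2}$ is path-connected since the collar of height $\delta/2$ is thin. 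Theorem~\ref{thm:Koebedistn} therefore yields the pointwise bound
\[
 |\Phi'(\tq)| \;\le\; \kappa_0 \;:=\; \exp\!\left(\frac{16\,\diam_{P_{\delta/2}} P_{\delta/2}}{\delta}\right) \qquad \text{for all }\tq\in P_{\delta/2}.
\]

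Next, integrate $|\Phi'|$ along a length-minimising path between $\tq$ and $\tx$ in $P_{\delta/2}$ to get $|\Phi(\tq)-\Phi(\tx)|\le \kappa_0\, d_P(\tq,\tx)$, and then convert the Euclidean bound to a spherical bound on $\csph$ using the two properties of $d_\csph$ recorded after Theorem~\ref{thm:modcont} (invariance under $z\mapsto 1/z$ and $d_\csph(w,z)\le 2|w-z|$ on $\C$):
\[
 d_\csph(\phi(\tq),\phi(\tx)) = d_\csph(\Phi(\tq),\Phi(\tx)) \le 2\,|\Phi(\tq)-\Phi(\tx)| \le 2\kappa_0\, d_P(\tq,\tx),
\]
which is the first bound in~(\ref{eq:kappa}) with $\kappa=2\kappa_0$. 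The second inequality in~(\ref{eq:kappa}) follows from the crude estimate $\diam_{P_{\delta/2}} P_{\delta/2}\le 2|\partial P|$: two points of the interior retract $P_{\delta/2}$ can always be joined by a path shadowing $\partial P_{\delta/2}$, whose length is bounded in terms of $|\partial P|$.

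The main point requiring care is the interplay between the intrinsic metrics on $P$ and on $P_{\delta/2}$: the Koebe derivative estimate is valid only in $P_{\delta/2}$, whereas the lemma is phrased in terms of $d_P$. A length-minimising $d_P$-path between two points of $P_{\delta/2}$ need not remain in $P_{\delta/2}$, so the main technical step is to argue that it can be taken in $P_{\delta/2}$, or that a near-minimising path can be deformed into $P_{\delta/2}$ without increasing its length by more than a controlled factor already absorbed in~$\kappa_0$. This uses the foliated trapezoidal structure of $\tQ(\bh)$ from Section~\ref{subsec:cxstruct-foliatedcollar} together with the fact that $\delta$ has been chosen (via~(\ref{eq:delta-A})) so that $P_{\delta/2}$ is a genuinely thick interior retract of $P$, with all constants controlled by the polygon constants of~$P$ alone.
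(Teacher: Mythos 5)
Your proof is essentially the paper's: apply Theorem~\ref{thm:Koebedistn} with $h=\delta/2$ to the normalised univalent map $\Phi=1/\phi$, pass to the spherical metric via $d_\csph(1/w,1/z)=d_\csph(w,z)$ and $d_\csph(w,z)\le 2|w-z|$, and bound $\diam_{P_{\delta/2}}P_{\delta/2}\le 2|\partial P|$ by a path that runs to and then along the boundary, exactly as in the paper. The ``main technical step'' of your last paragraph is not part of the paper's argument: the paper simply invokes the $\kappa$-biLipschitz conclusion of Theorem~\ref{thm:Koebedistn} on $P_{\delta/2}$ (the Lipschitz estimate obtained by integrating $|\Phi'|$ along paths inside $P_{\delta/2}$) and makes no attempt to deform $d_P$-geodesics into $P_{\delta/2}$ --- and note that an extra multiplicative deformation factor could not in any case be ``absorbed into $\kappa_0$'', since the constant in~(\ref{eq:kappa}) leaves no slack.
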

\begin{proof}
Let~$\Phi\co \Int(P)\to\C$ be defined by $\Phi:=1/\phi$. Then~$\Phi$ is
univalent with $\Phi(\tp_\infty)=0$ and
$\Phi'(\tp_\infty)=1$ by the choice of normalization of~$\phi$. It
follows from Theorem~\ref{thm:Koebedistn} that~$\Phi$ is Lipschitz
in~$P_{\delta/2}$ with constant~$\kappa/2$, where $\kappa$ is as in
the statement of the lemma. Then
\begin{eqnarray*}
d_\csph\left(\phi(\tq),\phi(\tx)\right)&=&
    d_\csph\left(\Phi(\tq),\Phi(\tx)\right) \\
      &\leq&2|\Phi(\tq)-\Phi(\tx)| \\
      &\leq& \kappa\,d_P(\tq,\tx)
\end{eqnarray*}
as required. Finally, observe that
\[\diam_{P_{\delta/2}}P_{\delta/2}\le 2 \left| \partial P\right|\] 
since the path between any two points of~$P_{\delta/2}$ obtained by
connecting each with a horizontal arc to the boundary and then joining
the endpoints of these arcs with the shorter boundary arc between them
has length bounded above by $\frac{1}{2}|\partial P| +
\frac{1}{2}|\partial P| + \frac{1}{2}|\partial P_{\delta/2}| \le
2|\partial P|$ by~(\ref{eq:K}).
\end{proof}

\subsection{Proof of Theorem~\ref{thm:modcont}}
\label{sec:proof-modcont}

Define $\hrho\co [0,\delta) \to (0,\infty)$ by
\[\hrho(t) := 2\max_{q\in Q(\delta)} \rho(q,t),\]
which is well defined since~$\rho$ is continuous and~$Q(\delta)$ is
compact. Then $\hrho$ is a continuous strictly increasing function
with $\hrho(0)=0$ (since each~$\rho_q$ has these properties and~$\rho$
is continuous), and $\phi$ has modulus of continuity $\hrho$ on
$\tQ(\delta)$ with respect to the Euclidean metric on~$\tQ(\delta)$
and the spherical metric on~$\csph$ (the factor~2 arises from the
translation from the Euclidean to the spherical metric).

On the other hand, $\phi$ is $\kappa$-Lipschitz in $P_{\delta/2}$ by
Lemma~\ref{lem:modcontininterior}, where~$\kappa=\kappa(\delta,
|\partial P|)$ is given by~(\ref{eq:kappa}). Hence
$\brho\co[0,\delta)\to[0,\infty)$ defined by
\[\brho(t) := \max\left\{\hrho(t),\,\kappa t\right\}\]
is the desired modulus of continuity.

\section{A dynamical application: convergence to the tight horseshoe}
\label{sec:dynam-appl}

\subsection{Introduction}

This section contains an extended example. It illustrates that it is
practicable to estimate the modulus of continuity provided
by~(\ref{eq:modcont}) uniformly across a family of paper-folding
schemes, and thereby to construct limits which are of interest in
dynamical systems theory. The example is the simplest non-trivial one
known to the authors, but the context in which it arises, which is
described below, provides a wealth of similar cases. In fact, similar
arguments construct an uncountable collection of limiting maps in the
family of unimodal generalized pseudo-Anosov maps defined
in~\cite{ugpa} as will be shown elsewhere. 

\smallskip
The family of {\em tent maps} $T_\lambda\co[0,1]\to[0,1]$, where
$\lambda\in(1,2]$, is defined by
\[
T_\lambda(x)=
\begin{cases}
\lambda(x-1)+2 & \text{if }x\le 1-\frac{1}{\lambda},\\
\lambda(1-x) & \text{if }x\ge 1-\frac{1}{\lambda}.
\end{cases}
\]
Thus $T_\lambda(0)=2-\lambda$, $T_\lambda(1-\frac{1}{\lambda})=1$,
$T_\lambda(1)=0$, and ~$T_\lambda$ has constant slope~$\lambda$ on
$[0,1-\frac{1}{\lambda}]$ and constant slope~$-\lambda$ on
$[1-\frac{1}{\lambda}, 1]$ (see Figure~\ref{fig:tent}).

\begin{figure}[htbp]
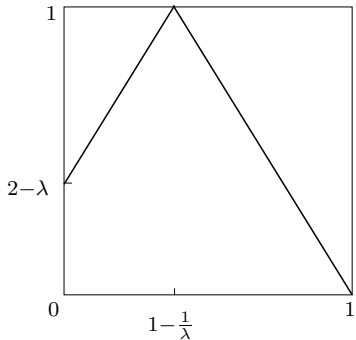

\lab{0}{0}{}
\lab{1}{1}{}
\lab{2}{1-\frac{1}{\lambda}}{t}
\lab{3}{2-\lambda}{r}
\begin{center}
\pichere{0.25}{tent}
\end{center}
\caption{Graph of the tent map~$T_\lambda$}
\label{fig:tent}
\end{figure}

Tent maps have been widely studied in the field of unimodal dynamics,
not least because every unimodal map~$f\co[0,1]\to[0,1]$ with positive
topological entropy $h=h(f)$ is semi-conjugate to the tent map of
slope $e^h$~\cite{MT}. In this section, only tent maps with
slopes \mbox{$\lambda\in(\sqrt2,2]$} will be considered.

There is a countable dense subset~$\Lambda\subset(\sqrt2,2]$ of
parameters~$\lambda$ for which $1$ is either a {\em periodic}
($T_\lambda^n(1)=1$ for some~$n\ge 1$) or a {\em preperiodic}
($T_\lambda^k(1)$ is periodic for some~$k\ge 0$) point of~$T_\lambda$.
In~\cite{ugpa} the authors constructed, for each~$\lambda\in\Lambda$,
a complexification $f_\lambda\co S_\lambda\to S_\lambda$
of~$T_\lambda$ as a quasi-conformal automorphism of a complex
sphere~$S_\lambda$. Naturally, one would like to be able to define
such complexifications~$f_\lambda$ for all~$\lambda\in(\sqrt2,2]$ by
taking limits through~$\Lambda$, and this was one of the motivations
for the work described in the current paper. The above spheres~$S_\lambda$
are constructed as paper spheres, and the first step in taking limits
is to identify each~$S_\lambda$ with the Riemann sphere~$\csph$ by a
suitable choice of normalization. In order to construct limits, it is
necessary to show that the uniformizing maps~$u_\lambda\co
S_\lambda\to\csph$ have a uniform modulus of continuity.

The family~$(f_\lambda)_{\lambda\in\Lambda}$ contains a subfamily,
parameterized by rationals~$m/n\in(0,1/2)$, for which the paper
sphere~$S_{m/n}:=S_\lambda$ arises from a paper-folding scheme with
only finitely many segment pairings: these are the so-called {\em NBT}
examples of~\cite{Ha}. In these cases, $f_{m/n}:=f_\lambda$ is a
pseudo-Anosov automorphism: it preserves a transverse pair of measured
foliations with finitely many singularities, stretching one foliation
uniformly by a factor~$\lambda$ and contracting the other uniformly
by~$1/\lambda$ --- the foliations are the projections to~$S_\lambda$
of the horizontal and vertical foliations of the
polygon~$P_\lambda$. For all other $\lambda\in\Lambda$, $f_\lambda$ is
a {\em generalized pseudo-Anosov}: these are defined in the same way
as pseudo-Anosovs, except that their invariant foliations are
permitted to have infinitely many singularities, provided that these
singularities accumulate in only finitely many points.

The simplest example of a generalized pseudo-Anosov is the {\em tight
  horseshoe}, which is the complexification of the {\em full} tent
map~$T_2$. Its sphere~$S=S_2$ of definition is obtained from the plain
paper-folding scheme~$(\Sigma, \cP)$, where~$\Sigma =
[0,1]\times[0,1]\subset\R^2$ and the segment pairings~$\cP$ consist of
two folds of length~$1/2^i$ for each~$i\ge 1$: the top and right sides
of~$\Sigma$ are folded in half, and the bottom and left sides are
covered by folds of lengths~$1/2^i$ for $i\ge 2$, arranged in order of
decreasing length from the bottom right and top left vertices
respectively (Figure~\ref{fig:tight-hs}). As in Example~\ref{ex:both},
the conditions of Theorem~\ref{thm:main} hold at the unique singular
point arising from the identification of the bottom left corner
of~$\Sigma$ with all of the fold endpoints, so that~$S$ has a unique
complex structure induced by the Euclidean structure on~$\Sigma$. The
scar~$G$ is an $\infty$-od, having two edges of length $1/2^i$ for
each $i\ge 1$.

\begin{figure}[htbp]
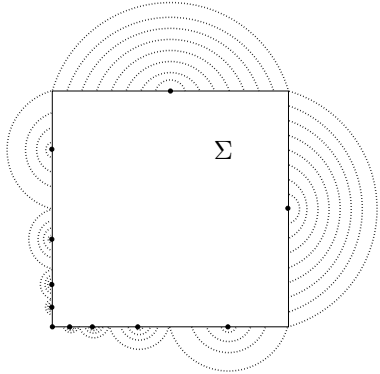

\labellist
\small
\pinlabel {$\scriptsize{\Sigma}$} at 265 272
\endlabellist
\pichere{0.3}{tight-hs}
\caption{The sphere of definition of the tight horseshoe}
\label{fig:tight-hs}
\end{figure}

Let $F:\Sigma\to\Sigma$ be the (discontinuous and non-injective)
function defined by
\[
F(x,y)=
\begin{cases}
(2x, \frac{y}{2}) & \text{ if }x\le \frac12,\\
(2(1-x), 1-\frac{y}{2}) & \text{ if }x>\frac12,
\end{cases}
\]
so that the first coordinate of $F(x,y)$ is $T_2(x)$, and $F$
contracts by a factor~$2$ in the vertical direction. The
identifications on~$\Sigma$ are precisely those which are needed to
make~$F$ continuous and injective, and it therefore descends to a
homeomorphism $f\co S\to S$. This homeomorphism preserves the
foliations on~$S$ which are the projections of the horizontal and
vertical foliations of~$\Sigma$, stretching the former by a factor~$2$
and contracting the latter by a factor~$1/2$: the foliations have
$1$-pronged singularities corresponding to the mid-point of each fold,
which accumulate on the unique singular point. $f$ is called the tight
horseshoe because it can be obtained from Smale's horseshoe
map~\cite{Smale} by collapsing the gaps in the non-wandering Cantor
set. 

The example elaborated in this section treats the convergence of
the NBT pseudo-Anosov homeomorphisms~$f_{1/n}\co S_{1/n}\to S_{1/n}$
(which will henceforth be denoted $f_n\co S_n\to S_n$) to the tight
horseshoe~$f\co S\to S$ as $n\to\infty$: uniformizing maps $u_n\co
S_n\to\csph$ and $u\co S\to\csph$ are chosen so that the
homeomorphisms $u_n\circ f_n\circ u_n^{-1}$ of $\csph$ converge to
$u\circ f\circ u^{-1}$. This will provide a dynamical meaning to the
term {\em tight horseshoe}. It was shown in~\cite{Ha} that the
sequence~$(f_n)$ effectively exhausts the finite invariant sets of the
horseshoe, in the sense that for any finite invariant set~$A$ of the
horseshoe, there is some~$N$ such that, for all~$n\ge N$, $f_n$ is the
pseudo-Anosov representative in the isotopy class of the horseshoe
relative to a finite invariant set~$A'\supset A$: thus the tight
horseshoe is tight in the sense of the introduction to this article.

The paper-folding schemes~$(P_n, \cP_n)$ which provide the
spheres~$S_n$, and the homeomorphisms $f_n\co S_n\to S_n$ will be
described in Section~\ref{sec:origami-surfaces-s_n}. This
application of the results of the article has been chosen so that an
elementary and self-contained description of these spaces and
functions can be given without requiring the machinery of train
tracks. However, the price for eschewing this machinery is that the
constructions are both ad-hoc and involved.  In
Section~\ref{sec:polygon-constants} it is shown that there are uniform
polygon constants for the polygons~$P_n$, which makes it possible, in
Section~\ref{sec:conv-unif-maps}, to obtain a uniform modulus of
continuity for the uniformizing maps~$\phi_n\co P_n\to\csph$ and hence
to complete the proof of convergence.

\subsection{The paper spheres~$S_n$ and the pseudo-Anosov
  homeomorphisms~$f_n\co S_n\to S_n$}
\label{sec:origami-surfaces-s_n}

Let~$\lambda\in(\sqrt2,2]$. The {\em itinerary}
  $k_\lambda(x)\in\{0,C,1\}^\N$ of a point~$x\in[0,1]$ under the tent
  map~$T_\lambda$ is defined by
\[
k_\lambda(x)_i = 
\begin{cases}
0 & \text{if } T_\lambda^i(x)<1-\frac{1}{\lambda},\\
C & \text{if } T_\lambda^i(x)=1-\frac{1}{\lambda},\\
1 & \text{if } T_\lambda^i(x)>1-\frac{1}{\lambda}.
\end{cases}
\]
The {\em kneading invariant} of~$T_\lambda$ is defined as
\[k(T_\lambda) = k_\lambda(1).\]

The kneading theory of Milnor and Thurston~\cite{MT} provides a
way of understanding the dynamics of~$T_\lambda$ by means of its
kneading invariant. In particular, $\lambda$ itself can be recovered
as the reciprocal of the smallest positive root~$\mu$ of a certain
power series. In the case where~$1$ is a period~$N$ point
of~$T_\lambda$, $\mu$ is the smallest positive root of the polynomial
$\sum_{i=0}^{N-1} \theta_i t^i$ whose coefficients
$\theta_i\in\{-1,+1\}$ are given by $\theta_0=1$ and
\[
\theta_i=
\left\{
\begin{array}{rl}
\theta_{i-1} & \quad\text{if }k(T_\lambda)_{i-1} = 0,\\
-\theta_{i-1}  & \quad\text{if }k(T_\lambda)_{i-1} = 1
\end{array}
\right.
\]
for $1\le i < N$.

Let~$n\ge 3$. The value~$\lambda_n$ of~$\lambda$ corresponding to the
$1/n$ NBT case (which are the ones of interest in this section) is the
slope of the tent map with kneading invariant
\[k(T_\lambda)=\left(10^{n-1}1C\right)^\infty\]
($n$ will be fixed throughout most this subsection, and hence the
suffix~$n$ on~$\lambda_n$ will usually be omitted). Then $\mu$ is the
smallest positive root of the polynomial
\[f_n(t) = 1 - t - t^2 - \cdots - t^n + t^{n+1},\]
and $\lambda = 1/\mu$ is the largest root of the same
polynomial. In particular, multiplying through by $(t-1)$, $\lambda$
satisfies
\begin{equation}
\label{eq:lambda}
\lambda^{n+2}-2\lambda^{n+1}+2\lambda-1 = 0.
\end{equation}

 $(\lambda_n)$ is therefore an increasing sequence with $\lambda_n\to
2$ as $n\to\infty$.

Since $k(T_\lambda)=(10^{n-1}1C)^\infty$, $1$ is a period~$n+2$ point
of~$T_\lambda$: write $p_0=1$ and $p_{i}=T_\lambda^i(p_0)$ for $1\le
i\le n+1$ so that $T_\lambda(p_{n+1})=p_0$. The following explicit
description of the points~$p_i$ is easily verified by induction:

\begin{equation}
\label{eq:pi}
p_i = 
\begin{cases}
1 & \text{if } i = 0,\\
0 & \text{if } i = 1,\\
\frac{(2-\lambda)(\lambda^{i-1}-1)}{\lambda-1} & \text{if } 2\le i\le
n, \\
1-\frac{1}{\lambda} & \text{if }i = n+1.
\end{cases}
\end{equation}

Now define a function $F_n\co \Sigma\to\R^2$ by
\[
F_n(x,y) = 
\begin{cases}
\left(
\lambda_n(x-1)+2\,,\, \frac{y}{\lambda_n} - \frac{1}{\lambda_n^{n+1}+1}
\right) 
& \text{ if } x\le 1-\frac{1}{\lambda_n},\\
\left(
\lambda_n(1-x)\,,\, 1-\frac{y}{\lambda_n}
\right)
& \text{ if } x> 1-\frac{1}{\lambda_n}
\end{cases}
\]
so that the first coordinate of $F_n(x,y)$ is $T_{\lambda_n}(x)$,
and~$F_n$ contracts by a factor $1/\lambda_n$ in the~$y$ direction
(see Figure~\ref{fig:F-action}). Notice that~$F_n$ is not
injective, and that it is discontinuous across the
line~$x=1-1/\lambda_n$.

\begin{figure}[htbp]
\labellist
\small
\pinlabel {$0$} [tr] at 0 9
\pinlabel {$1$} [r] at 0 236
\pinlabel {$1$} [t] at 227 9
\pinlabel {$1-\frac{1}{\lambda_n}$} [t] at 106 9
\pinlabel {$A$}  at 49 117
\pinlabel {$B$}  at 162 117
\pinlabel {$F(A)$}  at 424 58
\pinlabel {\rotatebox{180}{$F(B)$}}  at 403 177
\endlabellist
\pichere{0.6}{F-action}
\caption{The action of~$F_n\co \Sigma\to\R^2$}
\label{fig:F-action}
\end{figure}

The polygon~$P_n$ in the paper-folding scheme~$(P_n,\cP_n)$ is chosen
so that $F_n(P_n)=P_n$ and~$F_n$ is injective on its
interior. The segment pairings~$\cP_n$ are then chosen to make the
induced action of~$F_n$ on the quotient paper sphere~$S_n$
continuous and injective. The resulting homeomorphism $f_n\co S_n\to
S_n$ will be a pseudo-Anosov map, with $n+2$ one-pronged singularities,
and one $n$-pronged singularity.

$P_n$ has $n+2$ horizontal sides and $n+2$ vertical sides. The
vertical sides $V_i$ ($0\le i\le n+1$) are contained in the vertical
fibers over the points~$p_i$ of the orbit of~$1$ under the tent
map~$T_\lambda$: they are defined by
\[
V_i = 
\begin{cases}
\{1\}\times \left[ 0, \frac{\lambda^{n+1}}{\lambda^{n+1}+1}\right] & \text{ if } i=0,\\
F_n^i(V_0) & \text{ if } 1\le i\le n+1.
\end{cases}
\]

By~(\ref{eq:lambda}), the height~$h$ of~$V_0$ satisfies
\[h = \frac{\lambda^{n+1}}{\lambda^{n+1}+1} = 
\frac{2\lambda^{n+1}(\lambda-1)}{\lambda^{n+2}-1},\] so that the sum
$\sum_{i=0}^{n+1}h/\lambda^i$ of the heights of the vertical sides is
equal to~2.

The essential point about the configuration of these vertical sides is
contained in the following lemma (see Figure~\ref{fig:vertical}).

\begin{figure}[htbp]
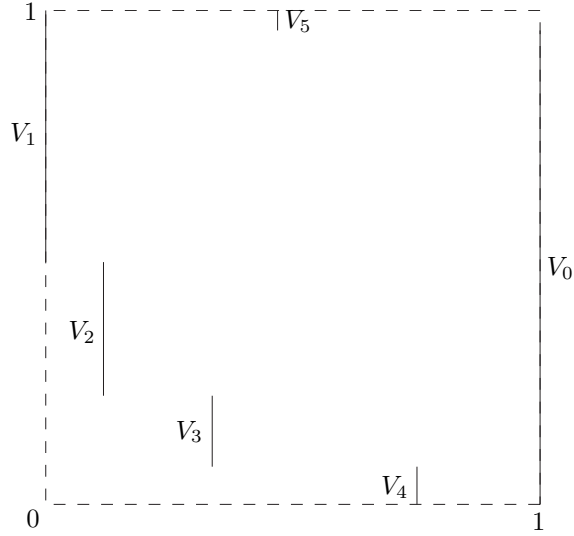

\labellist
\small
\pinlabel {$0$} [tr] at 0 0
\pinlabel {$1$} [r] at 0 500
\pinlabel {$1$} [t] at 500 0
\pinlabel {$V_0$} [l] at 500 240
\pinlabel {$V_1$} [r] at 0 376
\pinlabel {$V_2$} [r] at 59 175
\pinlabel {$V_3$} [r] at 168 74
\pinlabel {$V_4$} [r] at 376 18
\pinlabel {$V_5$} [l] at 234 490
\endlabellist
\pichere{0.4}{vertical}
\caption{The vertical sides of~$P_4$}
\label{fig:vertical}
\end{figure}

\begin{lem}
\label{lem:vertical-configuration}
Let $\pi_y\co \Sigma\to [0,1]$ be projection onto the second coordinate. Then
\begin{enumerate}[a)]
\item $\pi_y(V_0\cup V_{n+1})=[0,1]$, and $\pi_y(V_0) \cap \pi_y(V_{n+1})$ is a single point.
\item $\pi_y(V_1\cup \cdots \cup V_n) = [0,1]$; and for $1\le i<j\le
  n$, $\pi_y(V_i)\cap \pi_y(V_j)$ is a single point if $j=i+1$, and is
  empty otherwise.
\end{enumerate}
\end{lem}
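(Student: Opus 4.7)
The plan is to compute $\pi_y(V_i) = [a_i, b_i]$ explicitly by iterating $F_n$, and to reduce the required stacking identities to the single defining relation $\lambda^{n+2}-2\lambda^{n+1}+2\lambda-1=0$ satisfied by $\lambda_n$. Setting $c := 1/(\lambda^{n+1}+1)$, note that $h = \lambda^{n+1}c$ and $h+c = 1$.

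First I would use the kneading invariant $(10^{n-1}1C)^\infty$ to pin down which branch of $F_n$ carries $V_{i-1}$ to $V_i$: the orientation-reversing second branch is used for $i = 1$ (since $p_0 = 1 > 1-1/\lambda$) and for $i = n+1$ (since $p_n > 1-1/\lambda$), while the orientation-preserving first branch is used for $2 \le i \le n$, because each such $p_{i-1}$ lies strictly to the left of $1-1/\lambda$. Starting from $(a_0,b_0) = (0,h)$, the second branch yields $(a_1,b_1) = (1-h/\lambda,\,1)$, and the first branch yields the linear recursion $a_i = a_{i-1}/\lambda - c$, $b_i = b_{i-1}/\lambda - c$ for $2 \le i \le n$.

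The key step is the stacking identity $b_{i+1} = a_i$ for $1 \le i \le n-1$. The inductive step is automatic: if $b_i = a_{i-1}$, then the recursion forces $b_{i+1} = a_i$ immediately. The base case $b_2 = a_1$, after substituting $h = \lambda^{n+1}c$ and clearing denominators, reduces to exactly $\lambda^{n+2} - 2\lambda^{n+1} + 2\lambda - 1 = 0$, so it holds. Iterating the stacking relation from $b_1 = 1$ gives $a_i = 1 - h\sum_{j=1}^{i}\lambda^{-j}$, and a short calculation, again invoking the defining equation, shows that $\sum_{j=1}^{n} h/\lambda^j = 1$. Hence $a_n = 0$, $b_n = h/\lambda^n = \lambda c$, and applying the second branch at the final step produces $a_{n+1} = 1 - c = h$ and $b_{n+1} = 1$.

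Reading off the conclusions: $\pi_y(V_0) = [0,h]$ and $\pi_y(V_{n+1}) = [h,1]$ together cover $[0,1]$ and meet only at the single point $h$, establishing~a); while the intervals $\pi_y(V_1), \ldots, \pi_y(V_n)$ stack consecutively from $b_1 = 1$ down to $a_n = 0$, each meeting its neighbour in the single point $a_i = b_{i+1}$ and being disjoint from non-neighbours, establishing~b). I expect the main obstacle to be purely organizational, namely correctly matching each step of the orbit to the appropriate branch of $F_n$ and bookkeeping the signs produced by the orientation-reversing second branch; all of the nontrivial algebraic content collapses to the single verification that the base-case stacking identity is the defining equation for $\lambda_n$.
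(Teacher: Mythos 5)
Your proposal is correct and follows essentially the same route as the paper: track the vertical intervals $\pi_y(V_i)$ under the two affine branches of $F_n$, establish the stacking coincidence (your base case $b_2=a_1$ is exactly the paper's computation of $F_n(0,1)$ via the relation $\lambda^{n+2}-2\lambda^{n+1}+2\lambda-1=0$), propagate it along the orientation-preserving branch, and use $\sum_{i=1}^n h/\lambda^i=1$ plus one final application of the orientation-reversing branch to get both parts. The only difference is cosmetic: you phrase the propagation as an explicit recursion on endpoints, where the paper simply notes that the left branch preserves horizontal lines.
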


\begin{proof}
Recall that $V_0$ has height 
\[
|V_0|=h=\frac{\lambda^{n+1}}{\lambda^{n+1}+1}.
\]
Since $F_n(1,0)=(0,1)$ and $|V_1|=h/\lambda$, it follows that
\[V_1 = \{0\} \times \left[ 1-\frac{h}{\lambda}, 1\right].\]

Now, using~(\ref{eq:lambda}),
\[F_n(0,1) = 
\left(p_2,\frac{1}{\lambda}-\frac{1}{\lambda^{n+1}+1} \right) = 
\left(p_2, 1-\frac{h}{\lambda}\right).
\]
That is, the top point of~$V_2$ is on the same horizontal level as the
bottom point of~$V_1$. Since~$F_n$ preserves horizontal and
vertical lines in $x\le 1-1/\lambda$, the top point of~$V_{i+1}$ is on
the same horizontal level as the bottom point of~$V_i$ for $1\le i\le
n-1$. Since
\[\sum_{i=1}^n |V_i| = h\sum_{i=1}^n \frac{1}{\lambda^i} =
\frac{\lambda^{n+1}-\lambda}{\lambda^{n+2}-\lambda^{n+1}+\lambda-1} =
1\qquad\text{(by~(\ref{eq:lambda}))},\]
statement~b) of the lemma follows.

In particular, 
\[V_n = \{p_n\}  \times \left[0, \frac{h}{\lambda^n} \right],\]
and since $F_n(p_n,0) = (p_{n+1},1)$ it follows that
\[V_{n+1} = \{p_{n+1}\} \times \left[ 1-\frac{h}{\lambda^{n+1}}, 1 \right].\]
Now $1-h/\lambda^{n+1} = \lambda^{n+1}/(\lambda^{n+1}+1)$, the vertical coordinate of the top
of~$V_0$, so statement~a) follows also.
\end{proof}

Horizontal sides $H_0,\ldots,H_{n+1}$ can therefore be added to join
the endpoints of the vertical sides and bound the
polygon~$P_n$. Specifically, let
\begin{itemize}
\item $H_0$ join the top of~$V_0$ to the bottom of~$V_{n+1}$;
\item $H_i$ join the bottom of~$V_i$ to the top of~$V_{i+1}$ for $1\le i\le n-1$;
\item $H_n$ join the bottom of~$V_n$ to the bottom of $V_0$, and
\item $H_{n+1}$ join the top of~$V_{n+1}$ to the top of~$V_1$
\end{itemize}
(see Figure~\ref{fig:P4}). In particular, $H_{n-1}$ is the only
horizontal side which intersects the line of
discontinuity~$x=p_{n+1}=1-1/\lambda$ in its interior.

\begin{figure}[htbp]
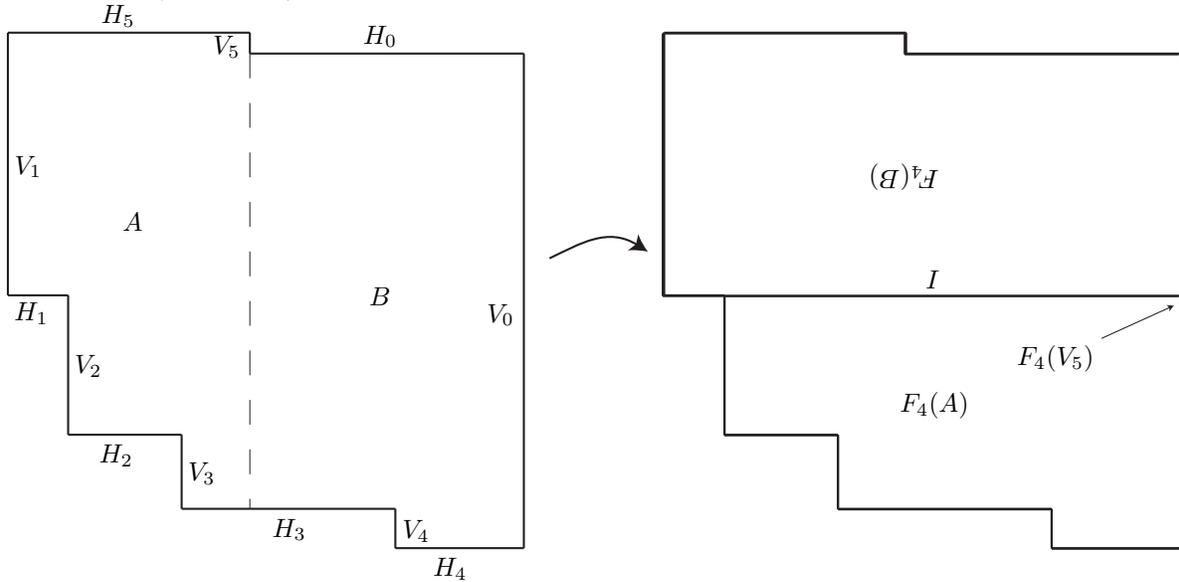

\labellist
\small
\pinlabel {$V_0$} [r] at 250 114
\pinlabel {$V_1$} [l] at 0 186
\pinlabel {$V_2$} [l] at 29 89
\pinlabel {$V_3$} [l] at 84 37
\pinlabel {$V_4$} [l] at 188 9
\pinlabel {$V_5$} [r] at 117 244
\pinlabel {$H_0$} [b] at 181 240
\pinlabel {$H_1$} [t] at 12 124
\pinlabel {$H_2$} [t] at 53 55
\pinlabel {$H_3$} [t] at 137 19
\pinlabel {$H_4$} [t] at 215 0
\pinlabel {$H_5$} [b] at 54 250
\pinlabel {$A$}  at 61 159
\pinlabel {$B$}  at 181 123
\pinlabel {$F_4(A)$}  at 450 70
\pinlabel {\rotatebox{180}{$F_4(B)$}} at 435 181
\pinlabel {$I$} [b] at 449 123
\pinlabel {$F_4(V_5)$} [tr] at 530 102
\endlabellist
\pichere{0.95}{P4}
\caption{The polygon~$P_4$ and its image under~$F_4$}
\label{fig:P4}
\end{figure}

Next, the segment pairings on~$\partial P_n$ which make~$F_n$
continuous and injective will be described. The pairings on the
vertical sides are straightforward and are treated first.

\begin{lem}
\label{lem:vertical-coincidence}
The image under~$F_n$ of the midpoint of~$V_{n+1}$ is the
midpoint of~$V_0$.
\end{lem}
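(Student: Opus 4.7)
The plan is to verify the claim by direct computation, showing that it reduces to the defining equation~(\ref{eq:lambda}) for $\lambda=\lambda_n$.

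First, I would unpack the data already established. From the explicit formula for $V_{n+1}$ given just before the lemma, $V_{n+1}=\{p_{n+1}\}\times[1-h/\lambda^{n+1},1]$ with $p_{n+1}=1-1/\lambda$, so its midpoint is
\[
M_{n+1}=\left(1-\tfrac{1}{\lambda},\ 1-\tfrac{h}{2\lambda^{n+1}}\right).
\]
Similarly, $V_0=\{1\}\times[0,h]$ has midpoint $M_0=(1,h/2)$. Since $p_{n+1}=1-1/\lambda$ lies exactly on the discontinuity locus for $F_n$, I would use the (left) branch $x\le 1-1/\lambda$; note that this branch sends $(1-1/\lambda,\cdot)$ into the fiber $\{1\}\times\R$, consistent with $F_n(V_{n+1})\subset V_0$ used implicitly in the preceding arguments.

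Next, I would apply this branch to $M_{n+1}$. The first coordinate becomes
$\lambda(1-1/\lambda-1)+2=1,$
which is the correct first coordinate of $M_0$. The nontrivial content is in the second coordinate:
\[
\frac{1-h/(2\lambda^{n+1})}{\lambda}-\frac{1}{\lambda^{n+1}+1}
=\frac{1}{\lambda}-\frac{h}{2\lambda^{n+2}}-\frac{1}{\lambda^{n+1}+1}.
\]
Substituting $h=\lambda^{n+1}/(\lambda^{n+1}+1)$ and clearing to a common denominator $2\lambda(\lambda^{n+1}+1)$, this simplifies to
\[
\frac{2\lambda^{n+1}+1-2\lambda}{2\lambda(\lambda^{n+1}+1)}.
\]

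Finally, I would compare with $h/2=\lambda^{n+1}/(2(\lambda^{n+1}+1))$. Cross-multiplying, the required equality is exactly
\[
\lambda^{n+2}-2\lambda^{n+1}+2\lambda-1=0,
\]
which is~(\ref{eq:lambda}). There is no real obstacle here --- the statement is essentially equivalent to the defining polynomial for $\lambda_n$, and once the correct branch of $F_n$ is identified the verification is a short algebraic reduction.
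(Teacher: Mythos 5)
Your proof is correct and is essentially the paper's argument: both apply the left branch of $F_n$ to the midpoint of $V_{n+1}$, compute the resulting vertical coordinate using $h=\lambda^{n+1}/(\lambda^{n+1}+1)$, and reduce the required identity to equation~(\ref{eq:lambda}). Your version is just slightly more explicit about the branch choice at $x=1-1/\lambda$ and the first coordinate, which the paper leaves implicit.
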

\begin{proof}
The midpoint of~$V_{n+1}$ has vertical coordinate
$1-h/2\lambda^{n+1}$, and so its image has vertical coordinate
\[\frac{1-h/2\lambda^{n+1}}{\lambda} - \frac{1}{\lambda^{n+1}+1} = 
\frac{1}{\lambda} - \frac{1}{2\lambda(\lambda^{n+1}+1)} -
\frac{1}{\lambda^{n+1}+1}.\] 
Using~(\ref{eq:lambda}), this is equal to
$\frac{\lambda^{n+1}}{2(\lambda^{n+1}+1)}$, the vertical coordinate of
the midpoint of~$V_0$.
\end{proof}

Define segment pairings on the vertical sides of~$P_n$ by folding each
side~$V_i$ about its midpoint. Since $V_i$ is mapped
affinely onto~$V_{i+1}$ for $0\le i\le n$, and $V_{n+1}$ is mapped
affinely into~$V_0$ with the midpoint being sent to the midpoint, the
identifications arising from these pairings are respected by the
action of~$F_n$. Moreover, the identifications make~$F_n$
continuous across the line~$x=1-1/\lambda$.

Write $\tP_n$ for the quotient space under these identifications on
the vertical sides (a disk with boundary $\bigcup H_i$), so that
$F_n$ induces a continuous surjection, $\tF_n\co
\tP_n\to\tP_n$.

The purpose of the segment pairings on the horizontal sides of~$P_n$
is to make~$\tF_n$ injective: they are more complicated to
describe. Observe first how~$F_n$ acts on these sides:
\begin{itemize}
\item $F_n(H_0)=I\cup H_1$, where~$I$ is the horizontal segment
 joining the right hand end of~$H_1$ to $V_0$ (see
  Figure~\ref{fig:P4}). Write $H_0=H_0'\cup H_0''$, where
  $F_n(H_0')=I$ and $F_n(H_0'')=H_1$;
\item $F_n(H_i) = H_{i+1}$ for $1\le i \le n-2$;
\item $F_n(H_{n-1}) = H_n \cup H_0$ (recall that~$H_{n-1}$ crosses the
  line of discontinuity~$x=1-1/\lambda$);
\item $F_n(H_n) = H_{n+1}$; and
\item $F_n(H_{n+1}) = I$.
\end{itemize}

In order to make~$F_n$ injective, it is therefore necessary to
identify $H_{n+1}$ with $H_0'$, and then to propagate this
identification under the dynamics.

Writing $I\to J$ to mean $F_n(I)\supset J$, observe that
\[H_0'' \to H_1\to H_2 \to \cdots \to H_{n-1} \to H_0''.\]
There is therefore a period~$n$ point $q_0$ of~$F_n$ in $H_0''$, the
points $q_i=F_n^i(q_0)$ ($1\le i<n$) of whose orbit satisfy $q_i\in
H_i$, and are therefore ordered cyclically according to their indices
around the boundary of $\tP_n$.

\begin{lem}
\label{lem:q0}
The horizontal coordinate of~$q_0$ is 
$\displaystyle{1-\frac{2-\lambda}{\lambda(\lambda+1)}}$.
Moreover, the distance from the left hand endpoint of~$H_0$ to~$q_0$
is equal to the length of~$H_{n+1}$ plus the distance from the left
hand endpoint of~$H_1$ to~$q_1$.
\end{lem}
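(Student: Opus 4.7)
The plan is to identify the symbolic itinerary of the period-$n$ point $q_0$ under the tent map $T_{\lambda_n}$ (written $T_\lambda$ below), solve the corresponding affine fixed-point equation, and then simplify using the defining relation $\lambda^{n+2}-2\lambda^{n+1}+2\lambda-1=0$.

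First I determine the itinerary. Using the explicit values of the points $p_i$ given in the excerpt, one verifies that $p_1 < p_2 < \cdots < p_{n-1} < p_{n+1} = 1 - 1/\lambda < p_n$, so the sides $H_i$, whose $x$-ranges are $[p_i,p_{i+1}]$ for $1 \le i \le n-2$, lie strictly in the left branch $\{x \le 1-1/\lambda\}$ of $T_\lambda$, while $H_{n-1}$ crosses the discontinuity line. Since $q_0 \in H_0''$ has $x$-coordinate in $[2-2/\lambda, 1] \subset (1-1/\lambda, 1]$, and since the right half of $H_{n-1}$ (not the left) is the part that $F_n$ sends onto $H_0$, the itinerary of $q_0$ is $R L^{n-2} R$, where $L(x) = \lambda x + (2-\lambda)$ is the left branch and $R(x) = \lambda(1-x)$ the right branch.

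Next I solve the fixed-point equation $T_\lambda^n(x_0) = x_0$ for this itinerary. Using $L^{k}(x) = \lambda^k x + (2-\lambda)(\lambda^k-1)/(\lambda-1)$, a direct computation gives
\[
R \circ L^{n-2} \circ R(x) = \lambda^n x - \lambda^n + \lambda - \frac{\lambda(2-\lambda)(\lambda^{n-2}-1)}{\lambda-1}.
\]
Setting this equal to $x$ and clearing the denominator $\lambda-1$ yields
\[
x_0 = \frac{\lambda^{n+1}-2\lambda^n+2\lambda^{n-1}-\lambda}{(\lambda-1)(\lambda^n-1)}.
\]
From the defining relation one extracts $\lambda^{n+1} = (2\lambda-1)/(2-\lambda)$, hence $\lambda^n = (2\lambda-1)/(\lambda(2-\lambda))$ and $\lambda^n - 1 = (\lambda-1)(\lambda+1)/(\lambda(2-\lambda))$. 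Substituting into the numerator and multiplying through by $\lambda^2(2-\lambda)$, it collapses to $(\lambda-1)^2(\lambda^2+2\lambda-2)$; cancelling the two factors of $(\lambda-1)$ and the factor $\lambda(2-\lambda)$ against the denominator gives the promised
\[
x_0 = \frac{\lambda^2+2\lambda-2}{\lambda(\lambda+1)} = 1 - \frac{2-\lambda}{\lambda(\lambda+1)}.
\]

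The second assertion is then immediate. The distance from the left endpoint of $H_0$ (at $x = 1 - 1/\lambda$) to $q_0$ is $x_0 - (1-1/\lambda) = (2\lambda-1)/(\lambda(\lambda+1))$, while $q_1 = R(q_0)$ has $x$-coordinate $\lambda(1-x_0) = (2-\lambda)/(\lambda+1)$, which equals the distance from $q_1$ to the left endpoint of $H_1$ (at $x = 0$); together with $|H_{n+1}| = 1-1/\lambda$, the identity $(1-1/\lambda) + (2-\lambda)/(\lambda+1) = (2\lambda-1)/(\lambda(\lambda+1))$ is a one-line computation. The main obstacle is the algebraic simplification of $x_0$: \emph{a priori} the expression depends on both $\lambda^n$ and $\lambda^{n-2}$ in an unilluminating way, and the conspiracy by which the numerator factors as $(\lambda-1)^2(\lambda^2+2\lambda-2)$ and supplies exactly the cancellation required to eliminate all $n$-dependence once $\lambda^n - 1$ is rewritten via the defining polynomial is the only non-routine step.
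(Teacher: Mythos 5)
Your proof is correct: the itinerary $RL^{n-2}R$ is right (the right-hand part of $H_{n-1}$ is indeed the piece that $F_n$ sends onto $H_0$, and $H_1,\dots,H_{n-2}$ lie in the left branch), the affine fixed-point computation and the simplification via $\lambda^{n+2}-2\lambda^{n+1}+2\lambda-1=0$ check out (one verifies $\lambda^4-5\lambda^2+6\lambda-2=(\lambda-1)^2(\lambda^2+2\lambda-2)$, and your value is consistent with $x_0=1-\tfrac1\lambda+\tfrac{\lambda^{n-1}(\lambda-1)}{\lambda^n-1}$), and the second assertion follows from your coordinates for $q_0$ and $q_1$. The paper organizes the same underlying idea differently: instead of writing down the full first-return map in coordinates, it introduces the two lengths $|L|$ (from the left endpoint of $H_0$ to $q_0$) and $|M|$ (from $q_1$ to the right endpoint of $H_1$) and extracts the linear system $\lambda|L|=(\lambda-1)+|M|$, $\lambda^{n-1}|M|=|L|$ from $F_n(L)=I\cup M$ and $F_n^{n-1}(M)=L$, which gives $|L|=\lambda^{n-1}(\lambda-1)/(\lambda^n-1)$ and then uses the defining polynomial exactly as you do. The advantage of the paper's bookkeeping is that $|M|$ is precisely the quantity needed for the second assertion, which then reads $|H_{n+1}|+(|H_1|-|M|)=|L|$ with no further geometry; your route buys an explicit closed form for the orbit $q_0,q_1$ at the cost of having to justify the ordering of the $p_i$ and the branch assignments, which the paper sidesteps by quoting its earlier description of how $F_n$ permutes the horizontal sides. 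Both hinge on the same non-routine step you identify, namely the elimination of the $n$-dependence through the relation defining $\lambda_n$.
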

\begin{proof}
Let~$L$ be the segment of~$H_0$ from its left hand end to $q_0$, and
$M$ be the segment of~$H_1$ from~$q_1$ to its right hand end (see
Figure~\ref{fig:idents}). Then $F_n(L) = I\cup M$ and
$F_n^{n-1}(M)=L$. Thus
\[\lambda |L|= |I| + |M| = \lambda - 1 + |M|\]
(using that the right hand endpoint of~$H_1$ has horizontal coordinate
$2-\lambda$), and
\[\lambda^{n-1}|M| = |L|.\]

\begin{figure}[htbp]
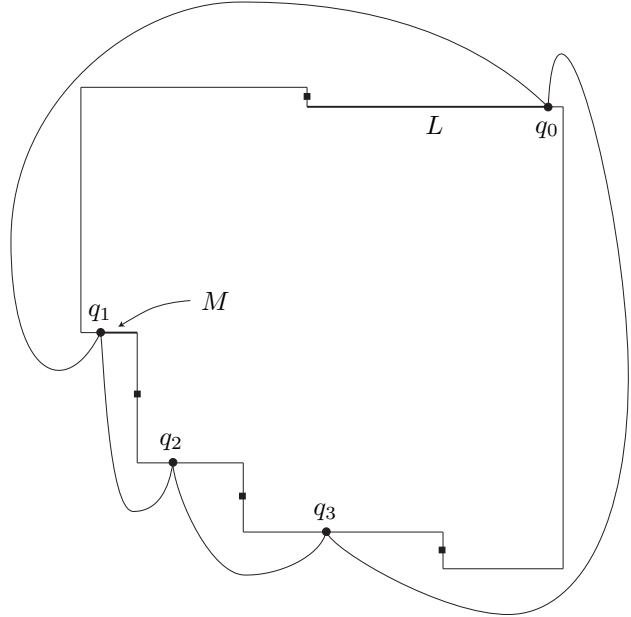

\labellist
\small
\pinlabel {$q_0$} [t] at 557 521
\pinlabel {$q_1$} [b] at 93 297
\pinlabel {$q_2$} [b] at 167 163
\pinlabel {$q_3$} [b] at 327 90
\pinlabel {$L$} [t] at 440 527
\pinlabel {$\scriptsize{M}$} [l] at 190 326
\endlabellist
\pichere{0.5}{idents}
\caption{Identifications on the horizontal sides of~$P_4$.}
\label{fig:idents}
\end{figure}

These give $|L| = \lambda^{n-1}(\lambda-1)/(\lambda^n-1)$, and hence
the horizontal coordinate~$\xi$ of $q_0$, given by $\xi =
1-\frac{1}{\lambda}+|L|$, is
\[\xi = 1-\frac{1}{\lambda}+\frac{\lambda^{n-1}(\lambda-1)}{\lambda^n-1}\]
which simplifies to the required expression using~(\ref{eq:lambda}).

Now $|H_{n+1}| = 1-1/\lambda$, and the distance from the left hand
endpoint of~$H_1$ to~$q_1$ is
\[
|H_1| - |M| = 2-\lambda-\frac{|L|}{\lambda^{n-1}} =
2-\lambda-\frac{\lambda-1}{\lambda^n-1},
\]
and the sum of these simplifies, using~(\ref{eq:lambda}), to~$|L|$ as
required.
\end{proof}

It follows that identifying~$L$ with the union of~$H_{n+1}$ and the
segment of~$H_1$ between its left hand endpoint and~$q_1$ identifies
$H_{n+1}$ with $H_0'$ as required. Propagating this identification
under the dynamics requires identifying the two halves of the segment
of the boundary of~$\tP_n$ between $q_i$ and $q_{i+1}$ for each $0\le
i<n-1$, and the two halves of the segment between $q_{n-1}$ and $q_0$
(Figure~\ref{fig:idents}). Each of these identifications can be
realised as a segment pairing on~$P_n$, with the exception of the
identification of~$L$ with $H_{n+1}$ and a segment of~$H_1$, and the
identification of the two halves of the segment between $q_{n-1}$ and
$q_0$, each of which can be expressed as two segment pairings. There
is therefore a total of~$n+2$ horizontal segment pairings, in addition
to the~$n+2$ vertical segment pairings.

The paper surface~$S_n = P_n/d_{P_n}^{\cP_n}$ is a sphere, on which
the horizontal and vertical foliations of~$P_n$ descend to a pair of
transverse measured foliations, with $n+2$ $1$-pronged singularities
(at the midpoints of the vertical sides) and an $n$-pronged
singularity (at the points of the period~$n$ orbit~$Q$ on the
horizontal boundary, which are all identified). Since~$F_n$ stretches
in the horizontal direction by a factor~$\lambda$, and contracts in
the vertical direction by a factor~$1/\lambda$, it induces a
pseudo-Anosov homeomorphism~$f_n\co S_n\to S_n$ of the paper surface.

The scar~$G_n$ of~$(P_n, \cP_n)$ is a tree with~$2n+4$ edges
corresponding to the $2n+4$ segment pairings in~$\cP_n$
(Figure~\ref{fig:scar}): however, $n$ of its vertices (the projections
of the endpoints of~$V_i$ for $2\le i \le n+1$) have valence~2. It
will be convenient to describe it in the following way. Let~$q_0\in
G_n$ denote the projection of the periodic
orbit~$\{q_0,\ldots,q_{n-1}\}$ in~$P_n$: $q_0$ will be called the {\em
  central vertex} of~$G_n$. There are~$n$ {\em horizontal edges}
$h_0,\ldots, h_{n-1}$ of~$G_n$ which emanate from~$q_0$ and have
lengths $|h_i|=\alpha\lambda^i$, where
$\alpha=(\lambda-1)/(\lambda^n-1)$ so that
$\sum_{i=0}^{n-1}\alpha\lambda^i=1$. Similarly there are~$n+2$ {\em
  vertical edges} $v_0,\ldots, v_{n+1}$ of~$G_n$, which have lengths
$|v_i|=\beta\lambda^i$, where $\beta =
(\lambda-1)/(\lambda^{n+2}-1)$. For $0\le i<n$, the edge~$v_i$
emanates from the end of~$h_{n-1-i}$. On the other hand $v_n$
(corresponding to~$V_1$) emanates from~$h_{n-1}$ at
distance~$\alpha/\lambda$ from~$q_0$ (this distance comes from
Lemma~\ref{lem:q0}); and~$v_{n+1}$ (corresponding to~$V_0$) emanates
from $h_{n-2}$ at distance~$\alpha/\lambda^2$ from~$q_0$. In
particular, $h_{n-2}$ and $h_{n-1}$ are strictly speaking unions of
two edges of~$G_n$.

\begin{figure}[htbp]
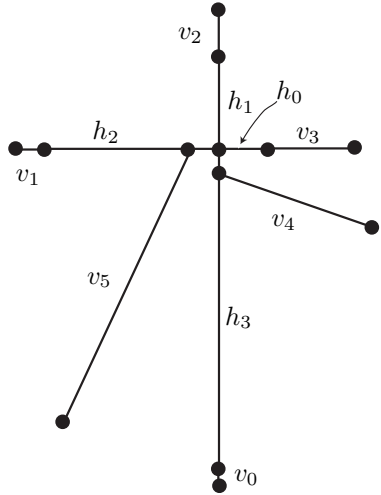

\labellist
\small
\pinlabel {$v_0$} [l] at 98 7
\pinlabel {$v_1$} [t] at 9 149
\pinlabel {$v_2$} [r] at 91 206
\pinlabel {$v_3$} [b] at 135 154
\pinlabel {$v_4$} [tr] at 132 128
\pinlabel {$v_5$} [br] at 49 90
\pinlabel {$h_0$} [bl] at 118 173
\pinlabel {$h_1$} [l] at 95 176
\pinlabel {$h_2$} [b] at 44 154
\pinlabel {$h_3$} [l] at 94 79
\endlabellist
\pichere{0.3}{scar}
\caption{The scar of the $P_4$ paper surface}
\label{fig:scar}
\end{figure}

The final lemma of this section shows that the polygons~$P_n$ converge
to the square~$\Sigma$ (see Figure~\ref{fig:hausdorff-converge}), the
equivalence relations~$\sim_{\cP_n}$ converge to the equivalence
relation~$\sim_\cP$ of the tight horseshoe, and the
functions~$F_n\co P_n \to P_n$ converge in an appropriate
sense to $F\co\Sigma\to\Sigma$. From this point on, superscripts~$n$
will be added to indicate dependence on~$n$ where they were previously
omitted: thus, for example, the horizontal and vertical sides of~$P_n$
will be denoted $H^n_i$ and $V^n_i$ for $0\le i\le n+1$.

\begin{figure}[htbp]
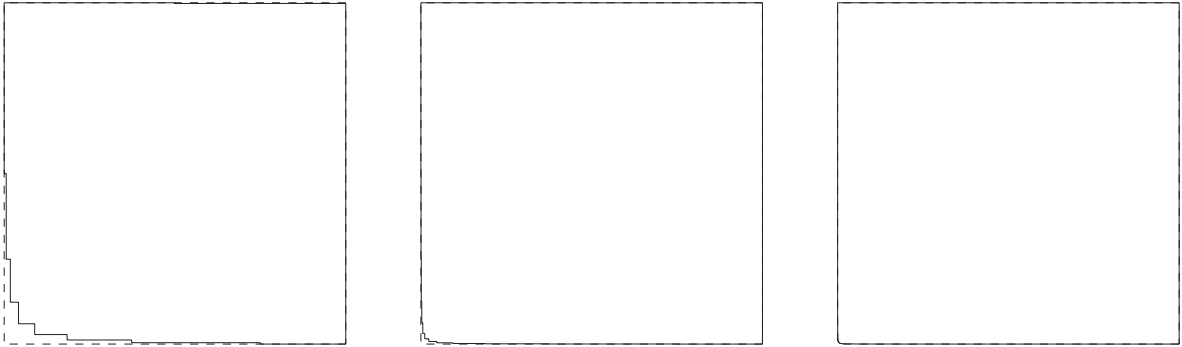

\pichere{0.95}{hausdorff-converge}
\caption{The polygons~$P_n$ (here $n=8,\,12,\,16$) converge to~$\Sigma$ (shown
with dashed lines)}
\label{fig:hausdorff-converge}
\end{figure}

\begin{lem}
\label{lem:topological-convergence}
\begin{enumerate}[a)] \mbox{}
\item Let~$\veps>0$. Then the square $\Sigma_\veps = [\veps,
  1-\veps]\times[\veps, 1-\veps]$ is contained in~$P_n$ for all
  sufficiently large~$n$. In particular, the polygons~$P_n$ converge
  in the Hausdorff metric to~$\Sigma$.
\item The equivalence relations $\sim_{\cP_n}$ converge in the
  Hausdorff metric to the equivalence relation~$\sim_\cP$.
\item Let $(x_n,y_n)\to(x,y)$ be a convergent sequence in~$\Sigma$
  with $(x_n,y_n)\in P_n$ for all~$n$. If $x\not=1/2$ then
  $F_n(x_n,y_n)\to F(x,y)$. If $x=1/2$ then there is a subsequence
  $(x_{n_i}, y_{n_i})$ such that $F_{n_i}(x_{n_i}, y_{n_i}) \to z\in\Sigma$
  where $z\sim_{\cP} F(x,y)$.
\end{enumerate}
\end{lem}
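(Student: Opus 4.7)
The plan is to handle parts~(a), (b), (c) in turn using the explicit description of $(P_n,\cP_n)$ and $F_n$ from Section~\ref{sec:origami-surfaces-s_n}. The key quantitative input is the rate $2-\lambda_n = (2\lambda_n-1)/\lambda_n^{n+1}$, immediate from~\eqref{eq:lambda}, which shows $\lambda_n\nearrow 2$ at exponential speed.

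For~(a), I would first compute the Hausdorff limits of the vertices of $P_n$. Using~\eqref{eq:pi} together with the backwards recursion $p_{i-1}^n = 1+(p_i^n-2)/\lambda_n$ (valid for preimages in $[0,1-1/\lambda_n]$), an induction in $j$ gives $p_{n-j}^n\to 3\cdot 2^{-(j+2)}$ for each fixed $j\ge 0$, while $p_i^n\to 0$ for each fixed $i\ge 2$. Combined with $|V_i^n|=h^n/\lambda_n^i$, this shows that $V_i^n$ (for fixed $i\ge 1$) tends to $\{0\}\times[1/2^i,1/2^{i-1}]$, whereas $V_{n-j}^n$ and $V_{n+1}^n$ collapse to the single points $(3\cdot 2^{-(j+2)},0)$ and $(1/2,1)$ respectively. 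Since the (almost) horizontal sides $H_i^n$ interpolate between these vertical sides, one checks $\partial P_n\to\partial\Sigma$ in Hausdorff; as $P_n\subset\Sigma$ is a topological disk, this forces $\Sigma_\veps\subset P_n$ for all $n$ sufficiently large.

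For~(b), I would perform an edge-by-edge analysis of the $2n+4$ pairings of $\cP_n$, matching each to an identification of $\cP$: the midpoint of $V_0^n$ tends to $(1,1/2)$, the fold point of the right side of $\Sigma$ in $\cP$; the midpoint of $V_i^n$ for fixed $i\ge 1$ tends to $(0,3/2^{i+1})$, the centre of the $i$-th left-side fold of $\cP$ (of length $1/2^{i+1}$); the pairing of $L^n\subset H_0^n$ with $H_{n+1}^n$ (and the vanishing piece of $H_1^n$) converges, since $|L^n|,|H_{n+1}^n|\to 1/2$, to the top-side folding of $\Sigma$ at $(1/2,1)$; and the pairings of the consecutive half-boundary pieces separated by $q_0^n,\ldots,q_{n-1}^n$ converge to the bottom-side folds of~$\cP$. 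Hausdorff convergence $\sim_{\cP_n}\to\sim_\cP$ then follows by upper and lower semicontinuity applied pair by pair.

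For~(c), the two branches defining $F_n$ are each continuous in $\lambda_n$ and meet across $x=1-1/\lambda_n\to 1/2$. When $x\ne 1/2$, $x_n$ lies on the same side of $1-1/\lambda_n$ as $x$ of $1/2$ for all $n$ large, and direct substitution yields $F_n(x_n,y_n)\to F(x,y)$. When $x=1/2$, pass to a subsequence on which all $x_{n_i}$ lie on one side of $1-1/\lambda_{n_i}$; the limit is either $(1,y/2)$ or $(1,1-y/2)$, two points symmetric about $(1,1/2)$ and hence identified by the right-side fold of $\cP$, so each equals $F(1/2,y)$ in $\Sigma/{\sim_\cP}$. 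The main obstacle is part~(b): although $\cP_n$ is tautologically the minimal relation making $F_n$ continuous and injective, transferring this to a limit statement requires careful bookkeeping of the central orbit $q_0^n,\ldots,q_{n-1}^n$, whose limit set --- forward limits $(1,1),(0,1/2^j)$ together with backward limits $(1/2^j,0)$ for $j\ge 1$, accumulating at $(0,0)$ --- lies on all four sides of $\Sigma$ and is only collectively $\sim_\cP$-identified through a chain of folds on three different sides of $\Sigma$, which is precisely what dissolves the $n$-pronged singularity of $f_n$ into the $\infty$-od singularity of $f$.
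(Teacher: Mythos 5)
Your proposal is correct and takes essentially the same route as the paper: explicit asymptotics for $\lambda_n$, the vertices $p_i^n$, the side lengths $|V_i^n|$ and the periodic orbit $q_0^n,\ldots,q_{n-1}^n$, then a pairing-by-pairing matching for part~(b), and the identical two-branch subsequence argument for part~(c). The one point to keep explicit in a write-up is that not every pairing of $\cP_n$ converges to a single fold of $\cP$ --- the middle-index vertical folds and the arcs between $q_i^n$ and $q_{i+1}^n$ for small or intermediate $i$ degenerate to pairs near the diagonal or land inside the singular class of $\sim_\cP$ only via chains (the ``very small'' case in the paper's argument) --- which your closing paragraph on the limit set of the central orbit already anticipates.
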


\begin{proof}\mbox{}
\begin{enumerate}[a)]
\item It is enough to show that all of the sides of~$P_n$ are
  contained in $C_\veps = \Sigma\setminus\Sigma_\veps$.

$V_0^n$, $V_1^n$, $H_n^n$, and $H_{n+1}^n$, being subsets
of~$\partial\Sigma$, are contained in~$C_\veps$ for all~$n$.

Since~$\lambda_n>3/2$ for all~$n$ (as $\lambda_3\simeq 1.722$), the
heights of the vertical sides satisfy $|V^n_i|<(2/3)^i$. Pick~$K$
large enough that $\sum_{i=K}^\infty (2/3)^i < \veps$: then the
vertical side $V_i^n$ is contained
in~$[0,1]\times([0,\veps)\cup(1-\veps,1])\sbs C_\veps$ whenever~$i>K$:
this in turn means that $H_i^n\sbs C_\veps$ whenever~$i\ge K$, and
$H_0^n \sbs C_\veps$ provided that $n>K$.

Now pick~$N>K$ large enough that $2-\lambda_N < \veps/2^{K+1}$, and
suppose that $n\ge N$. Then, by~(\ref{eq:pi}), whenever~$2\le i\le K$
the side~$V_i^n$ has horizontal coordinate
\begin{eqnarray*}
p_i^n &=&  \frac{(2-\lambda_n)(\lambda_n^{i-1}-1)}{\lambda_n-1} \\
&<& 2(2-\lambda_n)(\lambda_n^{i-1}-1) \qquad\text{(as
  $\lambda_n>3/2$)} \\
&<& \veps(\lambda_n^{i-1}-1)/2^K < \veps,
\end{eqnarray*}
and hence $V_i^n\sbs[0,\veps)\times[0,1]\sbs C_\veps$: this in turn
means that $H_{i-1}^n \sbs C_\veps$.
\item Let~$\veps>0$. It is required to show that there is some~$N$
  such that, for all~$n\ge N$:
\begin{enumerate}[i)]
\item If $a,b\in\Sigma$ with $a\sim_{\cP} b$, then there exist $a',b'\in
  P_n$ with $a'\sim_{\cP_n} b'$ and $|a-a'|<\veps$,
  $|b-b'|<\veps$.
\item If $a,b\in P_n$ with $a\sim_{\cP_n}b$, then there exist
  $a',b'\in \Sigma$ with $a'\sim_{\cP}b'$ and $|a-a'|<\veps$,
  $|b-b'|<\veps$.  
\end{enumerate}
There are four cases to consider:

\noindent{\bf Case 1:} $a=b$. This is dealt with by~a).

\noindent{\bf Case 2:} $a$ and $b$ are an interior pair of a vertical
segment pairing.

Choose~$K$ so that $\sum_{i=1}^K|V_i^n| > 1-\veps/2$ for
all~$n>K$. For each $i$ with $0\le i\le K$, $|V_i^n| =
\lambda_n^{n+1-i}/(\lambda_n^{n+1}+1)$ converges to $1/2^i$ as
$n\to\infty$. Thus, for sufficiently large~$n$: 
\begin{itemize}
\item $V_0^n$ is within Hausdorff distance $\veps/2$ of
  $\{1\}\times[0,1]$;
\item $V_i^n$ is within Hausdorff distance $\veps/2$ of $\{0\}
  \times [1/2^i, 1/2^{i-1}]$ for $1\le i \le K$; and
\item $|V_i^n| < \veps/2$ for $K<i\le n+1$.
\end{itemize}
That is, each folded vertical segment in~$P_n$ (respectively~$\Sigma$)
is either very close to a folded vertical segment in~$\Sigma$
(respectively~$P_n$), or is very small. In the former case, $(a,b)$
can be approximated by some~$(a',b')$; and in the latter case, it
can be approximated by some~$(a', a')$.

\noindent{\bf Case 3:} $a$ and $b$ are an interior pair of a horizontal
  segment pairing.

The argument is similar to that of the second case, except that (see
Figure~\ref{fig:idents}) the relevant quantities are the horizontal
coordinates of the points~$\{q_0^n, q_1^n, \ldots, q_{n-1}^n\}$ of the
periodic orbit of~$F_n$ on $\partial P_n$. When~$n$ is
large, the horizontal coordinate of~$q_0^n$ is very close to~$1$, and
the horizontal coordinates of $q_i^n$ for $1\le i < n$ are either
very small, or very close to $1/2^{n-i}$.

\noindent{\bf Case 4:} $a$ and $b$ are endpoints of segment pairings.

The argument is an extension of the third case. When~$n$ is large, the
set of segment endpoints on~$\partial\Sigma$ is Hausdorff close to
the union of the periodic orbit of~$F_n$ on~$\partial P_n$
with the points~$(1,0)$, $(0,1)$ of~$\partial P_n$: these two points
are very close to being identified with $q_0^n$ and $q_1^n$
respectively.

\item If $x<1/2$ (respectively $x>1/2$) then $x_n<1-1/\lambda_n$
  (respectively $x_n>1-1/\lambda_n$) for all sufficiently large~$n$,
  and the result is immediate from the definitions of $F_n$ and $F$.

If $x=1/2$ then $F(x,y) = (1, y/2)$. Take a subsequence $(x_{n_i},
y_{n_i})$ such that either $x_{n_i}<1-1/\lambda_{n_i}$ for all~$i$, or
$x_{n_i} > 1-1/\lambda_{n_i}$ for all~$i$. Then
$F_{n_i}(x_{n_i},y_{n_i})$ converges to $(1,y/2)$ in the former case,
and to $(1, 1-y/2) \sim_{\cP} (1,y/2)$ in the latter case.

\end{enumerate}
\end{proof}

\subsection{Polygon constants}
\label{sec:polygon-constants}

Recall (Definition~\ref{defn:constsofP}) that the polygon constants of
a polygon~$P$ are: the length~$|\partial P|$ of its boundary; the
collaring height~$\bh>0$ (Section~\ref{sec:the-collar-tq}); and
$\br>0$, which in the plain case can be chosen arbitrarily. In this
section it is shown that these constants can be chosen uniformly for
the polygons~$P_n$. Since $|\partial P_n|=4$ for all~$n$, and $\br$ is
arbitrary, the only issue is to find a uniform collaring
height~$\bh$. It will be shown that $\bh=\frac{1}{24}$ is a collaring
height for all~$n$, and the constant $\br$ will then be chosen as
\[\br = \bh = \frac{1}{24}.\]

Observe (Figure~\ref{fig:polygon-constants}) that the trapezoid of
height~$\bh=1/24$ on every side of~$P_n$ except for $V_0^n$, $V_1^n$,
$H_n^n$, and $H_{n+1}^n$ is a parallelogram. However, the lengths of
these four exceptional sides are uniformly bounded below:
\begin{eqnarray*}
|V_0^n| &=& \frac{\lambda_n^{n+1}}{\lambda_n^{n+1}+1}> \frac{1}{2},\\
|V_1^n| &=& \frac{|V_0^n|}{\lambda_n} > \frac{1}{4},\\
|H_{n+1}^n| &=& 1-\frac{1}{\lambda_n} > \frac{1}{3},
\qquad\text{and}\\
|H_n^n| &=& \frac{|H_{n+1}^n|}{\lambda_n} > \frac{1}{6}.
\end{eqnarray*}
Hence the ratio between the lengths of the bases and the tops of the
trapezoids on these sides lies in~$[1/2,2]$ since $\bh \le
\frac{1}{6}\times\frac{1}{4} = \frac{1}{24}$. (Since the base angles
are $\pi/4$, the difference between the length of the base and the
length of the top is $2\bh$.) It therefore only remains to show that
the trapezoids of height $\bh$ only intersect along their common
vertical sides. This is an immediate consequence of the observation
that the difference between the vertical coordinates of the top of
$V_0^n$ and the bottom of $V_1^n$ is 
\[
\frac{\lambda_n^{n+1}}{\lambda_n^{n+1}+1} -
\left(1-\frac{\lambda_n^n}{\lambda_n^{n+1}+1}\right)
=\frac{1-1/\lambda_n^n}{\lambda_n+1/\lambda_n^n} >
\frac{1-(2/3)^3}{2+1}=\frac{19}{81} > 2\bh.\]

\begin{figure}[htbp]
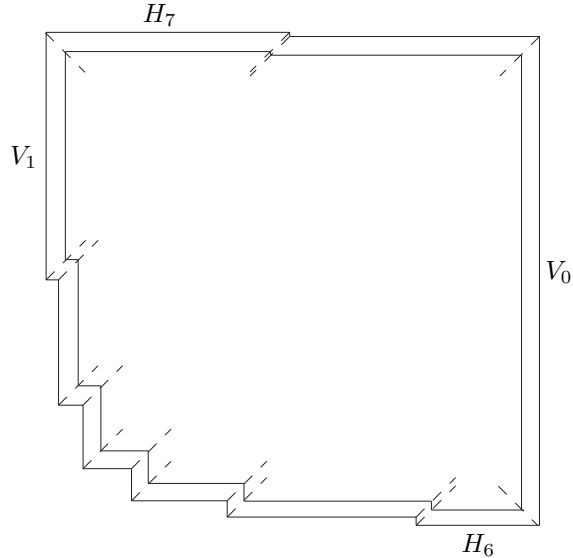

\labellist
\small
\pinlabel {$V_0$} [l] at 500 258
\pinlabel {$V_1$} [r] at 0 375
\pinlabel {$H_6$} [t] at 440 0
\pinlabel {$H_7$} [b] at 116 500
\endlabellist
\pichere{0.4}{polygon-constants}
\caption{Collaring~$P_n$ (here $n=6$)}
\label{fig:polygon-constants}
\end{figure}

\subsection{Convergence of the uniformizing maps}
\label{sec:conv-unif-maps}

Since the scars~$G_n$ have no singular points, the paper spheres~$S_n$
all have a natural complex structure, and there are unique
uniformizing maps $u_n\co S_n\to\csph$ such that the compositions
$\phi_n=u_n\circ\pi_n\co P_n\to\csph$ satisfy $\phi_n(1,0)=0$,
$\phi_n(1/2,1/2)=\infty$, and $(1/\phi_n)'(1/2,1/2)=1$ (the
point~$(1/2,1/2)$ lies in the complement of the height~$\bh$ collaring
of~$P_n$ for sufficiently large~$n$ by
Lemma~\ref{lem:topological-convergence}a): in fact it can be shown
that this is true for all~$n\ge 3$). The pseudo-Anosov homeomorphisms
$f_n\co S_n\to S_n$ induce homeomorpisms $\widehat{f}_n\co \csph
\to\csph$ by $\widehat{f}_n=u_n\circ f_n\circ u_n^{-1}$.

Similarly, recalling that $(\Sigma,\cP)$ is the paper-folding scheme
used in the definition of the tight horseshoe $f\co S\to S$, there is
a unique uniformizing map $u\co S\to\csph$ with the property that the
composition $\phi=u\circ\pi\co \Sigma\to\csph$ satisfies
$\phi(1,0)=0$, $\phi(1/2,1/2)=\infty$, and $(1/\phi)'(1/2,1/2)=1$. The
generalized pseudo-Anosov $f\co S\to S$ induces a homeomorphism
$\widehat{f}\co \csph\to\csph$.

In this section it is shown that the functions~$\phi_n$ converge
to~$\phi$, and hence that the homeomorphisms
$\widehat{f}_n$ converge to~$\widehat{f}$. The key result,
Lemma~\ref{lem:uniform-mod-cont} below, is that
the~$\phi_n$ have a uniform modulus of continuity.

\begin{defn}[$\cI(t)$]
\label{defn:cI}
Let $\cI\co(0,\br) \to (0,\infty)$ be the function defined by
\[\cI(t) := \frac{\ln 2}{12} \int_t^\br \frac{\rmd s}{s \ln \left(
  \frac{8}{s-t} \right) }.\]
\end{defn}

\begin{rmk}
\label{rmk:cI-props}
$\cI$ is a decreasing function, having derivative
\[\cI'(t) = \frac{-\ln2}{12} \int_t^\br \frac{\rmd
  s}{s(s-t)\left(\ln(8/(s-t))\right)^2} < 0.\]
Moreover $\cI(t)\to\infty$ as $t\to 0$.
\end{rmk}

\begin{lem}
\label{lem:uniform-mod-cont}
Define $\brho\co [0,\delta)\to[0,\infty)$ by
\[
\brho(t) :=
\begin{cases}
0, &\text{if } t=0;\\
\max\left(
\dfrac{16R}{\exp\left(2\pi M\cI(2At)\right)},
\kappa t
\right), &\text{if } t>0.
\end{cases}
\]
Then for all~$n\ge 3$, $\brho$ is a modulus of continuity
for~$\phi_n\co P_n\to\csph$, with respect to the Euclidean metric
on~$P_n$ and the spherical metric on~$\csph$.
\end{lem}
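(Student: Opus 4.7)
The plan is to apply Theorem~\ref{thm:modcont} to each $\phi_n$ and verify that its ingredients can be controlled uniformly in $n$. By Section~\ref{sec:polygon-constants}, the polygon constants can be chosen as $\bh=\br=\tfrac{1}{24}$ and $|\partial P_n|=4$, uniformly in $n$, so the derived constants $\delta$ and $A$ from~(\ref{eq:delta-A}), the radius $R$ from Lemma~\ref{lem:R}, and the Lipschitz constant $\kappa$ from~(\ref{eq:kappa}) are all uniform in $n$. The interior Lipschitz piece $\kappa t$ of $\brho$ then comes directly from Lemma~\ref{lem:modcontininterior}, and only the collar modulus $\rho_n(q,t)$ given by~(\ref{eq:modcont}) remains to be bounded uniformly.

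Inspecting~(\ref{eq:modcont}) case by case (according to whether $h_q\le t$ or $h_q>t$), I would reduce the desired inequality $\rho_n(q,t)\le 8R/\exp(2\pi M\,\cI(2At))$ to the single pointwise estimate
\[\iota_n(\psi(q);s) \;\ge\; \frac{M\ln 2}{12\,s\,\ln(8/s)} \qquad (s\in(0,\br)),\]
or equivalently to the uniform geometric bound
\[m_n(x;s)+s\cdot n_n(x;s) \;\le\; \frac{12}{\ln 2}\,s\,\ln(8/s) \qquad (x\in G_n,\ s\in(0,\br)).\]
In the case $h_q\le t$, the reduction is immediate from the monotonicity of the integral together with the elementary inequality $\ln(8/s)\le\ln(8/(s-2At))$ valid for $s\ge 2At$. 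In the case $h_q>t$, the extra factor $\ln(h_q/t)$ appearing in~(\ref{eq:modcont}) must be used to absorb the difference $\cI(2At)-\cI(2Ah_q)$; since the relevant integrand $1/(s\ln(8/(s-2At)))$ is bounded above on $[2At,2Ah_q]$ by $1/(s\ln(8/\br))$, this reduces to the numerical check $2\pi M\ln 2/(12\ln(8/\br))\le 1$, which is comfortably satisfied for $M=1/5$ and $\br=1/24$.

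The heart of the proof is thus the uniform geometric estimate on balls in the scar, which I would establish using the explicit description of $G_n$ from Section~\ref{sec:origami-surfaces-s_n}: $G_n$ is a finite tree whose $2n+4$ edges have lengths $\alpha_n\lambda_n^i$ ($0\le i<n$) and $\beta_n\lambda_n^i$ ($0\le i\le n+1$), with $\lambda_n\in(\sqrt 2,2]$ and $\alpha_n,\beta_n$ uniformly bounded above and below. Splitting the intersection of $B_{G_n}(x;s)$ with each edge according to whether the edge lies entirely inside the ball or crosses its boundary, the uniform lower bound $\lambda_n\ge\sqrt 2$ forces at most $O(\log(1/s))$ edges of length at least $s$ and a total length $O(s)$ of edges of length at most $s$, yielding $n_n(x;s)=O(\log(1/s))$ and $m_n(x;s)=O(s\log(1/s))$ with constants independent of $n$ and of the position of $x$. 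This is the exact analogue, carried out uniformly across the family, of the tight-horseshoe calculation in Example~\ref{ex:both} with $\lambda=2$.

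The hard part will be calibrating the universal constant in this counting argument so that it comes out to precisely $12/\ln 2$, and --- more delicately --- verifying that the estimate is uniform in the location of $x$ within $G_n$. The counting is cleanest when $x$ is the central vertex of $G_n$ or lies well inside a long edge, but needs additional care near the finitely many vertices where the two geometric families of edges are attached, and near the boundary $\hor(h(\br))$ where the behaviour of $\psi$ can shift $x$. Once these positions have been absorbed into the universal constant, only elementary summation of geometric series and the identification $\ln 2 = \lim_n \ln\lambda_n\cdot(\text{normalization})$ are required to close the argument.
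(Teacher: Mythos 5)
Your overall strategy --- uniform polygon constants from Section~\ref{sec:polygon-constants}, the interior Lipschitz bound of Lemma~\ref{lem:modcontininterior}, and a uniform-in-$n$ bound on the collar modulus~(\ref{eq:modcont}) obtained from uniform control of $m(\cdot\,;\cdot)$ and $n(\cdot\,;\cdot)$ on the scars $G_n$ --- is the paper's. But the key reduction you propose, namely the pointwise estimate $m_n(x;s)+s\cdot n_n(x;s)\le \frac{12}{\ln 2}\,s\ln(8/s)$ for all $x\in G_n$, $s\in(0,\br)$, uniformly in $n$, is false. It does hold (with essentially these constants) at the central vertex $q_0$ --- this is the content of Lemma~\ref{lem:Gn-centre-m-and-n} --- but not at a general point of $G_n$. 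Take $x$ on a long edge at distance $D$ from $q_0$ with $D$ comparable to $\br$, and $s=D+\veps$ with $\veps$ tiny: then $C_{G_n}(x;s)$ meets every edge emanating from $q_0$ of length at least $\veps$, so $n_n(x;s)\gtrsim \log_2(1/\veps)$ as soon as $n$ is large enough that such edges exist, and $s\cdot n_n(x;s)$ is unbounded as $\veps\to 0$, $n\to\infty$, while $\frac{12}{\ln 2}s\ln(8/s)$ remains bounded since $s\ge D$. For each fixed $n$ the left-hand side is of course finite ($n_n\le 2n+4$), but not uniformly in $n$, which is exactly the uniformity you need. You flag ``uniformity in the location of $x$'' as a calibration issue to be absorbed into the universal constant, but it cannot be: no pointwise bound of the form $C\,s\ln(8/s)$ holds uniformly over position and $n$.

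This is precisely why the paper bounds the integrated quantity instead: Lemma~\ref{lem:Gn-integral-bound} shows $\int_t^{\br}\frac{\rmd s}{m(q;s)+s\,n(q;s)}\ge \cI(t)$, where $\cI$ carries the \emph{shifted} logarithm $\ln\left(8/(s-t)\right)$, and its proof is a case analysis on $D=d_{G_n}(q,q_0)$ (using $m(q;s)\le 6D+8(s-D)\log_2(8/(s-D))$ and the analogous bound on $n(q;s)$ for $s>D$), together with monotonicity-in-$D$ arguments and, in the regime $t\le D\le\br$, a gain of $\frac{1}{9}\ln(D/t)$ from radii $s\le D$ that compensates the loss elsewhere. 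The shift $s-t$ (respectively $s-D$) is not cosmetic: it records exactly the spike in $n(q;s)$ near $s=D$ described above, and your argument would need to be restructured along these lines. A secondary point: in the case $h_q>t$ the factor appearing in~(\ref{eq:modcont}) is $t/h_q$, not $\ln(h_q/t)$, and $\cI(2At)-\cI(2Ah_q)$ is not the integral of a fixed integrand over $[2At,2Ah_q]$, since the shift inside $\cI$ moves as well; the clean route (the paper's) is to check that $x\mapsto x\exp\left(2\pi M\cI(2Ax)\right)$ is increasing on $(t,\delta]$, which rests on the same numerical margin you compute but also controls the tail term your sketch drops.
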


Here~$\delta = 1/4608$ and $A = 96$ are given by~(\ref{eq:delta-A}),
$M=1/5$ is given by Definition~\ref{defn:goodnessfn}, $R=R(\bh)$ is
given by Lemma~\ref{lem:R}, and $\kappa=\kappa(|\partial P_n|,
\delta)$ is given by~(\ref{eq:kappa}). Observe that $\brho$ is a
modulus of continuity in the sense of Definition~\ref{defn:modcont}:
it is continuous, positive, and strictly increasing in~$(0,\delta)$ as
the maximum of two functions with these properties; and $\brho(t)\to
0$ as $t\to 0$ by Remark~\ref{rmk:cI-props}.

Two preliminary lemmas are required.

\begin{lem}
\label{lem:Gn-centre-m-and-n}
Let~$n\ge 3$ and $r\in(0,\br)$. Then, in~$G_n$,
\begin{eqnarray*}
m(q_0;r) &\le& 8r\log_2(8/r) \qquad\text{and}\\
n(q_0;r) &\le& 4\log_2(4/r),
\end{eqnarray*}
where~$q_0$ is the central vertex of~$G_n$.
\end{lem}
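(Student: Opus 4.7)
The plan is to exploit the explicit description of the scar $G_n$ recalled in the previous subsection. The key algebraic simplification comes from rewriting the defining equation~(\ref{eq:lambda}) as $\lambda^{n+2} - 1 = 2\lambda(\lambda^n - 1)$, which yields $\beta = \alpha/(2\lambda)$, so that $|v_i| = \alpha\lambda^{i-1}/2$ for $i\ge 1$ (and $|v_0| = \alpha/(2\lambda)$). Thus all edges of $G_n$ have lengths in a single geometric progression of ratio $\lambda$. The $n+2$ leaves of $G_n$ can then be listed explicitly: the $n$ \emph{main} leaves (at the ends of $v_{n-1-i}$ reached via $h_i$) lie at distances $L_i := \alpha\lambda^i + \alpha\lambda^{n-2-i}/2$ from $q_0$ for $0\le i\le n-1$, while the two \emph{side} leaves (at the far ends of $v_n$ and $v_{n+1}$) lie at distances $\alpha/\lambda + \alpha\lambda^{n-1}/2$ and $\alpha/\lambda^2 + \alpha\lambda^n/2$ respectively.

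For $n(q_0;r)$: since $G_n$ is a tree, $n(q_0;r)$ is at most the number of leaves of $G_n$ at distance more than $r$ from $q_0$. The condition $L_i > r$ forces at least one of $\alpha\lambda^i > r/2$ or $\alpha\lambda^{n-2-i} > r$, so it suffices to count the $i\in\{0,\ldots,n-1\}$ satisfying each of these inequalities. The identity $\alpha\lambda^n = \lambda - 1 + \alpha \le 2$ (immediate from $\alpha = (\lambda-1)/(\lambda^n-1)$) gives $n \le \log_\lambda(2/\alpha)$, and this is precisely what makes both counts reduce to bounds of the form $\log_\lambda(C/r)$ with $C$ absolute, eliminating $n$ and $\alpha$ entirely. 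Converting $\log_\lambda$ to $\log_2$ using $\lambda\ge\sqrt 2$, and adding $2$ for the side leaves, yields $n(q_0;r) \le 4\log_2(4/r)$.

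For $m(q_0;r)$: the distance function $x\mapsto d_G(q_0,x)$ on the tree $G_n$ is $1$-Lipschitz with fibers $C_G(q_0;s)$ of cardinality $n(q_0;s)$, so the coarea identity $\mu^1_{G_n}(B_G(q_0;r)) = \int_0^r n(q_0;s)\,ds$ holds, and Lemma~\ref{lem:metricstructureofG}c) gives $m(q_0;r) = 2\int_0^r n(q_0;s)\,ds$; substituting the bound from the previous paragraph and evaluating the resulting integral yields the required estimate. The main technical obstacle throughout will be tracking the numerical constants carefully: the uniform bounds $\lambda/(\lambda-1) \le 2+\sqrt 2$ and $\log_\lambda x \le 2\log_2 x$ (both valid because $\lambda_n \ge \sqrt 2$), together with $\alpha\lambda^n \le 2$, all have to be applied sharply so that the specific multiplicative constants $4$ and $8$ emerge rather than larger ones.
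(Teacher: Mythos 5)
Your structural description of $G_n$ and your count of $n(q_0;r)$ are essentially sound, and in fact close in spirit to the paper's own argument: the paper also reduces to counting the legs of the $(n-2)$-od whose length exceeds $r$ (plus the four branches coming from $v_0,v_1,v_n,v_{n+1}$), using the cruder estimates $\alpha<\lambda^{-(n-1)}$, $\beta<\lambda^{-(n+1)}$ and $\lambda>\sqrt2$ together with an explicit threshold index, where you instead keep $\alpha$ and use $\beta=\alpha/2\lambda$ and $\alpha\lambda^n\le2$. That bookkeeping does deliver $n(q_0;r)\le4\log_2(4/r)$ (replace ``at distance more than $r$'' by ``at distance at least $r$'' so that leaves on $C_G(q_0;r)$ are not lost, but this is cosmetic).

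The genuine gap is in the first inequality. The coarea identity $m(q_0;r)=2\int_0^r n(q_0;s)\,\rmd s$ is fine, but substituting the bound you have just proved and integrating does \emph{not} ``yield the required estimate'': since
\[
2\int_0^r 4\log_2(4/s)\,\rmd s \;=\; 8r\log_2(4/r)+8r\log_2 e \;=\; 8r\log_2\!\left(\frac{4e}{r}\right),
\]
and $4e>8$, you obtain $8r\log_2(4e/r)$, which overshoots the stated bound $8r\log_2(8/r)$ by an additive $8r(\log_2 e-1)\approx 3.5r$. This loss does not come from slack in $\lambda\ge\sqrt2$, $\alpha\lambda^n\le2$ or $\lambda/(\lambda-1)\le2+\sqrt2$, so no amount of careful tracking of those constants recovers the $8$: it comes from integrating the logarithm itself ($\int_0^r\log_2(1/s)\,\rmd s=r\log_2(1/r)+r\log_2 e$ with $\log_2 e>1$), and even the sharper intermediate counts you could extract before rounding still integrate to $8r\log_2(c/r)$ with $c>8$. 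The paper sidesteps this by bounding $m(q_0;r)$ directly rather than through $n$: the six exceptional edges contribute at most $8r$ of $\rmm_G$-measure; each of the $O(\log_2(1/r))$ legs that actually meet $C_G(q_0;r)$ contributes at most $2r$; and the legs lying entirely inside the ball are charged their full lengths, which form a geometric series summing to at most $8r$ — that is what produces the constant $8$ in $8r\log_2(8/r)$. So either bound $m$ directly in this way, or settle for a weaker constant (e.g.\ $8r\log_2(4e/r)$) and re-verify the numerical estimates in Lemma~\ref{lem:Gn-integral-bound} that consume this lemma; as written, your argument does not establish the inequality for $m(q_0;r)$ as stated.
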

\begin{proof}
The edges $v_0$, $v_1$, $v_n$, $v_{n+1}$, $h_{n-1}$ and~$h_{n-2}$ of~$G_n$
contribute at most~$8r$ to~$m(q_0;r)$ and at most $4$ to~$n(q_0;r)$. The
remaining edges form an $(n-2)$-od, with edges of lengths
\[\ell_i = |h_i|+|v_{n-1-i}| = \alpha \lambda^i + \beta\lambda^{n-1-i}
\qquad (0\le i < n-2).\] Here $\alpha =
\frac{1}{1+\lambda+\cdots+\lambda^{n-1}} < \frac{1}{\lambda^{n-1}}$
and $\beta =
\frac{1}{1+\lambda+\cdots+\lambda^{n+1}}<\frac{1}{\lambda^{n+1}}$, so
that
\[\ell_i < \frac{1}{\lambda^{n-1-i}} + \frac{1}{\lambda^{i+2}} 
< \frac{1}{(\sqrt2)^{n-1-i}} + \frac{1}{(\sqrt2)^{i+2}} := L_i.\]

Let~$k=2+\lceil 2\log_2(1/r) \rceil \le 3+2\log_2(1/r)$ (where~$\lceil
x\rceil$ denotes the smallest integer which is not less than~$x$), so
that $\frac{1}{(\sqrt2)^k} \le \frac{r}{2}$. Then provided $n-1-i\ge
k$ and $i+2\ge k$ (i.e $k-2\le i\le n-1-k$), $L_i$ satisfies
\[L_i = \frac{1}{(\sqrt2)^{n-1-i}} + \frac{1}{(\sqrt2)^{i+2}} 
\le \frac{2}{(\sqrt2)^k} \le r.\] 
There are at most~$2k-4$ values
of~$i$ which don't satisfy these inequalities, so that
\[n(q_0;r) \le 4 + (2k-4) \le 6 + 4\log_2(1/r) < 4(2 + \log_2(1/r)) =
4\log_2(4/r)\]
as required. Similarly
\[m(q_0;r) \le 8r + (4k-8)r + 2\sum_{i=k-2}^{n-1-k}L_i < 8r+(4k-8)r +
8r < 8r(3+\log_2(1/r)) = 8r\log_2(8/r),\]
since 
\[\sum_{i=k-2}^{n-1-k}L_i < \frac{2}{(\sqrt2)^k}\sum_{i=0}^\infty
\frac{1}{(\sqrt2)^i} < \frac{8}{(\sqrt2)^k} \le 4r.\]
\end{proof}

\begin{lem}
\label{lem:Gn-integral-bound}
For all~$n\ge3$, $q\in G_n$, and $t\in (0,\br)$
\[\int_t^\br \frac{\rmd s}{m(q;s) + s\cdot n(q;s)} \ge \cI(t).\]
\end{lem}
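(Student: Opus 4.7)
The plan is to reduce the estimate to Lemma~\ref{lem:Gn-centre-m-and-n} (which handles the central vertex $q_0\in G_n$) by a comparison between $B_G(q;s)$ and $B_G(q_0;s)$, and then to exploit the elementary monotonicity $\log_2(8/s)\le\log_2(8/(s-t))$ to introduce the shift $s-t$ in the integrand.

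First I would dispatch the case $q=q_0$. Using Lemma~\ref{lem:Gn-centre-m-and-n} together with the identity $\log_2(4/s)=\log_2(8/s)-1$,
\[
m(q_0;s)+s\cdot n(q_0;s)\le 8s\log_2(8/s)+4s\log_2(4/s)=12\,s\log_2(8/s)-4s.
\]
Since $s\ge s-t$ gives $\log_2(8/s)\le\log_2(8/(s-t))$, this bound is also at most $12\,s\log_2(8/(s-t))$, so inverting and integrating over $(t,\br)$ produces $\cI(t)$ immediately.

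For a general $q\in G_n$ at distance $d=d_G(q,q_0)$, my plan is to prove the comparison inequality
\[
m(q;s)+s\cdot n(q;s)\le m(q_0;s)+s\cdot n(q_0;s)+4s,
\]
which, combined with the $4s$ slack in the central estimate above, yields the uniform bound $m(q;s)+s\cdot n(q;s)\le 12\,s\log_2(8/s)$ and reduces the problem to the central case. I would establish the comparison by a geometric analysis of the tree $G_n$, split into two subcases. When $s\le d$, the ball $B_G(q;s)$ lies in a single one of the $n$ branches of $G_n\setminus\{q_0\}$; each such branch has at most three edges and at most one additional degree-$3$ vertex ($w_n'$ or $w_{n+1}'$), so a direct inspection gives $m(q;s)\le 6s$ and $n(q;s)\le 3$, while for $n\ge 3$ the central ball satisfies $m(q_0;s)+s\cdot n(q_0;s)\ge 3ns\ge 9s$. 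When $s>d$, both balls contain $q_0$ and an edge-by-edge analysis of the symmetric difference shows that $B_G(q;s)$ gains at most a length $d$ in $q$'s branch beyond what $B_G(q_0;s)$ covers but loses $d$ in each of the $n-1\ge 2$ other branches at $q_0$, since from $q$ one has only $s-d$ distance left after reaching $q_0$; a parallel analysis of the exit directions handles the boundary counts.

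The main obstacle will be the detailed case analysis for $s>d$ when $q$ lies on $h_{n-1}$ or $h_{n-2}$, where the extra degree-$3$ vertex on the interior of the horizontal edge interacts with the attached $v_n$ or $v_{n+1}$. These cases remain tractable because the radii in question are bounded by $\br=1/24$, far smaller than the diameter of $G_n$, but they require careful bookkeeping; if the clean comparison with additive slack $4s$ fails in a corner case, a weaker version with a slightly larger constant $Cs$ would still suffice after a corresponding sharpening of the central estimate in Lemma~\ref{lem:Gn-centre-m-and-n}.
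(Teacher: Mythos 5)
Your reduction breaks at its key step: the same--radius comparison $m(q;s)+s\cdot n(q;s)\le m(q_0;s)+s\cdot n(q_0;s)+4s$ is false, and replacing $4s$ by $Cs$ for any fixed constant $C$ cannot rescue it, because the discrepancy is unbounded as $n\to\infty$. The culprit is the counting term, which your ``parallel analysis of the exit directions'' glosses over. Take $q$ on the long edge $h_{n-1}$ at distance $d=d_{G_n}(q,q_0)$ from the central vertex, and let $s<\br=1/24$ be a planar radius slightly larger than $d$, say $s-d=2^{-m}$ with $m\approx n/2$. Beyond $q_0$, the sphere $C_{G_n}(q;s)$ meets every branch at $q_0$ of depth at least $s-d$; since $|h_j|\approx 2^{j-n}$ while the vertical edge attached at the far end of $h_j$ has length $\approx 2^{-j-3}$, there are on the order of $2m$ such branches, so $n(q;s)\gtrsim 2m$ grows linearly in $n$. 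By contrast $n(q_0;s)$ stays bounded for $s$ near $\br$ (only about a dozen directions reach distance $1/24$ from $q_0$) and $m(q_0;s)\le\rmm_{G_n}(G_n)=4$. Hence $s\cdot n(q;s)\approx m/12\to\infty$ while $m(q_0;s)+s\cdot n(q_0;s)+Cs$ stays bounded, so your comparison --- and with it the uniform bound $m(q;s)+s\cdot n(q;s)\le 12\,s\log_2(8/s)$ on which the whole reduction rests --- fails for large $n$. (Your auxiliary claim $m(q_0;s)+s\cdot n(q_0;s)\ge 3ns$ in the case $s\le d$ is false for the same reason: most of the $n$ branches at $q_0$ are far shorter than $s$; that case can still be salvaged, since four long directions emanate from points within distance $\approx\lambda^{-n}$ of $q_0$, giving $m(q_0;s)+s\cdot n(q_0;s)\ge 9s$.)

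The missing idea is that for an off-centre $q$ the bounds must be taken at the shifted radius: for $s>D:=d_{G_n}(q,q_0)$ one only has $m(q;s)\le 6D+m(q_0;s-D)$ and $n(q;s)\le 2+n(q_0;s-D)$, so the integrand is controlled from below by $\bigl(12\,s\log_2(8/(s-D))\bigr)^{-1}$, not by $\bigl(12\,s\log_2(8/(s-t))\bigr)^{-1}$; since $D$ may exceed $t$, the monotonicity trick $\log_2(8/s)\le\log_2(8/(s-t))$ that you use (correctly) at $q_0$ is no longer available. One must then prove that the shifted integral still dominates $\cI(t)$: writing $F(D,t):=\frac{\ln 2}{12}\int_t^{\br}\frac{\rmd s}{s\ln(8/(s-D))}$ and using the elementary bound $m(q;s)+s\cdot n(q;s)\le 9s$ for radii $s\le D$, the paper argues by monotonicity in $D$ in three regimes ($D\le t$, $t\le D\le\br$, $D\ge\br$); in the middle regime the extra contribution $\frac19\ln(D/t)$ from the radii $s\in(t,D)$ compensates the weakening of the tail, so the minimum occurs at $D=t$ and equals $F(t,t)=\cI(t)$. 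Your treatment of $q=q_0$ is sound and corresponds to the case $D\le t$, but the off-centre reduction needs to be replaced by this shifted, case-split analysis rather than a same-radius comparison with the central vertex.
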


\begin{proof}
Write~$D:=d_{G_n}(q_0,q)$, where $q_0$ is the central vertex
of~$G_n$. Then
\[
m(q;r) \le
\begin{cases}
6r, &\text{if } r\le D;\\
6D+m(q_0;r-D) \le 6D+8(r-D)\log_2\left(\frac{8}{r-D}\right) & \text{if }r>D,
\end{cases}
\]
and
\[
n(q;r) \le
\begin{cases}
3, &\text{if } r\le D;\\
2+n(q_0;r-D) \le 2+4\log_2\left(\frac{4}{r-D}\right) <
4\log_2\left(\frac{8}{r-D}\right) & \text{if }r>D. 
\end{cases}
\]
There are three cases to consider: $0\le D\le t$; $t\le D\le\br$; and
$D\ge \br$.

\noindent\textbf{Case 1: $0\le D\le t$}

Then \[\int_t^\br \frac{\rmd s}{m(q;s)+s\cdot n(q;s)} \ge
\int_t^{\br}\dfrac{\rmd s}{6D+(12s-8D)
  \log_2\left(\frac{8}{s-D}\right)}. \]
Since $\log_2(8/(s-D)) \ge \log_2(8/\br) > 1$, this integral is in
turn bounded below by
\[F(D,t) := \int_t^{\br} \dfrac{\rmd s}{12 s
  \log_2\left(\frac{8}{s-D}\right)}  = \frac{\ln2}{12}\int_t^{\br}
\dfrac{\rmd s}{s\ln\left(\frac{8}{s-D}\right)}.\]
Now
\[\frac{\partial F}{\partial D} = \frac{-\ln2}{12}\int_t^\br
\dfrac{\rmd s}{s(s-D)\left(\ln\left(\frac{8}{s-D}\right)\right)^2}<0\] for $D\in[0,t]$, so
\[\int_t^\br \frac{\rmd s}{m(q;s)+s\cdot n(q;s)} \ge F(t,t) = \cI(t)\]
for all $q\in G_n$ with $d_{G_n}(q,q_0)\le t$ as required.

\noindent\textbf{Case 2: $t\le D\le \br$}

Then  \[\int_t^\br \frac{\rmd s}{m(q;s)+s\cdot n(q;s)} \ge
\int_t^D \frac{\rmd s}{9s} + F(D,D) =
\frac{1}{9}\ln\left(\frac{D}{t}\right) + \frac{\ln 2}{12} \int_D^\br
\dfrac{\rmd s}{s\ln\left(\frac{8}{s-D}\right)}.
\]
The derivative of this lower bound with respect to D is
\begin{eqnarray*}
\frac{1}{9D} - \frac{\ln2}{12} \int_D^\br \dfrac{\rmd
  s}{s(s-D)\left(\ln\left(\frac{8}{s-D}\right)\right)^2} &\ge& 
\frac{1}{D} \left(\frac{1}{9} - \frac{\ln2}{12} \int_D^\br \dfrac{\rmd
  s}{(s-D)\left(\ln\left(\frac{8}{s-D}\right)\right)^2} \right)\\
&=& \frac{1}{D}\left(
\frac{1}{9} - \dfrac{\ln2}{12\ln\left(\frac{8}{\br-D}\right)}
\right)\\
&>&\frac{1}{D}\left(
\frac{1}{9} - \frac{\ln2}{12\ln(192)}
\right) > 0
\end{eqnarray*}
for $D\in[t,\br]$, using $\br = 1/24$. Hence
\[\int_t^\br \frac{\rmd s}{m(q;s)+s\cdot n(q;s)} \ge
\frac{1}{9}\ln(1) + F(t,t) = \cI(t)\]
for all $q\in G_n$ with $t\le d_{G_n}(q,q_0)\le \br$ as required.

\noindent\textbf{Case 3: $ D\ge \br$}

Then 
\[
\int_t^\br \frac{\rmd s}{m(q;s)+s\cdot n(q;s)} \ge \frac{1}{9}\int_t^\br
\frac{\rmd s}{s} 
> \frac{\ln 2}{12}\int_t^\br \dfrac{\rmd
  s}{s\ln\left(\frac{8}{s-t}\right)} = \cI(t)
\]
as required, using $\ln(8/(s-t))> \ln(8/\br)>1$.
\end{proof}

\begin{proof}[Proof of Lemma~\ref{lem:uniform-mod-cont}]
Following the proof of Theorem~\ref{thm:modcont}, it is only required
to show that the function $\rho_q(t)$ of~(\ref{eq:modcont}) satisfies
$\rho_q(t) \le \brho(t)/2$ for all~$n$, all~$q\in Q_n(\delta)$, and all
$t\in (0,\delta)$. (As in that proof, the factor~2 arises from the
translation between the Euclidean and spherical metrics.)

So let $q\in Q_n(\delta)$ and $t\in (0,\delta)$. If $h_q\le t$ then
\begin{eqnarray*}
\rho_q(t) &=& \dfrac{8R}{\exp\left(2\pi\displaystyle{
\int_{A(t+h_q)}^\br \iota(\psi(q);s)\rmd s}
\right)}\\
&\le& \dfrac{8R}{\exp\left(2\pi\displaystyle{
\int_{2At}^\br \iota(\psi(q);s)\rmd s}
\right)}\\
&\le& \dfrac{8R}{\exp(2\pi M\cI(2At))}
\end{eqnarray*}
by Lemma~\ref{lem:Gn-integral-bound}, using $\iota(q;s) =
\frac{M}{m(q;s) + s\cdot n(q;s)}$.

On the other hand, if $h_q>t$ then an analogous argument gives
\[\rho_q(t) \le \dfrac{8Rt}{h_q\exp(2\pi M\cI(2Ah_q))}.\]
However the function $x\mapsto x\exp(2\pi M\cI(2Ax))$ is increasing on
$(t,\delta]$, so
\[\rho_q(t) \le \dfrac{8Rt}{t\exp(2\pi M\cI(2At))}\]
as required.
\end{proof}

\begin{lem}
\label{lem:convergence-phi-n}
For each~$N\ge 3$ the sequence of functions $(\phi_n\vert_{X_N}\co
X_N\to\csph)_{n\ge N}$ converges uniformly to $\phi\vert_{X_N}\co
X_N\to\csph$, where $X_N = \bigcap_{n\ge N}P_n$. In particular, if
$x_n\to x$ is a convergent sequence in~$\Sigma$ with $x_n\in P_n$ for
all~$n$, then $\phi_n(x_n)\to\phi(x)$.
\end{lem}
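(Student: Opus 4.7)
The plan is to combine the uniform modulus of continuity from Lemma~\ref{lem:uniform-mod-cont} with Arzela--Ascoli and the uniqueness of the normalized uniformizing map. Concretely, since the family $\{\phi_n|_{X_N}\}_{n\ge N}$ is uniformly equicontinuous (with respect to the Euclidean metric on $X_N \sbs \Sigma$ and the spherical metric on $\csph$) and $\csph$ is compact, every subsequence admits a further subsequence converging uniformly on $X_N$ to some continuous $\psi$. A standard diagonal extraction together with the nested exhaustion $X_N \nearrow \Int(\Sigma)$ (which follows from Lemma~\ref{lem:topological-convergence}a)) lets me take $\psi$ defined on all of $\Int(\Sigma)$. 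It then suffices to show $\psi = \phi|_{X_N}$ for any such subsequential limit.

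First I would extend $\psi$ to $\Sigma$: it inherits the modulus of continuity $\brho$ from the $\phi_n$ by passing to the limit, so it extends uniquely to a continuous map $\psi\co\Sigma\to\csph$, and this extension is what governs the ``in particular'' clause. Then I would check that $\psi$ respects $\sim_\cP$: given $x\sim_\cP y$ in $\Sigma$, Lemma~\ref{lem:topological-convergence}b) furnishes $\cP_n$-related points $x_n',y_n'\in P_n$ with $x_n'\to x$ and $y_n'\to y$, and the equality $\phi_n(x_n')=\phi_n(y_n')$ survives the limit thanks to $\brho$. Hence $\psi$ descends to a continuous $\bar\psi\co S\to\csph$.

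Next I would exploit conformality. On any compact $K\sbs\Int(\Sigma)$, one has $K\sbs\Int(P_n)$ for all large $n$, so the reciprocals $1/\phi_n$ are univalent holomorphic functions on a neighborhood of $K$ converging uniformly to $1/\psi$. Weierstrass convergence makes $1/\psi$ holomorphic on $\Int(\Sigma)$, so $\psi$ is meromorphic with at most a simple pole at $(1/2,1/2)$; the normalization $(1/\phi_n)'(1/2,1/2)=1$ passes to the limit by uniform convergence of derivatives, and Hurwitz's theorem then forces $1/\psi$ to be univalent and nonconstant. The remaining normalization $\psi(1,0)=0$ is automatic because $(1,0)\in P_n$ for every $n$. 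Thus $\bar\psi$ is continuous on $S$, conformal and injective on $S\setminus G$, and satisfies the normalization defining $u$.

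The main obstacle is promoting $\bar\psi$ to a global conformal isomorphism of $S$ with $\csph$: once that is done, uniqueness of the normalized uniformizing map gives $\bar\psi=u$, hence $\psi=\phi$, and the whole sequence converges because every subsequence has a subsequence converging to $\phi|_{X_N}$. To do this I would first show $\bar\psi$ is a homeomorphism. Injectivity across the scar is the delicate point: if $\bar\psi(p)=\bar\psi(q)=w$ for distinct $p,q\in S$, at least one of them lies in $G$, and for the ``mixed'' case ($p\in G$, $q\notin G$) local openness of $\bar\psi$ near~$q$ combined with injectivity on $S\setminus G$ yields an immediate contradiction with a sequence $p_k\to p$ in $S\setminus G$; the purely-in-$G$ case I would handle by a topological-degree argument using that each $\phi_n$ has degree one and that the preimage count is upper semi-continuous under uniform convergence. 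Surjectivity then follows from $\bar\psi(S)$ being simultaneously compact and (via the open mapping theorem on $S\setminus G$) open near each $\bar\psi(p)$ with $p\notin G$. Finally, since $G$ has finite $1$-dimensional Hausdorff measure by Lemma~\ref{lem:metricstructureofG}c), Theorem~\ref{thm:removability2} applied with $E=G$ upgrades the homeomorphism $\bar\psi$ to a global conformal isomorphism, completing the identification.
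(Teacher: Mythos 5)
Your overall architecture (uniform equicontinuity from Lemma~\ref{lem:uniform-mod-cont}, an Arzel\`a--Ascoli extraction on the exhaustion $X_N$, reduction to showing every subsequential limit equals $\phi$, descent to a map $S\to\csph$ via Lemma~\ref{lem:topological-convergence}b), univalence on $\Int(\Sigma)$ from the normalized univalent maps $1/\phi_n$, the ``mixed'' injectivity case via openness on $S\setminus G$, and the final upgrade by Theorem~\ref{thm:removability2} using $\mu^1_G(G)<\infty$) is exactly the paper's route. The one place you diverge is the one place where the real work happens, and there your argument has a genuine gap: injectivity of the limit on the scar~$G$. You propose to handle the purely-in-$G$ case ``by a topological-degree argument using that each $\phi_n$ has degree one and that the preimage count is upper semi-continuous under uniform convergence.'' This does not work. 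First, the degree-one objects are the homeomorphisms $u_n\co S_n\to\csph$, which live on \emph{different} quotient spheres ($\sim_{\cP_n}\neq\sim_{\cP}$), so there is no fixed-domain sequence of degree-one maps whose limit is the induced map $S\to\csph$. Second, even granting such a formulation, degree one does not imply injectivity of a uniform limit: limits of homeomorphisms of the sphere can collapse continua (by Moore's theorem, every quotient by a musc decomposition with non-separating elements arises this way), and the failure mode you must exclude is precisely that the limit collapses part of the dendrite~$G$ or identifies non-$\sim_\cP$-equivalent boundary points. Preimage counts of injective maps are \emph{not} upper semi-continuous under uniform convergence --- a single fibre of the limit can become a continuum. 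A Hurwitz-type count is unavailable here because there is no holomorphic structure across~$G$ at this stage; indeed Theorem~\ref{thm:removability2} itself requires the map to already be a homeomorphism onto its image, so you cannot defer injectivity to the removability step without circularity.

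The paper closes this gap with two ideas you would need (or a substitute for them). For a point~$z$ in the interior of a paired segment with partner~$z'$, it performs a local surgery: excise the half-disk $B_\Sigma(z';\veps/2)$ and reglue it onto~$\Sigma$ near~$z$ according to the pairing, obtaining $\Sigma^z$; the analogous regluings of the approximating polygons give univalent maps $\phi_n^z$ on $\Int(P_n^z)$, whose limit shows $\phi_\infty$ is univalent on an open set straddling the scar near~$z$, hence $z$ is identified with no boundary point other than~$z'$. Then a dendrite argument reduces the general case to this one: if $u_\infty$ identified two distinct points $w,z\in G$ without identifying interior-pair points to non-partners along $[w,z]_G$, the image $u_\infty(G)$ would contain a simple closed curve, contradicting connectedness of its complement $u_\infty(S\setminus G)$. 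Without an argument of this kind, your step ``$\bar\psi$ is injective on $G$'' is unsupported, and everything after it (homeomorphism, removability, uniqueness of the normalized uniformizing map) does not get off the ground.
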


\begin{rmk}
The sequence of polygons~$(P_n)$ is not increasing, since the lengths
$|V_1^n| = \lambda_n^n/(\lambda_n^{n+1}+1)$ of the sides $V_1^n$,
which are contained in~$\partial\Sigma$, decrease with~$n$. However,
the sequence~$(X_n)$ is increasing, and converges Hausdorff
to~$\Sigma$ by Lemma~\ref{lem:topological-convergence}a).
\end{rmk}

\begin{proof}
Since the sequence~$(\phi_n)$ is equicontinuous and the sequence of
domains $(P_n)$ converges Hausdorff to~$\Sigma$, a variant of the
Arzel\`a-Ascoli theorem (which is proved identically to the standard
version) shows that there is a subsequence~$(\phi_{n_i})$ and a
continuous function $\phi_\infty\co\Sigma\to\csph$ such that, for
each~$N\ge 3$, the sequence $(\phi_{n_i}\vert_{X_N})_{n_i \ge N}$
converges uniformly to $\phi_\infty\vert_{X_N}$. It will be shown that
$\phi_\infty=\phi$ for any such subsequence, which will establish the
result. 

Now $\phi_\infty(1/2,1/2)=\infty$ and $\phi_\infty(1,0)=0$, since
$\phi_n(1/2,1/2)=\infty$ and $\phi_n(1,0)=0$ for all~$n$. Moreover,
because the functions~$\phi_n$ are univalent on~$\Int(P_n)$ and any
open subset of $\Int(\Sigma)$ whose boundary is disjoint from
$\partial\Sigma$ is contained in~$X_n$ for sufficiently large~$n$, the
function~$\phi_\infty$ is univalent in $\Int(\Sigma)$, and satisfies
$(1/\phi_\infty)'(1/2,1/2)=1$. Because $\phi_\infty$ restricted to
$\Int(\Sigma)$ is open and injective, there do not exist points
$w\in\partial\Sigma$ and $z\in\Int(\Sigma)$ with
$\phi_\infty(w)=\phi_\infty(z)$.

By Lemma~\ref{lem:topological-convergence}b), if
$w,z\in\partial\Sigma$ with $w\sim_{\cP}z$, then
$\phi_\infty(w)=\phi_\infty(z)$ ($(w,z)$ is arbitrarily closely
approximated by points $(w_n,z_n)\in\sim_{\cP_n}$, which therefore
satisfy $\phi_n(w_n)=\phi_n(z_n)$). There is therefore a
continuous function $u_\infty\co S\to\csph$ with $\phi_\infty =
u_\infty \circ \pi$, which restricts to a conformal homeomorphism
\[u_\infty|_{S\setminus G}\co S\setminus G \to \csph \setminus
u_\infty(G).\]

Let~$z\in\partial\Sigma$ be any point in the interior of a paired
segment~$\alpha$ of~$\cP$, let~$\veps>0$ be the distance from~$z$ to
the nearest endpoint of~$\alpha$, and let~$z'$ be the point of the
paired segment~$\alpha'$ which is paired
with~$z$. Let~$\Sigma^z\subset\C$ be obtained from~$\Sigma$ by
excising $B_\Sigma(z'; \veps/2)$ and gluing this half-disk
onto~$\Sigma$ near~$z$ according to the
pairing~$\ssegpair{\alpha}$. Then $\phi_\infty$ induces a natural
function $\phi^z_\infty\co \Sigma^z\to\csph$.

Choose sequences $z_n\to z$ and $z_n'\to z'$ where $z_n,z_n'\in P_n$
and $z_n\sim_{\cP_n} z_n'$. Then for sufficiently large~$n$ an
analogous construction can be carried out to obtain functions
$\phi_n^z\co P_n^z\to P_n^z$ which are univalent on $\Int(P_n^z)$. It
follows that $\phi^z_\infty$ is univalent on $\Int(\Sigma^z)$, and in
particular that there is no point $w\in\partial\Sigma$ with
$\phi_\infty(w)=\phi_\infty(z)$ other than $z'$.

Now if $w,z\in G$ with $w\not=z$ and $u_\infty(w)=u_\infty(z)$, then
every point except possibly one of the interval $[w,z]_G$ is
identified with another such point by~$u_\infty$, as otherwise
$u_\infty(G)$ would contain a simple closed curve, contradicting the
fact that its complement is connected; in particular, $u_\infty$
identifies some point in the interior of a paired segment with another
point which it is not~$\sim_{\cP}$-equivalent to. Since this has been
shown to be impossible, $u_\infty$ is injective on~$G$.

It follows that $u_\infty\co S\to\csph$ is a homeomorphism: since it
is conformal on $S\setminus G$, and $G$ has finite $1$-dimensional
Hausdorff measure by Lemma~\ref{lem:metricstructureofG}c), $u_\infty$
is conformal by Theorem~\ref{thm:removability2}. Because~$u_\infty$ is
normalized in the same way as~$u$ the two are equal, and hence
\[\phi_\infty = u_\infty\circ\pi = u\circ\pi = \phi\] as required.

\end{proof}

\begin{thm}
\label{thm:convergence}
The pseudo-Anosov homeomorphisms $\widehat{f}_n\co \csph \to \csph$
converge pointwise to the generalized pseudo-Anosov
$\widehat{f}\co\csph\to\csph$.
\end{thm}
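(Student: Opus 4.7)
The plan is to prove pointwise convergence $\widehat{f}_n(w)\to \widehat{f}(w)$ for each $w\in\csph$ by combining the uniform convergence result of Lemma~\ref{lem:convergence-phi-n} with the pointwise convergence of the underlying maps from Lemma~\ref{lem:topological-convergence}(c). Fix $w\in\csph$, and choose $\tilde{z}\in\Sigma$ with $\phi(\tilde{z})=w$ and, for each $n\ge 3$, a preimage $\tilde{z}_n\in P_n$ with $\phi_n(\tilde{z}_n)=w$ (both exist since $\phi$ and each $\phi_n$ are surjective). The key intertwining relation is
\[
\phi_n\circ F_n = \widehat{f}_n\circ\phi_n \quad\text{and}\quad \phi\circ F = \widehat{f}\circ\phi,
\]
which follow from the definitions $f_n = \pi_n\circ F_n\circ\pi_n^{-1}$ (well-defined on $S_n$) and $\widehat{f}_n=u_n\circ f_n\circ u_n^{-1}$, and similarly for the limit. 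Thus $\widehat{f}_n(w)=\phi_n(F_n(\tilde{z}_n))$ and $\widehat{f}(w)=\phi(F(\tilde{z}))$.

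Next I would show that every subsequence of $(\widehat{f}_n(w))$ has a further subsequence converging to $\widehat{f}(w)$. Since each $P_n\subset\Sigma$ and $\Sigma$ is compact, any subsequence of $(\tilde{z}_n)$ admits a further subsequence $(\tilde{z}_{n_i})$ converging to some $\tilde{z}_\infty\in\Sigma$. By the final assertion of Lemma~\ref{lem:convergence-phi-n},
\[
w = \phi_{n_i}(\tilde{z}_{n_i}) \longrightarrow \phi(\tilde{z}_\infty),
\]
so $\phi(\tilde{z}_\infty)=w=\phi(\tilde{z})$, which gives $\tilde{z}_\infty\sim_\cP\tilde{z}$ and hence $\pi(\tilde{z}_\infty)=\pi(\tilde{z})$.

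Now apply Lemma~\ref{lem:topological-convergence}(c) to the sequence $(\tilde{z}_{n_i})$. If the first coordinate of $\tilde{z}_\infty$ differs from $1/2$, then $F_{n_i}(\tilde{z}_{n_i})\to F(\tilde{z}_\infty)$; if it equals $1/2$, a further subsequence (still denoted $(n_i)$) yields $F_{n_i}(\tilde{z}_{n_i})\to z'\in\Sigma$ with $z'\sim_\cP F(\tilde{z}_\infty)$. Applying Lemma~\ref{lem:convergence-phi-n} once more to this convergent sequence in $\Sigma$ gives
\[
\widehat{f}_{n_i}(w) = \phi_{n_i}(F_{n_i}(\tilde{z}_{n_i})) \longrightarrow \phi(F(\tilde{z}_\infty)),
\]
since $\phi$ respects $\sim_\cP$ and so the value at $z'$ coincides with that at $F(\tilde{z}_\infty)$. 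Finally, because $\pi(\tilde{z}_\infty)=\pi(\tilde{z})$ and $f=\pi\circ F\circ\pi^{-1}$ is well-defined on $S$, we have $\pi(F(\tilde{z}_\infty))=\pi(F(\tilde{z}))$, so $\phi(F(\tilde{z}_\infty))=\phi(F(\tilde{z}))=\widehat{f}(w)$. This shows every subsequence has a further subsequence converging to $\widehat{f}(w)$, giving $\widehat{f}_n(w)\to\widehat{f}(w)$.

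The only genuine obstacle is controlling the discontinuity of $F$ along the vertical fiber $\{1/2\}\times[0,1]$ together with the non-injectivity of $\pi$ and $\pi_n$; but this is exactly what Lemma~\ref{lem:topological-convergence}(c) handles, via the equivalence relation $\sim_\cP$ that $\phi$ is compatible with. Everything else is a straightforward two-step passage through Lemma~\ref{lem:convergence-phi-n} (once to identify $\tilde{z}_\infty$ and once to transport $F_{n_i}(\tilde{z}_{n_i})$ back to $\csph$), sandwiching the dynamical convergence between two analytic convergence steps.
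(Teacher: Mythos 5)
Your proposal is correct and follows essentially the same route as the paper's proof: choose preimages $x_n$ of the given point under $\phi_n$, pass to a convergent subsequence, control $F_{n_i}(x_{n_i})$ via Lemma~\ref{lem:topological-convergence}c), and apply Lemma~\ref{lem:convergence-phi-n} twice. The only cosmetic difference is that you fix a preimage $\tilde{z}$ of $w$ under $\phi$ in advance and then argue $\phi(F(\tilde{z}_\infty))=\phi(F(\tilde{z}))$ via the descent of $F$ to $S$, whereas the paper simply evaluates $\widehat{f}(z)=\widehat{f}(\phi(x))=\phi(F(x))$ at the subsequential limit itself, making that last step unnecessary.
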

\begin{proof}
Let~$z\in\csph$ and choose, for each~$n$, a point $x_n\in P_n$ with
$\phi_n(x_n)=z$: thus \mbox{$\widehat{f}_n(z) = \phi_n(F_n(x_n))$.} It will
be shown that if $\phi_{n_i}(F_{n_i}(x_{n_i}))\to w$ is any convergent
subsequence then $w=\widehat{f}(z)$, which will establish the result.

Given such a subsequence, assume without loss of generality that
$x_{n_i}\to x\in\Sigma$, and (using
Lemma~\ref{lem:topological-convergence}c)) that $F_{n_i}(x_{n_i})$
converges to some $y\in\Sigma$ with $\phi(y)=\phi(F(x))$. Since
$\phi_{n_i}(x_{n_i})=z$ for all~$i$, it follows from
Lemma~\ref{lem:convergence-phi-n} that $\phi(x)=z$. By the same lemma,
\[\phi_{n_i}(F_{n_i}(x_{n_i})) \to \phi(y) = \phi(F(x)) =
\widehat{f}(\phi(x)) = \widehat{f}(z)\]
as required.
\end{proof}

\bibliography{bib_origami}

\end{document}